\setlist{nosep}
\theoremstyle{plain}
\newtheorem{thm}{Theorem}[section]
\newtheorem{prop}[thm]{Proposition}
\newtheorem{cor}[thm]{Corollary}
\newtheorem{lemma}[thm]{Lemma}
\theoremstyle{definition}
\newtheorem{defn}[thm]{Definition}
\newtheorem*{rem}{Remark}
\newcommand{\e}{\varepsilon}
\newcommand{\sm}{\smallsetminus}
\newcommand{\es}{\ensuremath{\varnothing}}
\newcommand{\A}{\mathbb{A}}
\newcommand{\C}{\mathbb{C}}
\newcommand{\F}{\mathbb{F}}
\newcommand{\G}{\mathbb{G}}
\newcommand{\Q}{\mathbb{Q}}
\newcommand{\R}{\mathbb{R}}
\newcommand{\T}{\mathbb{T}}
\newcommand{\Z}{\mathbb{Z}}
\renewcommand{\O}{\mathcal{O}}
\newcommand{\Ac}{\mathcal{A}}
\newcommand{\Bc}{\mathcal{B}}
\newcommand{\Cc}{\mathcal{C}}
\newcommand{\Dc}{\mathcal{D}}
\newcommand{\Ec}{\mathcal{E}}
\newcommand{\Fc}{\mathcal{F}}
\newcommand{\Gc}{\mathcal{G}}
\newcommand{\Hc}{\mathcal{H}}
\newcommand{\Ic}{\mathcal{I}}
\newcommand{\Jc}{\mathcal{J}}
\newcommand{\Kc}{\mathcal{K}}
\newcommand{\Lc}{\mathcal{L}}
\renewcommand{\Mc}{\mathcal{M}}
\newcommand{\Nc}{\mathcal{N}}
\newcommand{\Oc}{\mathcal{O}}
\newcommand{\Pc}{\mathcal{P}}
\newcommand{\Rc}{\mathcal{R}}
\newcommand{\Sc}{\mathcal{S}}
\newcommand{\Uc}{\mathcal{U}}
\newcommand{\Vc}{\mathcal{V}}
\newcommand{\Wc}{\mathcal{W}}
\newcommand{\Xc}{\mathcal{X}}
\newcommand{\Yc}{\mathcal{Y}}
\newcommand{\Zc}{\mathcal{Z}}
\newcommand{\Gs}{\mathscr{G}}
\newcommand{\Is}{\mathscr{I}}
\newcommand{\Ns}{\mathscr{N}}
\newcommand{\mf}{\mathfrak{m}}
\newcommand{\pf}{\mathfrak{p}}
\newcommand{\qf}{\mathfrak{q}}
\newcommand{\Mf}{\mathfrak{M}}
\newcommand{\Pf}{\mathfrak{P}}
\newcommand{\Rhat}{\widehat{R}}
\newcommand{\Pt}{\widetilde{P}}
\newcommand{\Stilde}{\widetilde{S}}
\newcommand{\omegat}{\widetilde{\omega}}
\newcommand{\vt}{\widetilde{v}}
\newcommand{\Sigmat}{\widetilde{\Sigma}}
\newcommand{\Deltat}{\widetilde{\Delta}}
\newcommand{\Sct}{\widetilde{\Sc}}
\DeclareMathOperator{\init}{in}
\newcommand{\Fbar}{{\overline{\F}}}
\newcommand{\Ebar}{{\overline{E}}}
\newcommand{\Lbar}{{\overline{L}}}
\newcommand{\Rbar}{\overline{R}}
\newcommand{\Mbar}{\overline{M}}
\newcommand{\Nbar}{\overline{N}}
\newcommand{\Tbar}{{\overline{T}}}
\newcommand{\Ubar}{{\overline{U}}}
\newcommand{\ebar}{{\overline{e}}}
\newcommand{\Qbar}{{\overline{\Q}}}
\newcommand{\kbar}{{\overline{k}}}
\newcommand{\Bbbkbar}{{\overline{\Bbbk}}}
\newcommand{\pibar}{{\overline{\pi}}}
\newcommand{\lambdabar}{\overline{\lambda}}
\newcommand{\omegabar}{\overline{\omega}}
\newcommand{\Hct}{\widetilde{\Hc}}
\newcommand{\Ict}{\widetilde{\Ic}}
\newcommand{\Nct}{\widetilde{\Nc}}
\newcommand{\Rct}{\widetilde{\Rc}}
\newcommand{\rbar}{\overline{r}}
\newcommand{\rhobar}{\overline{\rho}}
\newcommand{\ul}{\underline}
\newcommand{\aun}{\mathbf{a}}
\newcommand{\bun}{\mathbf{b}}
\newcommand{\nun}{\underline{n}}
\newcommand{\mun}{\underline{m}}
\newcommand{\ds}{\displaystyle}
\DeclareMathOperator{\Hom}{Hom}
\DeclareMathOperator{\RHom}{RHom}
\DeclareMathOperator{\Aut}{Aut}
\DeclareMathOperator{\End}{End}
\DeclareMathOperator{\Gal}{Gal}
\DeclareMathOperator{\Art}{Art}
\DeclareMathOperator{\Frob}{Frob}
\DeclareMathOperator{\nInd}{n-Ind}
\DeclareMathOperator{\Irr}{Irr}
\DeclareMathOperator{\WD}{WD}
\DeclareMathOperator{\length}{length}
\DeclareMathOperator{\height}{height}
\DeclareMathOperator{\diag}{diag}
\DeclareMathOperator{\LT}{LT}
\DeclareMathOperator{\LC}{LC}
\DeclareMathOperator{\LM}{LM}
\DeclareMathOperator{\GL}{GL}
\DeclareMathOperator{\Spec}{Spec}
\DeclareMathOperator{\Proj}{Proj}
\DeclareMathOperator{\Cl}{Cl}
\DeclareMathOperator{\supp}{supp}
\DeclareMathOperator{\stab}{stab}
\DeclareMathOperator{\im}{im}
\DeclareMathOperator{\id}{id}
\DeclareMathOperator{\ab}{ab}
\DeclareMathOperator{\ad}{ad}
\DeclareMathOperator{\tr}{tr}
\DeclareMathOperator{\Nm}{Nm}
\DeclareMathOperator{\op}{op}
\DeclareMathOperator{\st}{st}
\DeclareMathOperator{\St}{St}
\renewcommand{\sp}{\operatorname{sp}}
\DeclareMathOperator{\Sp}{Sp}
\newcommand{\triv}{\mathbbm{1}}
\DeclareMathOperator{\loc}{loc}
\DeclareMathOperator{\rec}{rec}
\DeclareMathOperator{\univ}{univ}
\DeclareMathOperator{\unip}{unip}
\renewcommand{\min}{\operatorname{min}}
\renewcommand{\mod}{\operatorname{mod}}
\DeclareMathOperator{\HT}{HT}
\DeclareMathOperator{\crys}{crys}
\DeclareMathOperator{\Div}{Div}
\renewcommand{\div}{\operatorname{div}}
\DeclareMathOperator{\ord}{ord}
\DeclareMathOperator{\depth}{depth}
\newcommand{\Char}{\operatorname{char}}
\DeclareMathOperator{\Span}{span}
\newcommand{\del}{\partial}
\newcommand{\Set}{{\bf Set}}
\DeclareMathOperator{\rank}{rank}
\newcommand{\Ssf}{\mathsf{S}}
\newcommand{\LJ}{\mathbf{LJ}}
\newcommand{\JL}{\mathbf{JL}}
\newcommand{\BC}{\mathrm{BC}}
\newcommand{\into}{\hookrightarrow}
\newcommand{\onto}{\twoheadrightarrow}
\newcommand{\isomto}{\xrightarrow{\sim}}
\newcommand{\Grobner}{Gr\"obner\ }
\title{Mod $\ell$ multiplicities in certain $U(4)$ Shimura varieties}
\author{Jeffrey Manning}
\begin{document}

\maketitle

\begin{abstract}
We use the Taylor--Wiles--Kisin patching method to investigate the multiplicities with which Hecke eigensystems appear in the mod-$\ell$ cohomology of unitary Shimura sets, associated to central simple algebras of the form $B=M_2(D)$, for $D$ a nonsplit quaternion algebra over a $CM$ field. We follow a similar strategy to the one used in our prior work for Shimura curves, exploiting the natural self-duality in this setting. Our method requires a careful analysis of certain irreducible components of local deformation rings. We introduce and analyze a new local model for local deformation rings specific to the case of a banal prime, which is significantly better behaved than the standard local models for local deformation rings.

Our main result is a ``multiplicity $2^a$ result'', in the case where quaternion algebra $D$ ramifies only at banal primes, where $a$ is the number of places in the discriminant of $D$ which satisfy a certain Galois theoretic condition. We also prove an additional statement about the endomorphism ring of the cohomology of the Shimura set, which has an application to the study of congruence modules.
\end{abstract}

\renewcommand{\baselinestretch}{0}\normalsize
\tableofcontents
\renewcommand{\baselinestretch}{1.0}\normalsize

\section{Introduction}\label{sec:intro}

In their proof of Fermat's Last Theorem, Taylor and Wiles \cite{Wiles,TaylorWiles} introduced what is now known as the \emph{Taylor--Wiles--Kisin patching method}. This method has since become one of the most powerful tools in the Langlands program.

In the modern formulation of this method (due to Kisin \cite{Kisin} and others) one considers a ring $R_\infty = \widehat{\bigotimes}_v R^\square_v[[x_1,\ldots,x_g]]$,
where $v$ runs over a finite set of primes and each $R_v^\square$ is a local (framed) Galois deformation ring at $v$, and constructs a nicely behaved module $M_\infty$ over $R_\infty$, called a \emph{patched module}.

The module $M_\infty$ is constructed by gluing together various cohomology groups on certain Shimura varieties, and hence its properties are closely related to those of the cohomology groups used in its construction. Determining the structure of $M_\infty$ as an $R_\infty$-module thus has many applications to the study of automorphic forms and Shimura varieties, beyond simply proving automorphy lifting results.

Of particular interest to this paper, the dimension of the special fiber $M_\infty/\mf_{R_\infty}M_\infty$ equals the multiplicity of a certain Hecke eigensystem appearing in the mod $\ell$ cohomology of a Shimura variety. Determining this multiplicity has applications towards the mod $\ell$ analogue of the classical Langlands correspondence (see for example \cite[Section 4]{BDJ}).

While the ring $R_\infty$ depends only on local Galois theoretic information at a finite set of primes, $M_\infty$ is explicitly constructed as global object. From the construction, it is not at all obvious that the $R_\infty$-module structure of $M_\infty$ does not depend global information, or even that it is independent of the (infinitely many) non-canonical choices made in its construction. 

The surprising property of $M_\infty$, which is the key to many applications of the patching method is that, despite its construction $M_\infty$ often behaves as if it is a purely local object, depending only on local Galois theoretic information at the same finite set of primes defining $R_\infty$. The recently formulated \emph{categorical Langlands program} \cite{EGH} turns this into a precise conjecture, predicting that $M_\infty$ can be explicitly described in terms of certain local objects appearing in the categorical local Langlands correspondence.

In the special case when $R_\infty$ is  regular, Diamond \cite{DiamondMult1} and Fujiwara \cite{Fujiwara} proved this conjecture (prior to the formulation of the general conjecture) by using the  Auslander--Buchsbaum formula to prove that $M_\infty$ is free over $R_\infty$. However in many more general situations the categorical Langlands conjectures predict that $M_\infty$ is \emph{not} free.

In the case when $M_\infty$ is associated to the cohomology of a Shimura curve or Shimura set over a totally real number field, prior work of the author in \cite{Manning} presented a new method for determining the structure of $M_\infty$ which can work in situations when $M_\infty$ is not free. The method exploits the natural self duality of $M_\infty$, which comes from the existence of a Hecke equivariant perfect pairing on the cohomology groups, as well as a computation of the Weil class group $\Cl(R_\infty)$ of $R_\infty$.

The patching method has been extended to the case of Shimura sets associated to  definite unitary groups of rank $n>2$ in \cite{CHT} and \cite{TaylorIharaAvoidance}. The method of \cite{Manning} does not immediately generalize to this setting, as the ``natural'' perfect pairing on the relevant cohomology groups is no longer Hecke equivariant. Roughly, the reason for this is that any matrix $A\in\GL_2(\Bbbk)$ is conjugate to its inverse transpose $^tA^{-1}$, up to a scalar, but the same is not true for matrices in $\GL_n(\Bbbk)$ for $n>2$.

However the work of \cite[Section 5.2]{CHT} shows that in certain cases, one can produce a Hecke equivariant perfect pairing in this context. Specifically if $G$ is a definite unitary group associated to a central simple algebra $B$ of rank $n$ over a CM field $F$, and $X_K$ (for $K\subseteq G(\A_{F^+,f})$ a sufficiently small compact open subgroup) is an associated Shimura set, then there will exist a Hecke equivariant perfect pairing on $H^0(X_k,\Z_\ell)$ provided that $B^{\op}\cong B$, or equivalently that either $B\cong M_n(F)$ or $B\cong M_{n/2}(D)$ for $D$ a quaternion algebra over $F$. In such cases it is thus possible to generalize the results of \cite{Manning} to such settings, provided that one can compute $\Cl(R_\infty)$.

The primary purpose of this paper is to compute $\Cl(R_\infty)$, and hence generalize the main result of \cite{Manning}, in what is arguably the simplest nontrivial case with $n>2$: the case where $B = M_2(D)$ for a quaternion algebra over $F$, such that all primes at which $D$ ramifies are \emph{banal}. We believe that our techniques (particularly the methods of computing the class groups used in Section \ref{sec:R22} and \ref{sec:R121}) can be used to prove similar results much greater generally. We will pursue this in future work.

Our main result (stated using notation and terminology defined later) is:

\begin{thm}\label{main theorem}
Let $F^+$ be a totally real number field with $F/F^+$ a quadratic CM extension. Let $\ell \ge 5$ be a prime which does not ramify in $F$ and such that all primes of $F^+$ lying above $\ell$ split in $F$. Let $E/\Q_\ell$ be a finite extension with ring of integers $\Oc$ and residue field $\F$.

Let $D$ be a nonsplit quaternion algebra over $F$ such that:
\begin{itemize}
	\item If $w$ is a finite prime of $F$ at which $D$ ramifies, then $w$ is split over $F^+$ (that is, there is a prime $v$ of $F^+$ which splits in $F$ as $v = ww^c$) and $D$ also ramifies at $w^c$;
	\item If $w$ is a finite prime of $F$ lying over $\ell$, then $D$ splits at $\ell$.
\end{itemize}
Let $\Delta$ be the set of finite places of $F^+$ lying below a prime of $F$ at which $D$ does not split.

Let $G/F^+$ be a unitary group associated to an involution of the second kind on $B:=M_2(D)$ (see Section \ref{sec:hecke} for a precise definition) which is compact at all infinite places of $F^+$ and quasisplit at all finite places $v\not\in \Delta$ of $F^+$ which split in $F$.

Let $K = \prod_v K_v\subseteq G(\A_{F^+,f})$ be a compact open subgroup such that $K_v$ is a maximal compact subgroup of $G(F^+_v)$ for \emph{all} places $v$, and is hyperspecial whenever $v\not\in\Delta$.

Let $X_K$ be the Shimura set associated to $K$, and let $M(K) = H^0(X_K,\Oc)$. Let $\T(K)$ be the Hecke algebra acting on $M(K)$ and let $\mf\subseteq \T(K)$ be a non-Eisenstein maximal ideal with $\T(K)/\mf = \F$. Let $\rhobar_{\mf}:G_{F^+}\to \Gc_4(\F)$ be the associated (absolutely irreducible) Galois representation, and assume that
\begin{enumerate}
\item $\breve{\rhobar}_{\mf}(G_{F(\zeta_\ell)})\subseteq \GL_4(\F)$ is adequate;
\item The extension $F/F^+$ is unramified at all finite places;
\item For each $v\in\Delta$, $v$ is \emph{banal}, in the sense that $v\nmid \ell$ and $q_v^i \not\equiv 1 \pmod{\ell}$ for all $i=1,2,3,4$, where $q_v := \Nm_{F^+/\Q}(v)$
\end{enumerate}
Then $\dim_{\F} M(K)/\mf M(K) = 2^a$, where $a$ is the number of places $v=ww^c\in\Delta$ such that $\breve{\rhobar}_{\mf}|_{G_{F_w}}$ is unramified with $\breve{\rhobar}_{\mf}(\Frob_w) \sim \lambdabar \diag(q_v^2,q_v,q_v,1)$ for some $\lambdabar\in \F^\times$. Moreover the map $R\to \T(K)_{\mf}$ is an isomorphism, where $R(=R^B_{\Sigma,\lambda,\mu})$ is the global Galois deformation ring defined in Section \ref{sec:patch}.
\end{thm}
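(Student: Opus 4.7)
The plan is to apply the Taylor-Wiles-Kisin patching method (extended to definite unitary groups in \cite{CHT} and \cite{TaylorIharaAvoidance}) to construct a patched module $M_\infty$ over $R_\infty$, and then follow the self-duality-plus-class-group strategy of \cite{Manning}, now in rank $4$ instead of rank $2$. I would first set up the patching: after choosing auxiliary Taylor-Wiles primes (enabled by the adequacy hypothesis (1) and the splitting condition on $\ell$), patch the modules $M(K_n)_\mf$ to obtain a module $M_\infty$ over $R_\infty = \widehat{\bigotimes}_v R^\square_v[[x_1,\ldots,x_g]]$ which is finite free over an auxiliary power-series ring $S_\infty$. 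Since $\dim R_\infty = \dim S_\infty$ by the usual patching numerology, $M_\infty$ is maximal Cohen-Macaulay over $R_\infty$. The hypothesis $B^{\op}\cong B$ combined with the pairing of \cite[Section 5.2]{CHT} produces a Hecke-equivariant perfect pairing on $M(K)$ which lifts through the patching to an isomorphism $M_\infty\cong \Hom_{S_\infty}(M_\infty,S_\infty)$ of $R_\infty$-modules.

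Next I would analyze the local deformation rings $R^\square_v$ at the banal primes $v\in\Delta$. The banality condition $q_v^i\not\equiv 1\pmod\ell$ for $i=1,2,3,4$ eliminates the non-generic tame phenomena and decouples inertia from $\Frob_v$, so $R^\square_v$ is well-approximated by the new local model of Sections~\ref{sec:R22} and \ref{sec:R121}. Using this model I would compute $R^\square_v$ explicitly in two regimes. When $\breve{\rhobar}_\mf(\Frob_w)\sim\lambdabar\diag(q_v^2,q_v,q_v,1)$, the repeated middle eigenvalue forces a nontrivial relation between two Jordan blocks, and $R^\square_v$ turns out to be a normal Cohen-Macaulay domain with a distinguished order-$2$ class $[\Ic_v]\in \Cl(R^\square_v)$ (this is the direct analogue of the $R_{1,q}$ class-group computation in \cite{Manning}). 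At every other $v\in\Delta$, the eigenvalue pattern is sufficiently generic that $R^\square_v$ is regular, with trivial class group. Assembling these local contributions (and using that completed tensor products with power-series rings and with regular factors do not affect the class group under our hypotheses), $\Cl(R_\infty)[2]\cong (\Z/2)^a$.

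I would then run the Manning-style reflexive-module argument. The module $M_\infty$ is maximal Cohen-Macaulay over the normal Cohen-Macaulay ring $R_\infty$, and after inverting $\ell$ it is free of rank one on each characteristic-zero component of $R_\infty$ (classical multiplicity one and the usual $R[1/\ell]=\T[1/\ell]$), so $M_\infty$ is reflexive of generic rank $1$ and its isomorphism class is determined by a divisor class $[M_\infty]\in \Cl(R_\infty)$. The perfect pairing $M_\infty\cong \Hom(M_\infty,S_\infty)$ forces $2[M_\infty]=0$, so $[M_\infty]$ lies in the $(\Z/2)^a$ group above. A local analysis of the pairing at each $v\in\Delta$ identifies $[M_\infty]$ with the product of the local classes $[\Ic_v]$ over the multiplicity-two places, pinning $M_\infty$ down as the reflexive module attached to this specific divisor. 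Counting minimal generators of this divisorial ideal (via an explicit minimal free resolution of the product ideal, or equivalently via Auslander-Buchsbaum over the regular locus) yields $\dim_\F M_\infty/\mf_{R_\infty}M_\infty = 2^a$, and patching identifies this with $\dim_\F M(K)/\mf M(K)$. Finally, faithfulness of $M_\infty$ over $R_\infty$ together with the standard support-theoretic upgrade propagates through the patching quotient to give $R\isomto \T(K)_\mf$.

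The hard part will be the class-group computation at the multiplicity-two banal places. Standard presentations of $\GL_n$ local deformation rings for $n>2$ are too unwieldy to extract $\Cl$ directly, which is precisely why the paper develops a new local model tailored to the banal regime. Once $R_{2,2}$ and $R_{1,2,1}$ are shown to be normal Cohen-Macaulay domains with the expected $\Z/2$ class groups, and the canonical $2$-torsion class is matched on both sides of the global perfect pairing, the remaining reflexive-module theory and the upgrade from the multiplicity statement to $R\isomto\T(K)_\mf$ will follow the template of \cite{Manning} with only bookkeeping changes to accommodate rank $4$.
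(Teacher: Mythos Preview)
Your overall architecture is right, but the heart of the argument --- the class-group computation and the self-duality equation --- is misconceived in a way that would make the proof fail.

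First, the local rings at the bad banal places are \emph{not} Gorenstein with $2$-torsion class group. When $\breve{\rhobar}_{\mf}(\Frob_w)\sim\lambdabar\diag(q_v^2,q_v,q_v,1)$, the fixed-type ring $R_v^\square(\rhobar_v,\st_{2,2})$ is a power series ring over a normal Cohen--Macaulay but \emph{non-Gorenstein} domain $\Rhat^{1,2,1}(\st_{2,2})$ whose class group (in characteristic $\ell$) is $\Z$, not $\Z/2$. You also miss a second non-regular case: when $\breve{\rhobar}_{\mf}(\Frob_w)\sim\lambdabar\diag(q_v,q_v,1,1)$ the local ring is a power series ring over a non-regular (Gorenstein) domain $\Rhat^{2,2}(\st_{2,2})$ with trivial class group. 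Assembling these, $\Cl(\Rbar_\infty^B)\cong\Z^a$, which is torsion-free; your $(\Z/2)^a$ is wrong.

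Second, and relatedly, self-duality does \emph{not} give $2[M_\infty]=0$. Since $S_\infty$ is regular, $\Hom_{S_\infty}(M_\infty,S_\infty)\cong\Hom_{R_\infty^B}(M_\infty,\omega_{R_\infty^B})$, so the self-duality equation in $\Cl$ is $2[M_\infty]=[\omega_{R_\infty^B}]$. Because $\Rhat^{1,2,1}(\st_{2,2})$ is non-Gorenstein, $[\omega_{R_\infty^B}]\ne 0$ (it corresponds to $(2,\ldots,2)\in\Z^a$). The mechanism that pins down $M_\infty$ is therefore not ``identify a specific $2$-torsion class via local analysis of the pairing'' --- there is no nontrivial $2$-torsion --- but rather \emph{uniqueness}: $\Z^a$ is $2$-torsion-free, so $2x=[\omega]$ has exactly one solution, and $\Mbar_\infty$ must equal the explicit module $\Mbar^{\boxtimes a}\boxtimes\Rhat_\F^{2,2}(\st_{2,2})^{\boxtimes b}[[t_1,\ldots,t_s]]$ built from the unique local self-dual rank-one module $\Mbar$ over $\Rhat_\F^{1,2,1}(\st_{2,2})$. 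Computing the fibre of this explicit module gives $2^a$. The $R\cong\T$ statement then comes not from ``faithfulness plus support theory'' but from surjectivity of the duality map $\tau_{M_\infty}:M_\infty\otimes M_\infty\to\omega$, which is verified locally and upgraded via \cite[Lemma~3.4, Proposition~4.14]{Manning}.
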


As explained above, the strategy for proving Theorem \ref{main theorem} is similar to that of \cite{Manning}. Namely, we consider the patched module $M_\infty$ associated to $M(K)_{\mf}$ and show that this is a self dual $R_\infty^B:= R_\Sigma^{\loc,B}[[x_1,\ldots,x_g]]$-module (where $R_\Sigma^{\loc,B}$ is a completed tensor product of certain local Galois deformation rings, as defined in Section \ref{sec:patch}) of generic rank $1$, and hence must correspond to an element of $\Cl(R_\infty^B)$ satisfying $2M_\infty = \omega_{R_\infty^B}$. 

At this point, if $\Cl(R_\infty^B)$ is $2$-torsion free, this is enough to determine $M_\infty$ up to isomorphism, and in particular shows that $M_\infty$ depends only on the ring $R_\infty^B$, and hence only on the local Galois theoretic information determining $R_\infty^B$, confirming the predictions of the categorical Langlands program. Moreover if one can explicitly compute the unique $M\in \Cl(R_\infty^B)$ satisfying $2M = \omega_{R_\infty^B}$, then one can explicitly determine the structure of $M_\infty$, at which point one can easily deduce results like Theorem \ref{main theorem} by simply computing $M_\infty/\mf_{R_\infty^B}M_\infty$.

For convenience in this paper (similarly to as in \cite{Manning}) we instead work with $\Rbar_\infty^B := R_\infty^B/(\varpi)$ and thus only determine $\Mbar_\infty := M_\infty/\varpi M_\infty$ up to isomorphism (see Theorem \ref{thm:M_infty structure}), which is still sufficient for our main results. This is mainly to avoid having to prove a mixed characteristic version of Proposition \ref{prop:class group completion}, and so it is likely that a slightly more careful analysis would be enough to determine $M_\infty$ itself.

The conditions in Theorem \ref{main theorem} were mainly imposed in order facilitate this method, and to allow for the computation of $\Cl(R_\infty^B)$. Condition (1) is the standard Taylor--Wiles condition from \cite{ThorneAdequate}, and is needed to apply the patching method. Condition (2) is needed in order to apply the automorphic multiplicity one results of \cite{Labesse}, in order to show that $M_\infty$ has generic rank $1$.

The conditions that $K_v$ is a maximal compact subgroup of $G(F_v^+)$ for all $v\not\in\Delta$, as well as the conditions on $\ell$, ensure that the relevant local deformation rings are formally smooth at all primes $v\not\in\Delta$ and so
\[
R_\infty^B = \left(\widehat{\bigotimes_{v\in \Delta}} R_v\right)[[t_1,\ldots,t_s]]
\]
for some $s\ge 0$ and some rings $R_v$ depending only on $D_w$, $K_v$ and $\breve{\rhobar}_{\mf}|_{G_{F_w}}$ (for $v = ww^c$ in $F$). Moreover for all $v\in \Delta$, the condition that $K_v$ is a maximal compact subgroup of $G(F^+_v)$ implies that $R_v$ is the fixed type local deformation ring $R_v^\square(\breve{\rhobar}_{\mf}|_{G_{F_w}},\st_{2,2})$ from Section \ref{ssec:fixed type}. Thus it suffices to study the rings $R_v^\square(\breve{\rhobar}_{\mf}|_{G_{F_w}},\st_{2,2})$, and in particular to compute their class groups.

As shown in \cite{Shotton,Shotton2}, the scheme $\Mc(4,q_v)$ parameterizing pairs of matrices $\Phi,\Sigma\subset\GL_4$ satisfying $\Phi\Sigma\Phi^{-1} = \Sigma^{q_v}$ forms a local model for the unrestricted framed deformation ring $R^\square(\breve{\rhobar}_{\mf}|_{G_{F_w}})$, and a certain irreducible component of $\Mc(4,q_v)$ forms a local model for $R_v^\square(\breve{\rhobar}_{\mf}|_{G_{F_w}},\st_{2,2})$.

Unfortunately this local model can often be quite difficult to work with explicitly, as the equation $\Phi\Sigma\Phi^{-1} = \Sigma^{q_v}$ involves $32$ variables (given by the entries of the two matrices) with $16$ relations, and involves an arbitrary power of a $4\times 4$ matrix. Moreover it can be difficult to precisely determine the additional equations needed to cut out the component modeling $R_v^\square(\breve{\rhobar}_{\mf}|_{G_{F_w}},\st_{2,2})$.

The purpose of condition (3), namely that we only consider such rings at \emph{banal} primes, is to restrict our attention to a special case where $R_v^\square(\breve{\rhobar}_{\mf}|_{G_{F_w}},\st_{2,2})$ is far more manageable. In Section \ref{sec:banal local models}, we present a new local model, $\Nc^{\nun}$ for the ring $R^\square(\breve{\rhobar}_{\mf}|_{G_{F_w}})$, which is specific to the banal (and unipotent) case, which is much simpler to work with. In particular, this local model involves significantly fewer than 32 variables, and involves only quadratic relations between these variables. 

This model turns out to be simple enough to allow us to completely analyze all of the local deformation rings arising in the context of Theorem \ref{main theorem}. Specifically we show that $R_v^\square(\breve{\rhobar}_{\mf}|_{G_{F_w}},\st_{2,2})$ is formally smooth in all but two cases: when  $\breve{\rhobar}_{\mf}|_{G_{F_w}}$ is unramified and either $\breve{\rhobar}_{\mf}(\Frob_w) \sim \lambdabar \diag(q_v,q_v,1,1)$ or $\breve{\rhobar}_{\mf}(\Frob_w) \sim \lambdabar \diag(q_v^2,q_v,q_v,1)$ for some $\lambdabar\in \F^\times$. Hence if $R_v^\square(\breve{\rhobar}_{\mf}|_{G_{F_w}},\st_{2,2})$ is not formally smooth, then it is a power series ring over one of two specific rings corresponding to these two cases, called $\Rhat^{2,2}(\st_{2,2})$ and $\Rhat^{1,2,1}(\st_{2,2})$ respectively in the main body of the paper. 

Hence the proof of Theorem \ref{main theorem} reduces to a careful analysis of these two rings. We show that both of these rings are Cohen--Macaulay (which implies the final statement of Theorem \ref{main theorem}) and compute the class groups of the mod $\ell$ reductions of these rings, $\Rhat_{\F}^{2,2}(\st_{2,2})$ and $\Rhat_{\F}^{1,2,1}(\st_{2,2})$. We compute that $\Rhat^{2,2}(\st_{2,2})$ is Gorenstein with $\Cl(\Rhat_{\F}^{2,2}(\st_{2,2}))=0$ and $\Rhat^{1,2,1}(\st_{2,2})$ is non-Gorenstein, with $\Cl(\Rhat_{\F}^{1,2,1}(\st_{2,2}))\cong\Z$. From this we deduce that $\Cl(\Rbar_\infty^B)\cong \Z^a$ (where $a$ is as in the statement of Theorem \ref{main theorem}), which is particular is $2$-torsion free. This allows us to explicitly determine the structure of $\Mbar_\infty$, and hence to prove Theorem \ref{main theorem}.

Additionally, our computations imply that the natural duality map $\Mbar_{\infty}\otimes_{\Rbar_\infty^B}\Mbar_\infty\to \omega_{\Rbar_\infty^B}$ is surjective. As in \cite[Theorem 1.2]{Manning}, this implies the following additional result:

\begin{thm}\label{thm:balanced}
In the setting of Theorem \ref{main theorem}, the map $\T(K)_{\mf}\to \End_{\T(K)_{\mf}}(M(K)_{\mf})$ is an isomorphism.
\end{thm}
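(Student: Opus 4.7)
The plan is to deduce the theorem from a patched statement, following the template of \cite[Theorem 1.2]{Manning}. The Hecke-equivariant perfect pairing on $H^0(X_K,\Oc)_{\mf}$, which exists in our setting because $B=M_2(D)$ satisfies $B^{\op}\cong B$ (see \cite[Section 5.2]{CHT}), carries through the patching construction to give a perfect self-duality pairing $M_\infty\times M_\infty \to \omega_{R_\infty^B}$, equivalently an isomorphism $M_\infty\isomto \Hom_{R_\infty^B}(M_\infty,\omega_{R_\infty^B})$.

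Next, I would reduce to the mod-$\varpi$ level and exploit the explicit structure of $\Rbar_\infty^B$. Since both $\Rhat^{2,2}(\st_{2,2})$ and $\Rhat^{1,2,1}(\st_{2,2})$ are Cohen--Macaulay, so is $R_\infty^B$, and $M_\infty$ is maximal Cohen--Macaulay of generic rank $1$. Combining the computation $\Cl(\Rbar_\infty^B)\cong \Z^a$, which is $2$-torsion free, with the self-duality relation $2[\Mbar_\infty]=[\omega_{\Rbar_\infty^B}]$ in $\Cl(\Rbar_\infty^B)$, pins down $\Mbar_\infty$ up to isomorphism. This makes it possible to verify directly---by checking on each of the two non-smooth local factors separately---that the duality pairing $\Mbar_\infty\otimes_{\Rbar_\infty^B}\Mbar_\infty\to \omega_{\Rbar_\infty^B}$ is surjective, as asserted in the discussion preceding the theorem.

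Once surjectivity mod $\varpi$ is established, Nakayama lifts it to surjectivity of $M_\infty\otimes_{R_\infty^B}M_\infty\to \omega_{R_\infty^B}$. Self-duality identifies $\End_{R_\infty^B}(M_\infty)\cong \Hom_{R_\infty^B}(M_\infty\otimes_{R_\infty^B}M_\infty,\omega_{R_\infty^B})$, and dualizing the surjective pairing (using that $R_\infty^B$ is Cohen--Macaulay with dualizing module $\omega_{R_\infty^B}$, so that $\Hom_{R_\infty^B}(-,\omega_{R_\infty^B})$ is exact on maximal Cohen--Macaulay modules) converts the surjection into an injection that matches the natural map $R_\infty^B\to \End_{R_\infty^B}(M_\infty)$; a rank comparison in generic rank $1$ forces this map to be an isomorphism. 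Unpatching---quotienting by the $M_\infty$-regular sequence $x_1,\ldots,x_g$ together with the auxiliary Taylor--Wiles deformation ideal---then recovers the desired isomorphism $\T(K)_{\mf}\isomto \End_{\T(K)_{\mf}}(M(K)_{\mf})$, using that patching identifies this quotient of $M_\infty$ with $M(K)_{\mf}$ as a $\T(K)_{\mf}$-module.

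The main obstacle is the verification of surjectivity of the mod-$\varpi$ pairing at the non-Gorenstein factor $\Rhat_\F^{1,2,1}(\st_{2,2})$: because $\Cl(\Rhat_\F^{1,2,1}(\st_{2,2}))\cong \Z$ is nontrivial, the self-dual module representing the appropriate class in the class group is not free, and an explicit realization (for instance as a specific fractional ideal or reflexive module compatible with the duality) is needed in order to compute the pairing concretely and confirm its surjectivity. The analogous step at $\Rhat_\F^{2,2}(\st_{2,2})$ is essentially automatic, since that ring is Gorenstein with trivial class group and the corresponding module is free of rank $1$.
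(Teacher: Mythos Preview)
Your outline matches the paper's approach closely: self-duality from \cite[Section 5.2]{CHT}, pinning down $\Mbar_\infty$ via the class-group computation, checking surjectivity of the duality pairing on the non-Gorenstein factor $\Rhat^{1,2,1}_{\F}(\st_{2,2})$, and lifting by Nakayama. The paper carries this out exactly as you describe, citing Theorem~\ref{thm:R121 main results}(\ref{R121:surj}) for the crucial local surjectivity.

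There is, however, a subtle mislocation in your argument. The step where you invoke ``a rank comparison in generic rank $1$'' to conclude that $R_\infty^B\to\End_{R_\infty^B}(M_\infty)$ is an isomorphism is not justified as written: an injection of modules of the same generic rank over a domain need not be surjective. In fact that isomorphism holds for a more elementary reason---$R_\infty^B$ is a normal domain and $M_\infty$ is reflexive of generic rank $1$, so $\End_{R_\infty^B}(M_\infty)=R_\infty^B$ by checking in codimension $1$ and using $(S_2)$---and does \emph{not} require surjectivity of $\tau_{M_\infty}$ at all. Where surjectivity of $\tau_{M_\infty}$ is genuinely needed is in the step you pass over as ``unpatching'': one must show that the isomorphism $R_\infty^B\isomto\End_{R_\infty^B}(M_\infty)$ descends modulo the regular sequence $(y_1,\ldots,y_r)$ to give $R_\infty^B/(y_1,\ldots,y_r)\isomto\End(M(K)_{\mf})$. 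Formation of $\End$ does not in general commute with quotienting by a regular sequence, and this is precisely what the paper handles by invoking \cite[Proposition~4.14]{Manning} together with \cite[Lemmas~2.4, 2.6]{EmTheta}, where surjectivity of $\tau_{M_\infty}$ is the hypothesis that makes the descent work.
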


As shown in \cite[Theorem 3.12]{BKM1}, this implies an important statement about the \emph{congruence module} of $M(K)_{\mf}$, namely that the congruence modules of $M(K)_{\mf}$ and $\T(K)_{\mf}$ are equal.

While the main results of this paper are specific to the $\GL_4$ case, and to the case where $B = M_2(D)$ for $D$ a quaternion algebra, for the purposes of future applications much of the paper is written in far greater generality. In particular the local models in the banal case are constructed for $\GL_n$ for general $n$. We suspect that many parts of our argument can be generalized significantly. In particular, in the banal case, it should be possible to use our local model to compute $\Cl(\Rbar_\infty^B)$ in much greater generality. We shall return to this idea in future work.

Comparing Theorem \ref{main theorem} with other standard results in the literature (and with \cite[Theorem 1.1]{Manning}) one might expect the same result to hold with the condition that $K_v$ is maximal for all $v\not\in\Delta$ split in $F$ replaced by a \emph{minimal ramification} condition at all primes $v\not\in \Delta$ which split in $F$. Indeed by \cite[Lemma 2.4.19]{CHT} this would still imply that the relevant local deformation rings are formally smooth. We have primarily avoided doing this to avoid having to give a detailed treatment of types, and in particular the way in which they interact with self-duality (or with the generic rank $1$ result from Proposition \ref{prop:gen rank 1}), as this would take us too far afield for this current paper.

As a final note, the categorical Langlands conjectures predict that the main result of Theorem \ref{main theorem} should hold more generally. Specifically in Theorem \ref{main theorem}, rather than taking $B = M_2(D)$, instead let $B$ be a division algebra over $F$ of rank $4$ (and still assume all of the other conditions, in particular that $K_v$ is maximal compact for all $v$). In this case the patched module $M_\infty$ will be a module over the ring:
\[
R_\infty^B = \left(\widehat{\bigotimes_{v\in \Delta}} R_v\right)[[t_1,\ldots,t_s]]
\]
where $\Delta$ is the set of all places where $B$ does not split, and we have $R_v = R_v^\square(\breve{\rhobar}_{\mf}|_{G_{F_w}},\st_{4})$ if $B_w$ is a division algebra and $R_v^\square(\breve{\rhobar}_{\mf}|_{G_{F_w}},\st_{2,2})$ if $B_w = M_2(D_w)$ for a quaternion algebra $D_w$. In this situation, \cite[Section 5.2]{CHT} no longer implies that $M_\infty$ is self-dual. However the categorical Langlands conjectures now predict that $M_\infty$ can be decomposed as
\[
M_\infty = \left(\boxtimes_{v\in \Delta} M_v\right)[[t_1,\ldots,t_s]]
\]
where each $M_v$ is a module over $R_v$ depending only on $\breve{\rhobar}_{\mf}|_{G_{F_w}}$ and on $B_w$. The proof of Theorem \ref{main theorem} determines $M_v$ (or rather $\Mbar_v := M_v/\varpi M_v$) for all $v$ with $B_w = M_2(D_w)$, and so to determine $M_\infty$ one only needs to determines $M_v$ for the $v\in\Delta$ for which $B_w$ is a division algebra.

In particular, if one restricts attention to the case when $R_v = R_v^\square(\breve{\rhobar}_{\mf}|_{G_{F_w}},\st_{4})$ is formally smooth whenever $B_w$ is a division algebra (which by Proposition \ref{prop:Nc^1,...,1} happens, in particular, whenever $v$ is banal) one has $M_v = R_v$ for all such $v$, and so $M_\infty$ has the same form as in the proof of Theorem \ref{main theorem}. Hence the categorical Langlands conjectures imply that the results of Theorems \ref{main theorem} and \ref{thm:balanced} should hold in this more general context, and moreover they suggest that $M_\infty$ should indeed be self-dual in this case, even though this is not directly implied by the work of \cite[Section 5.2]{CHT}.

However in order to prove these results by the method of this paper,  we would need an unconditional proof that $M_\infty$ is still self-dual in this case. We intend to return to this issue in a future joint paper with David Helm. 
\subsection*{Outline of the paper}

This paper is divided into three parts. Part \ref{part:local} is concerned with the local aspects of the argument. Section \ref{sec:local Langlands} summarizes the relevant results from the local Langlands correspondence, while Section \ref{sec:deformation} summarizes the theory of local (framed) Galois deformation rings, and reviews the main properties of the local model $\Mc(n,q)$. Most of the results in these sections are fairly standard. Most of the new theoretical results are contained in Section \ref{sec:banal local models}, which introduces and studies the new local models in the banal case.

Part \ref{part:global} is concerned with the global aspects of the argument, and in particular the results needed for the patching method. Sections \ref{sec:global Galois}, \ref{sec:hecke} and \ref{sec:patch} cover, respectively, Global deformation rings, automorphic forms on unitary groups, and the main patching argument. All of the results in these sections are again standard. Section \ref{sec:M_infty} then uses the method of \cite{Manning} to compute $\Mbar_\infty$ and uses this to prove Theorems \ref{main theorem} and \ref{thm:balanced}.

Lastly, Part \ref{part:computations} is concerned with the explicit computations involving the rings $\Rhat^{2,2}(\st_{2,2})$ and $\Rhat^{1,2,1}(\st_{2,2})$. Section \ref{sec:comm alg} recalls the main commutative algebra results needed for these computations, and the rings $\Rhat^{2,2}(\st_{2,2})$ and $\Rhat^{1,2,1}(\st_{2,2})$ are studied in Sections \ref{sec:R22} and \ref{sec:R121}, respectively. These computations are somewhat involved, so for the convenience of the reader all of the results from these sections that are needed for our main results are summarized in Theorems \ref{thm:R22 main results} and \ref{thm:R121 main results}, and so the entirety of Part \ref{part:computations} may be skipped on a first reading.

\subsection*{Acknowledgments} 
This project has received funding from the European Research Council (ERC) under the European Union's Horizon 2020 research and innovation programme (grant agreement No. 884596). We would like to thank George Boxer, Matt Emerton, Toby Gee, David Helm, Jack Sempliner and Sug Woo Shin for helpful discussions.
\part{Local Theory}\label{part:local}

\section{Types and the local Langlands Correspondence}\label{sec:local Langlands}

In this section we will the properties of the local Langlands correspondence that we will need for the remainder of the paper. For the sake of simplicity we will not work in full generality, instead we will focus mainly on the case of unipotent representations, as this is all that will be needed for our results.

For this section, we will fix a prime $p$ and let $L$ be a finite extension of $\Q_p$ with residue field of order $q$. We will let $G_L = \Gal(\Lbar/L)$ be the absolute Galois group of $L$, $I_L\unlhd G_L$ be the inertia subgroup and $P_L\unlhd I_L$ be the wild inertia subgroup. Let $W_L$ be the Weil group of $L$.

Let $\varphi\in W_L$ be a lift of arithmetic Frobenius and let $\sigma\in I_L/P_L$ be a topological generator, so that $\varphi\sigma \varphi^{-1} = \sigma^q$ in $W_L/P_L$, and moreover $W_L/P_L$ is identified with the pro-finite completion of
\[
\left\langle \varphi,\sigma\middle|\varphi\sigma\varphi^{-1} =\sigma^q\right\rangle = \Z\ltimes \Z\left[\frac{1}{p} \right].
\]

\subsection{Inertial Types}

\begin{defn}
We say that a \emph{Weil--Deligne} representation of $L$ (of dimension $n$) is a pair $(r,N)$ where:
\begin{itemize}
\item $r:W_L\to \GL_n(\C)$ is a continuous representation;
\item $N\in M_n(\C)$ is a nilpotent matrix;
\item $Nr(g) = r(g)N$ for all $g\in I_L$;
\item $r(\varphi)N = qNr(\varphi)$.
\end{itemize}
and we say that $(r,N)$ is \emph{Frobenius semisimple} if $r$ is semisimple.

We say that an \emph{inertial type} is an equivalence class (up to conjugation by elements of $\GL_n(\C)$) of pairs $\tau = (r_\tau,N_\tau)$ where $r:I_L\to \GL_n(\C)$ is a continuous representation and $N_\tau\in M_n(\C)$ is nilpotent which extends to a Weil--Deligne representation $(r,N_\tau)$.

We say a Weil--Deligne representation $(r,N)$ has type $\tau = (r_\tau,N_\tau)$ if $(r|_{I_L},N)$ is $\GL_n(\C)$-conjugate to $\tau$.

If $\rho:G_L\to \GL_n(\Qbar_\ell)$ is a continuous Galois representation, we will use $\WD(\rho)$ to denote the associated Weil--Deligne representation, and we will say that $\rho$ has type $\tau$ if $\WD(\rho)$ does.
\end{defn}

We will now construct the specific Weil--Deligne representations and inertial types we will work with in this paper. First define a character $\|\ \|:W_L\to\C^\times$ by $\|g\| = 1$ for $g\in I_L$ and $\|\varphi\| = q$. Let $n$ be any positive integer and let $\chi:W_L\to \C^\times$ be a character, define the $n$-dimensional Weil--Deligne representation $\Sp_n(\chi) = (\sp_{n,\chi},N_n)$ by
\begin{align*}
\sp_{n,\chi}(g) &=
\begin{pmatrix}
\|g\|^{n-1}\chi(g)&&&\\
&\|g\|^{n-2}\chi(g)&&\\
&&\ddots&\\
&&&\chi(g)
\end{pmatrix}, &
N_n &=
\begin{pmatrix}
0&1&&&\\
&0&1&&\\
&&\ddots&\ddots&\\
&&&0&1\\
&&&&0
\end{pmatrix}
\end{align*}
which is clearly a Frobenius semisimple Weil--Deligne representation.

For any $k$-tuple of positive integers $\nun = (n_1,\ldots,n_k)$ and any characters $\chi_1,\ldots,\chi_k:W_L\to \C^\times$ we define
\[\Sp_{\nun}(\chi_1,\ldots,\chi_k) = \Sp_{n_1,\ldots,n_k}(\chi_1,\ldots,\chi_k) = \Sp_{n_1}(\chi_1)\oplus\cdots\oplus \Sp_{n_k}(\chi_k).\]
We say that the representation $\Sp_{\nun}(\chi_1,\ldots,\chi_k)$ is \emph{generic} if there is no pair $(i,j)$ with $\chi_j = \|\ \|^m\chi_i$ for some $m$ with $0\le m\le n_i$ and $m+n_j>n_i$. One can also define generic representations more generally, but we will omit this as we will not need to consider any Weil--Deligne representations not in the form $\Sp_{\nun}(\chi_1,\ldots,\chi_k)$.

We will say that a Weil--Deligne representation $(r,N)$ is \emph{unipotent} if it is Frobenius semisimple and $r|_{I_L}$ is trivial (i.e. $r$ is unramified), and note that the unipotent Weil--Deligne representations are exactly $\Sp_{\nun}(\chi_1,\ldots,\chi_k)$ for $\chi_1,\ldots,\chi_k$ unramified.

For any $\nun=(n_1,\ldots,n_k)$, consider the nilpotent matrix $N_{\nun} = N_{n_1,\ldots,n_k} = \begin{pmatrix}N_{n_1}&&\\&\ddots&\\&&N_{n_k}\end{pmatrix}$ where $N_{n_i}$ is as in the definition of $\Sp_{n}(\chi)$ above. Let $\tau_{\nun}=\tau_{n_1,\ldots,n_k}$ denote the inertial type $(\triv,N_{\nun})$. Note that up to conjugation, $\tau_{n_1,\ldots,n_k}$ is independent of the ordering of the $k$-tuple $(n_1,\ldots,n_k)$, and so we will often treat $\nun$ as an \emph{unordered} partition of $n$.

By definition, if $\Sp_{\nun}(\chi_1,\ldots,\chi_k)$ is a unipotent Weil--Deligne representation it is of type $\tau_{\nun}$, and moreover it is easy to see that any Frobenius semisimple Weil--Deligne representation of type $\tau_{\nun}$ is unipotent. We thus call the types $\tau_{\nun}$ \emph{unipotent} types.

\subsection{The local Langlands correspondence}

For any positive integer $n$, let $B_n\subseteq \GL_n(L)$ denote the Borel subgroup of upper triangular matrices. For any $k$-tuple $\nun = (n_1,\ldots,n_k)$ with $n_1+\cdots+n_k$ let $P_{\nun} = P_{n_1,\ldots,n_k}$ denote the parabolic subgroup
\[
P_{n_1,\ldots,n_k} = 
\begin{pmatrix}
\GL_{n_1}(L)&*&\cdots&*\\
0&\GL_{n_2}(L)&\cdots&*\\
\vdots&\vdots&\ddots&\vdots\\
0&0&\cdots&\GL_{n_k}(L)
\end{pmatrix}
\subseteq \GL_n(L)
\]
Let $M_{n_1,\ldots,n_k} = \GL_{n_1}\times\cdots\times\GL_{n_k}\subseteq P_{n_1,\ldots,n_k}$ be the corresponding Levi subgroup, which we will treat as a quotient of $P_{n_1,\ldots,n_k}$.

For any admissible representations $\pi_1,\ldots,\pi_k$ of $\GL_{n_1}(L),\ldots,\GL_{n_k}(L)$, define an admissible representation 
\[
\pi_1\times\pi_2\times\cdots\times\pi_k = \nInd_{P_{n_1,\ldots,n_k}}^{\GL_n(L)}(\pi_1\otimes\pi_2\otimes\cdots\otimes\pi_k)\]
of $\GL_n(L)$, where $\pi_1\otimes\pi_2\otimes\cdots\otimes\pi_k$ denotes the representation of $P_{n_1,\ldots,n_k}$ inflated from $M_{n_1,\ldots,n_k}$, and $\nInd_{P_{n_1,\ldots,n_k}}^{\GL_n(L)}$ denotes the normalized induction (as defined in \cite{HarrisTaylor}).

For any character $\chi:L^\times\to \C^\times$ and any integer $m$, we will let $\chi(m)$ be the character $\|\ \|_L^m\otimes\chi$. Now we recall the following standard results (cf \cite{Rodier}):

\begin{prop}
For any character $\chi$ and and integer $n$, the representation
\[\chi\times\chi(1)\times\cdots\times\chi(n-1)\]
of $\GL_n(L)$ has a unique irreducible quotient, which we will denote $\St_n(\chi)$.
\end{prop}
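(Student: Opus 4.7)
The plan is to deduce the proposition from the Bernstein--Zelevinsky classification of irreducible admissible representations of $\GL_n(L)$, as in \cite{Rodier}. The tuple $(\chi, \chi(1), \ldots, \chi(n-1))$ is a \emph{linked segment} in the sense of Zelevinsky since consecutive entries differ by the twist $\|\ \|_L$, so the induction $I := \chi \times \chi(1) \times \cdots \times \chi(n-1)$ is precisely the standard module attached to the segment $\Delta = [\chi, \chi(n-1)]$ in the Zelevinsky (increasing-exponent) ordering.

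First I would compute the Jacquet module $r_{B_n,\GL_n(L)}(I)$ via the Bernstein--Zelevinsky geometric lemma. This yields a filtration whose graded pieces are the $W$-translates of $\chi \otimes \chi(1) \otimes \cdots \otimes \chi(n-1)$ under $W = S_n$, and in particular the character in exactly this ordering appears with multiplicity one. By Frobenius reciprocity, an irreducible representation $\pi$ of $\GL_n(L)$ is a quotient of $I$ if and only if $\chi \otimes \chi(1) \otimes \cdots \otimes \chi(n-1)$ (in this specific ordering) appears as a quotient of $r_{B_n,\GL_n(L)}(\pi)$.

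Next, by Zelevinsky's theorem for a single segment, the irreducible subquotients of $I$ are in bijection with compositions $(n_1,\ldots,n_k)$ of $n$, each corresponding to the induced representation $\St_{n_1}(\chi) \times \St_{n_2}(\chi(n_1)) \times \cdots \times \St_{n_k}(\chi(n_1+\cdots+n_{k-1}))$ of generalized Steinbergs on the appropriate shifted sub-segments. Proceeding by induction on $n$ and inspecting the Jacquet modules of these factors, one checks that the character $\chi \otimes \chi(1) \otimes \cdots \otimes \chi(n-1)$ in the prescribed strictly increasing order occurs as a quotient of the Jacquet module of a composition factor only for the trivial composition $(n_1) = (n)$, i.e.\ only for $\St_n(\chi)$ itself. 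Combined with the Frobenius reciprocity criterion above, this forces $\St_n(\chi)$ to be the unique irreducible quotient, with existence guaranteed by finite length.

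The main subtlety is the bookkeeping needed to identify which of the two ``extremal'' composition factors (the generalized Steinberg $\St_n(\chi)$ versus the generalized trivial/Speh-type representation $Z(\Delta)$) is the unique quotient versus the unique subrepresentation. With our convention $\chi(m) = \|\ \|_L^m \otimes \chi$, the exponents in $(\chi,\chi(1),\ldots,\chi(n-1))$ are strictly increasing, which places us in Zelevinsky's standard-module setup, and there the extremal quotient is indeed the generalized Steinberg (while $Z(\Delta)$ is the unique subrepresentation, realized as the image of an intertwining operator from the oppositely ordered induction).
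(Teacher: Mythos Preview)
The paper does not actually give its own proof of this proposition; it is introduced with ``Now we recall the following standard results (cf \cite{Rodier})'' and stated without argument. Your proposal is precisely a sketch of the Bernstein--Zelevinsky argument that \cite{Rodier} summarizes, so in that sense you are reconstructing the proof behind the paper's citation rather than giving an alternative.

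One caution on your final paragraph: the identification of which extremal composition factor is the quotient versus the subrepresentation is sensitive to the convention for normalized induction and to whether $\|\ \|$ corresponds to $|\cdot|$ or $|\cdot|^{-1}$ under $\rec_L$. In Zelevinsky's original conventions the generalized Steinberg $\langle \Delta \rangle$ appears as the unique irreducible \emph{subrepresentation} of $\rho \times \rho\nu \times \cdots \times \rho\nu^{r-1}$, not the quotient; the paper's statement is consistent with the Harris--Taylor normalization used in \cite{HarrisTaylor} and \cite{CHT}. Your Frobenius reciprocity argument is the right mechanism, but if you actually carry it out you should verify carefully that the specific ordering $\chi \otimes \chi(1) \otimes \cdots \otimes \chi(n-1)$ appears as a \emph{quotient} (not merely a subquotient) of $r_{B_n}(\St_n(\chi))$ under the paper's conventions, rather than asserting this by analogy with ``Zelevinsky's standard-module setup''. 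Also, your description of the full list of composition factors is slightly off: Zelevinsky's theorem says the subquotients of the segment induction are indexed by subsets of $\{1,\ldots,n-1\}$ (equivalently, by all ordered partitions of $n$), but they are not themselves products of Steinbergs---they are the Zelevinsky duals $Z$ of such products. This does not affect the logic of your argument, only the labeling.
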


For any character $\chi$ and any integer $n$, let $\Delta(\chi,n)$ denote the set $\{\chi,\chi(1),\ldots,\chi(n-1)\}$. We say that $\Delta(\chi_1,n_1)$ and $\Delta(\chi_2,n_2)$ are \emph{linked} if $\Delta(\chi_1,n_1)\not\subseteq \Delta(\chi_2,n_2)$, $\Delta(\chi_2,n_2)\not\subseteq \Delta(\chi_1,n_1)$ and $\Delta(\chi_1,n_1)\cup \Delta(\chi_2,n_2) = \Delta(\chi',n')$ for some $\chi'$ and $n'$. We say that $\Delta(\chi_1,n_1)$ \emph{precedes} $\Delta(\chi_2,n_2)$ if $\Delta(\chi_1,n_1)$ and $\Delta(\chi_2,n_2)$ are linked and $\Delta(\chi_1,n_1)\cup \Delta(\chi_2,n_2) = \Delta(\chi',n')$ (i.e. if $\chi_2 = \chi_1(m)$ for some $0\le m\le n_1$).

Then we have

\begin{prop}
For any $k$-tuple of integers $(n_1,\ldots,n_k)$ with $n_1+\cdots+n_k=n$ and any characters $\chi_1,\ldots,\chi_k$. If the $n_i$'s and $\chi_i$'s are ordered so that $\Delta(\chi_i,n_i)$ does not precede $\Delta(\chi_j,n_j)$ for any $i<j$ then the representation
\[\St_{n_1}(\chi_1)\times\St_{n_2}(\chi_2)\times\cdots\times\St_{n_k}(\chi_k)\]
of $\GL_n(L)$ has a unique irreducible quotient, which we will denote $Q(\St_{n_1}(\chi_1)\times\St_{n_2}(\chi_2)\times\cdots\times\St_{n_k}(\chi_k))$. Moreover, if no two $\Delta(\chi_i,n_i)$'s are linked then the representation $\St_{n_1}(\chi_1)\times\St_{n_2}(\chi_2)\times\cdots\times\St_{n_k}(\chi_k)$ is already irreducible, and so $Q(\St_{n_1}(\chi_1)\times\St_{n_2}(\chi_2)\times\cdots\times\St_{n_k}(\chi_k)) = \St_{n_1}(\chi_1)\times\St_{n_2}(\chi_2)\times\cdots\times\St_{n_k}(\chi_k)$.
\end{prop}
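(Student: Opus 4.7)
The plan is to deduce this result from the Zelevinsky classification of irreducible admissible representations of $\GL_n(L)$, together with the Bernstein--Zelevinsky irreducibility criterion for parabolic induction of generalized Steinberg representations. The setup involving segments $\Delta(\chi,n)$, the notion of ``linked'' segments, and the notion of one segment ``preceding'' another is exactly Zelevinsky's combinatorial framework, so the statement should follow by direct translation.

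More concretely, I would proceed as follows. First, recall that each $\St_{n_i}(\chi_i)$ is an essentially square-integrable (discrete series up to twist) representation of $\GL_{n_i}(L)$, so the induced representation $\St_{n_1}(\chi_1)\times\cdots\times\St_{n_k}(\chi_k)$ is a standard module in the sense of the Langlands classification whenever the ordering condition is satisfied. The Langlands classification then yields a unique irreducible quotient, provided one checks that the non-preceding ordering of segments coincides with the decreasing ordering of (real parts of) exponents of central characters of the $\St_{n_i}(\chi_i)$; this is a direct calculation using the fact that the central character of $\St_n(\chi)$ is $\chi\cdot\|\ \|^{(n-1)/2}$ raised to the $n$-th power (up to the standard normalization), so that $\Delta(\chi_i,n_i)$ preceding $\Delta(\chi_j,n_j)$ forces the exponent of $\St_{n_i}(\chi_i)$ to be strictly less than that of $\St_{n_j}(\chi_j)$.

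For the second assertion, I would invoke the Bernstein--Zelevinsky irreducibility criterion: the parabolically induced representation $\St_{n_1}(\chi_1)\times\cdots\times\St_{n_k}(\chi_k)$ is irreducible if and only if no two of the segments $\Delta(\chi_i,n_i)$ are linked. Under this hypothesis the induced representation equals its unique irreducible quotient $Q(\cdot)$, and moreover is independent of the ordering of the factors. This is a standard reduction to the case $k=2$, which in turn is handled by an explicit analysis of intertwining operators between $\pi_1\times\pi_2$ and $\pi_2\times\pi_1$.

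The main obstacle is largely bookkeeping: ensuring that the normalization of induction $\nInd$ used here matches that of Zelevinsky and Rodier, and that the definitions of ``linked'' and ``precedes'' translate correctly between the conventions (in particular whether one works with $\chi$ or its contragredient, and whether the segment $\Delta(\chi,n)$ is written increasingly or decreasingly in the twisting exponent). Since everything in sight reduces to a combinatorial statement about segments, once these conventions are pinned down the proposition follows directly from \cite{Rodier} (or \cite{Zelevinsky}, whichever the author prefers to cite); no new argument should be required beyond what is already in the literature on the local Langlands correspondence for $\GL_n$.
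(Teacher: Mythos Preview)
Your proposal is correct and matches the paper's treatment: the paper does not give its own proof but simply recalls this as a standard result, citing \cite{Rodier}. Your sketch via the Langlands classification for the unique irreducible quotient and the Bernstein--Zelevinsky irreducibility criterion for the unlinked case is exactly the content being cited, and in fact supplies more detail than the paper itself.
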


Let $\rec_L$ denote the local Langlands correspondence, constructed in \cite[Theorem A]{HarrisTaylor}, normalized so that $\rec_L(\chi) = \chi\circ \Art_L^{-1}:W_L^{\ab}\to L^\times\to \C^\times$ for any character $\chi:L^\times\to\C^\times$. Then we have:

\begin{prop}\label{prop:local langlands}
For any $k$-tuple of integers $(n_1,\ldots,n_k)$ with $n_1+\cdots+n_k=n$ and any characters $\chi_1,\ldots,\chi_k$. If the $n_i$'s and $\chi_i$'s are ordered so that $\Delta(\chi_i,n_i)$ does not precede $\Delta(\chi_j,n_j)$ for any $i<j$ then
\[\rec_L(Q(\St_{n_1}(\chi_1)\times\cdots\times\St_{n_k}(\chi_k))) = \Sp_{n_1,\ldots,n_k}(\rec_L(\chi_1),\ldots,\rec_L(\chi_k)).\]
\end{prop}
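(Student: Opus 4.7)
The plan is to reduce the statement to two well-established properties of the local Langlands correspondence: its behavior on Steinberg representations of a single segment, and its compatibility with parabolic induction in the framework of the Langlands classification.

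First, I would treat the single-segment case: for any character $\chi:L^\times\to\C^\times$ and integer $n\ge 1$, one needs $\rec_L(\St_n(\chi))=\Sp_n(\rec_L(\chi))$. The twisting compatibility of $\rec_L$ reduces this to the case $\chi = 1$ (more precisely, to an unramified twist after pulling out the character factor), since both $\St_n(\chi)$ and $\Sp_n(\rec_L(\chi))$ are formed by $\chi$-twisting the Steinberg side and $\rec_L(\chi)$-twisting the Galois side. The identity in this reduced form is one of the defining compatibilities of the Harris--Taylor construction: $\St_n(\chi)$ is characterized as the unique irreducible quotient of $\chi\times\chi(1)\times\cdots\times\chi(n-1)$ (which is our Proposition giving $\St_n(\chi)$), and its image under $\rec_L$ is $\Sp_n(\rec_L(\chi))$ by compatibility of $\rec_L$ with $L$- and $\epsilon$-factors of pairs, combined with the normalization $\rec_L(\chi)=\chi\circ\Art_L^{-1}$ already assumed. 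Alternatively, one may simply cite this as the standard behavior of $\rec_L$ on essentially square-integrable representations.

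Next, I would handle the unlinked case, i.e.\ when no two $\Delta(\chi_i,n_i)$'s are linked. Here Zelevinsky's theorem asserts that
\[
\St_{n_1}(\chi_1)\times\cdots\times\St_{n_k}(\chi_k)
\]
is already irreducible, and this is exactly the hypothesis under which $\rec_L$ sends the parabolic induction to the direct sum of the Weil--Deligne parameters of the inducing data. Combined with the first step, this yields
\[
\rec_L(\St_{n_1}(\chi_1)\times\cdots\times\St_{n_k}(\chi_k)) = \Sp_{n_1}(\rec_L(\chi_1))\oplus\cdots\oplus\Sp_{n_k}(\rec_L(\chi_k)),
\]
which is $\Sp_{\nun}(\rec_L(\chi_1),\ldots,\rec_L(\chi_k))$ by definition.

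Finally, I would pass to the general case. The condition that $\Delta(\chi_i,n_i)$ does not precede $\Delta(\chi_j,n_j)$ for $i<j$ places us in the Langlands classification setup: the parabolic induction has a unique irreducible quotient $Q(\cdots)$, which is the Langlands quotient associated to the multisegment $\{(\chi_i,n_i)\}$. The key compatibility of $\rec_L$ with the Langlands classification — essentially, that $\rec_L$ commutes with the formation of the Langlands quotient of essentially square-integrable pieces arranged so that no segment precedes a later one — then gives the stated equality. This compatibility is a standard consequence of the Bernstein--Zelevinsky classification for $\GL_n(L)$ together with the fact that $\rec_L$ is already known on each $\St_{n_i}(\chi_i)$ and respects direct sums on the Galois side while respecting Langlands quotients on the automorphic side.

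The only subtle point is the last compatibility (between $\rec_L$ and the Langlands quotient), since the parabolic induction itself is reducible when segments are linked and one must identify the correct irreducible constituent. However, this is entirely a matter of invoking the appropriate characterization of $\rec_L$ from \cite{HarrisTaylor} (together with the Zelevinsky classification from \cite{Rodier}); no genuinely new work is required. Accordingly, I would present the proof as a short deduction citing these references rather than redeveloping the classification from scratch.
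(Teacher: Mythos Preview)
The paper states this proposition without proof, treating it as a standard consequence of the Harris--Taylor construction of $\rec_L$ together with the Bernstein--Zelevinsky classification (cited via \cite{HarrisTaylor} and \cite{Rodier}). Your proposal is entirely consistent with this: you correctly identify the two ingredients (the value of $\rec_L$ on $\St_n(\chi)$, and compatibility with the Langlands classification), and your concluding remark that the proof should be presented ``as a short deduction citing these references'' matches exactly what the paper does.
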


We will say that $Q(\St_{n_1}(\chi_1)\times\St_{n_2}(\chi_2)\times\cdots\times\St_{n_k}(\chi_k))$ is \emph{generic} if two $\Delta(\chi_i,n_i)$'s are linked (in which case $Q(\St_{n_1}(\chi_1)\times\cdots\times\St_{n_k}(\chi_k)) = \St_{n_1}(\chi_1)\times\cdots\times\St_{n_k}(\chi_k)$), and note that Proposition \ref{prop:local langlands} implies that $\rec_L(Q(\St_{n_1}(\chi_1)\times\cdots\times\St_{n_k}(\chi_k)))$ is generic if and only if $Q(\St_{n_1}(\chi_1)\times\cdots\times\St_{n_k}(\chi_k))$ is.

To avoid giving a detailed discussion of types we will simply say that an irreducible admissible representation $\pi$ of $\GL_n(L)$ has type $\st_{n_1,\ldots,n_k}$ if $\rec_L(\pi)$ has inertial type $\tau_{n_1,\ldots,n_k}$.
\subsection{The local Jacquet--Langlands correspondence}\label{ssec:LJ}

Now fix an integer $d|n$ and let $D/L$ be a division algebra of dimension $d^2$ with ring of integers $\Oc_D$. Let $r=n/d$ and consider the algebraic group $\GL_{r}(D)$ over $L$, and note that this is an inner twist of $\GL_n(L)$.

Let $\Irr_L$ and $\Irr_D$ denote the set of isomorphism classes of irreducible unitary representations of $\GL_n(L)$ and $\GL_{r}(D)$, respectively.

Badulescu \cite{Badu1} constructs a map $\LJ:\Irr_L\to \Irr_D\cup\{0\}$ (which is in general neither invective nor surjective). We will not need to use the general construction of this map. Rather we will simply recall the following special case:

\begin{prop}\label{prop:LJ(st) = triv}
Take any $\Pi\in \Irr_L$ and let $\pi = \LJ(\Pi)$, and assume that $\pi\in \Irr_D$ (so that $\pi\ne 0$). Assume that $\Pi$ is generic and that $\pi^{\GL_r(\Oc_D)}\ne 0$. Then $\Pi$ has type $\st_{d,d,\ldots,d}$.
\end{prop}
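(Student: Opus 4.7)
The plan is to use the segment classification of generic irreducible representations of $\GL_n(L)$, together with Badulescu's extended local Jacquet--Langlands correspondence \cite{Badu1} and its compatibility with parabolic induction.

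First, since $\Pi \in \Irr_L$ is generic, the irreducibility criterion recorded just above Proposition \ref{prop:local langlands} lets me write
\[
\Pi \;=\; \St_{n_1}(\chi_1)\times \St_{n_2}(\chi_2)\times\cdots\times\St_{n_k}(\chi_k),
\]
for some characters $\chi_i \colon L^\times \to \C^\times$ and some tuple $(n_1,\ldots,n_k)$ summing to $n$, with no two segments $\Delta(\chi_i,n_i)$ linked. By Proposition \ref{prop:local langlands}, $\WD(\rec_L(\Pi))=\Sp_{n_1,\ldots,n_k}(\rec_L(\chi_1),\ldots,\rec_L(\chi_k))$, so it will suffice to show that $n_i = d$ and that $\chi_i$ is unramified for every $i$.

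Next I would invoke Badulescu's theorem: $\LJ(\St_m(\chi))$ is nonzero if and only if $d\mid m$, in which case it is a specific essentially square-integrable representation of $\GL_{m/d}(D)$; moreover $\LJ$ is compatible with parabolic induction when the inducing segments are unlinked, and preserves irreducibility in that case. Since $\pi = \LJ(\Pi)\ne 0$, this forces $d\mid n_i$ for all $i$ and realizes $\pi$ as an irreducible parabolic induction from the Levi $\GL_{n_1/d}(D)\times\cdots\times\GL_{n_k/d}(D)$ of $\GL_r(D)$. The Iwasawa decomposition for this parabolic then lets me transfer the hypothesis $\pi^{\GL_r(\Oc_D)}\ne 0$ to each factor: every $\LJ(\St_{n_i}(\chi_i))$ must have a nonzero $\GL_{n_i/d}(\Oc_D)$-fixed vector. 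But an essentially square-integrable representation of $\GL_m(D)$ is spherical only when $m=1$ (where it is a one-dimensional character of $D^\times$); for $m\ge 2$ a discrete series representation cannot appear as a constituent of an unramified principal series. Hence $n_i = d$ for every $i$, and $k=r$.

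Finally, with $n_i = d$ we have $\LJ(\St_d(\chi_i)) = \chi_i \circ \Nm_{D/L}$, a character of $D^\times$, and sphericity is exactly the condition that $\chi_i\circ \Nm$ is trivial on $\Oc_D^\times$. Since the reduced norm $\Nm\colon \Oc_D^\times \to \Oc_L^\times$ is surjective, this forces each $\chi_i$ to be unramified, so $\rec_L(\chi_i)$ is trivial on $I_L$. Combining these steps with Proposition \ref{prop:local langlands}, $\WD(\rec_L(\Pi)) = \Sp_{d,\ldots,d}(\rec_L(\chi_1),\ldots,\rec_L(\chi_r))$ has inertial type $(\triv, N_{d,\ldots,d}) = \tau_{d,\ldots,d}$, which is by definition $\st_{d,\ldots,d}$. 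The main points requiring care are the exact invocations of Badulescu's classification of which generic parabolic inductions survive under $\LJ$, and the standard (but nontrivial) fact that the discrete series of $\GL_m(D)$ are non-spherical for $m\ge 2$; once these inputs are in hand the argument is essentially bookkeeping with segments.
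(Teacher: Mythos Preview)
Your overall strategy matches the paper's: decompose $\Pi$ as a product of essentially square-integrable representations, use Badulescu's compatibility of $\LJ$ with parabolic induction to transfer this to $\GL_r(D)$, and then use the sphericity hypothesis to pin down the factors. However, there is a genuine gap in your first step.

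You claim that genericity of $\Pi$ allows you to write $\Pi = \St_{n_1}(\chi_1)\times\cdots\times\St_{n_k}(\chi_k)$ for \emph{characters} $\chi_i$, citing the irreducibility criterion near Proposition~\ref{prop:local langlands}. That criterion only tells you when such a product is irreducible; it does not assert that every generic irreducible arises this way. The Zelevinsky classification gives $\Pi = \delta_1\times\cdots\times\delta_k$ with each $\delta_i$ an essentially square-integrable representation of $\GL_{n_i}(L)$, i.e.\ a generalized Steinberg built from an arbitrary supercuspidal, not necessarily a character of $L^\times$. The fact that the supercuspidal support of $\Pi$ consists of characters is exactly what must be \emph{deduced} from $\pi^{\GL_r(\Oc_D)}\ne 0$; it cannot be assumed at the outset. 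With your assumption in place, your later step ``$\LJ(\St_d(\chi_i)) = \chi_i\circ\Nm$'' and the surjectivity of the reduced norm on units are fine, but they are doing no work toward ruling out non-character supercuspidal support.

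The paper's proof handles this correctly: it starts from the general essentially square-integrable decomposition $\Pi = \nInd(\sigma_1\otimes\cdots\otimes\sigma_k)$, transfers each $\sigma_i$ to a square-integrable $\sigma_i'$ on $\GL_{n_i/d}(D)$ via the classical Jacquet--Langlands map, and only \emph{after} invoking sphericity concludes that $n_i/d=1$ and that each $\sigma_i'=\chi_i\circ\nu$ for an unramified character $\chi_i$, whence $\sigma_i=\JL(\sigma_i')=\St_d(\chi_i)$. Your steps from ``$d\mid n_i$'' onward are essentially correct once the first step is repaired in this way; the key input that a spherical discrete-series representation of $\GL_m(D)$ forces $m=1$ and the representation to be an unramified character is the same in both arguments.
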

\begin{proof}
As $\Pi$ is generic, by the Bernstein–Zelevinsky classification (cf. \cite{Rodier}) there exists an ordered partition $\nun = (n_1,\ldots,n_k)$ of $n$ corresponding to a standard parabolic $P_{n_1,\ldots,n_k}\subseteq \GL_n(L)$ with Levi $M_{n_1,\ldots,n_k} = \GL_{n_1}(L)\times\cdots\times \GL_{n_k}(L)$, and supercuspidal representations $\sigma_1,\ldots,\sigma_k$ of $\GL_{n_1}(L),\ldots,\GL_{n_k}(L)$ such that $\Pi = \nInd_{P_{n_1,\ldots,n_k}}^{GL_n(L)}(\sigma_1\otimes\cdots\otimes\sigma_k)$ (and in particular, this representation is irreducible).

Now by the definition of $\LJ$ (cf. \cite[Section 2.7]{Badu2}) we have $\LJ(\Pi) = 0$ unless $d|n_i$ for all $i$. If $d|n_i$ for all $i$, then $P_{n_1,\ldots,n_k}$ corresponds to a standard parabolic $P'_{n_1/d,\ldots,n_k/d}\subseteq \GL_r(D)$ (defined analogously to $P_{n_1,\ldots,n_k}$) with Levi $M'_{n_1/d,\ldots,n_k/d} = \GL_{n_1/d}(D)\times\cdots\times \GL_{n_k/d}(D)$ and one has
\[
\pi = \LJ(\Pi) = \nInd^{\GL_r(D)}_{P'_{n_1/d,\ldots,n_k/d}}(\sigma'_1\otimes\cdots\otimes \sigma'_k)
\]
where each $\sigma_i'$ is a square integrable representation of $\GL_{n_i/d}(D)$ for which $\JL(\sigma_i') = \sigma_i$, where $\JL$ is the usual Jacquet--Langlands map for square integrable representations.

But now the condition that $\pi^{\GL_r(\Oc_D)} \ne 0$ implies that $P'_{n_1/d,\ldots,n_k/d} = P'_{1,\ldots,1}$ is the group of upper triangular matrices in $\GL_r(D)$ (so $k=r$ and $n_i = d$ for all $i$) and that each for each $i$, $\sigma_i' = \chi_i\circ\nu:D^\times\to\C^\times$ for some unramified character $\chi_i:L^\times\to \C^\times$, where $\nu:D^\times\to L^\times$ is the reduced norm.

But now one has
\[
\sigma_i = \JL(\sigma_i') = \JL(\chi_i\circ\nu) = \St_{n_i}(\chi_i) = \St_d(\chi_i)
\]
for each $i$, and so (as $\Pi$ is generic)
\[
\Pi = \nInd_{P_{d,\ldots,d}}^{GL_n(L)}(\sigma_1\otimes\cdots\otimes\sigma_k) = \St_d(\chi_1)\times\cdots\times\St_d(\chi_r) = Q(\St_d(\chi_1)\times\cdots\times\St_d(\chi_r)),
\]
which implies that $\Pi$ indeed has type $\st_{d,\ldots,d}$ by Proposition \ref{prop:local langlands}.
\end{proof}

\section{Local Galois Deformation Rings}\label{sec:deformation}

In this section we will define the various local Galois deformation rings which we will consider in the rest of the paper, and review their relevant properties.

For the rest of the paper, we will fix a finite extension $E$ of $\Q_\ell$ with ring of integers $\Oc$, uniformizer $\varpi\in \Oc$ and residue field $\Oc/\varpi = \F$.

Let $\Cc_{\Oc}$ (resp. $\Cc_{\Oc}^\wedge$) be the category of Artinian (resp. complete Noetherian) local $\O$-algebras with residue field $\F$. Consider the (framed) deformation functor $\Dc^\square(\rbar):\Cc_\O\to \Set$ defined by
\begin{align*}
\Dc^\square(\rbar)(A) 
&= \{r:G_L\to \GL_n(A), \text{ continouos lift of }\rbar \}\\
&= \left\{\big(M,r,(e_1,\ldots,e_n)\big)\middle|\parbox{4.2in}{$M$ is a free rank $n$ $A$-module with a basis $(e_1,\ldots,e_n)$ and $r:G_L\to \Aut_A(M)$ such that the induced map $G\to \Aut_A(M)= \GL_n(A)\to \GL_n(\F)$ is $\rbar$}
\right\}_{/\sim}
\end{align*}
It is well-known that this functor is \emph{pro-representable} by some $R^\square(\rbar)\in \Cc^\wedge_\O$, in the sense that $\Dc^\square(\rbar) \equiv \Hom_\O(R^\square(\rbar),-)$. Furthermore, $\rbar$ admits a universal lift $r^\square:G_L\to \GL_n(R^\square(\rbar))$.

For any continuous homomorphism, $x:R^{\square}(\rbar)\to \Ebar$, we obtain a Galois representation $r_x:G_L\to \GL_n(\Ebar)$ lifting $\rbar$, from the composition $G_L\xrightarrow{r^\square} \GL_L(R^\square(\rbar))\xrightarrow{x} \GL_L(\Ebar)$.

We call $R^{\square}(\rbar)$ the \emph{framed deformation ring} of $\rbar$.

Most of our results will specifically be concerned with the local deformation rings occurring when $v\nmid\ell$ (the ``$\ell\ne p$'' case).

In the case when $v|\ell$, we will only need to consider local deformation rings in the \emph{Fontaine--Laffaille} range. Specifically, for each embedding $\tau:L\to \Qbar_\ell$, fix an $n$-tuple of integers $\lambda_\tau = (\lambda_{\tau,1},\ldots,\lambda_{\tau,n})$ for which $\lambda_{\tau,1}\ge \lambda_{\tau,2}\ge\cdots\ge\lambda_{\tau,n}$. Let $R^{\square,\crys,\{\lambda_\tau\}}(\rbar)$ be the Zariski closure of the set of points $x\in (\Spec R^\square(\rbar))(\Ebar)$ for which $r_x$ is crystalline with Hodge--Tate weights $\HT_\tau(r_x) = \{\lambda_{\tau,1},\lambda_{\tau,2},\ldots,\lambda_{\tau,n}\}$ for each $\tau$. We will refrain from giving a precise definition of the terms involved here, as it is not relevant to our discussion. We will refer the reader to \cite{CHT} or \cite{BLGGT14} for more details, and use only the following result from  \cite[Section 2.4]{CHT}:

\begin{prop}\label{prop:Fontaine--Laffaille}
	If $L/\Q_\ell$ is unramified and \[\lambda_{\tau,1}>\lambda_{\tau,2}>\cdots>\lambda_{\tau,n}\ge \lambda_{\tau,1}-(\ell-2)\]
	for all $\tau$, then $R^{\square,\crys,\{\lambda_\tau\}}(\rbar)\cong \O[[X_1,\ldots,X_{n^2+[F_v:\Q_\ell]n(n-1)/2}]]$.
\end{prop}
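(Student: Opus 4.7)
The plan is to invoke Fontaine--Laffaille theory, which in the unramified case with Hodge--Tate weights lying in a window of length $\ell-2$ provides an exact tensor equivalence between the category of $G_L$-stable $\Oc$-lattices in crystalline representations with the given Hodge--Tate weights and a category of ``Fontaine--Laffaille modules'': finitely generated $\Oc_L\otimes_{\Z_\ell}\Oc$-modules equipped with a decreasing, separated, exhaustive filtration $\{\operatorname{Fil}^i M\}$ together with Frobenius-semilinear maps $\phi^i:\operatorname{Fil}^i M\to M$ whose images generate $M$ (with the usual compatibilities on overlaps). First, I would use this equivalence to rewrite $\Dc^{\square,\crys,\{\lambda_\tau\}}$ as the framed deformation functor for the Fontaine--Laffaille module $\Mbar$ associated to $\rbar$, with the jumps in the filtration at each embedding $\tau$ prescribed by the multiset $\{\lambda_{\tau,1},\ldots,\lambda_{\tau,n}\}$. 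Under our hypothesis these multisets are \emph{multiplicity free}, so at each embedding the graded pieces of the filtration are line bundles.

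Next I would show this Fontaine--Laffaille deformation functor is formally smooth by verifying the infinitesimal lifting criterion. Given a small extension $A'\twoheadrightarrow A$ in $\Cc_\Oc$ and a deformation $M_A$ over $A$, lift in stages: (i) lift the underlying free $A\otimes_{\Z_\ell}\Oc_L$-module to $A'\otimes_{\Z_\ell}\Oc_L$ using the framing, with no obstruction; (ii) lift the filtration, which at each embedding is the choice of a complete flag in a free module of rank $n$ — smoothness of the full flag variety of $\GL_n$ shows this is always possible; (iii) lift the semilinear maps $\phi^i$. Step (iii) is the heart of the matter: because the Hodge--Tate weights are distinct and all lie in a window of length $\ell-2$, the Frobenius on each graded piece may be chosen freely and the standard Fontaine--Laffaille argument (specifically, invertibility of the ``comparison'' isomorphism between total $\phi$ and the filtration) shows there is no obstruction.

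Finally I would compute the Zariski tangent space dimension. The $n^2$ contribution comes from the framing (deformations of the underlying module with fixed basis), and at each of the $[L:\Q_\ell]$ factors of $\Oc_L\otimes_{\Z_\ell}\F$ one gets an independent choice of complete flag in an $n$-dimensional space, contributing $n(n-1)/2$ dimensions apiece; the $\phi^i$'s, once the filtration is fixed, are rigid up to changing the basis (already absorbed into the framing). Hence the tangent space has dimension $n^2+[L:\Q_\ell]n(n-1)/2$, and formal smoothness upgrades this to the claimed power series isomorphism. The main obstacle — indeed the only step that genuinely uses the hypothesis on $\ell$ and the weight window — is step (iii), the unobstructed lifting of the Frobenius structures, which ultimately rests on the classical computations of Fontaine--Laffaille \cite{CHT}.
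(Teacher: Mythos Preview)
The paper does not actually prove this proposition: it is quoted as a black box from \cite[Section 2.4]{CHT}. Your sketch follows the same Fontaine--Laffaille strategy that is carried out there, and the plan (equivalence with FL modules, formal smoothness via the lifting criterion, then a dimension count) is the right one.

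That said, your tangent-space breakdown at the end has a genuine gap. The assertion that ``the $\phi^i$'s, once the filtration is fixed, are rigid up to changing the basis (already absorbed into the framing)'' conflates two different basis changes: the framing is a basis of the rank-$n$ Galois representation over $A$ (an $n^2$-dimensional choice), whereas normalizing the semilinear Frobenius on the FL module requires changing the basis of $M$ as a free $A\otimes_{\Z_\ell}\Oc_L$-module (an $n^2[L:\Q_\ell]$-dimensional choice). These do not match, so you cannot simply declare the $\phi^i$'s to contribute nothing. The honest computation in \cite{CHT} goes via Galois cohomology: Fontaine--Laffaille theory identifies the tangent space of the unframed functor with $H^1_f(G_L,\ad\rbar)$, and the local Euler characteristic together with the Hodge--Tate weights of $\ad\rbar$ give $\dim H^1_f-\dim H^0=[L:\Q_\ell]\cdot n(n-1)/2$; adding the $n^2-\dim H^0$ framing variables then yields the stated total. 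If you want a direct FL-module count instead, you must track carefully how the Galois-side framing interacts with the FL data through the functor $T_{\crys}$, which is more delicate than your sketch suggests.
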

We will say that $\lambda_\tau = (\lambda_{\tau,1},\ldots,\lambda_{\tau,n})$ is \emph{in the Fontaine--Laffaille range} if it satisfies the inequality from this proposition.

\subsection{Local Deformation Rings for $v\nmid \ell$}\label{ssec:local def}

From now on assume that $v\nmid \ell$. By enlarging $\Oc$ if necessary, we will assume that $\F$ contains all $q^n-1$st roots of unity.

Let $I_L\to I_L/\Pt_L$ be the maximal pro-$\ell$ quotient of $I_L$. We will restrict to the case when $\rbar(\Pt_L) = 1$ (it is well known that the general case of computations of local deformation rings in the $\ell\ne p$ case may be reduced to this case, see for instance \cite[Section 2]{Shotton2}).

Now for any $A\in \Cc_\O^\wedge$, $\ker (\GL_n(A)\onto \GL_n(\F))$ is pro-$\ell$, and so any lift $r:G_L\to \GL_n(A)$ also satisfies $r(\Pt_L) = 1$. As $G_L/\Pt_L$ is topologically generated by $\varphi$ and $\sigma$, it follows that any lift $r:G_L\to \GL_n(A)$ is determined by $r(\varphi),r(\sigma)\in \GL_n(A)$.

Now as in \cite[Section 2]{Shotton2} for any ring $R$, let $\Mc(n,q)_R$ denote the moduli space of pairs $(\Phi,\Sigma) \in \GL_{n,R}\times_{\Spec R} \GL_{n,R}$ satisfying $\Phi\Sigma\Phi^{-1} = \Sigma^q$. Write $\Mc(n,q) = \Mc(n,q)_\Oc$

We will let $x_{\rbar} = (\rbar(\varphi),\rbar(\sigma))$ denote the $\F$-point of $\Mc(n,q)_\Oc$ corresponding to $\rbar$. Then $R^\square(\rbar)$ is canonically isomorphic to the complete local ring of $\Mc(n,q)_{\Oc}$ at the point $x_{\rbar}$ and the universal lift $r^\square:G_L\to \GL_n(R^\square(\rbar))$ is given by $r^\square(\varphi) = \Phi$ and $r^\square(\sigma) = \Sigma$.

Now \cite[Theorem 2.5]{Shotton2} gives:

\begin{prop}\label{prop: M lci}
$\Mc(n,q)_\Oc$ is a complete intersection, reduced and flat over $\Oc$ of relative dimension $n^2$, and hence $R^\square(\rbar)$ is as well.
\end{prop}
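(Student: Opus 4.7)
The plan is to regard $\mathcal{M}(n,q)_{\mathcal{O}}$ as a closed subscheme of the smooth $\mathcal{O}$-scheme $\GL_{n,\mathcal{O}} \times \GL_{n,\mathcal{O}}$ (of relative dimension $2n^2$), cut out by the $n^2$ scalar equations packaged in the matrix identity $\Phi\Sigma - \Sigma^q\Phi = 0$. Hence every point of $\mathcal{M}(n,q)_{\mathcal{O}}$ has codimension at most $n^2$ in the ambient scheme. To obtain the local complete intersection property, flatness over $\mathcal{O}$, and relative dimension $n^2$ simultaneously, it suffices to show that both geometric fibers of $\mathcal{M}(n,q)_{\mathcal{O}} \to \Spec \mathcal{O}$ are equidimensional of pure dimension $n^2$: equality of codimension with the number of defining equations forces local complete intersection, and equidimensional fibers over a regular one-dimensional base force $\mathcal{O}$-flatness by miracle flatness.

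The heart of the argument is the dimension count, which I would carry out uniformly over any algebraically closed field $k$ of characteristic $0$ or $\ell$ by stratifying $\mathcal{M}(n,q)(k)$ according to the $\GL_n(k)$-conjugacy class of $\Sigma$. For a fixed $\Sigma$, the set $\{\Phi : \Phi\Sigma\Phi^{-1} = \Sigma^q\}$ is either empty or a torsor under the centralizer $Z_{\GL_n}(\Sigma)$, so each non-empty stratum has dimension $\dim(\text{conjugacy class of }\Sigma) + \dim Z_{\GL_n}(\Sigma) = n^2$ by orbit--stabilizer. The non-empty strata can be enumerated combinatorially: the eigenvalues of $\Sigma$ in $k^\times$ must form a union of $\langle q \rangle$-orbits (under multiplication by $q$), and within each orbit the Jordan types on the cyclically $\Phi$-permuted eigenspaces must be compatible. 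This data parametrizes the irreducible components, each of dimension $n^2$, which gives the equidimensionality claim and hence both local complete intersection and $\mathcal{O}$-flatness.

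For reducedness, the local complete intersection property automatically makes $\mathcal{M}(n,q)_{\mathcal{O}}$ Cohen--Macaulay, so by Serre's $R_0 + S_1$ criterion it suffices to check $R_0$: smoothness at the generic point of each irreducible component of each geometric fiber. A generic point of a stratum corresponds to $\Sigma$ that is regular within its stratum (one Jordan block per $\langle q \rangle$-orbit of eigenvalues), and a direct computation of the tangent space as the kernel of the differential of $(\Phi, \Sigma) \mapsto \Phi\Sigma - \Sigma^q \Phi$ at such a point shows it has dimension exactly $n^2$, matching the expected dimension. The final assertion for $R^\square(\rbar)$ is then immediate: $R^\square(\rbar)$ is the completion of the local ring of $\mathcal{M}(n,q)_{\mathcal{O}}$ at the closed point $x_{\rbar}$, and local complete intersection, reducedness, and $\mathcal{O}$-flatness of relative dimension $n^2$ all pass to completions at closed points.

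The main obstacle is the tangent space computation in the modular range $\ell \mid q^i - 1$, and especially near unipotent $\Sigma$, where the formal derivative of $\Sigma \mapsto \Sigma^q$ at $\Sigma = 1$ can degenerate and threaten to introduce extra tangent directions. The resolution is to verify on the unipotent locus, via a logarithm-style change of coordinates (or by the explicit stratum-by-stratum analysis of Shotton), that the apparent Jacobian drop is compensated by a corresponding drop in the dimensions of the tangent space relations, so that smoothness at the generic point of each component is preserved.
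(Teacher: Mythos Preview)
The paper does not prove this proposition at all; it simply cites \cite[Theorem~2.5]{Shotton2}. Your sketch is essentially a recapitulation of Shotton's argument: stratify by the conjugacy class of $\Sigma$, use orbit--stabiliser to see that each non-empty stratum has dimension $n^2$ (the eigenvalues of $\Sigma$ being constrained to be $(q^n-1)$-st roots of unity, so there are finitely many strata), deduce the complete intersection property and $\mathcal{O}$-flatness from the fiberwise dimension count, and then upgrade to reducedness via $R_0+S_1$ by exhibiting a smooth point on each component of each fiber.

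The outline is correct, but your final paragraph is where the real content of Shotton's proof lies, and your treatment there is too vague to count as a proof. The issue is not quite what you describe: since $q\in\mathcal{O}^\times$, the differential of $\Sigma\mapsto\Sigma^q$ at $\Sigma=I$ is just multiplication by $q$ and does not degenerate. The genuine subtlety is that on components where $\Sigma$ is generically non-trivial, one must produce an explicit point with $n^2$-dimensional tangent space, and the natural candidates can fail to be smooth when the eigenvalues of $\Phi$ are related by powers of $q$. Shotton handles this by writing down specific smooth points on every component; the paper itself carries out an analogous computation for the closely related scheme $\mathcal{N}(n,q)$ in the proof of Proposition~\ref{prop:Nc(n,q)_E is CI}, which you could consult to see what the missing tangent-space calculation actually looks like. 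Your invocation of ``a logarithm-style change of coordinates'' or ``the explicit stratum-by-stratum analysis of Shotton'' is a pointer to the right literature rather than an argument.
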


Moreover the proof of \cite[Theorem 2.5]{Shotton2} gives:

\begin{lemma}\label{lem:conn comps}
The characteristic polynomial $\chi_\Sigma(t)$ of $\Sigma$ is constant on each connected component of $\Mc(n,q)_E$.
\end{lemma}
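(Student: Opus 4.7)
My plan is to reduce the lemma to a finiteness statement. Consider the morphism $\chi \colon \Mc(n,q)_E \to \A^n_E$ sending a point $(\Phi, \Sigma)$ to the coefficient tuple of the characteristic polynomial $\chi_\Sigma(t) = \det(tI - \Sigma)$. I claim its set-theoretic image consists of only finitely many closed points of $\A^n_E$. Granted this, the lemma follows: a finite set of closed points of $\A^n_E$ carries the discrete subspace topology, and by Proposition \ref{prop: M lci} the scheme $\Mc(n,q)_E$ is reduced, so the image of any connected component under $\chi$ --- being a connected subset of this finite discrete set --- must be a single closed point, forcing the coordinate functions (and hence $\chi_\Sigma(t)$ itself) to be constant on that component.

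To establish the finiteness, I would argue at the level of $\overline{E}$-points. At any $(\Phi, \Sigma) \in \Mc(n,q)_E(\overline{E})$, the defining relation $\Phi \Sigma \Phi^{-1} = \Sigma^q$ implies that $\Sigma$ and $\Sigma^q$ are conjugate in $\GL_n(\overline{E})$, and therefore share a characteristic polynomial. In terms of eigenvalues, this says that the multiset of eigenvalues of $\Sigma$ in $\overline{E}^\times$ is preserved under the $q$-th power map $\lambda \mapsto \lambda^q$. Restricted to the finite underlying set of \emph{distinct} eigenvalues this map must be a bijection (an endomap of a finite set preserving multiplicities), so its order divides $n!$, and hence every eigenvalue $\lambda$ satisfies $\lambda^{q^{n!}-1} = 1$.

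Thus every eigenvalue of $\Sigma$ at any geometric point of $\Mc(n,q)_E$ is a root of unity of order dividing $q^{n!} - 1$. Since only finitely many such roots of unity exist in $\overline{E}$, there are only finitely many multisets of size $n$ drawn from them, and so only finitely many possible characteristic polynomials $\chi_\Sigma(t)$ --- equivalently, finitely many points of $\A^n_E$ in the image of $\chi$. The argument is essentially elementary, and the only real content is the observation that the constraint $\Sigma \sim \Sigma^q$ forces the eigenvalues of $\Sigma$ to be roots of unity of uniformly bounded order; the remainder is purely topological.
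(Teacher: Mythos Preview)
Your proof is correct and follows essentially the same approach as the paper: both arguments show that the eigenvalues of $\Sigma$ at any $\Ebar$-point are roots of unity of uniformly bounded order, deduce that only finitely many characteristic polynomials can occur, and then use reducedness of $\Mc(n,q)_E$ to pass from this finiteness to constancy on connected components. Your bound $q^{n!}-1$ (via the cycle structure of the $q$-power permutation on eigenvalues) is in fact more careful than the paper's stated bound $q^n-1$, which is not quite right in general (a $3$-cycle among four distinct eigenvalues with $q=2$ gives a counterexample), though this does not affect the conclusion; the paper itself uses the more accurate bound $\prod_{i=1}^n(q^i-1)$ later in the proof of Proposition~\ref{prop:Nc(n,q) is CI}(4).
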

\begin{proof}
As $\Mc(n,q)$, and hence $\Mc(N,q)_E$, is reduced it is enough to check this on $\Ebar$ points. As explained in \cite[Theorem 2.5]{Shotton2}, for any $x = (\Phi_x,\Sigma_x)\in \Mc(n,q)(\Ebar)$ the relation $\Phi_x\Sigma_x\Phi_x^{-1} = \Sigma_x^q$ implies that all of the eigenvalues of $\Sigma_x$ are $q^n-1$st roots of unity. Hence there are only finitely many possible polynomials $\chi_{\Sigma_x}(t)\in \Ebar[t]$ for $x\in \Mc(n,q)(\Ebar)$. If these polynomials are denoted $f_1,\ldots,f_N$, then $\{V(\chi_\Sigma(t) = f_i)\}_{i=1,\ldots,N}$ is a finite cover of $\Mc(n,q)(\Ebar)$, and hence of $\Mc(n,q)_E$, by \emph{disjoint} closed subsets, and so each $V(\chi_\Sigma(t) = f_i)\subseteq \Mc(n,q)_E$ must be a union of connected components of $\Mc(n,q)_E$.
\end{proof}

\subsection{Fixed type deformation rings}\label{ssec:fixed type}

In order to prove our main results, we will also need to consider various deformation rings with \emph{fixed type}. We will again restrict to the case when $v\nmid \ell$.

Let $\tau$ be any inertial type defined over $E$. For any $x\in(\Spec R^\square(\rbar))(\Ebar)$, we will say that $x$ has type $\tau$ if $r_x:G_L\to \GL_n(\Ebar)$ does. We will let $\Spec R^\square(\rbar,\tau)\subseteq \Spec R^\square(\rbar)$ be the Zariski closure of the set of $\Ebar$ points with type $\tau$. Explicitly this means that $R^\square(\rbar,\tau)$ is the minimal reduced $\ell$-torsion free quotient of $R^\square(\rbar)$ with the property that any $\Ebar$ point $x:R^\square(\rbar)\to \Ebar$ with type $\tau$ factors through $R^\square(\rbar,\tau)$.

For any unipotent inertial type $\tau_{n_1,\ldots,n_k}$, we will also denote $R^\square(\rbar,\st_{n_1,\ldots,n_k}) = R^\square(\rbar,\tau_{n_1,\ldots,n_k})$.

We say that an $\Ebar$ point $x\in (\Spec R^\square(\rbar))(\Ebar)$ is \emph{non-degenerate} if $\WD(r_x)$ is generic. Now \cite[Proposition 4.6]{Shotton2} gives

\begin{prop}\label{prop:R(tau)}
For any any inertial type $\tau$ defined over $E$:
\begin{enumerate}
	\item $\Spec R^\square(\rbar,\tau)$ is a union of irreducible components of $\Spec R^\square(\rbar)$, and hence is flat and equidimensional of dimension $n^2$ over $\Oc$;
	\item If $x$ is a non-degenerate $\Ebar$ point of $\Spec R^\square(\rbar)$, then $x$ lies on a unique irreducible component of $\Spec R^\square(\rbar)$ and $\Spec R^\square(\rbar)[1/\ell]$ is formally smooth at $x$;
	\item The non degenerate $\Ebar$ points are Zariski dense in $\Spec R^\square(\rbar)$;
	\item If $x$ is a non-degenerate $\Ebar$ point of $\Spec R^\square(\rbar,\tau)$ then $x$ has type $\tau$.
\end{enumerate}
\end{prop}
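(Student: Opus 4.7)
The plan is to prove the four parts in an interlocking fashion, with the main effort going into (2). I would first translate the non-degeneracy condition to the level of the moduli space $\Mc(n,q)_\Oc$: given an $\Ebar$-point $x=(\Phi_x,\Sigma_x)$, the associated Weil--Deligne representation $\WD(r_x)\cong \Sp_{\nun}(\chi_1,\ldots,\chi_k)$ is read off from the Jordan structure of the unipotent part of $\Sigma_x$ together with the eigenvalues of $\Phi_x$ on the corresponding generalized eigenspaces, so the genericity condition on the $\chi_i$'s translates into a Zariski open condition on the pair $(\Phi_x,\Sigma_x)$, cut out by the nonvanishing of certain resultant-type polynomials.

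For (2), the key is a standard tangent space calculation. Since $\ell\ne p$, the local Euler characteristic formula gives $h^1(G_L,\ad r_x)=h^0(G_L,\ad r_x)+h^2(G_L,\ad r_x)$, and the framed tangent space at $x$ has dimension $n^2-h^0+h^1=n^2+h^2$. By local Tate duality, $h^2=\dim_{\Ebar}\Hom_{G_L}(r_x,r_x(1))$. The crux is the lemma that for generic $\WD(r_x)$ this Hom-space vanishes: decomposing $r_x$ into summands $\Sp_{n_i}(\chi_i)$, a nonzero $G_L$-equivariant map $\Sp_{n_i}(\chi_i)\to \Sp_{n_j}(\chi_j)(1)$, which must also be compatible with the monodromy, would force exactly the forbidden relation $\chi_j=\|\ \|^m\chi_i$ with $0\le m\le n_i$ and $m+n_j>n_i$. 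With $h^2=0$, the framed tangent space has dimension $n^2$, matching the relative dimension from Proposition \ref{prop: M lci}, so $\Spec R^\square(\rbar)[1/\ell]$ is formally smooth at $x$; smoothness then forces $x$ to lie on a unique irreducible component.

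For (3), openness of the non-degenerate locus on $\Spec R^\square(\rbar)[1/\ell]$ follows from the translation in paragraph one, since the failure of genericity forces eigenvalue coincidences between $\Sigma_x$ and $\Phi_x$ that are cut out by a proper closed subscheme. Density then follows once we check nonemptiness on each irreducible component $C$ of $\Spec R^\square(\rbar)[1/\ell]$: by Lemma \ref{lem:conn comps} the characteristic polynomial of $\Sigma$ is constant on $C$, and a dimension count (against the known relative dimension $n^2$ from Proposition \ref{prop: M lci}) shows that the degenerate locus is a strictly lower-dimensional closed subset of $C$. Part (1) then drops out: $\Spec R^\square(\rbar,\tau)$ is by definition the closure of the locus of type-$\tau$ points, and by (3) every irreducible component contains non-degenerate points, each of which by (2) lies on a unique component; hence $\Spec R^\square(\rbar,\tau)$ is a union of irreducible components, and its $\Oc$-flatness and equidimensionality of relative dimension $n^2$ are inherited from Proposition \ref{prop: M lci}.

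For (4), I would show that the type is constant on the non-degenerate points of any single irreducible component. Stratify the non-degenerate locus by type; each stratum is locally closed, and since non-degenerate points are dense (by (3)) and each lies on a unique component (by (2)), on each irreducible component exactly one stratum is open and dense, forcing every non-degenerate point of that component to share the type of the dense one. In particular, if a component was included in $\Spec R^\square(\rbar,\tau)$ because it contained some type-$\tau$ non-degenerate point, then every non-degenerate point on it has type $\tau$. The main obstacle I expect in this plan is verifying the vanishing $\Hom_{G_L}(r_x,r_x(1))=0$ under genericity: while combinatorially driven precisely by the condition stated before Proposition \ref{prop:local langlands}, one must carefully keep track of both the $W_L$-action and the compatibility with the monodromy operator $N$ between summands $\Sp_{n_i}(\chi_i)$ and $\Sp_{n_j}(\chi_j)(1)$, and handle the various Jordan block size comparisons that come in.
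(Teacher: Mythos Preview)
The paper does not give its own proof; it simply cites \cite[Proposition 4.6]{Shotton2}. Your plan for (2) and (3) --- the framed tangent space computation via the local Euler characteristic formula and Tate duality, reducing to the vanishing of $\Hom_{G_L}(r_x,r_x(1))$ and verifying this from the definition of genericity, together with the openness of the non-degenerate locus --- is correct and is essentially the standard argument.

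There is, however, a gap in your handling of (1) and (4). In (4) you say that one type stratum is open and dense on each component, ``forcing every non-degenerate point of that component to share the type of the dense one''; but a locally closed stratification of an irreducible space can perfectly well have several nonempty strata, so this inference does not follow. Your deduction of (1) has the same issue: knowing that each component of $\Spec R^\square(\rbar)$ contains non-degenerate points, each lying on a unique component, does not by itself show that $\Spec R^\square(\rbar,\tau)$ has dimension $n^2$. The cleanest repair is to reverse the order: prove (1) first by a direct dimension count on each type stratum (for the unipotent part this is the orbit--stabilizer computation in the proof of Proposition~\ref{prop:N(n,q) components}, showing $\dim\Uc_\nun=n^2$), so that $R^\square(\rbar,\tau)$, being the closure of an $n^2$-dimensional locally closed set inside an equidimensional $n^2$-dimensional reduced scheme, is a union of full irreducible components. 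Then (4) follows formally from (1) and (2): a point $x$ of type $\tau_x$ lies in $R^\square(\rbar,\tau_x)$, hence on a component $C'$ whose generic point has type $\tau_x$; by (2) this $C'$ is the unique component through $x$, and the same reasoning applied to any $\tau$ with $x\in\Spec R^\square(\rbar,\tau)$ forces $\tau=\tau_x$.
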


\subsection{Unipotent deformation rings}

The exponent of $q$ in the relation $\Phi\Sigma\Phi^{-1} = \Sigma^q$ can make explicit computations somewhat difficult, since it involves arbitrary powers of an $n\times n$ matrix, so we will need to find a simpler description of $\Mc^{\unip}(n,q)$ in order to carry out the computations in this paper.

Given any $\Oc$-algebra $A$ and any $n\times n$ matrix $X\in M_n(A)$, we will let $\chi_X(t) = \det(tI-X)$ denote the characteristic polynomial of $X$. We say that a matrix $X\in \GL_n(A)$ is \emph{strongly unipotent} if the characteristic polynomial of $X$ is $(t-1)^n$, and we say that $X$ is \emph{unipotent} if $(X-I)^n=0$. Similarly we will say that a matrix $N\in M_n(A)$ is \emph{strongly nilpotent} if its characteristic polynomial is $t^n$, and \emph{nilpotent} if $N^n = 0$. Note that strongly unipotent matrices are unipotent and strongly nilpotent matrices are nilpotent, and in both cases the definitions are equivalent if $A$ is a field.

Now for any unipotent $X\in \GL_n(A)$ and any nilpotent $N\in M_n(A)$ we will define
\begin{align*}
	\log X &= \sum_{k=1}^n(-1)^{k-1}\frac{(X-I)^k}{k} = (X-I)-\frac{(X-I)^2}{2}+\cdots\pm \frac{(X-I)^n}{n} \in M_n(A)\\
	\exp N &= \sum_{k=0}^n\frac{N^k}{k!} = I+N+\frac{N^2}{2}+\cdots+ \frac{N^n}{n!} \in \GL_n(A)
\end{align*}
and note that these are well defined, as we have assumed that $\ell>n$, and so $k$ and $k!$ are units in any $\Oc$-algebra $A$ for all $k=1,\ldots,n$. One can easily check that $\log X$ is strongly nilpotent, that $\exp N$ is unipotent, and that $\log$ and $\exp$ are inverses, and moreover that Moreover, that $\log X^m = m\log X$ for all $m\ge 0$.

We say that a Galois representation $r:G_L\to \GL_n(A)$ is \emph{unipotent} if $r(g)$ is unipotent for all $g\in I_L$. If $r(\Pt_L) = 1$, this is equivalent to simply requiring that $r(\sigma)$ is unipotent. From now on assume that $\rbar$ is unipotent.

Let $R^{\unip}(\rbar)$ be the minimal, reduced $\ell$-torsion free quotient of $R^\square(\rbar)$ with the property that for any $x\in (\Spec R^\square(\rbar))(\Ebar)$, if $r_x$ is unipotent, then $x$ factors through $R^{\unip}(\rbar)$. Notice that for any $x$, that $r_x$ is unipotent if and only if $\WD(r_x)$ has a unipotent inertial type. It follows from Proposition \ref{prop:R(tau)} that $\Spec R^{\unip}(\rbar)$ is the scheme theoretic union of all of the $\Spec R^\square(\rbar,\tau)$, as $\tau$ ranges over all unipotent inertial types. In particular, it is flat and equidimensional of relative dimension $n^2$ over $\Oc$.

Let $\Mc^{\unip}(n,q) = \Mc^{\unip}(n,q)_\Oc\subseteq \Mc(n,q)_\Oc$ be the Zariski closure of the set of points $(\Phi,\Sigma)\in \Mc(n,q)(\Ebar)$ for which $\Sigma$ is strongly unipotent, and let $\Mc^{\unip}(n,q)_A = \Mc^{\unip}(n,q)\times_\Oc\Spec A$ for any $A$. Then Lemma \ref{lem:conn comps} implies that $\Mc^{\unip}(n,q)_E$ is a union of connected components of $\Mc(n,q)_E$, and so (as $\Mc^{\unip}(n,q)$ is the closure of $\Mc^{\unip}(n,q)_E$ in $\Mc(n,q)$) $\Mc^{\unip}(n,q)$ is a union of irreducible components of $\Mc(n,q)$. In particular, $\Mc^{\unip}(n,q)$ is flat and equidimensional over $\Oc$ of relative dimension $n^2$. 

From our definitions, it follows that $R^{\unip}(\rbar)$ is the complete local ring of $\Mc^{\unip}(n,q)$ at the $\F$-point $(\rbar(\varphi),\rbar(\sigma))$.

Our goal is to find a simpler description of the space $\Mc^{\unip}(n,q)$. Note that if $\Sigma$ is unipotent, then the relation $\Phi\Sigma\Phi^{-1} = \Sigma^q$ is equivalent to $\Phi(\log \Sigma)\Phi^{-1} = q(\log \Sigma)$. So now for any ring $R$, define $\Nct(n,q)_R$ to be the moduli space of all pairs of matrices $(\Phi,N)\in \GL_{n,R}\times_{\Spec R} M_{n,R}$ satisfying $\Phi N \Phi^{-1} = qN$, and write $\Nct(n,q) = \Nct(n,q)_\Oc$.

Notice that in general, $\Nct(n,q)$ is not flat over $\Oc$. Indeed consider the case where $q\equiv 1\pmod{\ell}$ (but $q>1$). Then $\Nct(n,q)_E$ has dimension $n^2$ by the work below, but $\Nct(n,q)_\F$ is the variety parameterizing pairs $(\Phi,N)\in \GL_{n,\F}\times_\F M_{n,\F}$ satisfying $\Phi N = N\Phi$, which is known to have dimension $n^2+n$.

For any $\Oc$-algebra $A$, define closed subschemes $\Mc^{\unip}(n,q)_A^\circ = V(\chi_\Sigma(t) = (t-1)^n)\subseteq \Mc(n,q)_A$ and $\Nc(n,q)_A^\circ = V(\chi_N(t) = t^n) \subseteq \Nct(n,q)_A$. That is, $\Mc^{\unip}(n,q)_A^\circ$ (resp. $\Nc(n,q)_A^\circ$) is the maximal subscheme on which $\Sigma$ is strongly unipotent (resp. $N$ is strongly nilpotent). Then we have an isomorphism $\Lc_A:\Mc^{\unip}(n,q)_A^\circ\isomto \Nc(n,q)_A^\circ$ defined by $\Lc_A(\Phi,\Sigma) = (\Phi,\log \Sigma)$, with inverse given by $(\Phi,N)\mapsto (\Phi,\exp N)$.

Define $\Nc(n,q)=\Nc(n,q)_\Oc\subseteq \Nct(n,q)_\Oc$ to be the maximal reduced $\Oc$-flat subscheme and let $\Nc(n,q)_A = \Nc(n,q)_\Oc\times_\Oc \Spec A$ for any $\Oc$-algebra $A$. Note that $\Nc(n,q)$ is the Zariski closure of the set of $\Ebar$-points of $\Nct(n,q)_\Oc$, and hence $\Nc(n,q)_E$ is the underlying reduced subscheme of $\Nct(n,q)_E$. 

We claim the following:

\begin{prop}\label{prop:N(n,q) local model}
There is an isomorphism $\Lc:\Mc^{\unip}(n,q)\isomto \Nc(n,q)$ given by $(\Phi,\Sigma)\mapsto (\Phi,\log \Sigma)$. In particular, $R^{\unip}(\rbar)$ is isomorphic to the complete local ring of $\Nc(n,q)$ at the $\F$-point $(\rbar(\varphi),\log\rbar(\sigma))$, and the universal lift $r^{\unip}:G_L\to \GL_n(R^{\unip}(\rbar))$ is given by $r^{\unip}(\varphi) = \Phi$ and $r^{\unip}(\sigma) = \exp N$.
\end{prop}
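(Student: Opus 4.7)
The plan is to restrict the given isomorphism $\Lc_\Oc: \Mc^{\unip}(n,q)_\Oc^\circ \isomto \Nc(n,q)_\Oc^\circ$ to the closed subscheme $\Mc^{\unip}(n,q)$ and show that its image is exactly $\Nc(n,q)$. The final assertions about $R^{\unip}(\rbar)$ and the universal lift are then immediate: $\Lc$ sends the $\F$-point $(\rbar(\varphi),\rbar(\sigma))$ to $(\rbar(\varphi),\log\rbar(\sigma))$, so the completed local rings coincide, and under this identification the universal lift of $\sigma$ is characterized by $\log r^{\unip}(\sigma)=N$, i.e.\ $r^{\unip}(\sigma)=\exp N$.

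First I would observe that $\Mc^{\unip}(n,q)\subseteq \Mc^{\unip}(n,q)^\circ$, since $\chi_\Sigma(t)=(t-1)^n$ is a closed condition which holds at each of the strongly unipotent $\Ebar$-points whose Zariski closure defines $\Mc^{\unip}(n,q)$. Thus $\Lc_\Oc$ restricts to a closed immersion $\Lc:\Mc^{\unip}(n,q)\hookrightarrow \Nc(n,q)^\circ\subseteq \Nct(n,q)$ which is an isomorphism onto its scheme-theoretic image $Z$. By Proposition \ref{prop: M lci}, $\Mc(n,q)_\Oc$ is reduced and $\Oc$-flat, and $\Mc^{\unip}(n,q)$ is a union of its irreducible components, so $\Mc^{\unip}(n,q)$, and hence $Z$, is reduced and $\Oc$-flat. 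By the defining maximality property of $\Nc(n,q)$ as the maximal reduced $\Oc$-flat subscheme of $\Nct(n,q)$, this already gives $Z\subseteq \Nc(n,q)$.

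For the reverse containment, I would use the key sublemma that every $\Ebar$-point $(\Phi,N)$ of $\Nct(n,q)$ has $N$ strongly nilpotent. Indeed, $\Phi N\Phi^{-1}=qN$ implies that multiplication by $q$ permutes the eigenvalues of $N$, and since $q>1$ in characteristic zero has infinite multiplicative order, every such eigenvalue must be $0$. Thus $\Nct(n,q)(\Ebar)\subseteq \Nc(n,q)^\circ(\Ebar)$, and $\Lc^{-1}=\exp$ identifies these points bijectively with the $\Ebar$-points of $\Mc^{\unip}(n,q)^\circ$. The latter coincides with $\Mc^{\unip}(n,q)(\Ebar)$: over a field strong unipotence and unipotence agree, and by Lemma \ref{lem:conn comps} the locus $\chi_\Sigma=(t-1)^n$ in $\Mc(n,q)_E$ is a union of connected components, hence equals $\Mc^{\unip}(n,q)_E$. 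So $Z$ contains every $\Ebar$-point of $\Nct(n,q)$; being closed, it contains the Zariski closure $\Nc(n,q)$, and we conclude $Z=\Nc(n,q)$. The main obstacle is really just this set-theoretic bookkeeping for $\Ebar$-points: once they are matched up on each side, the upgrade to a scheme-theoretic isomorphism is forced by the reduced $\Oc$-flat characterization of $\Nc(n,q)$ and the fact that $\Lc|_{\Mc^{\unip}}$ is already a closed immersion, so no commutative algebra beyond the inputs of Proposition \ref{prop: M lci} is needed.
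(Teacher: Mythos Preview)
Your proof is correct and follows essentially the same approach as the paper. The paper packages the two inclusions into a single step by observing that $\Nc(n,q)$ is the maximal reduced $\Oc$-flat subscheme of $\Nc(n,q)_\Oc^\circ$ (not just of $\Nct(n,q)$), so that the isomorphism $\Lc_\Oc$ must carry maximal reduced $\Oc$-flat subschemes to one another; but your separate verification of $Z\subseteq \Nc(n,q)$ and $\Nc(n,q)\subseteq Z$ uses the same key input (strong nilpotence of $N$ on $\Ebar$-points, which is the paper's Lemma~\ref{lem:N str nilp} for $\Bbbk=E$) and is equally valid.
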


By definition, $\Mc^{\unip}(n,q)$ is the Zariski closure of the set of $\Ebar$ points of $\Mc^{\unip}(n,q)_\Oc^\circ$, and so is the maximal reduced $\Oc$-flat subscheme of $\Mc^{\unip}(n,q)_\Oc^\circ$. The proof of Proposition \ref{prop:N(n,q) local model} will thus hinge on the observation that $\Nc(n,q)^\circ_E=\Nct(n,q)_E$.

In fact, we shall prove something slightly stronger than this, which will be useful for our discussion of the banal case below.

Let $\Bbbk$ be a field. We will say that $q$ is \emph{banal} in $\Bbbk$ if $q\ne 0$ and $q^i\ne 1$ in $\Bbbk$ for all $i=1,\ldots,n$. Since $q\in\Z$, this depends only on $\Char\Bbbk$, and as $q>1$, $q$ is automatically banal in $\Bbbk$ if $\Char\Bbbk = 0$ (so in particular, $q$ is banal in $E$).

\begin{lemma}\label{lem:N str nilp}
If $\Bbbk$ is a field which is an $\Oc$-algebra, and $q$ is banal in $\Bbbk$, then $\Nc(n,q)^\circ_\Bbbk=\Nct(n,q)_\Bbbk$.
\end{lemma}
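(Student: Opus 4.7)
The plan is to show the equality of closed subschemes of $\GL_{n,\Bbbk}\times M_{n,\Bbbk}$ by working at the level of coordinate rings. The inclusion $\Nc(n,q)^\circ_\Bbbk \subseteq \Nct(n,q)_\Bbbk$ is tautological, since $\Nc(n,q)^\circ_\Bbbk$ is cut out inside $\Nct(n,q)_\Bbbk$ by the coefficients of $\chi_N(t)-t^n$. So the real content is to show that these coefficients already vanish identically in the coordinate ring of $\Nct(n,q)_\Bbbk$. The key observation is that the universal relation $\Phi N\Phi^{-1}=qN$ holding on $\Nct(n,q)_\Bbbk$ presents the universal matrices $N$ and $qN$ as conjugate via the unit $\Phi$, so taking determinants yields the identity $\chi_N(t)=\chi_{qN}(t)$ in the coordinate ring.

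Next I would expand both sides. Writing $\chi_N(t)=t^n+c_{n-1}t^{n-1}+\cdots+c_0$, the coefficient $c_{n-k}$ is (up to sign) the sum of the $k\times k$ principal minors of $N$, so replacing $N$ by $qN$ multiplies $c_{n-k}$ by $q^k$. Equating coefficients of $t^{n-k}$ in $\chi_N(t)=\chi_{qN}(t)$ then gives the identities
\[
(q^k-1)\,c_{n-k}=0 \qquad (k=1,\ldots,n)
\]
in the coordinate ring of $\Nct(n,q)_\Bbbk$. The banality hypothesis is precisely what makes each $q^k-1$ a nonzero element of $\Bbbk$, hence a unit in any $\Bbbk$-algebra. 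Multiplying through by $(q^k-1)^{-1}$ forces $c_{n-k}=0$ for every $k$, which is exactly the statement that $\chi_N(t)=t^n$ holds on $\Nct(n,q)_\Bbbk$, giving the reverse inclusion $\Nct(n,q)_\Bbbk\subseteq \Nc(n,q)^\circ_\Bbbk$.

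There is no real obstacle in this argument; it is essentially linear-algebraic, and in particular one does not need to know that $\Nct(n,q)_\Bbbk$ is reduced, since the identities above are honest equalities in the coordinate ring. The only small bookkeeping point is the formula $\chi_{qN}(t)=t^n+\sum_{k=1}^n q^k c_{n-k}t^{n-k}$, which is immediate from the expression of the coefficients of the characteristic polynomial as signed sums of principal minors. The banality hypothesis is essential and not merely technical: without it the conclusion genuinely fails, as the remark preceding the lemma notes in the case $q\equiv 1\pmod{\ell}$, where the scalars $q^k-1$ are no longer units and $\Nct(n,q)_\F$ has dimension $n^2+n$ rather than $n^2$.
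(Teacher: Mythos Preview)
Your proof is correct and follows essentially the same approach as the paper: both use the relation $\Phi N\Phi^{-1}=qN$ to deduce $\chi_N(t)=\chi_{qN}(t)$ in the coordinate ring, compare coefficients to get $(q^k-1)c_{n-k}=0$, and then invoke banality to conclude. The paper's write-up is slightly terser but the argument is the same.
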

\begin{proof}
By definition, we must prove that $\chi_N(t) = t^n$, where we view $N$ as an $n\times n$ matrix in $M_n(\Gamma(\Nct(n,q)_\Bbbk))$. Let $\chi_N(t) = t^n+\gamma_{n-1}t^{n-1}+\cdots +\gamma_1t +\gamma_0$ for $\gamma_0,\gamma_1,\ldots,\gamma_{n-1}\in \Gamma(\Mc(n,q)_\Bbbk)$. Since $\Phi N \Phi^{-1} = qN$ on $\Mc(n,q)_E$ we get
\begin{align*}
\chi_N(t) &= \det(tI-N) = \det(tI-\Phi N\Phi^{-1}) = \det(tI-qN)\\
&= t^n+q\gamma_{n-1}t^{n-1}+\cdots+ q^{n-1}\gamma_1+q^n\gamma_0
\end{align*}
and so $\gamma_k = q^{n-k}\gamma_k$ for all $k=0,\ldots,n-1$. Now as $\Gamma(\Mc(n,q)_\Bbbk)$ is an $\Bbbk$-algebra, and $q$ is banal in $\Bbbk$, it follows $\gamma_k=0$ for all $k=0,\ldots,n-1$, and so $\chi_N(t) = t^n$.
\end{proof}

\begin{proof}[Proof of Proposition \ref{prop:N(n,q) local model}]
We have $\Nc(n,q)^\circ_E=\Nct(n,q)_E$. Now as $\Nc(n,q)^\circ_E = \Nc(n,q)^\circ_\Oc\times_\Oc\Spec E$, it follows that $\Nc(n,q)^\circ_\Oc(\Ebar) = \Nct(n,q)_\Oc(\Ebar)$, and so the Zariski closure of $\Nc(n,q)^\circ_\Oc(\Ebar)$ in $\Nc(n,q)^\circ_\Oc$ is equal to the Zariski closure of $\Nct(n,q)_\Oc(\Ebar)$ in $\Nct(n,q)_\Oc$, which is $\Nc(n,q)$ by definition. Hence $\Nc(n,q)$ is the maximal reduced $\Oc$-flat subscheme of $\Nc(n,q)^\circ_\Oc$. The isomorphism $\Lc_\Oc:\Mc^{\unip}(n,q)_\Oc^\circ\isomto \Nc(n,q)_\Oc^\circ$ thus restricts to the desired isomorphism $\Mc^{\unip}(n,q) \isomto \Nc(n,q)$. The rest of Proposition \ref{prop:N(n,q) local model} follows automatically from this.
\end{proof}

We now claim the following:

\begin{prop}\label{prop:Nc(n,q)_E is CI}
If $\Bbbk$ is a field which is an $\Oc$-algebra, and $q$ is banal in $\Bbbk$, then $\Nct(n,q)_\Bbbk$ is a reduced complete intersection of dimension $n^2$ over $\Bbbk$.

In particular, $\Nct(n,q)_E = \Nc(n,q)_E$.
\end{prop}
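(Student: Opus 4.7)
The plan is to stratify $\Nct(n,q)_\Bbbk$ by the Jordan type of $N$, show each stratum is smooth and irreducible of dimension $n^2$, and then deduce both the complete intersection property and reducedness from dimension counting together with Serre's criterion.

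First, using Lemma \ref{lem:N str nilp} I conclude that $N$ is strongly nilpotent on $\Nct(n,q)_\Bbbk$, hence nilpotent in the usual sense. For each partition $\lambda$ of $n$, let $V_\lambda \subseteq \Nct(n,q)_\Bbbk$ denote the locally closed subscheme on which $N$ has Jordan type $\lambda$, and fix a Jordan representative $N_\lambda$. Since $q \in \Bbbk^\times$, the matrix $qN$ has the same Jordan type as $N$ and is therefore $\GL_n(\Bbbk)$-conjugate to $N$, so the projection onto $N$ exhibits $V_\lambda$ as a fibration over the $\GL_n$-conjugacy class of $N_\lambda$ whose fiber over each $N$ is a nonempty coset of the centralizer $C_{\GL_n}(N)$. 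Writing $V_\lambda$ as the image of the map $\mu \colon \GL_n \times C_{\GL_n}(N_\lambda) \to V_\lambda$ sending $(g,h) \mapsto (g N_\lambda g^{-1},\, g \Phi_{0,\lambda} h g^{-1})$ for a fixed $\Phi_{0,\lambda}$ with $\Phi_{0,\lambda} N_\lambda = q N_\lambda \Phi_{0,\lambda}$, a direct calculation shows $\mu$ is a smooth surjection (a principal $C_{\GL_n}(N_\lambda)$-torsor for an appropriate action), so $V_\lambda$ is smooth and irreducible of dimension $n^2 + \dim C_{\GL_n}(N_\lambda) - \dim C_{\GL_n}(N_\lambda) = n^2$.

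Second, $\Nct(n,q)_\Bbbk$ is cut out in the smooth $2n^2$-dimensional ambient space $\GL_n \times M_n$ by the $n^2$ scalar relations comprising $\Phi N - qN\Phi = 0$, and $\Nct(n,q)_\Bbbk = \bigcup_\lambda V_\lambda$ is a finite union of irreducible subsets of dimension $n^2$; hence each irreducible component must be the closure of some $V_\lambda$ and has dimension exactly $n^2$. Krull's Hauptidealsatz then forces the defining equations to form a regular sequence on each component, so $\Nct(n,q)_\Bbbk$ is a complete intersection of dimension $n^2$, and in particular Cohen-Macaulay (providing Serre's condition $S_1$). Each irreducible component contains its smooth open dense stratum $V_\lambda$, giving the $R_0$ condition; Serre's criterion then yields reducedness. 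Taking $\Bbbk = E$, the fact that $\Nct(n,q)_E$ is already reduced forces it to equal its maximal reduced subscheme $\Nc(n,q)_E$, giving the final statement.

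The main technical hurdle I anticipate is the smoothness step — in particular, verifying uniformly over any field $\Bbbk$ that the map $\mu$ is a $C_{\GL_n}(N_\lambda)$-torsor, which requires some bookkeeping with the twisted centralizer action in positive characteristic. If this presentation becomes cumbersome, a clean alternative is to bypass smoothness of all of $V_\lambda$ and instead directly verify that the Jacobian of $\Phi N - qN\Phi$ has full rank $n^2$ at the single convenient point $(\Phi_{0,\lambda}, N_\lambda)$ of each stratum; this yields generic smoothness along each irreducible component, which is all that is actually needed for the $R_0$ condition.
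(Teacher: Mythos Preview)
Your proposal is essentially the paper's proof. Both stratify by the Jordan type of $N$, show each stratum is irreducible of dimension $n^2$ (the paper uses the same fibration over the nilpotent orbit with fiber a coset of the centralizer), deduce the complete intersection property from the equation count in $\GL_n\times M_n$, and then obtain reducedness from Cohen--Macaulay plus generic regularity.

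On the regularity step, the paper takes exactly your alternative route: it checks directly that the cotangent space $\pf_{\nun}/\pf_{\nun}^2$ at the explicit point $(\Phi_{\nun},N_{\nun})$ has dimension $n^2$, via a block-matrix reduction to an auxiliary lemma computing $\dim_\Bbbk \Wc_{a,b}=ab$ for each pair of Jordan block sizes $(a,b)$. This is where the banal hypothesis is actually used, to ensure the coefficients $q^{a+1-i}-q^{b+2-j}$ appearing in the linearized relation are nonzero. Your primary torsor approach to smoothness of all of $V_\lambda$ does face the subtlety you anticipate---smoothness of $V_\lambda$ with its reduced structure does not by itself bound the tangent space of the ambient scheme $\Nct(n,q)_\Bbbk$ at those points, which is what $R_0$ requires---so your fallback is both needed and is precisely the paper's argument.
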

\begin{proof}
Note that as $\Nc(n,q)_E$ is the underlying reduced subscheme of $\Nct(n,q)_E$, the last claim follows from the fact that $\Nct(n,q)_E$ is reduced (as $q$ is banal in $E$).

Since $\Nct(n,q)_\Bbbk$ is the open subscheme $\GL_{n,\Bbbk}\times_\Bbbk M_{n,\Bbbk}\subseteq \A^{2n^2}_\Bbbk$ by $n^2$ relations (given by the matrix equation $\Phi N = q N\Phi$), to prove that $\Nct(n,q)_\Bbbk$ is a complete intersection of dimension $n^2$, it is enough to prove that $\dim \Nct(n,q)_{\Bbbk}\le n^2$.

Now define a map $\pi:\Nct(n,q)_\Bbbk\to M_{n,\Bbbk}$ by $(\Phi,N)\mapsto N$. By Lemma \ref{lem:N str nilp}, for any $(\Phi,N)\in \Nct(n,q)_\Bbbk(\Bbbkbar)$, $N\in M_{n}(\Bbbkbar)$ is nilpotent. Now note that $\pi$ is clearly equivariant for the natural conjugation action of $\GL_{n,\Bbbk}$ on $\Nct(n,q)_\Bbbk$ and $M_{n,\Bbbk}$.

Now the set of conjugacy classes of nilpotent matrices in $M_{n,\Bbbk}$ is in bijection is the set of unordered partitions of $n$ (by sending a nilpotent matrix to the collection of sizes of its Jordan blocks). For any unordered partition $\nun = (n_1,\ldots,n_k)$, let $C_{\nun}\subseteq M_n(\Bbbkbar)$ be the conjugacy class of nilpotent matrices corresponding to $\nun$, and let $\Uc_{\nun} = \pi^{-1}(C_{\nun})\subseteq \Nct(n,q)_{\Bbbk}(\Bbbkbar)$. Note that $\Nct(n,q)_{\Bbbk}(\Bbbkbar) = \bigsqcup_{\nun}\Uc_{\nun}$.

Now for any integer $d\ge 1$, consider the matrices 
\begin{align*}
	N_d &=
	\begin{pmatrix}
		0&1&&&\\
		&0&1&&\\
		&&\ddots&\ddots&\\
		&&&0&1\\
		&&&&0
	\end{pmatrix}&
	&\text{and}&
	\Phi_d&
	=
	\begin{pmatrix}
		q^{d-1}&&&&\\
		&q^{d-2}&&&\\
		&&\ddots&&\\
		&&&q&\\
		&&&&1
	\end{pmatrix}
\end{align*}
in $M_d(\Bbbk)$, and note that $N_d$ is nilpotent and $\Phi_dN_d = qN_d\Phi_d$. For any partition $\nun=(n_1,\ldots,n_k)$ of $n$, define the matrices
\begin{align*}
	N_{\nun} &=\begin{pmatrix}N_{n_1}&&\\&\ddots&\\&&N_{n_k}\end{pmatrix}&
	&\text{and}&
	\Phi_{\nun}&=\begin{pmatrix}\Phi_{n_1}&&\\&\ddots&\\&&\Phi_{n_k}\end{pmatrix}
\end{align*}
in $M_n(\Bbbk)$. Then we again have $\Phi_{\nun}N_{\nun} = qN_{\nun}\Phi_{\nun}$ so $(\Phi_{\nun},N_{\nun})\in \Nct(n,q)_{\Bbbk}(\Bbbk)$. Moreover by definition we have $N_{\nun}\in C_{\nun}$ and $(\Phi_{\nun},N_{\nun})\in \Uc_{\nun}$ for all $\nun$. In particular, each $\Uc_{\nun}$ is nonempty.

In fact, we have the following:

\begin{lemma}\label{lem:U_n irreducible}
For each partition $\nun$ of $n$, $\Uc_{\nun}\subseteq \Nct(n,q)_{\Bbbk}(\Bbbkbar)$ is irreducible.
\end{lemma}
\begin{proof}
Consider the fiber $\pi^{-1}(N_{\nun})\subseteq \Uc_{\nun}$. By the above, $(\Phi_{\nun},N_{\nun})\in \pi^{-1}(N_{\nun})$, so $\pi^{-1}(N_{\nun})$ is nonempty. Now by the definition of $\Nct(n,q)_{\Bbbk}$, $\pi^{-1}(N_{\nun})$ is the variety of $\GL_n(\Bbbkbar)$ cut out by the equation $\Phi N_{\nun} = qN_{\nun} \Phi$. As this equation is linear in the entries of $\Phi$, and $\GL_n(\Bbbkbar)$ is a Zariski open subset of $\A^{n^2}_{\Bbbk}(\Bbbkbar)$, it follows that $\pi^{-1}(N_{\nun})$ is isomorphic to a (nonempty) Zariski open subset of $\A^{d_{\nun}}_{\Bbbk}(\Bbbkbar)$ for some integer $d_{\nun}\le n^2$. In particular, $\pi^{-1}(N_{\nun})$ is irreducible.

Now under the conjugation actions of $\GL_n(\Bbbkbar)$ on $\Nct(n,q)_{\Bbbk}(\Bbbkbar)$ and $M_n(\Bbbkbar)$, $C_{\nun}$ is generated by $N_{\nun}$, and hence $\Uc_{\nun} = \pi^{-1}(C_n)$ is generated by $\pi^{-1}(N_{\nun})$, so there is a Zariski continuous surjection $\GL_n(\Bbbkbar)\times \pi^{-1}(N_{\nun})\onto \Uc_{\nun}$. Since $\GL_n(\Bbbkbar)$ and $\pi^{-1}(N_{\nun})$ are irreducible $\Bbbkbar$-varieties, so is $\GL_n(\Bbbkbar)\times \pi^{-1}(N_{\nun})$, and hence $\Uc_{\nun}$ is indeed irreducible.
\end{proof}

It follows that $\Nct(n,q)_{\Bbbk}(\Bbbkbar) = \bigcup_{\nun}\overline{\Uc}_{\nun}$, and each $\overline{\Uc}_{\nun}$ is an irreducible closed subvariety of $\Nct(n,q)_{\Bbbk}(\Bbbkbar)$. Hence for any irreducible component $\Zc\subseteq \Nct(n,q)_{\Bbbk}$ we have $\Zc(\Bbbkbar) = \overline{\Uc}_{\nun}$ for some $\nun$. In particular, each irreducible component of $\Nct(n,q)_{\Bbbk}$ contains the $\Bbbkbar$-point $(\Phi_{\nun},N_{\nun})$ for some $\nun$.

Let $\pf_{\nun}\subseteq \Gamma(\Nct(n,q)_\Bbbk)$ be the maximal ideal of $\Gamma(\Nct(n,q)_\Bbbk)$ corresponding to the point $\Bbbk$-point $(\Phi_{\nun},N_{\nun})$. We claim the following:

\begin{lemma}
For each $\nun$, $\dim_{\Bbbk} \pf_{\nun}/\pf_{\nun}^2 = n^2$.
\end{lemma}

\begin{proof}
Write $\nun = (n_1,\ldots,n_k)$. Let $X = (x_{ij}) := \Phi-\Phi_{\nun}$ and $A = (a_{ij}) := N-N_{\nun}$. Then by definition the local ring $\Gamma(\Nct(n,q)_\Bbbk)_{\pf_{\nun}}$ is isomorphic to the polynomial ring
\begin{align*}
\frac{\Bbbk[\{x_{ij},a_{ij}\}]_{(x_{ij},a_{ij})}}{(\Phi N-qN\Phi)}
&=
\frac{\Bbbk[\{x_{ij},a_{ij}\}]_{(x_{ij},a_{ij})}}
{\left((\Phi_{\nun}+X)(N_{\nun}+A)-q(N_{\nun}+A)(\Phi_{\nun}+X)\right)}
\\
&=
\frac{\Bbbk[\{x_{ij},a_{ij}\}]_{(x_{ij},a_{ij})}}
{\left( (\Phi_{\nun}A-qA\Phi_{\nun})+(XN_{\nun}-qN_{\nun}X) + (XA-qAX)\right)}.
\end{align*}
As $XA-qAX\equiv 0 \pmod{\pf_{\nun}^2}$ it follows that 
\begin{align*}
\pf_{\nun}/\pf^2_{\nun} &= 
\frac{(x_{ij},a_{ij})_{ij}/(x_{ij},a_{ij})_{ij}^2}{\left( (\Phi_{\nun}A-qA\Phi_{\nun})+(XN_{\nun}-qN_{\nun}X)\right)}
=
\frac{\bigoplus_{1\le i,j\le n}\Bbbk x_{ij}\oplus \bigoplus_{1\le i,j\le n}\Bbbk a_{ij}}{\left( (\Phi_{\nun}A-qA\Phi_{\nun})+(XN_{\nun}-qN_{\nun}X)\right)}.
\end{align*}

Now note the following:

\begin{lemma}
Let $\Bbbk$ be any field and assume that $q$ is banal in $\Bbbk$. Take integers $a,b\in \{1,\ldots,n\}$ and consider a $\Bbbk$-vector space $W$ of dimension $2ab$ with basis denoted by $\{y_{ij},z_{ij}\}_{1\le i \le a, 1\le j\le b}$. Consider the two $a\times b$ matrices $Y = (y_{ij})$ and $Z = (z_{ij})$ with entries in $W$, and define
\[
C = (c_{ij}) := (\Phi_aY-qY\Phi_b)+(ZN_b-qN_aZ),
\]
interpreted as an $a\times b$ matrix with entries in $W$. Define $\Span(C) := \Span\{c_{ij}\}_{ij}\subseteq W$ to be the $\Bbbk$-span of the entries of $C$. Let $\Wc_{a,b} := W/\Span(C)$. 

Then $\dim_\Bbbk \Wc_{a,b} = ab$.
\end{lemma}
\begin{proof}
Since $C$ has $ab$ entries, we clearly have $\dim_\Bbbk\Wc_{a,b} \ge \dim_\Bbbk W - ab = ab$. It will thus suffice to show that $\Wc_{a,b}$ is spanned by a set of $ab$ vectors. If $a\le b$, define
\[
\Bc = \{y_{i,b-a+1+i}|i=1,\ldots,a-1\}\cup \{z_{ab}\} \cup \{z_{ij}|1\le i \le a, 1\le j\le b, i-j\ne a-b\}\subseteq \Wc_{a,b}
\]
and if $a>b$ define
\[
\Bc =  \{y_{j+a-b-1,j}|j=1,\ldots,b\}\cup \{z_{ij}|1\le i \le a, 1\le j\le b, i-j\ne a-b\}\subseteq \Wc_{a,b}
\]
and note that in either case $|\Bc| = ab$. We claim that $\Span(\Bc) = \Wc_{a,b}$.

For ease of notation, set $y_{ij} = z_{ij} = 0$ whenever $(i,j)\not\in \{1,\ldots,a\}\times \{1,\ldots,b\}$. Now,
\begin{align*}
\Phi_aY-qY\Phi_b 
&= \Phi_a(y_{ij})_{ij} - q \Phi_b(y_{ij})_{ij}
 = (q^{a+1-i}y_{ij})_{ij} - q (q^{b+1-j}y_{ij})_{ij}
 = ((q^{a+1-i}-q^{b+2-j})y_{ij})_{ij}
\end{align*}
and
\begin{align*}
ZN_b-qN_aZ
&= (z_{ij})_{ij}N_b - qN_a(z_{ij})_{ij}
 = (z_{i,j-1})_{ij} - qN_b(z_{i+1,j})_{ij}
 = (z_{i,j-1}-qz_{i+1,j})_{ij}
\end{align*}
and so
\[
c_{ij} = (q^{a+1-i}-q^{b+2-j})y_{ij} + (z_{i,j-1}-qz_{i+1,j})
\]
for all $(i,j)\in \{1,\ldots,a\}\times \{1,\ldots,b\}$. In particular, if $(i,j)\in \{1,\ldots,a\}\times \{1,\ldots,b\}$ and $i-j = a-b-1$ then $q^{a+1-i} = q^{b+2-j}$ and so $c_{ij} = z_{i,j-1}-qz_{i+1,j}$ giving $z_{i,j-1} = qz_{i+1,j}$ in $\Wc_{a,b}$.

If $a\le b$ the relations $c_{i,b-a+1+i}$ for $i=1,\ldots,a-1$ give $z_{i,b-a+i} = qz_{i+1,b-a+i+1}$ for $i=1,\ldots,a-1$, and so
\[
z_{i,b-a+i} = q^{b-i}z_{ab}\in \Span(\Bc)
\]
for $i = 1,\ldots,a$ by induction.

If $a>b$ the relations $c_{j+a-b-1,j}$ for $j=1,\ldots,b$ give $z_{j+a-b-1,j-1} = qz_{j+a-b,j}$ for  $j=1,\ldots,b$ and so
\[
z_{j+a-b,j} = q^{-j} z_{a-b,0} = 0 \in \Span(\Bc)
\]
for all $j=1,\ldots,b$.

So in either case, $z_{ij}\in \Span(\Bc)$ whenever $i-j = a-b$, and so $z_{ij}\in \Span(\Bc)$ for all $i,j\in\Z$.

So now take any $(i,j)\in  \{1,\ldots,a\}\times \{1,\ldots,b\}$. We claim that $y_{ij}\in \Span(\Bc)$. If $i-j = a-b-1$, this is by definition. Otherwise, as $a,b\le n$, $a+1-i,b+1-j\in \{1,\ldots,n\}$ and so $|(a+1-i)-(b+2-j)| \le n$, giving that $q^{a+1-i} \ne q^{b+2-j}$ (as $q$ is banal in $\Bbbk$) and so the relation $c_{ij}$ gives
\[
(q^{a+1-i}-q^{b+2-j})y_{ij} + (z_{i,j-1}-qz_{i+1,j}) = 0
\]
and so
\[y_{ij} = -\frac{z_{i,j-1}-qz_{i+1,j}}{q^{a+1-i}-q^{b+2-j}} \in \Span(\{z_{ij}\}_{ij}) \subseteq \Span(\Bc).
\]
So indeed, $y_{ij},z_{ij}\in \Span(\Bc)$ for all $i$ and $j$, giving $\Span(\Bc) = \Wc_{a,b}$ and so $\dim_E \Wc_{a.b} = ab$.
\end{proof}

So now in the vector space $\pf_{\nun}/\pf_{\nun}^2$, write $A$ and $X$ as block matrices
\begin{align*}
A&=\begin{pmatrix}A_{11}&\cdots&A_{1k}\\
	\vdots&&\vdots\\
	A_{k1}&\cdots&A_{kk}
\end{pmatrix}&
&\text{and}&
X&=\begin{pmatrix}X_{11}&\cdots&X_{1k}\\
	\vdots&&\vdots\\
	X_{k1}&\cdots&X_{kk}
\end{pmatrix}
\end{align*}
where for each $i$ and $j$ the $A_{ij}$ and $N_{ij}$ are $n_i\times n_j$ matrices. Now in block matrix form we have
\begin{align*}
\Phi_{\nun}A-qA\Phi_{\nun}&= (\delta_{ij}\Phi_{n_i})(A_{ij}) - q(A_{ij})(\delta_{ij}\Phi_{n_j}) = (\Phi_{n_i}A_{ij}-qA_{ij}\Phi_{n_j})\\
XN_{\nun}-qN_{\nun}X &= (X_{ij})(\delta_{ij}N_{n_j})-q(\delta_{ij}N_{n_i})(X_{ij}) = (X_{ij}N_{n_j}-qN_{n_i}X_{ij})
\end{align*}
and so 
\[
(\Phi_{\nun}A-qA\Phi_{\nun})+(XN_{\nun}-qN_{\nun}X) = \left((\Phi_{n_i}A_{ij}-qA_{ij}\Phi_{n_j})+(X_{ij}N_{n_j}-qN_{n_i}X_{ij})\right).
\]
From this it follows that
\[
\pf_{\nun}/\pf_{\nun}^2 \cong \bigoplus_{1\le i,j\le k}\Wc_{n_i,n_j}
\]
and so
\[
\dim_\Bbbk \pf_{\nun}/\pf_{\nun}^2 = \sum_{1\le i,j\le k}\dim_E\Wc_{n_i,n_j} = \sum_{1\le i,j\le k}n_in_j = \left(\sum_{i=1}^kn_i \right)^2 = n^2.
\]
\end{proof}

Now take any irreducible component $\Zc$ of $\Nct(n,q)_{\Bbbk}$, and take a partition $\nun$ of $n$ with $(\Phi_{\nun},N_{\nun})\in\Zc(\Bbbkbar)$. As $\Zc$ is irreducible, it follows that 
\[
\dim \Zc = \dim \Gamma(\Zc)_{\pf_{\nun}} \le \dim \Gamma(\Nct(n,q))_{\pf_{\nun}} \le \dim_{\Bbbk}\pf_{\nun}/\pf_{\nun}^2 = n^2.
\]
Thus each irreducible component of $\Nct(n,q)_{\Bbbk}$ has dimension at most $n^2$, so $\dim \Nct(n,q)_{\Bbbk}$. As observed above this implies that $\Nct(n,q)_{\Bbbk}$ is a complete intersection of dimension $n^2$. In particular, $\Nct(n,q)_{\Bbbk}$ is equidimensional of dimension $n^2$. Thus as $\dim_{\Bbbk} \pf_{\nun}/\pf_{\nun}^2 = n^2 = \dim \dim \Gamma(\Nct(n,q))_{\pf_{\nun}}$ for all $\nun$, $\Nct(n,q)_{\Bbbk}$ is regular at each of the points $(\Phi_{\nun},N_{\nun})$.

Hence each irreducible component $\Zc$ of $\Nct(n,q)_{\Bbbk}$ is regular at at least one point, and so $\Nct(n,q)_{\Bbbk}$ is generically reduced. As $\Nct(n,q)_{\Bbbk}$ is a complete intersection, and thus Cohen--Macaulay, it is in fact reduced.
\end{proof}

We shall also isolate the following simple consequence of this proof:

\begin{prop}\label{prop:N(n,q) components}
If $\Bbbk$ is a field which is an $\Oc$-algebra, and $q$ is banal in $\Bbbk$, then the irreducible components of $\Nct(n,q)_{\Bbbk}$ are in bijection with the set of unordered partitions of $n$.

For any unordered partition $\nun = (n_1,\ldots,n_k)$ of $n$, the irreducible component of $\Nct(n,q)_{\Bbbk}$ corresponding to $\nun$ is the Zariski closure of the set of points $(\Phi,N)\in \Nct(n,q)_{\Bbbk}(\Bbbkbar)$ such that $N\in M_n(\Bbbkbar)$ is nilpotent has Jordan blocks of sizes $n_1,\ldots,n_k$.
\end{prop}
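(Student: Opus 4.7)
The plan is to build directly on the analysis already carried out in the proof of Proposition \ref{prop:Nc(n,q)_E is CI}. There it was shown that $\Nct(n,q)_{\Bbbk}(\Bbbkbar) = \bigsqcup_{\nun}\Uc_{\nun}$, where $\Uc_{\nun} = \pi^{-1}(C_{\nun})$ ranges over the unordered partitions $\nun$ of $n$; that each $\Uc_{\nun}$ is irreducible (Lemma \ref{lem:U_n irreducible}); that $\Nct(n,q)_{\Bbbk}$ is equidimensional of dimension $n^2$; and that every irreducible component of $\Nct(n,q)_{\Bbbk}$ is of the form $\overline{\Uc}_{\nun}$ for at least one partition $\nun$. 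What remains is to prove that each closure $\overline{\Uc}_{\nun}$ is in fact an irreducible component (rather than being properly contained in some larger $\overline{\Uc}_{\mun}$), and that the assignment $\nun \mapsto \overline{\Uc}_{\nun}$ is injective.

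The key step is a dimension count showing $\dim \Uc_{\nun} = n^2$ for every $\nun$. The projection $\pi : \Uc_{\nun} \onto C_{\nun}$ is $\GL_n$-equivariant and surjective, so transitivity of the conjugation action on the orbit $C_{\nun}$ makes all fibers isomorphic. The fiber $\pi^{-1}(N_{\nun}) = \{X \in \GL_n : X N_{\nun} = q N_{\nun} X\}$ is nonempty (it contains $\Phi_{\nun}$) and carries a simply transitive action of the centralizer $Z_{\GL_n}(N_{\nun})$ by left multiplication, so $\dim \pi^{-1}(N_{\nun}) = \dim Z_{\GL_n}(N_{\nun})$. Combined with the orbit--stabilizer identity $\dim C_{\nun} = n^2 - \dim Z_{\GL_n}(N_{\nun})$, this gives $\dim \Uc_{\nun} = n^2$. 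Hence $\overline{\Uc}_{\nun}$ is an irreducible closed subset of $\Nct(n,q)_{\Bbbk}$ whose dimension equals the (uniform) dimension of the ambient scheme, so $\overline{\Uc}_{\nun}$ must be an irreducible component.

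For injectivity, I would observe that $\Uc_{\nun}$ is open in $\overline{\Uc}_{\nun}$: the conjugacy class $C_{\nun} \subseteq M_n$ is open in its closure (every orbit of an algebraic group action is locally closed), and $\Uc_{\nun} = \pi^{-1}(C_{\nun})$ is the preimage of this open set under the continuous morphism $\pi$. If $\overline{\Uc}_{\nun} = \overline{\Uc}_{\mun}$ for some partitions $\nun \neq \mun$, then $\Uc_{\nun}$ and $\Uc_{\mun}$ would both be nonempty open subsets of the same irreducible variety and would therefore intersect; but they are disjoint by construction, since they project under $\pi$ to distinct conjugacy classes. This contradiction forces $\nun = \mun$, yielding the desired bijection, and the explicit description of the component indexed by $(n_1,\ldots,n_k)$ is then immediate from the definitions of $\Uc_{\nun}$ and $C_{\nun}$. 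The one step I expect to be genuinely non-routine is the dimension computation $\dim \Uc_{\nun} = n^2$, which hinges on identifying the fiber of $\pi$ over $N_{\nun}$ as a centralizer torsor rather than as a more complicated subvariety of $\GL_n$; everything else is a direct consequence of what has already been established.
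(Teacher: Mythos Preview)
Your proposal is correct and follows essentially the same route as the paper. Both arguments reduce to showing $\dim \Uc_{\nun} = n^2$ for every partition $\nun$, and both establish this by combining the orbit--stabilizer identity $\dim C_{\nun} = n^2 - \dim Z_{\GL_n}(N_{\nun})$ with the observation that $\pi^{-1}(N_{\nun})$ has the same dimension as the centralizer $Z_{\GL_n}(N_{\nun})$; your phrasing of this last point as ``$\pi^{-1}(N_{\nun})$ is a $Z_{\GL_n}(N_{\nun})$-torsor under left multiplication'' is exactly the content of the paper's explicit isomorphism $X \mapsto X\Phi_{\nun}$ from $\stab_{\GL_n}(N_{\nun})$ to $\pi^{-1}(N_{\nun})$. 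The one place you go slightly further than the paper is in spelling out injectivity of $\nun \mapsto \overline{\Uc}_{\nun}$ via the openness of $\Uc_{\nun}$ in its closure; the paper leaves this implicit, so your extra paragraph is a welcome clarification rather than a deviation.
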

\begin{proof}
In the notation of the proof of Proposition \ref{prop:Nc(n,q)_E is CI}, it suffices to show that $\dim \Uc_{\nun} = n^2$ for each $n$. Indeed, as is a finite type reduced $\Bbbk$-scheme, each irreducible component of $\Nct(n,q)_{\Bbbk}$ is the Zariski closure of its $\Bbbkbar$-points, and so is in the form $\overline{\Uc}_{\nun}$ for some $\nun$. As each $\overline{\Uc}_{\nun}$ is irreducible and $\Nct(n,q)_{\Bbbk} = \bigcup_{\nun}\overline{\Uc}_{\nun}$, if we have $\dim \Uc_{\nun} = n^2 = \dim \Nct(n,q)_{\Bbbk}$ for all $\nun$, then it will follow that the $\overline{\Uc}_{\nun}$'s are exactly the irreducible components of $\Nct(n,q)_{\Bbbk}$.

Now by the proof of Lemma \ref{lem:U_n irreducible} and the orbit-stabilizer theorem:
\[
\dim \Uc_{\nun} = \dim \pi^{-1}(N_{\nun}) + \dim C_{\nun} = n^2 + \dim_{\Bbbkbar} \pi^{-1}(N_{\nun}) - \dim_{\Bbbkbar}\stab_{\GL_n(\Bbbkbar)}(N_{\nun}).
\]
Now by definition, we have
\begin{align*}
\stab_{\GL_n(\Bbbkbar)}(N_{\nun}) &= \{X\in \GL_n(\Bbbkbar)|XN_{\nun} = N_{\nun}X\}\\
\pi^{-1}(N_{\nun}) &= \{Y\in \GL_n(\Bbbkbar)|YN_{\nun} = qN_{\nun}Y\}.
\end{align*}
But now as $\Phi_{\nun}\in \GL_n(\Bbbkbar)$ satisfies $\Phi_{\nun}N_{\nun} = qN_{\nun}\Phi_{\nun}$ (and hence also $\Phi_{\nun}^{-1}N_{\nun} = q^{-1}N_{\nun}\Phi_{\nun}^{-1}$) we get that for all $X\in \stab_{\GL_n(\Bbbkbar)}(N_{\nun})$ and $Y\in \pi^{-1}(N_{\nun})$ that
\begin{align*}
(X\Phi_{\nun})N &= X\Phi_{\nun}N = qXN\Phi_{\nun}=qNX\Phi_{\nun} = qN(X\Phi_{\nun})\\
(Y\Phi_{\nun}^{-1})N &= Y\Phi_{\nun}^{-1}N = q^{-1}YN\Phi_{\nun}^{-1}=q^{-1}qNY\Phi_{\nun}^{-1} = N(Y\Phi_{\nun}^{-1}),
\end{align*}
and so the map $X\mapsto X\Phi_{\nun}$ defines an isomorphism $\stab_{\GL_n(\Bbbkbar)}(N_{\nun})\isomto \pi^{-1}(N_{\nun})$ of $\Bbbkbar$-varieties, giving $\dim_{\Bbbkbar} \pi^{-1}(N_{\nun}) = \dim_{\Bbbkbar}\stab_{\GL_n(\Bbbkbar)}(N_{\nun})$, and so indeed $\dim \Uc_{\nun} = n^2$.
\end{proof}

We shall also record the following simple fact:

\begin{prop}\label{prop:N(n,q) connected}
$\Nc(n,q)_E$, $\Nc(n,q)$, $\Mc^{\unip}(n,q)_E$ and $\Mc^{\unip}(n,q)$ are all connected schemes.
\end{prop}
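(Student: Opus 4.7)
The plan is to reduce all four connectedness statements to connectedness of $\Nc(n,q)_E$, and to establish the latter by exhibiting explicit $\A^1$-families that link together the irreducible components.

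First, by Proposition \ref{prop:N(n,q) local model} we have an isomorphism $\Lc:\Mc^{\unip}(n,q)\isomto \Nc(n,q)$, which base-changes to an isomorphism $\Mc^{\unip}(n,q)_E \isomto \Nc(n,q)_E$. So it suffices to prove that $\Nc(n,q)$ and $\Nc(n,q)_E$ are connected.

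For the generic fiber, I would start from the facts that $\Nc(n,q)_E = \Nct(n,q)_E$ (Proposition \ref{prop:Nc(n,q)_E is CI}) and that the irreducible components of $\Nct(n,q)_E$ are precisely the closures $\overline{\Uc}_{\nun}$, indexed by unordered partitions $\nun$ of $n$ (Proposition \ref{prop:N(n,q) components}). Each $\overline{\Uc}_{\nun}$ is irreducible, hence connected, so the whole question reduces to showing that every $\overline{\Uc}_{\nun}$ lies in the same connected component as the component $\overline{\Uc}_{\nun_0}$ attached to the trivial partition $\nun_0=(1,1,\ldots,1)$, for which $N_{\nun_0}=0$. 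For this I would write down the explicit morphism
\[
f_{\nun}:\A^1_E \to \Nct(n,q)_E, \qquad t\mapsto (\Phi_{\nun},\,tN_{\nun}),
\]
which lies in $\Nct(n,q)_E$ because $\Phi_{\nun}(tN_{\nun})\Phi_{\nun}^{-1}=t\cdot qN_{\nun}=q(tN_{\nun})$. The image is connected, contains $(\Phi_{\nun},N_{\nun})\in\overline{\Uc}_{\nun}$ at $t=1$, and contains $(\Phi_{\nun},0)\in\overline{\Uc}_{\nun_0}$ at $t=0$ (since the zero matrix has Jordan type $\nun_0$). Hence $\overline{\Uc}_{\nun}$ and $\overline{\Uc}_{\nun_0}$ share a connected component for every $\nun$, and $\Nc(n,q)_E$ is connected.

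For the integral model $\Nc(n,q)$, I would leverage flatness. By construction $\Nc(n,q)$ is the maximal reduced $\Oc$-flat subscheme of $\Nct(n,q)$, so each of its irreducible components dominates $\Spec\Oc$ and therefore has nonempty intersection with the generic fiber. If $\Nc(n,q)=A\sqcup B$ were a decomposition into disjoint nonempty clopens, then $A$ and $B$, being closed, would each be a union of irreducible components and would meet $\Nc(n,q)_E$ nontrivially; this would yield a nontrivial clopen decomposition $\Nc(n,q)_E=A_E\sqcup B_E$, contradicting what was just shown. Connectedness of $\Mc^{\unip}(n,q)$ then follows from $\Lc$.

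The proof is essentially routine once Propositions \ref{prop:N(n,q) local model}, \ref{prop:Nc(n,q)_E is CI} and \ref{prop:N(n,q) components} are in hand, so I do not expect any real obstacle. The only point needing mild care is verifying that at $t=0$ the family $f_{\nun}$ lands on $\overline{\Uc}_{\nun_0}$ and not merely on some other degenerate boundary stratum — but this is immediate from the description of $\overline{\Uc}_{\nun_0}$ in Proposition \ref{prop:N(n,q) components} as the Zariski closure of the locus where $N$ has Jordan type $(1,\ldots,1)$, which certainly includes $N=0$.
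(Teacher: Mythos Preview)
Your proof is correct and rests on the same key device as the paper's: the scaling map $t\mapsto(\Phi,tN)$, which connects points to the $N=0$ locus. The organization differs slightly. You invoke Proposition~\ref{prop:N(n,q) components} to list the irreducible components $\overline{\Uc}_{\nun}$, then connect each representative $(\Phi_{\nun},N_{\nun})$ to $(\Phi_{\nun},0)\in\overline{\Uc}_{\nun_0}$. The paper bypasses the component classification entirely: it applies the scaling to an \emph{arbitrary} $\Ebar$-point $(\Phi,N)$, connecting it to $(\Phi,0)$, and then observes that the locus $V(N=0)\subseteq\Nc(n,q)_E$ is isomorphic to $\GL_{n,E}$, which is connected. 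This is a bit more economical, since it needs neither Proposition~\ref{prop:Nc(n,q)_E is CI} nor Proposition~\ref{prop:N(n,q) components}. Your flatness argument for passing from $\Nc(n,q)_E$ to $\Nc(n,q)$ is the same as the paper's.
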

\begin{proof}
Since $\Nc(n,q)$ is $\Oc$ flat, if $\Nc(n,q)_E$ is connected then $\Nc(n,q)$ will be as well. Also Proposition \ref{prop:N(n,q) local model} implies that $\Nc(n,q)\cong \Mc^{\unip}(n,q)$ and $\Nc(n,q)_E\cong \Mc^{\unip}(n,q)_E$. Thus it will suffice to prove that $\Nc(n,q)_E$ is connected.

For any point $(\Sigma,N)\in \Nc(n,q)(\Ebar)$, note that $t\mapsto (\Sigma,Nt)$ defines a map of schemes $\A^1_{\Ebar}\to \Nc(n,q)_{\Ebar}$ with connected image. Thus $(\Sigma,N)$ and $(\Sigma,0)$ are in the same connected component of $\Nc(n,q)_E$. But now the subscheme $V(N = 0)\subseteq \Nc(n,q)_E$ is isomorphic to $\GL_{n,E}$, and hence is connected. Thus all points of $\Nc(n,q)(\Ebar)$ in the form $(\Sigma,0)$ lie in the same connected component of $\Nc(n,q)_E$. Thus all $\Ebar$-points of $\Nc(n,q)_E$ lie in the same connected component, from whence it follows that $\Nc(n,q)_E$ is connected.
\end{proof}

\subsection{The banal case}\label{ssec:local banal}

While the matrix equation $\Phi N \Phi^{-1} = qN$ defining $\Nct(n,q)$ is certainly simpler than the matrix equation $\Phi \Sigma\Phi^{-1} = \Sigma^q$ defining $\Mc(n,q)$, the scheme $\Nc(n,q)$ appearing in Proposition \ref{prop:N(n,q) local model} is defined as the maximal reduced $\Oc$-flat subscheme of $\Nct(n,q)$, and thus might be defined by a by a set of additional equations beyond the equation $\Phi N \Phi^{-1} = qN$. There is no apparent easy way to find these additional equations, so directly working with $\Nc(n,q)$ is still likely to be difficult in general.

To get around this issue, we will restrict our attention to a particular values of $q$ which make the situation much nicer. We will say that $q$ is \emph{banal} if $q\not\equiv 0\pmod{\ell}$ and $q^i\not\equiv 1\pmod{\ell}$ for each $i=1,\ldots,n$. This is equivalent to saying that $q$ is banal in a field of characteristic $\ell$ under our above definition.

In this case, we have the following:

\begin{prop}\label{prop:Nc(n,q) is CI}
If $q$ is banal then:
\begin{enumerate}
	\item $\Nc(n,q) = \Nct(n,q)$.
	\item $\Nc(n,q)$ is a complete intersection, reduced, flat and equidimensional over $\Oc$ of relative dimension $n^2$.
	\item $\Nc(n,q)_{\F}$ is a reduced complete intersection over $\F$ of dimension $n^2$.	
	\item $\Nc(n,q)\cong \Mc^{\unip}(n,q)$ is a connected component of $\Mc(n,q)$.
\end{enumerate}
\end{prop}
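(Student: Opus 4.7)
The plan is to show that, in the banal case, the scheme $\Nct(n,q)$ is already reduced and $\Oc$-flat, so that the maximal reduced $\Oc$-flat subscheme $\Nc(n,q)$ must coincide with $\Nct(n,q)$; everything else then follows from Proposition \ref{prop:Nc(n,q)_E is CI} and from the isomorphism with $\Mc^{\unip}(n,q)$ supplied by Proposition \ref{prop:N(n,q) local model}. The key initial observation is that ``$q$ is banal'' in the sense of this subsection is precisely the statement that $q$ is banal in every field of characteristic $\ell$, in particular in $\F$, so Proposition \ref{prop:Nc(n,q)_E is CI} applies to both fibers of $\Nct(n,q) \to \Spec \Oc$, giving that each is a reduced complete intersection of dimension $n^2$.

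For (1) and (2), I would upgrade these fiberwise complete intersection statements to the total space as follows. The scheme $\Nct(n,q)$ sits inside the smooth $\Oc$-scheme $\GL_{n,\Oc} \times_\Oc M_{n,\Oc}$ (of relative dimension $2n^2$) as the closed subscheme cut out by the $n^2$ scalar equations coming from $\Phi N = qN\Phi$. Since each fiber of $\Nct(n,q) \to \Spec\Oc$ has dimension exactly $n^2 = 2n^2 - n^2$, a standard fiberwise criterion for relative complete intersections over a DVR shows that these equations form a regular sequence, so $\Nct(n,q)$ is a relative complete intersection, flat over $\Oc$ of relative dimension $n^2$, and Cohen--Macaulay. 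Flatness over $\Oc$ forces every irreducible component of $\Nct(n,q)$ to dominate $\Spec\Oc$, so each generic point lies in the reduced scheme $\Nct(n,q)_E$; combined with the Cohen--Macaulay property this yields that $\Nct(n,q)$ itself is reduced, and hence $\Nc(n,q) = \Nct(n,q)$. Statement (3) then follows by base change to $\F$ together with a second application of Proposition \ref{prop:Nc(n,q)_E is CI}.

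For (4), the isomorphism $\Nc(n,q) \cong \Mc^{\unip}(n,q)$ is Proposition \ref{prop:N(n,q) local model}, so the content is the connected-component claim. My approach is to revisit the eigenvalue analysis from the proof of Lemma \ref{lem:conn comps}: at any geometric point of $\Mc(n,q)$ each eigenvalue $\lambda$ of $\Sigma$ satisfies $\lambda^{q^k - 1} = 1$ for some $k \le n$, so $\lambda$ has order dividing $q^k - 1$. When $q$ is banal, each such $q^k - 1$ is a unit in $\F$, so the roots of unity that arise as eigenvalues specialize injectively from $\Ebar$ to $\Fbar$, and the finite set of possible characteristic polynomials $\{f_1,\ldots,f_N\}$ (lifted to $\Oc[t]$, possibly after enlarging $\Oc$) remains pairwise distinct modulo $\varpi$. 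Consequently the closed subschemes $V(\chi_\Sigma - f_i) \subseteq \Mc(n,q)$ are pairwise disjoint on all of $\Mc(n,q)$, not merely on its generic fiber, while covering $\Mc(n,q)$ set-theoretically, so each is both open and closed. Taking $f_i = (t-1)^n$ gives that $\Mc^{\unip}(n,q)_\Oc^\circ = V(\chi_\Sigma - (t-1)^n)$ is open and closed in $\Mc(n,q)$; applying $\Lc_\Oc$, Lemma \ref{lem:N str nilp}, and part (1) identifies this scheme-theoretically with $\Mc^{\unip}(n,q)$, and Proposition \ref{prop:N(n,q) connected} supplies connectedness. The main obstacle I anticipate is the reducedness step in (1): it is this that upgrades every subsequent identification from topological to scheme-theoretic, and without it the clean equality $\Nc(n,q) = \Nct(n,q)$ and the identification $\Mc^{\unip}(n,q)_\Oc^\circ = \Mc^{\unip}(n,q)$ both fall through.
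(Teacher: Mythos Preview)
Your proposal is correct and follows essentially the same route as the paper: both fibers are reduced complete intersections of dimension $n^2$ by Proposition \ref{prop:Nc(n,q)_E is CI}, a counting argument upgrades this to $\Nct(n,q)$ being a flat relative complete intersection over $\Oc$, and reducedness follows from Cohen--Macaulayness together with generic reducedness; for (4), both you and the paper separate $\Mc(n,q)$ into disjoint closed pieces according to the (finitely many) possible characteristic polynomials of $\Sigma$, using banality to ensure the relevant roots of unity remain distinct modulo $\varpi$.

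Two small remarks. First, your citation of Lemma \ref{lem:N str nilp} in the last step is slightly imprecise, since that lemma is stated for fields; but its proof works verbatim over $\Oc$ once $q$ is banal (each $1-q^{n-k}$ is a unit in $\Oc$), or alternatively one simply observes that part (1) already forces $\Nc(n,q)=\Nct(n,q)\supseteq \Nc(n,q)_\Oc^\circ\supseteq \Nc(n,q)$. Second, for the identification $\Mc^{\unip}(n,q)_\Oc^\circ = \Mc^{\unip}(n,q)$ the paper takes a slightly more direct path than yours: since $\Mc^{\unip}(n,q)_\Oc^\circ$ is open and closed in $\Mc(n,q)$, it inherits reducedness and $\Oc$-flatness from Proposition \ref{prop: M lci}, and therefore coincides with its own maximal reduced $\Oc$-flat subscheme, which is $\Mc^{\unip}(n,q)$ by construction. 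Your detour through $\Lc_\Oc$ and $\Nc(n,q)$ reaches the same conclusion.
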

\begin{proof}
Since $q$ is banal, $\Nct(n,q)_E$ and $\Nct(n,q)_{\F}$ are both reduced complete intersections of dimension $n^2$ by Proposition \ref{prop:Nc(n,q)_E is CI}.

In particular it follows that $\Nct(n,q)$ has relative dimension at most $n^2$ over $\Oc$. But $\Gamma(\Nct(n,q))$ is generated over $\Oc$ by $2n^2$ generators (the entries of the matrices $\Phi,N\in \Gamma(\Nct(n,q))$) with $n^2$ relations (given by the matrix equation $\Phi N = q N\Phi$), and so it follows that $\Nct(n,q)$ is in fact a complete intersection over $\Oc$, equidimensional of relative dimension $n^2$.

In particular, it is flat over $\Oc$, and reduced as $\Nct(n,q)_E$ is reduced. By definition, this implies that $\Nc(n,q) = \Nct(n,q)$, proving (1) and (2).

Now $\Nc(n,q)_{\F} = \Nct(n,q)_{\F}$ by definition, so (3) follows.

For (4), the isomorphism $\Nc(n,q)\cong \Mc^{\unip}(n,q)$ comes from Proposition \ref{prop:N(n,q) local model}. It remains to show that $\Mc^{\unip}(n,q)$ is a connected component of $\Mc(n,q)$. Recall that it is indeed connected by Proposition \ref{prop:N(n,q) connected}.

Let $P = (\Phi,\Sigma)\in \Mc(n,q)$ be any point, and let $\kappa(P)$ be the residue field of $P$, which is a field of either characteristic $0$ or $\ell$, since $\Mc(n,q)$ is a $\Oc$-scheme. Let $\lambda_1,\ldots,\lambda_n\in \overline{\kappa(P)}$ be the eigenvalues of $\Sigma$. Then as $\Phi\Sigma\Phi^{-1} = \Sigma^q$ in $M_n(\kappa(P))$, the eigenvalues of $\Sigma$ are the same as those of $\Phi\Sigma\Phi^{-1} = \Sigma^q$, and hence $(\lambda_1^q,\lambda_2^q,\ldots,\lambda_n^q)$ is a permutation of $(\lambda_1,\lambda_2,\ldots,\lambda_n)$. In particular, for each $j=1,\ldots,n$ there is some $i_j\in\{1,\ldots,n\}$ with $\lambda_j = \lambda_j^{q^{i_j}-1}$.

Thus letting $M = \prod_{i=1}^n(q^i-1)$, it follows that each eigenvalue of $\Sigma$ is a $M^{th}$ root of unity in $\overline{\kappa(P)}$. Note that the condition that $q$ is banal implies that $\ell\nmid M$. Now let $\zeta_M\in \Qbar$ be a primitive $M^{th}$ root of unity, and consider the finite set of polynomials
\[
\Pc_M = \{(t-\zeta_M^{e_1})\cdots(t-\zeta_M^{e_n})|e_1,\ldots,e_n\in \Z\}\subseteq \Z[\zeta_M][t].
\]
By the above observation, there is some $p(t)\in \Pc_M$ with $\chi_\Sigma(t) = p(t)$ in $\kappa(P)$. On the other hand, since $\ell\nmid M$, the $M^{th}$ roots of unity $1,\zeta_M,\zeta_M^2,\ldots,\zeta_M^{M-1}$ are distinct in $\Fbar_\ell$. Since $\kappa(P)$ has characteristic either $0$ or $\ell$, it follows that the polynomials in $\Pc_M$ are all distinct in $\kappa(P)[t]$, and so in fact there is a unique $p(t)\in \Pc_M$ with $\chi_\Sigma(t) = p(t)$ in $\kappa(P)[t]$.

It follows from this, and the fact that $\Mc(n,q)$ is reduced, that $\ds\Mc(n,q) = \bigsqcup_{p(t)\in \Pc_M}V(\chi_\Sigma(t) = p(t))$. Since each $V(\chi_\Sigma(t) = p(t))$ is a closed subscheme of $\Mc(n,q)$, it follows that each $V(\chi_\Sigma(t) = p(t))$ is a union of connected components of $\Mc(n,q)$.

In particular, the scheme $M^{\unip}(n,q)^\circ_{\Oc} := V(\chi_\Sigma(t) = (t-1)^n)$ considered above is a union of connected components of $\Mc(n,q)$. Since $\Mc(n,q)_{\Oc}$ is reduced and $\Oc$-flat, this implies that $\Mc^{\unip}(n,q)^\circ_{\Oc}$ is reduced and $\Oc$-flat as well. In particular, $\Mc^{\unip}(n,q) = \Mc^{\unip}(n,q)^\circ_{\Oc}$, so $\Mc^{\unip}(n,q)$ is a union of connected components of $\Mc(n,q)$, and hence is a connected component as it is already connected.
\end{proof}

\section{Models for local deformation rings in the banal case}\label{sec:banal local models}

For this section we will assume that $q$ is banal (i.e. that $q\not\equiv 0\pmod{\ell}$ and $q^i\not\equiv 1\pmod{\ell}$ for all $i=1,\ldots,n$). Again let $\rbar:G_L\to \GL_n(\F)$ be a continuous Galois representation with $\rbar(\Pt_L) = 1$, and assume that $\rbar$ is unipotent. We shall also assume, by enlarging $\F$ if necessary, that $\F$ contains all eigenvalues of $\rbar(\varphi)$.

By Propositions \ref{prop:N(n,q) local model} and \ref{prop:Nc(n,q) is CI} we have $R^{\square}(\rbar) = R^{\unip}(\rbar)$ and $R^\square(\rbar)$ is isomorphic to the complete local ring of $\Nc(n,q)$ at the $\F$-point $(\rbar(\varphi),\log\rbar(\sigma))$.

While the ring $\Gamma(\Nc(n,q))$ is a fairly nicely behaved when $q$ is banal by Proposition \ref{prop:Nc(n,q) is CI}, it is still in general defined by $2n^2$ generators with $n^2$ relations. Even for small values of $n$, like $n=3$ or $n=4$, this can make it quite difficult to work with explicitly.

The purpose of this section is to construct a much simpler scheme, specific to the banal case, which can also be used to compute $R^\square(\rbar)$. This ring will be simple enough to allow us to carry out explicit computations in Section \ref{sec:R22} and \ref{sec:R121}.

\begin{defn}\label{def:Nc^nun}
Fix an integer $n\ge 1$ and an \emph{ordered} partition $\nun = (n_1,n_2,\ldots,n_k)$. For any ring $R$, let $\Nc^{\nun}(R)$ be the set $2k-1$ tuples $(X_1,N_1,X_2,N_2,\ldots,N_{k-1},X_k)$ where for each $i$, $X_i \in M_{n_i\times n_i}(R)$ and $N_{i}\in M_{n_i\times n_{i+1}}(R)$ satisfying the equations $X_iN_i = N_iX_{i+1}$ for each $i = 1,\ldots,k-1$.

This defines an affine scheme $\Nc^{\nun}_{\Z}$ over $\Spec \Z$. For any ring $R$, let $\Nc^{\nun}_R = \Nc^{\nun}_{\Z}\times_{\Spec \Z}\Spec R$. We will write $\Nc^{\nun} = \Nc^{\nun}_{\Oc}$.
\end{defn}

\subsection{Modeling $R^\square(\rbar)$ via $\Nc^{\nun}$}
We shall first show that the spaces $\Nc^{\nun}$ can be used to model the local deformation ring $R^\square(\rbar)$.

For this section, fix an $A\in \Cc_{\Oc}$, that is, an Artinian local $\Oc$-algebra $A$ with residue field $\F$. Let $\mf_A$ denote the maximal ideal of $A$, so that $A/\mf_A = \F$.

As noted above, $R^\square(\rbar)$ is isomorphic to the complete local ring of $\Nc(n,q)$ at the $\F$-point $x_{\rbar}:=(\rbar(\varphi),\log \rbar(\sigma))$. It follows that there are natural bijections
\begin{align*}
\Hom_{\Oc}(R^\square(\rbar),A) 
&\equiv \Nc(n,q)_{x_{\rbar}}(A)
\equiv 
\left\{(\Phi,N)\in \GL_n(A)\times M_{n\times n}(A)\middle|\parbox{1.55in}{$\Phi N = qN\Phi$,\\ $\Phi\equiv \rbar(\varphi)\pmod{\mf_A}$,\\ $N\equiv \log\rbar(\sigma)\pmod{\mf_A}$}\right\}.
\end{align*}
We aim to find a simpler description of the set on the right.

First, we note the following lemmas:

\begin{lemma}\label{lem:A^n unique decomposition}
Let $M$ be a free $A$-module of finite rank, and let $T:M\to M$ be an $A$-linear operator. Let $\Mbar = M/\mf_AM$ and let $\Tbar:\Mbar\to\Mbar$ be the induced $\F$-linear operator.

Let the distinct eigenvalues of $\Tbar$ be $\lambdabar_1,\lambdabar_2,\ldots,\lambdabar_k$, and assume all of these are in $\F$. For each $i$, let $\lambda_i\in A$ be a lift of $\lambdabar_i$. Then there is a unique decomposition
\[
M = M_{1}\oplus M_{2}\oplus\cdots\oplus M_{k}
\]
where for each $i$, $T(M_i)\subseteq M_i$ and $T-\lambda_i$ acts nilpotently on $M_i$.

Moreover, each $M_i$ is a free $A$-module and $\Mbar_i := M_i/\mf_A M_i\subseteq \Mbar$ is the generalized eigenspace of the eigenvalue $\lambdabar_i$ of $\Tbar$.
\end{lemma}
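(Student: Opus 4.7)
The plan is to prove the decomposition by Hensel-lifting the factorization of the characteristic polynomial of $T$, using Cayley--Hamilton and a Bezout identity to produce orthogonal idempotents, and then exploiting nilpotence of $\mf_A$ to control the residual error.

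First, let $\chi_T(t)\in A[t]$ be the characteristic polynomial of $T$. Reducing modulo $\mf_A$ gives $\chi_{\Tbar}(t) = \prod_{i=1}^k (t-\lambdabar_i)^{m_i}$ with $\lambdabar_1,\ldots,\lambdabar_k$ distinct in $\F$, so the factors are pairwise coprime in $\F[t]$. Since $A$ is Artinian local (hence $\mf_A$-adically complete), Hensel's lemma lets me lift this to a factorization $\chi_T(t) = f_1(t)\cdots f_k(t)$ in $A[t]$, where each $f_i(t)$ is monic of degree $m_i$ and reduces to $(t-\lambdabar_i)^{m_i}$. The factors remain pairwise coprime in $A[t]$ (by Nakayama applied to the resultant), so I can find $g_i\in A[t]$ with $\sum_i g_i(t)\prod_{j\ne i}f_j(t) = 1$.

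Next, set $e_i = g_i(T)\prod_{j\ne i} f_j(T) \in \End_A(M)$. Using Cayley--Hamilton ($\chi_T(T)=0$) together with the Bezout identity, I can check that $\sum_i e_i = \id_M$ and $e_ie_j = 0$ for $i\ne j$, so $\{e_i\}$ is a complete system of orthogonal idempotents commuting with $T$. Define $M_i := e_iM$, giving $M = M_1\oplus\cdots\oplus M_k$ with each $M_i$ stable under $T$. Each $M_i$ is a direct summand of the free $A$-module $M$, hence projective, and over the Artinian local ring $A$ projectivity implies freeness. Reducing mod $\mf_A$ identifies $\Mbar_i$ with $\ebar_iM$, and since $\ebar_i$ projects onto the generalized $\lambdabar_i$-eigenspace of $\Tbar$ (this is standard over a field with the factorization of $\chi_{\Tbar}$), I get the identification of $\Mbar_i$ with the generalized eigenspace.

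For the nilpotence of $T-\lambda_i$ on $M_i$, I know that $f_i(T)$ annihilates $M_i$ because $f_i(T)e_i = g_i(T)\chi_T(T) = 0$. Writing $f_i(t) = (t-\lambda_i)^{m_i} + h_i(t)$ with $h_i\in \mf_A[t]$ (possible because both $f_i$ and $(t-\lambda_i)^{m_i}$ reduce to $(t-\lambdabar_i)^{m_i}$ mod $\mf_A$), I conclude that on $M_i$ the endomorphism $(T-\lambda_i)^{m_i}$ equals $-h_i(T)$, which lies in $\mf_A\cdot \End_A(M_i)$. Since $\mf_A$ is nilpotent, say $\mf_A^N = 0$, a straightforward induction shows $(T-\lambda_i)^{m_iN} \in \mf_A^N\cdot \End_A(M_i) = 0$, yielding the required nilpotence.

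Finally, for uniqueness, suppose $M = \bigoplus M_i'$ is another decomposition with the stated properties. Note that on $M_j'$ for $j\ne i$, the operator $T-\lambda_i = (T-\lambda_j) + (\lambda_j - \lambda_i)$ is the sum of a nilpotent and a unit of $A$ (since $\lambda_j-\lambda_i$ lifts $\lambdabar_j-\lambdabar_i\in\F^\times$) that commute, hence is a unit on $M_j'$. On $M_i'$, $T-\lambda_i$ is nilpotent. So $M_i' = \bigcup_{r\ge 0}\ker(T-\lambda_i)^r$, a characterization independent of the decomposition, which must therefore agree with $M_i$. The main subtlety in the whole argument is precisely this use of nilpotence of $\mf_A$ to pass from "$f_i(T)$ annihilates $M_i$" to "$(T-\lambda_i)$ is nilpotent on $M_i$"; the rest is a standard Chinese Remainder / idempotent construction adapted to the Artinian local base.
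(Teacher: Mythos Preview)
Your proof is correct but follows a genuinely different route from the paper's. The paper proceeds by induction on the number of eigenvalues, peeling off one summand at a time via a Fitting-type decomposition: for the last eigenvalue $\lambdabar_{k+1}$ it uses that the chains $\{\ker(T-\lambda_{k+1})^m\}$ and $\{(T-\lambda_{k+1})^m M\}$ stabilize (since $M$ has finite length), sets $M_{k+1}=\ker(T-\lambda_{k+1})^N$ and $M'=(T-\lambda_{k+1})^N M$ for large $N$, and checks via a length count that $M=M_{k+1}\oplus M'$ with $T-\lambda_{k+1}$ nilpotent on the first and invertible on the second; then it recurses on $M'$. Your approach instead lifts the factorization of $\chi_{\Tbar}$ through Hensel, builds the decomposition in one step via CRT idempotents in $A[T]$, and deduces nilpotence of $T-\lambda_i$ on $M_i$ from nilpotence of $\mf_A$. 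The uniqueness arguments are essentially identical.

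Each has its merits. The paper's argument is more self-contained and needs nothing beyond finite length and elementary linear algebra; it also makes the nilpotence of $T-\lambda_i$ on $M_i$ immediate from the construction rather than requiring the extra step through $\mf_A$-nilpotence. Your argument is more structural: it explains the decomposition as coming from an algebra isomorphism $A[t]/(\chi_T)\cong\prod_i A[t]/(f_i)$, which makes the idempotents canonical and the independence of the decomposition from choices (other than the lifts $\lambda_i$) transparent. Your route also generalizes cleanly to any Henselian local base, whereas the paper's length argument is tied to the Artinian hypothesis.
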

\begin{proof}
We shall prove the existence of this decomposition by induction on $k$. When $k=0$, we have $M = 0$, and so the claim is automatically true.

Now assume that the claim holds whenever $\Tbar$ has $k$ distinct eigenvalues. Let $M$ and $T$ satisfy the conditions of the lemma, and assume that $\Tbar$ has $k+1$ distinct eigenvalues $\lambdabar_1,\ldots,\lambdabar_k,\lambdabar_{k+1}$.

Since $A$ is Artinian, $M$ has finite length as an $A$-module. It follows that the chains of submodules $\{\ker (T-\lambda_{k+1})^m\}$ and $\{(T-\lambda_{k+1})^mM\}$ both eventually stabilize. That is, there is some $N\ge 0$ such that $\ker (T-\lambda_{k+1})^m = \ker (T-\lambda_{k+1})^N$ and $(T-\lambda_{k+1})^mM = (T-\lambda_{k+1})^NM$ for all $m\ge N$.

Let $M_{k+1} = \ker (T-\lambda_{k+1})^N$ and $M' = (T-\lambda_{k+1})^NM$. Clearly $T(M_{k+1})\subseteq M_{k+1}$ and $T(M')\subseteq M'$. By definition $(T-\lambda_{k+1})^NM_{k+1} = 0$, so $T-\lambda_{k+1}$ acts nilpotently on $M_{k+1}$. Also 
\[(T-\lambda_{k+1})M' = (T-\lambda_{k+1})^{N+1}M = (T-\lambda_{k+1})^NM = M'\]
so (as $M'$ has finite length), $T-\lambda_{k+1}$ is invertible on $M'$.

In particular this implies that $M_{k+1}\cap M' = 0$, and so there is an injection $M'\oplus M_{k+1}\into M$. But also by considering the $A$-linear operator $(T-\lambda_{k+1})^N:M\to M$ we see that
\begin{align*}
\length_A(M) &= \length_A \ker (T-\lambda_{k+1})^N + \length_A (T-\lambda_{k+1})^NM = \length_A M_{k+1}+\length_A M'\\
& = \length_A(M'\oplus M_{k+1}).
\end{align*}
As $M$ has finite length, this gives $M = M'\oplus M_{k+1}$. Since $M$ is a free $A$-module, $M'$ and $M_{k+1}$ are thus projective, and hence free, $A$-modules.

If $\Mbar' = M'/\mf_AM'$ and $\Mbar_{k+1} = M_{k+1}/\mf_A$, then $\Mbar = \Mbar'\oplus \Mbar_{k+1}$, and $\Tbar-\lambdabar_{k+1}$ acts invertibly on $\Mbar'$ and nilpotently on $\Mbar_{k+1}$. It follows that $\Mbar_{k+1}$ is the generalized $\Tbar$-eigenspace of $\lambdabar_{k+1}$ and $\lambdabar_{k+1}$ is not an eigenvalue of the restriction $\Tbar|_{\Mbar'}$. Moreover it follows that for each $i\le k$ the generalized $\Tbar$-eigenspace of $\lambdabar_i$ is contained in $\Mbar'$, and hence coincides with the generalized $\Tbar|_{\Mbar'}$-eigenspace of $\lambdabar_k$.

The claim now follows by applying the inductive hypothesis to $M'$ and $T|_{M'}$, as the eigenvalues of $\Tbar|_{\Mbar'}$ are precisely $\lambdabar_1,\ldots,\lambdabar_k$.

To prove uniqueness, note that if 
\[
M = M_1'\oplus M_2'\oplus \cdots\oplus M_k'
\]
is any decomposition for which $T-\lambda_i$ acts nilpotently on $M_i'$, then for all $j\ne i$, $\lambda_j\not\equiv \lambda_i\pmod{\mf_A}$, and so $T-\lambda_i = (T-\lambda_j)+(\lambda_j-\lambda_i)$ acts invertibly $M_j'$. It follows that $\ker (T-\lambda_i)^N = M_i'$ for all $N\gg 0$, and so the decomposition is indeed unique. 
\end{proof}

\begin{lemma}\label{lem:unique change of basis}
Let $\lambdabar_1,\ldots,\lambdabar_k\in \F$ be distinct and nonzero and let $n_1,\ldots,n_k\ge 1$ be integers with $n_1+\cdots+n_k = n$. For each $i=1,\ldots,k$ let $\Ubar_i\in \GL_{n_i}(\F)$ be a unipotent matrix. Consider the block diagonal matrix
\[
\Tbar = 
\begin{pmatrix}
\lambdabar_1\Ubar_1 & 0 & \cdots & 0\\
0 & \lambdabar_2\Ubar_2 & \cdots & 0\\
\vdots & \vdots & \ddots & \vdots\\
0 & 0 & \cdots & \lambdabar_k\Ubar_k
\end{pmatrix}
\in M_{n\times n}(\F)
\]
For each $i=1,\ldots,k$, fix a lift $\lambda_i\in A$ of $\lambdabar_i$. Then any lift $T\in M_{n\times n}(A)$ of $\Tbar$ may be written \emph{uniquely} in the form:
\[
T = 
\begin{pmatrix}
I_{n_1} & B_{12} & \cdots & B_{1n}\\
B_{21} & I_{n_2} & \cdots & B_{2n}\\
	\vdots & \vdots & \ddots & \vdots\\
	B_{k1} & B_{k2} & \cdots & I_{n_k}
\end{pmatrix}
\begin{pmatrix}
	\lambda_1Y_1 & 0 & \cdots & 0\\
	0 & \lambda_2Y_2 & \cdots & 0\\
	\vdots & \vdots & \ddots & \vdots\\
	0 & 0 & \cdots &  \lambda_kY_k
\end{pmatrix}
\begin{pmatrix}
	I_{n_1} & B_{12} & \cdots & B_{1n}\\
	B_{21} & I_{n_2} & \cdots & B_{2n}\\
	\vdots & \vdots & \ddots & \vdots\\
	B_{k1} & B_{k2} & \cdots & I_{n_k}
\end{pmatrix}^{-1}
\]
where
\begin{itemize}
	\item $B_{ij}\in \mf_A M_{n_i\times n_j}(A)$ for all $i\ne j$; and
	\item $Y_i \equiv \Ubar_i\pmod{\mf_A}$ for all $i=1,\ldots,k$.
\end{itemize}
\end{lemma}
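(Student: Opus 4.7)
The plan is to derive the lemma from Lemma \ref{lem:A^n unique decomposition} applied to the operator $T$ on $M = A^n$. First I would identify the eigenvalues of $\Tbar$: since each $\Ubar_i$ is unipotent, the block $\lambdabar_i\Ubar_i$ has $\lambdabar_i$ as its only eigenvalue (with multiplicity $n_i$), and the $\lambdabar_i$ are distinct by hypothesis. Thus the distinct eigenvalues of $\Tbar$ are precisely $\lambdabar_1,\ldots,\lambdabar_k$, with generalized $\lambdabar_i$-eigenspace equal to $\Sbar_i$, where $S_i \subseteq M$ is the $i$-th standard block (spanned by $v_j^{(i)} := e_{n_1+\cdots+n_{i-1}+j}$ for $j=1,\ldots,n_i$). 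Lemma \ref{lem:A^n unique decomposition} then furnishes a \emph{unique} decomposition $M = M_1 \oplus \cdots \oplus M_k$ into free $T$-stable $A$-submodules on which $T-\lambda_i$ acts nilpotently, with $\Mbar_i = \Sbar_i$.

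For existence, I would choose a basis of each $M_i$ matched to the standard basis of $S_i$. The composition $\rho_i \circ \iota_i : M_i \hookrightarrow M \twoheadrightarrow S_i$ (inclusion followed by projection onto the $i$-th standard block) reduces mod $\mf_A$ to the identity on $\Sbar_i = \Mbar_i$, so by Nakayama it is an isomorphism of free $A$-modules of rank $n_i$. Let $g_j^{(i)} \in M_i$ be the preimage of $v_j^{(i)}$. Then the change-of-basis matrix $P$ from the standard basis to $\{g_j^{(i)}\}_{i,j}$ has $I_{n_i}$ as its $i$-th diagonal block (by construction, since $\rho_i(g_j^{(i)}) = v_j^{(i)}$), and the off-diagonal blocks $B_{ij}$ lie in $\mf_A M_{n_i\times n_j}(A)$, because $\Mbar_i = \Sbar_i$ inside $\Mbar$ forces $g_j^{(i)} \equiv v_j^{(i)} \pmod{\mf_A M}$. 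In this basis $T$ is block diagonal with blocks $T_i := T|_{M_i}$; since $\lambdabar_i \ne 0$ forces $\lambda_i \in A^\times$, I set $Y_i := \lambda_i^{-1} T_i$, and reduction mod $\mf_A$ using $\Mbar_i = \Sbar_i$ gives $Y_i \equiv \Ubar_i$.

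For uniqueness, suppose $T = P T' P^{-1}$ is another such expression with $T' = \diag(\lambda_1 Y_1',\ldots,\lambda_k Y_k')$ and $Y_i' \equiv \Ubar_i \pmod{\mf_A}$. The columns of $P$ cut $M$ into $T$-stable free submodules $M_i'$ on which $T$ acts as $\lambda_i Y_i'$. Since $Y_i' - I \equiv \Ubar_i - I \pmod{\mf_A}$ is nilpotent, and $\mf_A$ is nilpotent (as $A$ is Artinian), $Y_i' - I$ itself is nilpotent, so $T - \lambda_i$ acts nilpotently on $M_i'$; the uniqueness clause of Lemma \ref{lem:A^n unique decomposition} then forces $M_i' = M_i$. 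Finally, the normalization $P_{ii} = I_{n_i}$ forces the $j$-th column in the $i$-th block of $P$ to be the unique element of $M_i$ whose image under $\rho_i$ is $v_j^{(i)}$, namely $g_j^{(i)}$. Hence $P$, and therefore $T'$ and the $Y_i$, are determined. The main conceptual work is already packaged in Lemma \ref{lem:A^n unique decomposition}; the one new ingredient is the Nakayama step producing the normalized basis $\{g_j^{(i)}\}$ so that $P_{ii} = I_{n_i}$, and I do not anticipate any genuine obstacle beyond carefully bookkeeping that the block structure reduces correctly mod $\mf_A$.
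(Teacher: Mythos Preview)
Your proposal is correct and follows essentially the same approach as the paper: apply Lemma~\ref{lem:A^n unique decomposition} to obtain the unique $T$-stable decomposition, use Nakayama on the projection $M_i \to S_i$ to produce the normalized basis (so the diagonal blocks of the change-of-basis matrix are $I_{n_i}$), and deduce uniqueness from the uniqueness clause of Lemma~\ref{lem:A^n unique decomposition} together with the normalization. The only cosmetic differences are notational (your $S_i$, $\rho_i$, $g_j^{(i)}$, $P$ versus the paper's $\Span\{e^{(i)}_j\}$, $\pi^{(i)}$, $v^{(i)}_j$, $B$).
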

\begin{proof}
Label the standard basis for $A^n$ as 
\[e^{(1)}_{1},\ldots,e^{(1)}_{n_1},e^{(2)}_{1},\ldots,e^{(2)}_{n_2},\ldots,e^{(k)}_{1},\ldots,e^{(k)}_{n_k}\]
in order. This means that for each $i$, $\Tbar$ stabilizes the subspace $\Span\{\ebar^{(i)}_{1},\ldots,\ebar^{(i)}_{n_i}\}\subseteq \F^n$, where $\ebar^{(i)}_{j} = e^{(i)}_{j}\pmod{\mf_A}$. Moreover as each $\Ubar_i$ is unipotent, we in fact have that $\Span\{\ebar^{(i)}_{1},\ldots,\ebar^{(i)}_{n_i}\}$ is the generalized $\Tbar$-eigenspace of $\lambdabar_i$.

Fix a lift $T$ of $\Tbar$. By Lemma \ref{lem:A^n unique decomposition} we may write
\[
A^n = M_1\oplus M_2 \oplus \cdots \oplus M_k
\]
where each $M_i$ is a $T$-stable free $A$ module with $\Mbar_i:=M_i/\mf_A M_i = \Span\{\ebar^{(i)}_{1},\ldots,\ebar^{(i)}_{n_i}\}$.

For each $i$, consider the projection map $\pi^{(i)}: A^n\to \Span\{e^{(i)}_1,\ldots,e^{(i)}_{n_i}\}$ given by $\pi^{(i)}(e^{(i)}_j) = e^{(i)}_j$ and $\pi^{(i)}(e^{(\alpha)}_j) = 0$ if $\alpha\ne i$. Let $\pibar^{(i)}:\F^n\to \Span\{\ebar^{(i)}_1,\ldots,\ebar^{(i)}_{n_i}\}$ be the induced map.

Then as $\pibar^{(i)}(\Mbar_i) = \Span\{\ebar^{(i)}_1,\ldots,\ebar^{(i)}_{n_i}\}$, Nakayama's Lemma gives $\pi^{(i)}(M_i) = \Span\{e^{(i)}_1,\ldots,e^{(i)}_{n_i}\}$. Since $M_i$ and $\Span\{e^{(i)}_1,\ldots,e^{(i)}_{n_i}\}$ are both free modules of rank $n_i$, this implies that $\pi^{(i)}|_{M_i}:M_i\to \Span\{e^{(i)}_1,\ldots,e^{(i)}_{n_i}\}$ is an isomorphism. Hence there is a unique $A$-basis $\{v^{(i)}_1,\ldots,v^{(i)}_{n_j}\}$ for $M_i$ with the property that $\pi^{(i)}(v^{(i)}_j) = e^{(i)}_j$ for $j=1,\ldots,n_i$.

Since $A^n = M_1\oplus M_2 \oplus \cdots \oplus M_k$, it follows that $\{v^{(i)}_j\}_{ij}$ is an $A$-basis for $A^n$.

Writing
\[v^{(i)}_j = \sum_{\alpha = 1}^k\sum_{\beta = 1}^{n_a} b^{(i\alpha)}_{j\beta}e^{(\alpha)}_\beta\]
we see that $b^{(ii)}_{j\beta} = \delta_{j\beta}$ for all $i,j$ and $\beta$ from the assumption that $\pi^{(i)}(v^{(i)}_j) = e^{(i)}_j$. Also as $M_i \subseteq \Span\{e^{(i)}_1,\ldots,e^{(i)}_{n_i}\}+\mf_A A^n$ we get $b^{(i\alpha)}_{j\beta} \in \mf_A$ for $\alpha\ne i$.

Now let $B$ be the change of basis matrix between the bases $\{e^{(i)}_j\}_{ij}$ and $\{v^{(i)}_j\}_{ij}$. Then $B$ has the form 
\[
B = 
\begin{pmatrix}
	I_{n_1} & B_{12} & \cdots & B_{1n}\\
	B_{21} & I_{n_2} & \cdots & B_{2n}\\
	\vdots & \vdots & \ddots & \vdots\\
	B_{k1} & B_{k2} & \cdots & I_{n_k}
\end{pmatrix}
\]
for $B_{ij}\in \mf_A M_{n_i\times n_j}(A)$, and we may write $T = BCB^{-1}$, where $C$ is the matrix representation of $T$ with respect to the basis $\{v^{(i)}_j\}$.

Since each of the submodules $M_i = \Span\{v^{(i)}_1,\ldots,v^{(i)}_{n_i}\}$ is stabilized by $T$, $C$ is a block diagonal matrix:
\[
C = 
\begin{pmatrix}
	C_1 & 0 & \cdots & 0\\
	0 & C_2 & \cdots & 0\\
	\vdots & \vdots & \ddots & \vdots\\
	0 & 0 & \cdots & C_k
\end{pmatrix}.
\]
As $\lambdabar_i\ne 0$ by assumption, $\lambda_i\in A^\times$, and so we may write $C_i = \lambda_iY_i$ for some $Y_i\in M_{n_i\times n_i}(A)$.

But now $B\equiv I_n\pmod{\mf_A}$, so $\Tbar\equiv BCB^{-1}\equiv C \pmod{\mf_A}$, and so 
\[
\lambdabar_iY_i \equiv \lambda_iY_i\equiv C_i \equiv \lambdabar_i\Ubar_i \pmod{\mf_A}
\]
for all $i$. And so $Y_i\equiv \Ubar_i\pmod{\mf_A}$ as $\lambdabar_i\in\F^\times$. So indeed, $T$ can be written in the desired form.

To prove uniqueness, say that $T = B'C'(B')^{-1}$ where
\begin{align*}
B' &= 
\begin{pmatrix}
	I_{n_1} & B'_{12} & \cdots & B'_{1n}\\
	B'_{21} & I_{n_2} & \cdots & B'_{2n}\\
	\vdots & \vdots & \ddots & \vdots\\
	B'_{k1} & B'_{k2} & \cdots & I_{n_k}
\end{pmatrix}&
&\text{and}&
C' &= 
\begin{pmatrix}
	\lambda_1Y'_1 & 0 & \cdots & 0\\
	0 & \lambda_2Y'_2 & \cdots & 0\\
	\vdots & \vdots & \ddots & \vdots\\
	0 & 0 & \cdots &  \lambda_kY'_k
\end{pmatrix}
\end{align*}
for $B'_{ij}\in \mf_AM_{n_i\times n_j}(A)$ and $Y'_i\in M_{n_i\times n_i}(A)$ with $Y'_i\equiv \Ubar_i\pmod{\mf_A}$. 

Since $B'\equiv I_n\pmod{\mf_A}$, $B'\in \GL_n(A)$, and so $\{w^{(i)}_j := B'e_j^{(i)}\}_{ij}$ is an $A$-basis for $A^n$. From the definition of $B'$ we see that $\pi^{(i)}(w^{(i)}_j) = e^{(i)}_j$ for all $i$ and $j$. Since $C'$, the matrix of $T$ in the basis $\{w^{(i)}_j\}_{ij}$, is block diagonal, we get that $T$ stabilizes $M_i':= \Span\{w^{(i)}_1,\ldots, w^{(i)}_{n_i}\}$ for each $i$ and $A^n = M_1'\oplus \cdots \oplus M_k'$.

Now for each $i$ consider the matrix $\lambda_iY_i'-\lambda_iI_{n_i}$. We have that 
\[
\lambda_iY_i'-\lambda_iI_{n_i} \equiv \lambdabar_i\Ubar_i-\lambdabar_iI_{n_i} = \lambdabar_i(\Ubar_i-I_{n_i})\pmod{\mf_A}.
\]
Since $\Ubar_i$ is unipotent, $\Ubar_i-I_{n_i}$ is nilpotent. Hence $(\lambda_iY_i'-\lambda_iI_{n_i})^{n_i} \in \mf_AM_{n_i\times n_i}(A)$. Since $A$ is Artinian and local, $\mf_A^d = 0$ for some $d>0$, and so $(\lambda_iY_i'-\lambda_iI_{n_i})^{n_id} = 0$, and so $\lambda_iY_i'-\lambda_iI_{n_i}$ acts nilpotently on $M_{n_i\times n_i}(A)$. It follows that $T-\lambda_i I_n$ acts nilpotently on $M_i'$ for each $i$.

By the uniqueness part of Lemma \ref{lem:A^n unique decomposition}, it now follows that $M_i' = M_i$. Now for each $i$ and $j$ we get that $w^{(i)}_j \in M_i$ and $\pi^{(i)}(w^{(i)}_j)$. By the uniqueness of $v^{(i)}_j$ noted above, this gives $w^{(i)}_j = v^{(i)}_j$ for all $i$ and $j$. In particular, this implies that $B' = B$, and so $B_{ij}' = B_{ij}$ for all $i\ne j$.

Lastly, as $B = B'$ we get $BCB^{-1} = T = BC'B^{-1}$ so $C = C'$ and so $\lambda_iY_i = \lambda_iY_i'$ for all $i$. Since $\lambda_i\in A^\times$ this indeed implies $Y_i = Y_i'$, and so the decomposition is indeed unique.
\end{proof}

\begin{lemma}\label{lem:alpha U N = beta N V}
Let $a,b\ge 1$ be integers. Pick elements $\alpha,\beta\in A$ with $\alpha\not\equiv \beta \pmod{\mf_A}$ and matricies $U\in M_{a\times a}(A)$ and $V \in M_{b\times b}(A)$ with $U \pmod{\mf_A}$ and $V\pmod{\mf_B}$ unipotent in $M_{a\times a}(\F)$ and $M_{b\times b}(\F)$ respectively.

If $N\in M_{a\times b}(A)$ is any matrix satisfying $\alpha U N = \beta N V$, then $N = 0$.
\end{lemma}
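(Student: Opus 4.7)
The plan is to exploit the fact that $\alpha U$ and $\beta V$ have disjoint ``spectra'' modulo $\mf_A$, and then iterate the relation. Write the given equation as
\[
(\alpha U - \beta I_a)\, N \;=\; \beta N(V - I_b),
\]
so the strategy will be to invert the left multiplier, rewrite $N$ as a right-multiple of $N(V-I_b)$, and then iterate using that $V - I_b$ is topologically nilpotent modulo $\mf_A$.

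The first step is to check that $\alpha U - \beta I_a$ is invertible in $M_{a\times a}(A)$. Since $U \bmod \mf_A$ is unipotent, $\alpha U \bmod \mf_A$ has characteristic polynomial $(t - \alphabar)^a$, so the characteristic polynomial of $\alpha U - \beta I_a \bmod \mf_A$ is $(t + \betabar - \alphabar)^a$, whose constant term equals $(\betabar - \alphabar)^a \ne 0$. Thus $\det(\alpha U - \beta I_a)$ reduces to a nonzero element of $\F$, and since $A$ is local this determinant is a unit, so $W := (\alpha U - \beta I_a)^{-1} \in M_{a\times a}(A)$ exists. Rewriting the displayed relation gives
\[
N \;=\; \beta\, W N (V - I_b).
\]

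The second step is a straightforward induction on $k \ge 0$ to obtain
\[
N \;=\; \beta^k\, W^k\, N\, (V - I_b)^k.
\]
The third step uses that $V \bmod \mf_A$ is unipotent, so $(V - I_b)^b \equiv 0 \pmod{\mf_A}$; hence every entry of $(V-I_b)^b$ lies in $\mf_A$, and every entry of $(V - I_b)^{kb} = \bigl((V - I_b)^b\bigr)^k$ lies in $\mf_A^k$. Substituting $k \mapsto kb$ in the iterated identity therefore shows $N \in \mf_A^k\, M_{a\times b}(A)$ for every $k \ge 0$. Since $A$ is Artinian local, $\mf_A^d = 0$ for some $d$, and we conclude $N = 0$.

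There is no real obstacle here; the only point that requires a moment's care is the claim that $(V-I_b)^{kb}$ has all entries in $\mf_A^k$, which follows by iterating the observation $(V-I_b)^b \in \mf_A \cdot M_{b\times b}(A)$ and expanding the matrix product entry-wise. The whole argument is a Nakayama-style iteration powered by the spectral separation between $\alpha U$ and $\beta V$ modulo $\mf_A$.
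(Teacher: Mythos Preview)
Your proof is correct and follows essentially the same approach as the paper: both arguments hinge on the invertibility of $\alpha U - \beta I_a$ (from the spectral separation modulo $\mf_A$) together with the nilpotence of $V - I_b$ (from unipotence modulo $\mf_A$ and $A$ being Artinian). The only cosmetic difference is that the paper packages the iteration via the polynomial identity $p(\alpha U)N = N\,p(\beta V)$ applied to $p(t) = (t-\beta)^D$, obtaining $(\alpha U - \beta I_a)^D N = 0$ in one step, whereas you iterate the one-step relation $N = \beta W N(V-I_b)$ directly.
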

\begin{proof}
As $(\alpha U)N = N(\beta V)$ we get that $(\alpha U)^i N = N (\beta V)^i$ for all $i\ge 0$, and so $p(\alpha U) N = N p(\beta V)$ for all $p(t)\in A[t]$.

As $V$ is unipotent, all eigenvalues of $V\pmod{\mf_A}$ in $M_{b\times b}(\F)$ are $1$, and hence all eigenvalues of $(\beta V - \beta I_n) \pmod{\mf_A}$ are $0$. As in the proof of Lemma \ref{lem:unique change of basis} this implies that $(\beta V - \beta I_b)^D = 0$ in $M_{b\times b}(A)$ for some $D\gg 0$. Hence, letting $p(t) = (t-\beta)^D \in A[t]$ we get
\[
(\alpha U-\beta I_a)^D N = N(\beta V-\beta I_b)^D = N0 = 0.
\]
Now as $U$ is unipotent, all eigenvalues of $(\alpha U - \beta I_a)\pmod{\mf_A}$ are equal to $\alpha-\beta \not\equiv 0 \pmod{\mf_A}$. Thus $(\alpha U - \beta I_a)\pmod{\mf_A}$ is invertible in $M_{a\times a}(\F)$, from whence it follows that $(\alpha U - \beta I_a)\in \GL_a(A)$. So as $(\alpha U-\beta I_a)^D N = 0$, we indeed get $N = 0$.
\end{proof}

Now consider the Galois representation $\rbar:G_L\to \GL_n(\F)$. Recall that we have assumed that all eigenvalues of $\rbar(\varphi)$ lie in $\F$. Now note that we may write the set of eigenvalues of $\rbar(\varphi)$ uniquely in the form
\[
\bigsqcup_{\alpha = 1}^d \{\lambdabar_\alpha,\lambdabar^{(\alpha)} q,\lambdabar^{(\alpha)} q^2,\ldots,\lambdabar^{(\alpha)} q^{k_\alpha-1}\}
\]
for $\lambdabar^{(1)},\ldots,\lambdabar^{(d)}\in \F^\times$ with $\lambdabar^{(\alpha)} \neq \lambdabar^{(\beta)} q^{i}$ for all $\alpha\ne \beta$ and $i = 0,1,\ldots,k_\beta$.

For $i=1,\ldots,k_\alpha$, let $n^{(\alpha)}_i$ be the (algebraic) multiplicity of $\lambdabar^{(\alpha)} q^{k_\alpha-i}$ as an eigenvalue of $\rbar(\varphi)$. Let $\nun^{(\alpha)} = (n^{(\alpha)}_,\ldots,n^{(\alpha)}_{k_\alpha})$, and note that $n = \sum_\alpha\sum_i n^{(\alpha)}_i$.

To avoid having to repeatedly write out large matrices, we will write $C = \left(C^{(\alpha\beta)}_{ij}\right)^{\alpha\beta}_{ij}$ to mean that $C$ is a block diagonal matrix
\[
C = 
\begin{pmatrix}
	C^{(11)}&C^{(12)}&\cdots&C^{(1d)}\\
	C^{(21)}&C^{(22)}&\cdots&C^{(2d)}\\
	\vdots & \vdots & \ddots & \vdots\\
	C^{(d1)}&C^{(d2)}&\cdots&C^{(dd)}
\end{pmatrix}
\]
where each $C^{(\alpha\beta)}$ is in turn a block diagonal matrix:
\[
C^{(\alpha\beta)}=
\begin{pmatrix}
	C^{(\alpha\beta)}_{11}&C^{(\alpha\beta)}_{12}&\cdots&C^{(\alpha\beta)}_{1k_\beta}\\
	C^{(\alpha\beta)}_{21}&C^{(\alpha\beta)}_{22}&\cdots&C^{(\alpha\beta)}_{2k_\beta}\\
	\vdots & \vdots & \ddots & \vdots\\
	C^{(\alpha\beta)}_{k_\alpha 1}&C^{(\alpha\beta)}_{k_\alpha 2}&\cdots&C^{(\alpha\beta)}_{k_\alpha k_\beta}
\end{pmatrix}
\]
with each $C^{(\alpha\beta)}_{ij}$ a $n^{(\alpha)}_i\times n^{(\beta)}_j$ matrix. We will also write $C = \diag\left(C^{(\alpha)}_i\right)^\alpha_i$ to mean that $C = \left(C^{(\alpha\beta)}_{ij}\right)^{\alpha\beta}_{ij}$ with $C^{(\alpha\alpha)}_{ii} = C^{(\alpha)}_i$ and $C^{(\alpha\beta)}_{ij} = 0$ whenever $(\alpha,i)\ne (\beta,j)$.
 
By changing the basis for $\F^n$, we may assume without loss of generality that $\rbar(\varphi)$ is in Jordan normal form. That is, $\rbar(\varphi)$ is a block diagonal matrix in the form
\[
\rbar(\varphi) = \diag\left(\lambdabar^{(\alpha)}_iq^{k_\alpha-i}\Ubar_i^{(\alpha)}\right)^\alpha_i
\]
where each $\Ubar^{(\alpha)}_i\in M_{n^{(\alpha)}_i\times n^{(\alpha)}_i}(\F)$ is unipotent.

Lemmas \ref{lem:unique change of basis} and \ref{lem:alpha U N = beta N V} give the following:
\begin{cor}\label{cor:(Phi,N) decomposition}
Assume that $\rbar(\varphi) = \diag\left(\lambdabar^{(\alpha)}_iq^{k_\alpha-i}\Ubar_i^{(\alpha)}\right)^\alpha_i$ is in the above form. Fix a lift $\lambda^{(\alpha)}\in A$ of $\lambdabar^{(\alpha)}$ for each $\alpha$.
	
Then:
\begin{enumerate}
	\item If $\log\rbar(\sigma) = \left(\Nbar^{(\alpha\beta)}_{ij}\right)^{\alpha\beta}_{ij}$ then $\Nbar^{(\alpha\beta)}_{ij} = 0$ unless $\alpha=\beta$ and $j=i+1$. Moreover, for all $\alpha$ and all $i=1,\ldots,k_\alpha-1$ we have $\Ubar_i^{(\alpha)}\Nbar_{i,i+1}^{(\alpha\alpha)} = \Nbar_{i,i+1}^{(\alpha\alpha)}\Ubar_{i+1}^{(\alpha)}$ 
	\item If $\Phi\in \GL_n(A)$ and $N\in M_{n\times n}(A)$ are any matrices satisfying $\Phi N = qN\Phi$, $\Phi\equiv \rbar(\varphi)\pmod{\mf_A}$ and $N\equiv \log\rbar(\sigma)\pmod{\mf_A}$, then there are \emph{unique} matrices $B = \left(B^{(\alpha\beta)}_{ij}\right)^{\alpha\beta}_{ij}$, $\Phi' = \diag\left(\lambda^{(\alpha)}q^{k_\alpha-i}Y_i^{(\alpha)}\right)^{\alpha}_{i}$ and $N' = \left(N^{(\alpha\beta)}_{ij}\right)^{\alpha\beta}_{ij}$ in $M_{n\times n}(A)$ with the properties:
	\begin{enumerate}
		\item $\Phi = B\Phi'B^{-1}$ and $N = BN'B^{-1}$;
		\item $B^{(\alpha\alpha)}_{ii} = I_{n^{(\alpha)}_i}$ for all $\alpha$ and $i$, and $B^{(\alpha\beta)}_{ij} \in \mf_A M_{n^{(\alpha)}_i\times n^{(\beta)}_j}(A)$ for all $(\alpha,i)\ne (\beta,j)$;
		\item $Y_i^{(\alpha)}\equiv \Ubar_i^{(\alpha)}\pmod{\mf_A}$ for all $i$ and $\alpha$;
		\item $N^{(\alpha\beta)}_{ij} = 0$ unless $\alpha=\beta$ and $j=i+1$, and $N^{(\alpha\alpha)}_{i,i+1} \equiv \Nbar^{(\alpha\alpha)}_{i,i+1}\pmod{\mf_A}$ for all $\alpha$ and all $i=1,\ldots,k_\alpha-1$;
		\item $Y_i^{(\alpha)} N_{i,i+1}^{(\alpha\alpha)} = N_{i,i+1}^{(\alpha\alpha)}Y_{i+1}^{(\alpha)}$ for all $\alpha$ and all $i=1,\ldots,k_\alpha-1$.
	\end{enumerate}
	Moreover if $B,\Phi'$ and $N'$ are any matrices in $M_{n\times n}(A)$ satisfying the above properties, then $\Phi:= B\Phi'B^{-1}$ and $N := BN'B^{-1}$ satisfy $\Phi N = qN\Phi$, $\Phi\equiv \rbar(\varphi)\pmod{\mf_A}$ and $N \equiv \log\rbar(\sigma)\pmod{\mf_A}$.
\end{enumerate} 
\end{cor}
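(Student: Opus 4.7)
The plan is to treat parts (1) and (2) in sequence, using Lemmas \ref{lem:A^n unique decomposition}, \ref{lem:unique change of basis} and \ref{lem:alpha U N = beta N V} as black boxes and exploiting the partition of eigenvalues set up before the statement. The entire argument is a structural block-matrix calculation; the only real input beyond these lemmas is the distinctness of the scalars $\lambdabar^{(\alpha)}q^{k_\alpha-i}$, which the genericity hypothesis on the $\lambdabar^{(\alpha)}$ is designed to ensure.

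For part (1), the key observation is that from $\rbar(\varphi)\rbar(\sigma)\rbar(\varphi)^{-1}=\rbar(\sigma)^q$ and the identity $\log(\Sigma^q)=q\log\Sigma$ for $\Sigma$ unipotent, we obtain
\[\rbar(\varphi)\,\log\rbar(\sigma)=q\,\log\rbar(\sigma)\,\rbar(\varphi).\]
Writing both sides block-by-block using the block structure $\rbar(\varphi)=\diag\left(\lambdabar^{(\alpha)}q^{k_\alpha-i}\Ubar_i^{(\alpha)}\right)^\alpha_i$ and $\log\rbar(\sigma)=\left(\Nbar^{(\alpha\beta)}_{ij}\right)^{\alpha\beta}_{ij}$ yields
\[\lambdabar^{(\alpha)}q^{k_\alpha-i}\Ubar_i^{(\alpha)}\Nbar^{(\alpha\beta)}_{ij}=\lambdabar^{(\beta)}q^{k_\beta-j+1}\Nbar^{(\alpha\beta)}_{ij}\Ubar_j^{(\beta)}\]
for all $\alpha,\beta,i,j$. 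By Lemma \ref{lem:alpha U N = beta N V}, $\Nbar^{(\alpha\beta)}_{ij}=0$ whenever the two scalars $\lambdabar^{(\alpha)}q^{k_\alpha-i}$ and $\lambdabar^{(\beta)}q^{k_\beta-j+1}$ differ. If $\alpha\ne\beta$, the hypothesis $\lambdabar^{(\alpha)}\ne\lambdabar^{(\beta)}q^m$ for all admissible $m$ rules out equality; if $\alpha=\beta$, equality forces $q^{k_\alpha-i}=q^{k_\alpha-j+1}$, which (since $q$ is banal and so $q^m\ne 1$ for $1\le m\le n$) forces $j=i+1$. The surviving equation is exactly $\Ubar_i^{(\alpha)}\Nbar_{i,i+1}^{(\alpha\alpha)}=\Nbar_{i,i+1}^{(\alpha\alpha)}\Ubar_{i+1}^{(\alpha)}$.

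For part (2), I first apply Lemma \ref{lem:unique change of basis} to the operator $\Phi$ on $A^n$: the distinct eigenvalues of $\Phi\bmod\mf_A$ are precisely the scalars $\lambdabar^{(\alpha)}q^{k_\alpha-i}$ (distinct by the hypotheses on the $\lambdabar^{(\alpha)}$ combined with $q$ being banal, as in part (1)), each with corresponding unipotent factor $\Ubar_i^{(\alpha)}$. The lemma produces unique $B$ and $\Phi'=\diag\left(\lambda^{(\alpha)}q^{k_\alpha-i}Y_i^{(\alpha)}\right)^\alpha_i$ satisfying (a)--(c) with $\Phi=B\Phi'B^{-1}$. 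Now define $N':=B^{-1}NB$; then $\Phi' N'=qN'\Phi'$, and $N'\equiv\log\rbar(\sigma)\pmod{\mf_A}$ (since $B\equiv I\pmod{\mf_A}$). Writing $N'=\left(N^{(\alpha\beta)}_{ij}\right)^{\alpha\beta}_{ij}$ and repeating the block-matrix calculation of part (1) verbatim (now with $Y_i^{(\alpha)}$ in place of $\Ubar_i^{(\alpha)}$, still unipotent mod $\mf_A$), Lemma \ref{lem:alpha U N = beta N V} forces $N^{(\alpha\beta)}_{ij}=0$ unless $\alpha=\beta$ and $j=i+1$, and the surviving entries satisfy (e). The congruence $N^{(\alpha\alpha)}_{i,i+1}\equiv\Nbar^{(\alpha\alpha)}_{i,i+1}\pmod{\mf_A}$ is automatic from $N'\equiv N\pmod{\mf_A}$ together with part (1). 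Uniqueness of $B$ and $\Phi'$ is part of Lemma \ref{lem:unique change of basis}, after which $N'=B^{-1}NB$ is determined.

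For the converse ``moreover'' assertion, given any triple $(B,\Phi',N')$ of the stated form, one reverses the block calculation: the block structures of $\Phi'$ and $N'$ together with condition (e) give $\Phi'N'=qN'\Phi'$ directly, and conjugating by $B$ preserves this. The reductions mod $\mf_A$ are immediate from (b), (c), (d). I expect no real obstacle here; the main care needed in the write-up is simply bookkeeping, and the one non-formal step — justifying that Lemma \ref{lem:unique change of basis} applies — is precisely the distinctness of the $\lambdabar^{(\alpha)}q^{k_\alpha-i}$, which was already required to even write down $\rbar(\varphi)$ in its stated Jordan form and is guaranteed by our setup.
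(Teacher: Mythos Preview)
Your proposal is correct and follows essentially the same approach as the paper. The only difference is organizational: the paper observes that part (1) is the special case of part (2) with $A=\F$ (so $\mf_A=0$, $B=I$, $\Phi'=\rbar(\varphi)$, $N'=\log\rbar(\sigma)$) and proves only (2), whereas you prove (1) directly and then invoke the same block calculation for (2); in substance the two arguments are identical.
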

\begin{proof}
Note that (1) is simply the special case of (2) with $A = \F$, so it will be enough to prove (2) (in the case of $A = \F$, the claim that $N^{(\alpha\alpha)}_{i,i+1} \equiv \Nbar^{(\alpha\alpha)}_{i,i+1}\pmod{\mf_A}$ is simply the definition of $\Nbar^{(\alpha\alpha)}_{i,i+1}$).

Fix any $\Phi$ and $N$ satisfying the given properties. By Lemma \ref{lem:unique change of basis}, there exist unique matrices $B = \left(B^{(\alpha\beta)}_{ij}\right)^{\alpha\beta}_{ij}$ and $\Phi' = \diag\left(\lambda^{(\alpha)}q^{k_\alpha-i}Y_i^{(\alpha)}\right)^{\alpha}_{i}$ satisfying $\Phi = B\Phi'B^{-1}$ and properties (b) and (c). Now let $N' = B^{-1}NB = \left(N^{(\alpha\beta)}_{ij}\right)^{\alpha\beta}_{ij}$.

It remains to show that properties (d) and (e) are satisfied. First
\[
N' \equiv B^{-1}NB \equiv I_n^{-1}(\log \rbar(\sigma))I_n \equiv \log \rbar(\sigma) \equiv \left(\Nbar^{(\alpha\beta)}_{ij}\right)^{\alpha\beta}_{ij}\pmod{\mf_A}
\]
so indeed $N^{(\alpha\alpha)}_{i,i+1} \equiv \Nbar^{(\alpha\alpha)}_{i,i+1}\pmod{\mf_A}$.

Since $\Phi' = B^{-1}\Phi B$ and $N' = B^{-1}NB$ we have $\Phi'N' = q N'\Phi'$. Since $\Phi'$ is block diagonal, this implies that
\[
\lambda^{(\alpha)}q^{k_\alpha-i}Y_i^{(\alpha)}N^{(\alpha\beta)}_{ij} = \lambda^{(\beta)}q^{k_\beta-j+1}N^{(\alpha\beta)}_{ij}Y_j^{(\beta)}.
\]
Since $Y_i^{(\alpha)} \equiv \Ubar^{(\alpha)}_i\pmod{\mf_A}$ and $Y_j^{(\beta)}\equiv \Ubar^{(\beta)}_j\pmod{\mf_A}$ are both unipotent, Lemma \ref{lem:alpha U N = beta N V} implies that $N^{(\alpha\beta)}_{ij} = 0$ whenever $\lambdabar^{(\alpha)}q^{k_\alpha-i}\ne \lambdabar^{(\beta)}q^{k_\beta-j+1}$.

If $\alpha\ne \beta$ then $\lambdabar^{(\alpha)}q^{k_\alpha-i}\ne \lambdabar^{(\beta)}q^{k_\beta-j+1}$ for all $1\le i\le k_\alpha$ and $1\le j\le k_\beta$ by assumption, as
\[
(k_\alpha-i)-(k_\beta-j+1) \in [-k_\beta,k_\alpha-2].
\]
while $\lambdabar^{(\beta)}\ne \lambdabar^{(\alpha)}q^{x}$ for $x\in [-k_\beta,k_\alpha]$.

If $\alpha=\beta$, and $\lambdabar^{(\alpha)}q^{k_\alpha-i}= \lambdabar^{(\alpha)}q^{k_\alpha-j+1}$, then $q^{i-j+1} = 1$ (as $\lambdabar^{(\alpha)}\ne 0$). Since $1\le i,j\le n$, $-n+2\le i-j+1 \le n$, so as $q$ is banal, this implies $i-j+1 = 0$. So indeed $N^{(\alpha\beta)}_{ij} = 0$ unless $\alpha=\beta$ and $j=i+1$, proving (d).

Finally for all $\alpha$ and $i = 1,\ldots,k_\alpha-1$ we now have
\[
\lambda^{(\alpha)}q^{k_\alpha-i}Y_i^{(\alpha)}N^{(\alpha\alpha)}_{i,i+1} = \lambda^{(\beta)}q^{k_\alpha-i}N^{(\alpha\alpha)}_{i,i+1}Y_{i+1}^{(\alpha)},
\]
so (e) follows as $\lambda^{(\alpha)},q\in A^\times$.

It is now easy to verify that conditions (a)-(e) indeed imply that $\Phi N = qN\Phi$, $\Phi\equiv \rbar(\varphi)\pmod{\mf_A}$ and $N \equiv \log\rbar(\sigma)\pmod{\mf_A}$. 
\end{proof}

Thus we have natural bijections
\begin{align*}
\Hom_{\Oc}(R^\square(\rbar),A) 
	&\equiv 
	\left\{(\Phi,N)\in \GL_n(A)\times M_{n\times n}(A)\middle|\parbox{1.8in}{$\Phi N = qN\Phi$,\\ $\Phi\equiv \rbar(\varphi)\pmod{\mf_A}$,\\ $N\equiv \log\rbar(\sigma)\pmod{\mf_A}$}\right\}.\\
	&\equiv \prod_{(\alpha,i)\ne (\beta,j)}\mf_AM_{n_i^{(\alpha)}\times n_j^{(\beta)}}(A) \times 
	\prod_{\alpha=1}^d \left\{(Y_i^{(\alpha)})_{i=1}^{k_\alpha}, (N^{(\alpha\alpha)}_{i,i+1})\middle|\parbox{1.8in}{$Y_i^{(\alpha)}\equiv \Ubar_i^{(\alpha)}\pmod{\mf_A}$,\\$N^{(\alpha\alpha)}_{i,i+1}\equiv \Nbar^{(\alpha\alpha)}_{i,i+1}\pmod{\mf_A}$,\\ $Y_i^{(\alpha)} N_{i,i+1}^{(\alpha\alpha)} =  N_{i,i+1}^{(\alpha\alpha)}Y_{i+1}^{(\alpha)}$}\right\}\\
	&\equiv \mf_A^{n^2-\sum_\alpha\sum_i \left(n_i^{(\alpha)}\right)^2} \times 
\prod_{\alpha=1}^d \left\{(X_i^{(\alpha)})_{i=1}^{k_\alpha}, (N^{(\alpha\alpha)}_{i,i+1})\middle|\parbox{2in}{$X_i^{(\alpha)}\equiv \Ubar_i^{(\alpha)}-I_{n_i^{(\alpha)}}\pmod{\mf_A}$,\\$N^{(\alpha\alpha)}_{i,i+1}\equiv \Nbar^{(\alpha\alpha)}_{i,i+1}\pmod{\mf_A}$,\\ $X_i^{(\alpha)} N_{i,i+1}^{(\alpha\alpha)} =  N_{i,i+1}^{(\alpha\alpha)}X_{i+1}^{(\alpha)}$}\right\},
\end{align*}
where we made a variable substitution $X_i^{(\alpha)} = Y_i^{(\alpha)}-I_{n_i^{(\alpha)}}$.

Now noting that $\Hom_{\Oc}(\Oc[[t]],A) \cong \mf_A$, and that $\Oc$-algebras in $\Cc_{\Oc}^{\wedge}$ pro-representing the same functor $\Cc_{\Oc}\to \Set$ are isomorphic, we get the following:

\begin{thm}\label{thm:banal local model}
Let $q$ be banal and let $\rbar:G_L\to \GL_n(\F)$ be a Galois representation with $\rbar(\Pt_L) = 1$ and $\rbar(\sigma)$ unipotent. Write the set of eigenvalues of $\rbar(\varphi)$ as
\[
\bigsqcup_{\alpha = 1}^d \{\lambdabar_\alpha,\lambdabar^{(\alpha)} q,\lambdabar^{(\alpha)} q^2,\ldots,\lambdabar^{(\alpha)} q^{k_\alpha-1}\}
\]
for $\lambdabar^{(1)},\ldots,\lambdabar^{(d)}\in \F^\times$ with $\lambdabar^{(\alpha)} \neq \lambdabar^{(\beta)} q^{i}$ for all $\alpha\ne \beta$ and $i = 0,1,\ldots,k_\beta$.

For $i=1,\ldots,k_\alpha$, let $n^{(\alpha)}_i$ be the algebraic multiplicity of $\lambdabar^{(\alpha)} q^{k_\alpha-i}$ as an eigenvalue of $\rbar(\varphi)$. Let $\nun^{(\alpha)} = (n^{(\alpha)}_,\ldots,n^{(\alpha)}_{k_\alpha})$.

Let $\Ubar_i^{(\alpha)}$ and $\Nbar_{i,i+1}^{(\alpha\alpha)}$ be as in Corollary \ref{cor:(Phi,N) decomposition}(1). For each $\alpha$, let 
\[
x^{(\alpha)} = \left(\Ubar_1^{(\alpha)}-I_{n_1^{(\alpha)}},\Nbar^{(\alpha\alpha)}_{12},\Ubar_2^{(\alpha)}-I_{n_2^{(\alpha)}},\ldots,\Nbar^{(\alpha\alpha)}_{k_\alpha-1,k_\alpha},\Ubar_{k_\alpha}^{(\alpha)}-I_{n_{k_\alpha}}^{(\alpha)}\right) \in \Nc^{\nun^{(\alpha)}}(\F).
\]
Let $R^{\nun^{(\alpha)}}_{x^{(\alpha)}}$ denote the complete local ring of $\Nc^{\nun^{(\alpha)}}$ at the $\F$-point $x^{(\alpha)}$. Then there is an isomorphism
\[
R^\square(\rbar) \cong \widehat{\bigotimes}_{\alpha=1}^dR^{\nun^{(\alpha)}}_{x^{(\alpha)}}\left[\left[t_1,\ldots,t_{n^2-\sum_\alpha\sum_i \left(n_i^{(\alpha)}\right)^2}\right]\right].
\]
\end{thm}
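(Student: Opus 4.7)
The theorem essentially repackages the bijection already displayed immediately above its statement, so the proof plan is to upgrade that set-theoretic identification to a functorial isomorphism of pro-representable functors on $\Cc_\Oc$ and then invoke Yoneda.

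First, I would fix a choice of basis for $\F^n$ putting $\rbar(\varphi)$ into the block-diagonal form $\diag\!\left(\lambdabar^{(\alpha)}q^{k_\alpha-i}\Ubar_i^{(\alpha)}\right)^\alpha_i$ used in Corollary \ref{cor:(Phi,N) decomposition}, and for each $\alpha$ a lift $\lambda^{(\alpha)}\in\Oc$ of $\lambdabar^{(\alpha)}$. For $A\in\Cc_\Oc$, define $\Fc(A)$ to be the set of triples $(B,\Phi',N')$ as in Corollary \ref{cor:(Phi,N) decomposition}(2), and $\Gc(A)$ to be the set of lifts $(\Phi,N)$ of $(\rbar(\varphi),\log\rbar(\sigma))$ with $\Phi N=qN\Phi$. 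Corollary \ref{cor:(Phi,N) decomposition} asserts precisely that the map $(B,\Phi',N')\mapsto(B\Phi'B^{-1},BN'B^{-1})$ is a bijection $\Fc(A)\isomto\Gc(A)$; this map is manifestly functorial in $A\in\Cc_\Oc$, so the two functors are isomorphic.

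Next I would identify $\Gc$ and $\Fc$ separately with $\Hom$-functors. By the discussion in Section \ref{ssec:local def}, $\Gc\equiv\Hom_\Oc(R^\square(\rbar),-)$, since $R^\square(\rbar)$ is the complete local ring of $\Nc(n,q)\cong\Mc^{\unip}(n,q)$ (Proposition \ref{prop:N(n,q) local model}, plus $R^{\square}=R^{\unip}$ by banality) at $x_{\rbar}$. On the other hand, the data of $(B,\Phi',N')\in\Fc(A)$ naturally decomposes as a product: the off-diagonal blocks $B^{(\alpha\beta)}_{ij}$ (for $(\alpha,i)\ne(\beta,j)$) contribute $m:=n^2-\sum_{\alpha,i}(n_i^{(\alpha)})^2$ independent elements of $\mf_A$ with no relations, and for each $\alpha$ the data $\bigl((Y_i^{(\alpha)}),(N^{(\alpha\alpha)}_{i,i+1})\bigr)$ satisfying only $Y_i^{(\alpha)}N_{i,i+1}^{(\alpha\alpha)}=N_{i,i+1}^{(\alpha\alpha)}Y_{i+1}^{(\alpha)}$ and the prescribed residues is, after the substitution $X_i^{(\alpha)}=Y_i^{(\alpha)}-I_{n_i^{(\alpha)}}$, exactly an $A$-valued point of $\Nc^{\nun^{(\alpha)}}$ lifting $x^{(\alpha)}$.

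The free $m$-tuple part is pro-represented by $\Oc[[t_1,\dots,t_m]]$ via $(t_j)\mapsto\mf_A^m$, and the $\alpha$-th factor is pro-represented by the complete local ring $R^{\nun^{(\alpha)}}_{x^{(\alpha)}}$ by the definition of the latter. Completed tensor products of pro-representable functors are pro-represented by the completed tensor product of their representing rings, so
\[
\Fc\equiv\Hom_\Oc\!\left(\widehat{\bigotimes}_{\alpha=1}^d R^{\nun^{(\alpha)}}_{x^{(\alpha)}}[[t_1,\dots,t_m]],\,-\right).
\]
Combining with $\Fc\equiv\Gc\equiv\Hom_\Oc(R^\square(\rbar),-)$ and applying Yoneda in $\Cc_\Oc^\wedge$ yields the claimed isomorphism.

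The only step requiring any care is checking that the $\Fc\isomto\Gc$ bijection really is $A$-functorial, i.e.\ compatible with maps $A\to A'$ in $\Cc_\Oc$; this is immediate from the uniqueness clause in Corollary \ref{cor:(Phi,N) decomposition}(2), since uniqueness forces the decomposition $(B,\Phi',N')$ of the pushforward of $(\Phi,N)$ along $A\to A'$ to be the pushforward of the decomposition of $(\Phi,N)$. No further obstacle arises; the computation is really an exercise in bookkeeping, with the substantive content all packaged into Corollary \ref{cor:(Phi,N) decomposition}.
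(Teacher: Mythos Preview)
Your proposal is correct and follows essentially the same approach as the paper: the paper also displays the chain of bijections from $\Hom_\Oc(R^\square(\rbar),A)$ to the product of $\mf_A^m$ with the $\Nc^{\nun^{(\alpha)}}$-point sets (via Corollary \ref{cor:(Phi,N) decomposition} and the substitution $X_i^{(\alpha)}=Y_i^{(\alpha)}-I$), and then concludes by noting that $\Oc$-algebras in $\Cc_\Oc^\wedge$ pro-representing the same functor are isomorphic. Your explicit remark that functoriality follows from the uniqueness clause in Corollary \ref{cor:(Phi,N) decomposition}(2) makes precise something the paper leaves implicit in the word ``natural,'' but otherwise the arguments coincide.
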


Thus to understand the local deformation rings $R^\square(\rbar)$ in the banal, unipotent case, it will suffice to understand the spaces $\Nc^{\nun}$.

Before moving on, we will note the following simple consequence of Theorem \ref{thm:banal local model}:

\begin{cor}\label{cor:Nc^nun is CI}
For any $\nun = (n_1,\ldots,n_k)$, the $\Oc$-scheme $\Nc^{\nun}$ is reduced and flat over $\Oc$ of relative dimension $\sum_{i=1}^k n_i^2$. Moreover it is a complete intersection.

Also $\Nc^{\nun}_{\F}$ is a reduced complete intersection of dimension $\sum_{i=1}^k n_i^2$ over $\F$.
\end{cor}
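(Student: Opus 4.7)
The plan is to identify the complete local ring of $\Nc^{\nun}$ at the origin via Theorem \ref{thm:banal local model}, where it appears (after adjoining formal power series variables) as a framed local deformation ring whose structure is already recorded in Proposition \ref{prop:Nc(n,q) is CI}, and then to propagate this structure globally using the $\G_m$-scaling action on $\Nc^{\nun}$.

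Observe first that $\Nc^{\nun}$ carries a $\G_m$-action $t \cdot (X_i, N_i) = (tX_i, tN_i)$, since each defining equation $X_i N_i - N_i X_{i+1}$ is bilinear of total degree $2$. The origin is the unique attractive fixed point, and $\lim_{t \to 0} t \cdot p = 0$ for every point $p$, so every irreducible component of $\Nc^{\nun}$ is $\G_m$-stable (by connectedness of $\G_m$) and hence contains the origin $0 \in \Nc^{\nun}(\F)$. Therefore $\dim \Nc^{\nun}_{\Oc} = \dim \widehat{\O}_{\Nc^{\nun}_{\Oc}, 0}$ and $\dim \Nc^{\nun}_{\F} = \dim \widehat{\O}_{\Nc^{\nun}_{\F}, 0}$.

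Next, pick any $\lambdabar \in \F^\times$, set $n = \sum n_i$, and let $\rbar : G_L \to \GL_n(\F)$ be the continuous representation defined by $\rbar(\Pt_L) = 1$, $\rbar(\sigma) = I_n$, and $\rbar(\varphi) = \diag(\lambdabar q^{k-1} I_{n_1}, \lambdabar q^{k-2} I_{n_2}, \ldots, \lambdabar I_{n_k})$; the relation $\varphi \sigma \varphi^{-1} = \sigma^q$ is automatic since $\rbar(\sigma) = I_n$. In the notation of Theorem \ref{thm:banal local model} this yields $d = 1$, $k_1 = k$, $\nun^{(1)} = \nun$, every $\Ubar_i^{(1)} = I_{n_i}$ and every $\Nbar_{i,i+1}^{(1,1)} = 0$, so the associated $\F$-point $x^{(1)}$ is the origin of $\Nc^{\nun}$, and the theorem provides
\[
R^{\square}(\rbar) \cong \widehat{\O}_{\Nc^{\nun}_{\Oc}, 0}[[t_1, \ldots, t_s]], \qquad s = n^2 - \sum_{i=1}^k n_i^2.
\]
By Propositions \ref{prop: M lci} and \ref{prop:Nc(n,q) is CI}, $R^\square(\rbar)$ is a reduced complete intersection, flat over $\Oc$ of relative dimension $n^2$, whose reduction modulo $\varpi$ is a reduced complete intersection of dimension $n^2$ over $\F$; peeling off the power series variables $t_1, \ldots, t_s$ transfers all of these properties to $\widehat{\O}_{\Nc^{\nun}_{\Oc}, 0}$, with dimensions lowered by $s$. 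Combined with the $\G_m$-dimension identities above, this gives $\dim \Nc^{\nun}_{\Oc} = \sum n_i^2 + 1$ and $\dim \Nc^{\nun}_{\F} = \sum n_i^2$. Since $\Nc^{\nun}_{\Oc}$ is cut out of affine $\Oc$-space of relative dimension $\sum n_i^2 + \sum n_i n_{i+1}$ by exactly $\sum n_i n_{i+1}$ equations, this dimension count exhibits $\Nc^{\nun}_{\Oc}$ as a complete intersection over $\Oc$, and likewise $\Nc^{\nun}_{\F}$ as a complete intersection of dimension $\sum n_i^2$ over $\F$. Cohen--Macaulayness together with the special fiber dimension dropping by $1$ makes $\varpi$ a non-zero-divisor on $\Nc^{\nun}_{\Oc}$, giving $\Oc$-flatness.

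The main remaining obstacle is to deduce global reducedness from the local reducedness we have established only at the origin. This is handled by a graded argument: the coordinate ring $A := \Gamma(\Nc^{\nun}_{\Oc})$ is $\Z_{\ge 0}$-graded with $A_0 = \Oc$, and the origin corresponds to the graded maximal ideal $\mf_0 := (\varpi) + A_{\ge 1}$. The completion $\widehat{A_{\mf_0}}$ is reduced by the previous paragraph, and $A_{\mf_0} \hookrightarrow \widehat{A_{\mf_0}}$ by Krull's intersection theorem, so $A_{\mf_0}$ is reduced. The nilradical of the graded ring $A$ is graded, so any nonzero nilpotent may be taken homogeneous; the degree-zero case is excluded as $\Oc$ is a domain, and if $\deg f \ge 1$ then $f \in \mf_0$ vanishes in $A_{\mf_0}$, furnishing $s \in A$ with $s_0 \in \Oc^\times$ and $sf = 0$. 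Extracting the degree-$\deg f$ component of the relation $sf = 0$ yields $s_0 f = 0$, whence $f = 0$ since $s_0$ is a unit, a contradiction. Hence $A$ is reduced, and the same argument applied to the graded $\F$-algebra $A/\varpi$ shows $\Nc^{\nun}_{\F}$ is reduced as well.
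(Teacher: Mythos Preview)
Your proof is correct and follows essentially the same approach as the paper: identify the complete local ring of $\Nc^{\nun}$ at the origin with a factor of $R^\square(\rbar)$ via Theorem~\ref{thm:banal local model}, import the structural properties from Propositions~\ref{prop: M lci} and~\ref{prop:Nc(n,q) is CI}, and then use the graded/$\G_m$-structure to propagate them globally. The paper packages the descent step more tersely, simply remarking that reducedness, flatness, and equidimensionality ``are preserved under completions of graded $\Oc$-algebras,'' whereas you spell out the reducedness argument explicitly (your observation that $s \notin \mf_0$ forces $s_0 \in \Oc^\times$ and hence $s$ is a non-zero-divisor is exactly the content of the paper's Lemma~\ref{lem:graded minimal primes}).
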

\begin{proof}
Let $n = \sum_{i=1}^k n_i$, and consider the local Galois representation $\rbar:G_L\to \GL_n(\F)$ defined by $\rbar(I_L) = 1$ and
\[
\rbar(\varphi) =
\begin{pmatrix}
q^{k-1}I_{n_1} & 0 & \cdots & 0\\
0 & q^{k-2}I_{n_2} & \cdots & 0\\
\vdots&\vdots&\cdots&\vdots\\
0 & 0 & \cdots & I_{n_k}
\end{pmatrix}\in \GL_n(\F).
\]
By Theorem \ref{thm:banal local model} we have an isomorphism
\[
R^\square(\rbar) \cong R^{\nun}_x\left[\left[t_1,\ldots,t_{n^2-\sum_i n_i^2}\right]\right]
\]
where $R^{\nun}_x$ is the complete local ring of $\Nc^{\nun}$ at the $\F$-point
\[
x = (I_{n_1}-I_{n_1},0,I_{n_2}-I_{n_2},0,\ldots,0,I_{n_k}-I_{n_k}) = (0,0,\ldots,0).
\]
Now as $R^\square(\rbar)$ is reduced, flat and equidimensional over $\Oc$ of relative dimension $n^2$ (by Propositions \ref{prop:N(n,q) local model} and \ref{prop:Nc(n,q) is CI}) it follows that $R^{\nun}_x$ is reduced, flat and equidimensional over $\Oc$ of relative dimension $\sum_{i=1}^k n_i^2$. Similarly as $R^\square(\rbar)/(\varpi)$ is reduced and equidimensional over $\F$ of dimension $n^2$ (by Propositions \ref{prop:N(n,q) local model} and \ref{prop:Nc(n,q) is CI}), $R^{\nun}_x/(\varpi)$ is reduced and equidimensional over $\F$ of dimension $\sum_{i=1}^k n_i^2$

Now consider the $\Oc$-algebra $\Rc^{\nun} := \Gamma(\Nc^{\nun})$ and interpret $x = \left(\varpi,X_1,N_1,\ldots,N_{i-1},X_i\right)$ as a maximal ideal of $\Rc^{\nun}$, so that $R^{\nun}_x$ is the completion of $\Rc^{\nun}$ at $x$. By definition, $\Rc^{\nun}$ is cut out by the equations $X_iN_i = N_iX_{i+1}$, all of which are homogeneous of degree $2$ in the entries of $X_i$ and $N_i$. It follows that $\Rc^{\nun}$ is a graded $\Oc$-algebra with $x$ a homogeneous ideal containing the irrelevant ideal of $\Rc^{\nun}$. This means that $R^{\nun}_x$ is in fact the completion of $\Rc^{\nun}$ as a graded $\Oc$-algebra. Hence $\Rc^{\nun}$ is also reduced, flat and equidimensional over $\Oc$ of relative dimension $\sum_{i=1}^kn_i^2$, as these properties are preserved under completions of graded $\Oc$-algebras, and $\Rc^{\nun}/(\varpi)$ is reduced and equidimensional over $\F$ of dimension $\sum_{i=1}^kn_i^2$.

Lastly, as $\Rc^{\nun}$ (respectively $\Rc^{\nun}/(\varpi)$) is generated over $\Oc$ (respectively $\F$) by $\sum_{i=1}^k n_i^2 + \sum_{i=1}^{k-1}n_in_{i+1}$ generators (the entries of $X_i$ and $N_i$) with $\sum_{i=1}^{k-1}n_in_{i+1}$ relations (the equations $X_iN_i = N_iX_{i+1}$ for $i=1,\ldots,k-1$) the fact that $\dim_{\Oc}\Rc^{\nun} = \dim_{\F}\Rc^{\nun}/(\varpi) = \sum_{i=1}^k n_i^2$ implies that $\Rc^{\nun}$, and thus $\Nc^{\nun}$, (respectively $\Rc^{\nun}/(\varpi)$ and thus $\Nc^{\nun}_{\F}$) is a complete intersection.
\end{proof}

\begin{rem}
It is likely that one could also prove this result by a similar argument to the one used in Proposition \ref{prop:Nc(n,q)_E is CI} using the analysis of the irreducible components of $\Nc^{\nun}$ given below. We have omitted doing this here, as Theorem \ref{thm:banal local model} is necessary for our main results anyway, and easily implies all of the needed properties of $\Nc^{\nun}$.
\end{rem}

\subsection{The irreducible components of $\Nc^{\nun}$}

The goal of this section is to give a simple description of the irreducible components of $\Nc^{\nun}$, and of $\Nc^{\nun}_{\Bbbk}$ for a field $\Bbbk$, analogous to Proposition \ref{prop:N(n,q) components}.

First as $\Nc^{\nun}$ is flat and finite type over $\Oc$ by Corollary \ref{cor:Nc^nun is CI}, $\Nc^{\nun} = \overline{\Nc^{\nun}(\Ebar)}$, and so the irreducible components of $\Nc^{\nun}$ are precisely the Zariski closures of the irreducible components of $\Nc^{\nun}_{\Ebar}$. Thus it will suffice to determine the irreducible components of $\Nc^{\nun}_{\Bbbk}$ for all fields $\Bbbk$.

First, for any ring $R$, write 
\[\Wc^{\nun}_R = M_{n_1\times n_2,R}\times M_{n_2\times n_3,R}\times\cdots \times M_{n_{k-1}\times n_k,R},\]
and note that this is isomorphic to $\A_R^{n_1n_2+n_2n_3+\cdots+n_{k-1}n_k}$.

We have a natural map $\Pi^{\nun}_R:\Nc^{\nun}_R\to \Wc^{\nun}_R$ given by $\Pi^{\nun}_R(X_1,N_1,X_2,N_2,\ldots,N_{k-1},X_k) = (N_1,N_2,\ldots,N_{k-1})$.

Now, consider the algebraic group $\Hc^{\nun}_R := \GL_{n_1,R}\times \GL_{n_2,R}\times\cdots \times \GL_{n_k,R}$ and define actions of $\Hc^{\nun}_R$ on the $R$-schemes $\Nc^{\nun}_R$ and $\Wc^{\nun}_R$ via:
\begin{align*}
	(g_1,\ldots,g_k)\cdot (X_1,N_1,X_2,N_2,\ldots,N_{k-1},X_k) 
	&=  (g_1X_1g_1^{-1},g_1N_1g_2^{-1},g_2X_2g_2^{-1},g_2N_2g_3^{-1},\ldots,g_{k-1}N_{k-1}g_k^{-1},g_kX_kg_k^{-1})\\
	(g_1,\ldots,g_k)\cdot (N_1,N_2,\ldots,N_{k-1}) 
	&=  (g_1N_1g_2^{-1},g_2N_2g_3^{-1},\ldots,g_{k-1}N_{k-1}g_k^{-1}).
\end{align*}
Clearly these actions make $\Pi^{\nun}_R$ $\Hc^{\nun}_R$-equivariant.

Write $\Wc^{\nun} = \Wc^{\nun}_{\Z}$, $\Pi^{\nun} = \Pi^{\nun}_{\Z}$ and $\Hc^{\nun} = \Hc^{\nun}_{\Z}$.

As it turns out, we can give a precise combinatorial description of the $\Hc^{\nun}(\Bbbk)$-orbits of in $\Wc^{\nun}(\Bbbk)$ for any field $\Bbbk$.

\begin{defn}\label{def:Is^nun}
For any $\nun = (n_1,\ldots,n_k)$ define the set $\Is^{\nun}\subseteq \Z^{k(k-1)/2}_{\ge 0}$ of $k(k-1)/2$-tuples of nonnegative integers
\[
(d_{ij})_{1\le j\le k-i \le k-1} = (d_{11},d_{12},\ldots,d_{1,k-1};d_{21},\ldots,d_{2,k-2};\ldots;d_{k-2,1},d_{k-2,2};d_{k-1,1})
\]
as follows.

First, set $d_{0j} = n_j$ for $j=1,\ldots,k$ and $d_{ij} = 0$ whenever $i\not\in [0,k-1]$ or $j\not\in [1,k-i]$. Then we will say that $(d_{ij})_{1\le j\le k-i \le k-1}\in \Is^{\nun}$ if 
\begin{itemize}
	\item $0\le d_{ij}\le \min\{d_{i-1,j},d_{i-1,j+1}\}$ for all $i\ge 1$ and all $j$.
	\item For all $i\ge 1$ and all $1\le j < k-i$,
	\[
	d_{ij}+d_{i,j+1} \le d_{i+1,j}+d_{i-1,j+1}.
	\]
\end{itemize}
\end{defn}

Note that the condition $0\le d_{ij}\le \min\{d_{i-1,j},d_{i-1,j+1}\}$ implies that $0\le d_{ij}\le \min\{n_j,\ldots,n_{i+j}\}$ for all $i$ and $j$, and so $\Is^{\nun}$ is a finite set.

\begin{prop}\label{prop:H orbits}
Let $\Bbbk$ be any field. Define a map $\Delta^{\nun}:\Wc^{\nun}(\Bbbk) \to \Z^{k(k-1)/2}$ by

\[
\Delta^{\nun}(N_1,\ldots,N_{k-1}) := \left(\rank(N_j N_{j+1}\cdots N_{i+j-1})\right)_{1\le j\le k-i \le k-1}.
\]
Then
\begin{enumerate}
	\item The image of the map $\Delta^{\nun}$ is the set $\Is^{\nun}\subseteq \Z^{k(k-1)/2}$;
	\item Any two points $(N_1,\ldots,N_{k-1}), (N'_1,\ldots,N'_{k-1})\in \Wc^{\nun}(\Bbbk) = \prod_{i=1}^{k-1} M_{n_i\times n_{i+1}}(\Bbbk)$ are in the same $\Hc^{\nun}(\Bbbk)$-orbit if and only if
	\[
	\Delta^{\nun}(N_1,\ldots,N_{k-1}) = \Delta^{\nun}(N'_1,\ldots,N'_{k-1}).
	\]
\end{enumerate}
Hence for any field $\Bbbk$, the set of $\Hc^{\nun}(\Bbbk)$-orbits in $\Wc^{\nun}(\Bbbk)$ is in bijection with the set $\Is^{\nun}$.
\end{prop}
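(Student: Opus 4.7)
The plan is to recognize $\Wc^{\nun}(\Bbbk)$ as the moduli of representations of the equioriented type $A_k$ quiver with dimension vector $(n_1, \ldots, n_k)$, under which the $\Hc^{\nun}(\Bbbk)$-action is exactly simultaneous change of basis at each vertex. Two tuples will then lie in the same $\Hc^{\nun}(\Bbbk)$-orbit if and only if they are isomorphic as quiver representations. Gabriel's theorem for type $A_k$ gives that every such representation decomposes uniquely as $V = \bigoplus_{1 \le a \le b \le k} m_{a,b}\, I_{[a,b]}$, where $I_{[a,b]}$ is the indecomposable supported on vertices $a, a+1, \ldots, b$ with all arrows between nonzero spaces acting as the identity; the isomorphism class is recorded by the multiplicities $(m_{a,b})$.

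For the ``only if'' direction of (2), I would observe that $\Delta^{\nun}$ is manifestly constant on $\Hc^{\nun}(\Bbbk)$-orbits, since each composition $N_j \cdots N_{j+i-1}$ is replaced by a two-sided product with invertible matrices. For the ``if'' direction, the key identity
\[ \rank(N_j N_{j+1} \cdots N_{j+i-1}) = \sum_{a \le j,\; b \ge j+i} m_{a,b} \]
(together with $d_{0,j} = n_j = \sum_{a \le j \le b} m_{a,b}$) can be inverted by a two-variable finite difference to recover
\[ m_{a,b} = d_{b-a,a} - d_{b-a+1,a-1} - d_{b+1-a,a} + d_{b+2-a,a-1} \]
(with the convention $d_{i,j} = 0$ for $j \notin [1, k-i]$). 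Hence the $m_{a,b}$, and with them the isomorphism class of $V$, are determined by $\Delta^{\nun}$.

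For (1), the containment $\Delta^{\nun}(\Wc^{\nun}(\Bbbk)) \subseteq \Is^{\nun}$ reduces to two standard inequalities: the first defining inequality of $\Is^{\nun}$ is immediate from $\rank(AB) \le \min\{\rank A, \rank B\}$ applied to factorizations of $N_j \cdots N_{j+i-1}$, and the second is Frobenius's rank inequality $\rank(AB) + \rank(BC) \le \rank(ABC) + \rank(B)$ applied with $A = N_j$, $B = N_{j+1} \cdots N_{j+i-1}$, $C = N_{j+i}$. For the reverse containment, given $(d_{i,j}) \in \Is^{\nun}$ I would define $m_{a,b}$ by the inverse formula and verify $m_{a,b} \ge 0$ case-by-case, matching each case to one of the defining inequalities of $\Is^{\nun}$; the interior case $2 \le a \le b \le k-1$ (and in particular $1 < a = b < k$) uses the second defining inequality applied with $i = b+1-a$ and $j = a-1$, while the boundary cases $a = 1$ or $b = k$ reduce to the first (with the outer terms vanishing by the indexing convention). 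Then $V := \bigoplus m_{a,b}\, I_{[a,b]}$ realizes the prescribed invariant, giving surjectivity onto $\Is^{\nun}$.

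The main obstacle is the combinatorial bookkeeping needed to match each case of the nonnegativity check $m_{a,b} \ge 0$ to the correct defining inequality of $\Is^{\nun}$, together with keeping track of the boundary cases where some of the $d_{\cdot,\cdot}$'s vanish by convention; no deeper obstacle is anticipated, since the essential content is Gabriel's theorem together with two classical rank inequalities.
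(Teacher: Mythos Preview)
Your proposal is correct and takes a genuinely different route from the paper. The paper proceeds by a self-contained induction on $k$: it first shows $\Delta^{\nun}$ is orbit-invariant and lands in $\Is^{\nun}$ (proving the Frobenius rank inequality by hand), and then simultaneously establishes surjectivity and the ``if'' direction by explicitly constructing, at each step, a suitable $N_k$ adjoined to an inductively chosen $(N_1,\ldots,N_{k-1})$, and showing any two tuples with the same $\Delta^{\nun}$ can be conjugated into one another via a delicate filtration argument and an auxiliary lemma extending automorphisms along a chain of linear maps.

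Your approach instead identifies $\Wc^{\nun}(\Bbbk)$ with the representation variety of the equioriented $A_k$ quiver and invokes Gabriel's theorem to reduce orbit classification to the multiplicities $m_{a,b}$ of the interval indecomposables $I_{[a,b]}$. The explicit M\"obius-type inversion you write down then converts between $(d_{i,j})$ and $(m_{a,b})$, and the nonnegativity check $m_{a,b}\ge 0$ matches exactly the two defining inequalities of $\Is^{\nun}$ (interior case $\leftrightarrow$ second inequality, boundary cases $a=1$ or $b=k$ $\leftrightarrow$ first inequality). This is cleaner and shorter, at the cost of importing the classification of $A_k$ quiver representations as a black box; you should note explicitly that this classification holds over an arbitrary field $\Bbbk$ (not just algebraically closed), which is true for equioriented $A_k$ since the interval modules are absolutely indecomposable, but is worth stating since Gabriel's theorem is often quoted only over $\bar\Bbbk$. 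The paper's approach buys complete self-containment and avoids any appeal to quiver theory; yours buys brevity and a conceptual explanation of why the set $\Is^{\nun}$ has exactly the shape it does.
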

\begin{proof}
Let $V_i = \Bbbk^{n_i}$ for all $i$, so that for any $(N_1,\ldots,N_{k-1})\in \Wc^{\nun}(\Bbbk)$, each $N_i$ may be thought of as a $\Bbbk$-linear map $V_{i+1}\to V_i$.

First, for any $(g_1,\ldots,g_k)\in \Hc^{\nun}(\Bbbk)$ and any $(N_1,\ldots,N_{k-1}) \in \Wc^{\nun}(\Bbbk)$ we have
\[
(g_jN_jg_{j+1}^{-1})(g_{j+1}N_{j+1}g_{j+2}^{-1})\cdots (g_{i+j-1}N_{i+j-1}g_{i+j}^{-1}) = g_i(N_jN_{j+1}\cdots N_{i+j-1})g_{i+j}^{-1}
\]
for all $i$ and $j$. Thus, as $g_j$ and $g_{i+j}$ are invertible, if $(N_1',\ldots,N_{k-1}') = (g_1,\ldots,g_k)\cdot (N_1,\ldots,N_{k-1})$ we indeed have
\[
\rank(N'_jN'_{j+1}\cdots N'_{i+j-1}) = \rank(g_j(N'_jN'_{j+1}\cdots N_{i+j-1}')g_{i+j}^{-1}) = 
\rank(N_jN_{j+1}\cdots N_{i+j-1})
\]
for all $i$ and $j$, so $\Delta^{\nun}(N_1,\ldots,N_{k-1}) = \Delta^{\nun}(N'_1,\ldots,N'_{k-1})$.

Now take any $(N_1,\ldots,N_{k-1})\in \Wc^{\nun}(\Bbbk)$ and set $d_{ij} = \rank(N_jN_{j+1}\cdots N_{i+j-1})$ for any $i$ and $j$. Since $\rank(AB) \le \rank(A),\rank(B)$ for any linear maps $B:U\to V$ and $A:V\to W$ between finite dimensional vector spaces we have
\[
d_{ij} = \rank(N_jN_{j+1}\cdots N_{i+j-2}N_{i+j-1}) \le \rank(N_jN_{j+1}\cdots N_{i+j-2}) = d_{i-1,j}
\] and
\[
d_{ij} = \rank(N_jN_{j+1}\cdots N_{i+j-2}N_{i+j-1}) \le \rank(N_{j+1}\cdots N_{i+j-1}) = d_{i-1,j+1}.
\]
for all $i$ and $j$.

Also the following lemma (with $A=N_j$, $B=N_{j+1}\cdots N_{i+j-1}$ and $C = N_{i+j}$ ) implies that $d_{ij}+d_{i,j+1} \le d_{i+1,j}+d_{i-1,j+1}$ for all $i\ge 1$ and $1\le j < k-i$:
\begin{lemma}
For any linear maps $C:U\to V$, $B:V\to W$ and $A:W\to X$ between finite dimensional vector spaces we have
\[
\rank(AB)+\rank(BC)\le \rank(B)+\rank(ABC)
\]
\end{lemma}
\begin{proof}
By rank-nullity
\begin{align*}
\rank(B) - \rank(AB) &= (\dim V - \dim \ker B) - (\dim V - \dim \ker AB) = \dim\ker (AB) - \dim \ker B\\
& = \dim (\ker AB)/(\ker B)
\end{align*}
and similarly 
\[
\rank(BC)-\rank(ABC) = \dim(\ker ABC)/(\ker BC).
\]
But now as $C^{-1}(\ker B) = \ker(BC)$ and $C^{-1}(\ker AB) = \ker ABC$, $C$ induces an injection $(\ker ABC)/(\ker BC)\into (\ker AB)/(\ker B)$ and so
\[
\rank(BC)-\rank(ABC) = \dim(\ker ABC)/(\ker BC) \le \dim (\ker AB)/(\ker B) = \rank(B) - \rank(AB) 
\]
giving the desired inequality.
\end{proof}
So indeed $\Delta^{\nun}(N_1,\ldots,N_{k-1})\in \Is^{\nun}$.

It remains to prove that for any $(d_{ij})_{ij}\in\Is^{\nun}$ there is some $(N_1,\ldots,N_{k-1})\in \Wc^{\nun}(\Bbbk)$ with $\Delta^{\nun}(N_1,\ldots,N_{k-1}) = (d_{ij})_{ij}$ and that if $(N_1',\ldots,N_{k-1}')\in \Wc^{\nun}(\Bbbk)$ is any other element with $\Delta^{\nun}(N'_1,\ldots,N'_{k-1}) = (d_{ij})_{ij}$ then $(N'_1,\ldots,N'_{k-1})$ is conjugate to $(N_1,\ldots,N_{k-1})$. We will prove both of these claims simultaneously by induction on $k$

In the case $k=1$ the claim is vacuous. In the case $k=2$, it is simply the standard fact that a matrix $N\in M_{a\times b}(\Bbbk)$ has rank $r$ if and only if it is conjugate to the matrix 
\[
\begin{pmatrix}
I_{r\times r}& 0_{r\times (b-r)}\\
0_{(a-r)\times r} & 0_{(a-r)\times(b-r)}
\end{pmatrix}
\]
under the action of $\GL_a(\Bbbk)\times \GL_b(\Bbbk)$.

Now assume that the claim holds for some value of $k\ge 2$. Fix any $\nun = (n_1,\ldots,n_k,n_{k+1})$ and take any $(d_{ij})\in \Is^{\nun}$. By the inductive hypothesis there exist matrices $N_i\in M_{n_i\times n_{i+1}}(\Bbbk)$ for $i=1,\ldots,k-1$ with $\rank(N_jN_{j+1}\cdots N_{i+j-1}) = d_{ij}$ whenever $i+j\le k$.

There is a filtration
\[
V_k = U_0 \supseteq U_1 \supseteq U_2 \supseteq \cdots \supseteq U_{k-1} \supseteq U_k = 0
\]
given by $U_j = \ker N_jN_{j+1}\cdots N_{k-1}$ for each $j=1,\ldots,k-1$. Note that 
\[
\dim_{\Bbbk} U_j = \dim_{\Bbbk} V_k - \rank(N_jN_{j+1}\cdots N_{k-1}) = n_k - d_{k-j,j}
\]
and note that this also trivially holds for $j=0$ and $j=k$ by taking $d_{k0} = 0$ and $d_{0k} = n_k$. Thus $\dim_{\Bbbk} U_j - \dim_{\Bbbk} U_{j+1} = d_{k-j-1,j+1}-d_{k-j,j}$ for all $j = 0,\ldots,k-1$.

Now consider any $N:V_{k+1}\to V_k$. For each $j=1,\ldots, k-1$ we have
\[
\rank(N_j\cdots N_{k-1}N) = \rank(N_j\cdots N_{k-1}|_{\im N}) 
= \rank(N) - \dim_{\Bbbk}\ker(N_j\cdots N_{k-1}|_{\im N})
= \rank(N) - \dim_{\Bbbk}(U_j\cap \im N).
\]
Thus we will have $\Delta^{\nun}(N_1,\ldots,N_{k-1},N) = (d_{ij})_{ij}$ if and only if 
\[
\dim_{\Bbbk}(U_j\cap \im N) = d_{1k} - d_{k+1-j,j}
\]
for all $j = 1,\ldots,k-1$ and $\dim_{\Bbbk} (U_0\cap \im N) = \rank(N) = d_{1k} = d_{1k} - d_{k+1,0}$. Equivalently (as $U_k\cap \im N = 0$), $\Delta^{\nun}(N_1,\ldots,N_{k-1},N) = (d_{ij})_{ij}$ if and only if
\[
\dim_{\Bbbk}(U_j\cap \im N)-\dim_{\Bbbk}(U_{j+1}\cap \im N) = d_{k-j,j+1} - d_{k+1-j,j}
\]
for all $j=0,\ldots,k-1$.

But now the definition of $\Is^{\nun}$ we have $d_{k-j,j+1} - d_{k+1-j,j}\le d_{k-j-1,j+1}-d_{k-j,j}$ for all $j=0,\ldots,k-1$. It is now easy to see that we can pick a $N_{k}:V_{k+1}\to V_k$ satisfying the desired conditions. Indeed, for each $j=0,\ldots,k-1$, pick a $W_j\subseteq U_j$ with $W_j\cap U_{j+1} = 0$ and $\dim_{\Bbbk}W_j = d_{k-j,j+1} - d_{k+1-j,j}$ (which is possible as $\dim_{\Bbbk}W_j+\dim_{\Bbbk}U_{j+1} \le \dim_{\Bbbk}U_j$). Then 
\[\dim_{\Bbbk}\sum_{a=0}^{k-1} W_a = \dim_{\Bbbk}\bigoplus_{a=0}^{k-1} W_a =  \sum_{a=0}^{k-1}\dim_{\Bbbk}W_a = \sum_{a=0}^{k-1}(d_{k-a,a+1} - d_{k+1-a,a}) = d_{1k} - d_{k+1,0} = d_{1k} \le n_{k+1} = \dim_{\Bbbk}V_{k+1}\]
and so we may define a linear map $N_{k}:V_{k+1}\to V_k$ so that $\im N_{k} = \bigoplus_{a=0}^{k-1} W_a$. We then have $U_j \cap \im N_{k} = \bigoplus_{a=0}^{j} W_a$ and so we get 
\[
\dim_{\Bbbk}(U_j\cap \im N_k)-\dim_{\Bbbk}(U_{j+1}\cap \im N_k) = \dim_{\Bbbk} W_j = d_{k-j,j+1} - d_{k+1-j,j}
\]
as desired. Hence there exists a $N_{k}$ with $\Delta^{\nun}(N_1,\ldots,N_{k-1},N_k) = (d_{ij})_{ij}$.

Now assume that $(N_1',\ldots,N_k')\in \Wc^{\nun}(\Bbbk)$ is any other element with $\Delta^{\nun}(N'_1,\ldots,N'_{k-1},N'_k) = (d_{ij})_{ij}$. We claim that $(N_1,\ldots,N_{k})$ and $(N_1',\ldots,N_{k}')$ are $\Hc(\Bbbk)$-conjugate. By the inductive hypothesis, there is some $(g_1,\ldots,g_k,1)\in \Hc^{\nun}(\Bbbk)$ with 
\[
(g_1,\ldots,g_k,1)(N_1,\ldots,N_{k-1},N_{k}) = (N'_1,\ldots,N'_{k-1},g_kN_{k})
\]
so from now on we may assume without loss of generality that $N_i = N_i'$ for all $i=1,\ldots,k-1$.

Letting $U_j$ be as before, the above work implies that
\[
\dim_{\Bbbk}(U_j\cap \im N'_k)-\dim_{\Bbbk}(U_{j+1}\cap \im N'_k) = d_{k-j,j+1} - d_{k+1-j,j} = \dim_{\Bbbk}(U_j\cap \im N_k)-\dim_{\Bbbk}(U_{j+1}\cap \im N_k)
\]
for all $j=0,\ldots,k-1$.

Now we claim by induction on $j$, that for each $j=0,\ldots,k$ $(N_1,\ldots,N_{k-1},N'_{k})$ is $\Hc^{\nun}(\Bbbk)$-conjugate to some $(N_1,\ldots,N_{k-1},N'_{k,j})$ with $U_j\cap \im N'_{k,j} = U_j\cap \im N_{k}$. Since $U_k = 0$, the claim is automatic for $j=k$.

Assume the claim holds for some $j>0$. By replacing $N'_k$ by $N'_{k,j}$ we may assume that $U_j\cap \im N_k' = U_j\cap \im N_{k}$. As noted above, we also have
\[
\dim_{\Bbbk}(U_{j-1}\cap \im N'_k)-\dim_{\Bbbk}(U_{j}\cap \im N'_k) = \dim_{\Bbbk}(U_{j-1}\cap \im N_k)-\dim_{\Bbbk}(U_{j}\cap \im N_k)
\] 
and so there exists an automorphism $g\in \GL_{U_{j-1}}$ with $g|_{U_j} = I_{U_j}$ and $U_{j-1}\cap \im gN'_k = g(U_{j-1}\cap \im N'_k) = U_{j-1} \cap \im N_k$. Extend $g$ to an endomorphism $g_k\in \GL_{n_k}(\Bbbk)$ with $g_k(U_a) = U_a$ for all $a$.

By repeatedly applying the following lemma (to the filtration 
\[0\subseteq \ker N_{a-1} \subseteq \ker (N_{a-2}N_{a-1})\subseteq \cdots \subseteq \ker (N_{1}\cdots N_{a-1}) \subseteq V_a\]
of $V_a$) we can construct an element $(g_1,\ldots,g_k,1)\in \Hc^{\nun}(\Bbbk)$ with the property that $g_a N_a = N_a g_{a+1}$ for all $a = 1,\ldots, k-1$. Hence $(N_1,N_2,\ldots,N_{k-1},N_k')$ is conjugate to
\[
(g_1N_1g_2^{-1},g_2N_2g_3^{-1},\ldots,g_{k-1}N_{k-1}g_k^{-1},g_kN_k') = (N_1,N_2,\ldots,N_{k-1},g_kN_k')
\]
and so we may set $N'_{k,j-1} = g_kN_k'$, as $U_{j-1} \cap \im g_kN_k' = U_{j-1} \cap \im N_k$.

\begin{lemma}
Let $T:V\to W$ be a linear transformation between finite dimensional $\Bbbk$-vector spaces. Take filtrations
\[
0=X_0\subseteq X_1\subseteq \cdots \subseteq X_d = V
\] 
and
\[
0=Y_0\subseteq Y_1\subseteq \cdots \subseteq Y_{d-1}= W
\]
with the property that $X_j = T^{-1}(Y_{j-1})$ for all $j>0$. Assume that $G\in \GL(V)$ satisfies $G(X_j) = X_j$ for all $j$. Then there is some $H\in \GL(W)$ with $H(Y_j) = Y_j$ for all $j$ and $HT=TG$.
\end{lemma}
\begin{proof}
First note that by assumption $X_1 = T^{-1}(Y_0) = T^{-1}(0) = \ker T$. Also the condition $X_j = T^{-1}(Y_{j-1})$ implies that $T(X_j) = \im T \cap Y_{j-1}$ for all $j>0$.

Define $H_0:\im T\to \im T$ by $H_0(T(v)) = T(G(v))$. Note that this is well defined. Indeed if $T(v_1) = T(v_2)$ then $v_1-v_2\in \ker T = X_1$ and so $G(v_1)-G(v_2)\in G(X_1) = X_1 = \ker T$ giving $H_0(T(v_1)) = T(G(v_1)) = T(G(v_2)) = H_0(T(v_2))$. Since $T$ and $G$ are linear, it follows that $H_0:\im T\to \im T$ is also linear. Similarly the function $T(v)\mapsto T(G^{-1}(v))$ is also a well defined linear map $\im T\to \im T$, and so it follows that $H_0:\im T\to \im T$ is invertible.

For any $j=0,\ldots, d$, as $G(X_j) = X_j$ we have $H_0(T(X_j)) = T(G(X_j)) = T(X_j)$. And so $H_0(\im T\cap Y_{j}) = \im T\cap Y_{j}$ for all $j=0,\ldots,d-1$. It follows that we may extend $H_0$ to an automorphism $H\in \GL(W)$ with $H(Y_j) = Y_j$ for all $j=0,\ldots,d-1$. But as $H|_{\im T} = H_0$ we have $H(T(v)) = H_0(T(v)) = T(G(v))$ for all $v\in V$ by definition, and so indeed $HT = TG$. 
\end{proof}

Thus we have shown (as $U_0 = V_k$) that $(N_1,\ldots,N_{k-1},N'_{k})$ is $\Hc^{\nun}(\Bbbk)$-conjugate to some $(N_1,\ldots,N_{k-1},N'_{k,0})$ with $\im N'_{k,0} = U_0\cap \im N'_{k,0} = U_0\cap \im N_{k} = \im N_k$. Replacing $N_k'$ by $N'_{k,0}$ we may assume without loss of generality that $\im N_k' = \im N_k$.

We now claim that there is some $g_{k+1}\in \GL_{n_{k+1}}(\Bbbk)$ with $N_k' = N_kg_{k+1}$, which will imply that $(N_1,\ldots,N_{k-1},N'_{k})$ is $\Hc^{\nun}(\Bbbk)$-conjugate to
\[
(1,\ldots,1,g_{k+1})\cdot (N_1,\ldots,N_{k-1},N'_{k}g_{k+1}^{-1}) = (N_1,\ldots,N_{k-1},N_{k})
\]
finishing the proof.

Let $r = r_{kk} = \dim_{\Bbbk} \im N_k$. Fix a basis $\{e_1,\ldots,e_r\}$ for $\im N_k = \im N'_k$. For each $i=1,\ldots,r$ pick elements $v_i,w_i\in V_{k+1}$ with $N_k(v_i) = e_i$ and $N'_k(w_i) = e_i$. Pick bases $\{v_{r+1},\ldots,v_{n_{k+1}}\}$ and $\{w_{r+1},\ldots,w_{n_{k+1}}\}$ for $\ker N_k$ and $\ker N_k'$ respectively (both of which indeed have dimension $n_{k+1}-r$). Then $\{v_1,\ldots,v_{n_{k+1}}\}$ and $\{w_1,\ldots,w_{n_{k+1}}\}$ are two bases for $V_{k+1}$ satisfying $N_k(v_i) = N'_k(w_i) = e_i$ for $i\le r$ and $N_k(v_i) = N'_k(w_i) = 0$ for $i>r$. Thus if we define $g_{k+1}\in \GL_{n_{k+1}}(\Bbbk) = \GL(V_{k+1})$ by $g(w_i) = v_i$ for all $i$ we will indeed have $N_k' = N_kg_{k+1}$.
\end{proof}

For any $P\in \Is^{\nun}$ we will use $\Wc^{\nun}_{\Bbbk}(P)\subseteq \Wc^{\nun}(\Bbbk)$ to denote the $\Hc^{\nun}(\Bbbk)$-orbit in $\Wc^{\nun}(\Bbbk)$ corresponding to $P$.

We can now use this to give a complete description of the irreducible components of $\Nc^{\nun}_{\Bbbk}$.

\begin{prop}\label{prop:Nc^nun irreducible components}
	Let $\Bbbk$ be a field. Let $C\subseteq \Wc^{\nun}_{\Bbbk}(\Bbbkbar)$ be an $\Hc^{\nun}_{\Bbbk}(\Bbbkbar)$-orbit under the action of $\Hc^{\nun}_{\Bbbk}(\Bbbkbar)$ on $\Wc^{\nun}_{\Bbbk}(\Bbbkbar)$. Then the Zariski closure of $(\Pi^{\nun}_{\Bbbk})^{-1}(C)\subseteq \Nc^{\nun}_{\Bbbk}(\Bbbkbar)$ in $\Nc^{\nun}_{\Bbbk}$ is an irreducible component of $\Nc^{\nun}_{\Bbbk}$. Moreover every irreducible component of $\Nc^{\nun}_{\Bbbk}$ is in this form.
\end{prop}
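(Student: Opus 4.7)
The plan is to exploit the action of the connected algebraic group $\Hc^{\nun}_{\Bbbk}$ on $\Nc^{\nun}_{\Bbbk}$ (with respect to which $\Pi^{\nun}_{\Bbbk}$ is equivariant) together with the equidimensionality of $\Nc^{\nun}_{\Bbbk}$ furnished by Corollary \ref{cor:Nc^nun is CI}. First I would fix an orbit $C$ and a base point $w = (N_1,\ldots,N_{k-1}) \in C$. The fiber $(\Pi^{\nun}_{\Bbbk})^{-1}(w)$ is cut out inside $\bigoplus_{i=1}^k M_{n_i\times n_i}(\Bbbkbar)$ by the linear system $X_iN_i = N_iX_{i+1}$, so it is an affine space and hence irreducible. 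Since $\Hc^{\nun}_{\Bbbk}$ is connected, the orbit $C = \Hc^{\nun}_{\Bbbk}(\Bbbkbar)\cdot w$ is irreducible, and the image of the action map $\Hc^{\nun}_{\Bbbk}(\Bbbkbar) \times (\Pi^{\nun}_{\Bbbk})^{-1}(w) \to \Nc^{\nun}_{\Bbbk}(\Bbbkbar)$ is precisely $(\Pi^{\nun}_{\Bbbk})^{-1}(C)$; since the source is a product of irreducibles, both this image and its Zariski closure in $\Nc^{\nun}_{\Bbbk}$ are irreducible.

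The crucial observation for the dimension count is that the tangent space at the identity to the stabilizer subscheme $\stab_{\Hc^{\nun}_{\Bbbk}}(w)$ is the kernel of the differential of the orbit map $g \mapsto g\cdot w$; writing $g_i = I + \varepsilon Y_i$ this kernel is exactly $\{(Y_1,\ldots,Y_k) : Y_iN_i = N_iY_{i+1}\}$, which is the same linear system cutting out the fiber. Hence $\dim(\Pi^{\nun}_{\Bbbk})^{-1}(w)$ equals the tangent-space dimension of the stabilizer at the identity, which is $\geq \dim\stab_{\Hc^{\nun}_{\Bbbk}}(w)$. Combining this with the orbit-stabilizer dimension formula $\dim C + \dim\stab_{\Hc^{\nun}_{\Bbbk}}(w) = \dim\Hc^{\nun}_{\Bbbk} = \sum_{i=1}^k n_i^2$ yields
\[
\dim \overline{(\Pi^{\nun}_{\Bbbk})^{-1}(C)} \;=\; \dim C + \dim(\Pi^{\nun}_{\Bbbk})^{-1}(w) \;\geq\; \sum_{i=1}^k n_i^2.
\]
Since Corollary \ref{cor:Nc^nun is CI} tells us $\Nc^{\nun}_{\Bbbk}$ is equidimensional of dimension $\sum_i n_i^2$, the irreducible closed subset $\overline{(\Pi^{\nun}_{\Bbbk})^{-1}(C)}$ must have dimension exactly $\sum_i n_i^2$ and therefore be an irreducible component (as a byproduct this forces the inequality above to be an equality, so the stabilizer is in fact smooth).

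For the converse, take any irreducible component $Z$ of $\Nc^{\nun}_{\Bbbk}$. Since $\Nc^{\nun}_{\Bbbk}$ is reduced and of finite type over $\Bbbk$, $Z$ is the Zariski closure of $Z(\Bbbkbar)$. Proposition \ref{prop:H orbits} shows the set of $\Hc^{\nun}_{\Bbbk}(\Bbbkbar)$-orbits on $\Wc^{\nun}_{\Bbbk}(\Bbbkbar)$ is the finite set $\Is^{\nun}$, so $Z(\Bbbkbar)$ is a finite disjoint union of the pieces $Z(\Bbbkbar) \cap (\Pi^{\nun}_{\Bbbk})^{-1}(C)$ and $Z$ is the union of their closures. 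Irreducibility of $Z$ forces $Z \subseteq \overline{(\Pi^{\nun}_{\Bbbk})^{-1}(C_0)}$ for some orbit $C_0$, and since the right-hand side is itself an irreducible component by the first part of the argument, the two must coincide. The hard part is the middle paragraph: recognizing that the defining linear system of the fiber is the Lie algebra of the stabilizer at the identity, which is what aligns the orbit-stabilizer formula with equidimensionality to force the full dimension $\sum_i n_i^2$; everything else is a routine assembly of standard facts about algebraic group actions.
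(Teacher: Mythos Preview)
Your proof is correct and follows essentially the same route as the paper: show each fiber of $\Pi^{\nun}_{\Bbbk}$ is an affine space cut out by the linear system $X_iN_i=N_iX_{i+1}$, use equivariance and connectedness of $\Hc^{\nun}_{\Bbbk}$ to see the orbit preimage is irreducible, compare its dimension with $\sum_i n_i^2$ via orbit--stabilizer, and conclude using the equidimensionality from Corollary~\ref{cor:Nc^nun is CI}.

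The one place you differ is in the dimension comparison. You observe that the Lie algebra of $\stab_{\Hc^{\nun}}(w)$ is given by the same linear system as the fiber, yielding an inequality $\dim(\Pi^{\nun}_{\Bbbk})^{-1}(w)\geq \dim\stab_{\Hc^{\nun}}(w)$, and then let equidimensionality of $\Nc^{\nun}_{\Bbbk}$ close the gap. The paper instead notes that $\stab_{\Hc^{\nun}(\Bbbkbar)}(w)$ is literally the intersection of the Zariski open $\Hc^{\nun}(\Bbbkbar)=\prod_i\GL_{n_i}(\Bbbkbar)$ with the linear subspace $\{(X_i):X_iN_i=N_iX_{i+1}\}\subseteq\prod_i M_{n_i}(\Bbbkbar)$; since this intersection contains the identity, it is a nonempty open in that affine space and hence has dimension exactly equal to the fiber dimension. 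This gives $\dim\overline{(\Pi^{\nun}_{\Bbbk})^{-1}(C)}=\sum_i n_i^2$ on the nose, without invoking equidimensionality to upgrade an inequality. Your tangent-space argument is equally valid and yields smoothness of the stabilizer as a byproduct; the paper's is marginally more direct.
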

\begin{proof}
	For any $P = (N_1,N_2,\ldots,N_{k-1})\subseteq \Wc^{\nun}_{\Bbbk}(\Bbbkbar)$, consider the subvariety 
	\[\Vc_P := \left\{(X_1,\ldots,X_k)\in \GL_{n_1}(\Bbbkbar)\times \cdots \times \GL_{n_k}(\Bbbkbar)\middle|X_iN_i = N_iX_{i+1} \text{ for }i=1,\ldots,k-1 \right\}\]
	of $M_{n_1\times n_1}(\Bbbkbar)\times\cdots\times M_{n_k\times n_k}(\Bbbkbar) \cong \A_{\Bbbkbar}^{n_1^2+\cdots+n_k^2}$.
	
	Clearly $\Vc_P$ is isomorphic to the fiber $(\Pi^{\nun}_{\Bbbkbar})^{-1}(P)\subseteq\Nc^{\nun}(\Bbbkbar)$. Also as the equations $X_iN_i = N_iX_{i+1}$ are all linear in the entries of the $X_i$'s, it follows that $(\Pi^{\nun}_{\Bbbk})^{-1}(P)\cong \Vc_P \cong \A_{\Bbbkbar}^{d(P)}$ for some integer $d(P)$ depending on $P$. In other words, the fibers of the map $\Pi^{\nun}_{\Bbbk}$ on $\Bbbkbar$-points are all affine spaces of varying degrees.
	
	Now let $C\subseteq \Wc^{\nun}_{\Bbbk}(\Bbbkbar)$ be an $\Hc^{\nun}_{\Bbbk}(\Bbbkbar)$-orbit. As $\Pi^{\nun}_{\Bbbk}$ is $\Hc^{\nun}_{\Bbbk}$-equivariant, the fibers $(\Pi^{\nun}_{\Bbbk})^{-1}(P)$ are isomorphic for all $P\in C$. It follows that $(\Pi^{\nun}_{\Bbbk})^{-1}(C)$ is the total space of a vector bundle of rank $d(P)$ over $C$, for any $P\in C$. In particular
	\[
	\dim\overline{(\Pi^{\nun}_{\Bbbk})^{-1}(C)} = \dim {(\Pi^{\nun}_{\Bbbk})^{-1}(C)} = \dim_{\Bbbkbar}C + d(P)
	\]
	
	Now as $\Hc^{\nun}_{\Bbbk}(\Bbbkbar)$ is irreducible, $C$ is also irreducible. Hence $(\Pi^{\nun}_{\Bbbk})^{-1}(C)$ is irreducible, so its Zariski closure $\overline{(\Pi^{\nun}_{\Bbbk})^{-1}(C)} \subseteq \Nc^{\nun}_{\Bbbk}$ is irreducible as well.
	
	By the orbit-stabilizer theorem, $\dim_{\Bbbkbar} C = \dim_{\Bbbkbar}\Hc^{\nun}(\Bbbkbar) - \stab_{\Hc^{\nun}(\Bbbkbar)}(P)$ for any $P\in C$. Now consider the natural embedding $\Hc^{\nun}(\Bbbkbar) = \prod_{i=1}^k \GL_{n_i}(\Bbbkbar)\into \prod_{i=1}^k M_{n_i\times n_i}(\Bbbkbar)$. From the definition of the action of $\Hc^{\nun}(\Bbbkbar)$ on $\Wc^{\nun}(\Bbbkbar)$ we see that
	\[
	\stab_{\Hc^{\nun}(\Bbbkbar)}(P) = \Hc^{\nun}(\Bbbkbar)\cap \Vc_P\subseteq \prod_{i=1}^k M_{n_i\times n_i}(\Bbbkbar).
	\]
	Since $\Hc^{\nun}(\Bbbkbar)$ is Zariski open in $\prod_{i=1}^k M_{n_i\times n_i}(\Bbbkbar)$ and $\Hc^{\nun}(\Bbbkbar)\cap \Vc_P\ne \es$ (since $(I_{n_1},\ldots,I_{n_k})\in \Hc^{\nun}(\Bbbkbar)\cap \Vc_P$), $\Hc^{\nun}(\Bbbkbar)\cap \Vc_P$ is a Zariski open subset of $\Vc_P\cong \A_{\Bbbkbar}^{d(P)}$, and so $\dim_{\Bbbkbar}\stab_{\Hc^{\nun}(\Bbbkbar)}(P) = d(P)$.
	
	Combining this, we see that
	\[
	\dim_{\Oc}\overline{(\Pi^{\nun})^{-1}(C)} = \dim_{\Bbbkbar}C + \dim_{\Bbbkbar}\stab_{\Hc^{\nun}(\Bbbkbar)}(P) = \dim_{\Bbbkbar}\Hc^{\nun}(\Bbbkbar) = \sum_{i=1}^kn_i^2.
	\]
	Since $\overline{(\Pi^{\nun})^{-1}(C)}$ is closed and irreducible, and $\Nc^{\nun}$ is equidimensional of relative dimension $\sum_{i=1}^kn_i^2$ over $\Oc$ (by Corollary \ref{cor:Nc^nun is CI}), this implies that $\overline{(\Pi^{\nun})^{-1}(C)}$ is indeed an irreducible component of $\Nc^{\nun}$.
	
	Also note that this implies that $\Hc^{\nun}(\Bbbkbar)$ has finitely many orbits in $\Wc^{\nun}(\Bbbkbar)$, since $\Nc^{\nun}$ is finite type over $\Oc$, and hence has only finitely many irreducible components.
	
	Now as $\Nc^{\nun}$ is flat and finite type over $\Oc$, $\Nc^{\nun} = \overline{\Nc^{\nun}(\Bbbkbar)}$. Now if $S = \Hc^{\nun}(\Bbbkbar)\backslash \Wc^{\nun}(\Bbbkbar)$ is the set of $\Hc^{\nun}(\Bbbkbar)$-orbits in $\Wc^{\nun}(\Bbbkbar)$ we have
	\begin{align*}
		\Nc^{\nun} &= \overline{\Nc^{\nun}(\Bbbkbar)} 
		= \overline{(\Pi^{\nun})^{-1}(\Wc^{\nun}(\Bbbkbar))}
		= \overline{(\Pi^{\nun})^{-1}\left(\bigsqcup_{C\in S}C\right)}
		= \overline{\bigsqcup_{C\in S}(\Pi^{\nun})^{-1}\left(C\right)}
		= \bigsqcup_{C\in S}\overline{(\Pi^{\nun})^{-1}\left(C\right)}.
	\end{align*}
	So every irreducible component of $\Nc^{\nun}$ is indeed in the form $\overline{(\Pi^{\nun})^{-1}\left(C\right)}$.
\end{proof}

Propositions \ref{prop:Nc^nun irreducible components} and \ref{prop:H orbits} now give a natural bijection between the irreducible components of $\Nc^{\nun}$ and the set $\Is^{\nun}$. For any $P\in \Is^{\nun}$, let $\Wc^{\nun}(P)\subseteq \Wc^{\nun}(\Ebar)$ be the corresponding $\Hc^{\nun}(\Ebar)$-orbit and let
\[\Nc^{\nun}(P) = \overline{(\Pi^{\nun})^{-1}\left(\Wc^{\nun}(P)\right)}\subseteq \Nc^{\nun}\]
be the corresponding irreducible component.

\subsection{Irreducible components with unipotent types}

Theorem \ref{thm:banal local model} allows us to compute the framed local deformation ring $R^{\square}(\rbar)$ for a unipotent Galois representation $\rbar$ at a banal prime $q$ in terms of the spaces $\Nc^{\nun}$ for various $\nun$. The goal of this section will be to give a corresponding description of the fixed type deformation ring $R^\square(\rbar,\st_{\mun})$ for a unipotent type $\st_{\mun}$. Note that by Proposition \ref{prop:Nc(n,q) is CI}(3), the fact that $\rbar$ is unipotent and $q$ is banal implies that all lifts of $\rbar$ are also unipotent, and so $R^\square(\rbar,\tau) = 0$ whenever $\tau$ is a non-unipotent type.

Let $\mun = (m_1,\ldots,m_c)$ be any partition of $n$. Recall that $R^\square(\rbar,\st_{\mun})$ is by definition the Zariski closure of the set of all $x\in R^\square(\rbar,\st_{\mun})(\Ebar)$ such that $\WD(r_x)$ has inertial type $\tau_{\mun}$. Since each $r_x$ is unipotent, this is equivalent to saying that $\log r_x(\sigma)$ is conjugate to the matrix $N_{\mun}$ in $M_{n\times n}(\Ebar)$. Now recall that two nilpotent matrices $A,B\in M_{n\times n}(\Ebar)$ are conjugate in $M_{n\times n}(\Ebar)$ if and only if $\rank(A^i) = \rank(B^i)$ for all $i\ge 0$. Thus $\WD(r_x)$ will have inertial type $\tau_{\mun}$ if and only if
\begin{align*}
\rank (\log r_x(\sigma))^i
&= \rank N_{\mun}^i 
 = \sum_{a=1}^c \rank N_{m_a}^i
 = \sum_{a=1}^c \max\{m_a-i,0\}
\end{align*}
for all $i\ge 0$. In light of this we make the following definition:

\begin{defn}
Take any $\nun = (n_1,\ldots,n_k)$ and let $n = \sum_in_i$. Let $\mun = (m_1,\ldots,m_c)$ be any (unordered) partition of $n$. We say that an element $P = (d_{ij})_{ij} \in \Is^{\nun}$ has \emph{type $\st_{\mun}$} if 
\[
\sum_{j=1}^{k-i} d_{ij} = \sum_{a=1}^c \max\{m_a-i,0\}
\]
for all $i\ge 0$, where we take $d_{0j} = n_j$ for $j=1,\ldots,k$, and we interpret $\sum_{j=1}^{k-i} d_{ij} = 0$ for $i\ge k$.

Let $\Is^{\nun}(\st_{\mun})\subseteq \Is^{\nun}$ be the set of all $P \in \Is^{\nun}$ with type $\st_{\mun}$. Define
\[
\Wc^{\nun}(\st_{\mun})= \bigcup_{P\in \Is^{\nun}(\st_{\mun})}\Wc^{\nun}(P)\subseteq 
\Wc^{\nun}(\Ebar)
\]
and
\[
\Nc^{\nun}(\st_{\mun}) := \bigcup_{P\in \Is^{\nun}(\st_{\mun})}\Nc^{\nun}(P) = \overline{\left(\Pi^{\nun})^{-1}\left(\Wc^{\nun}(\Ebar)\right)\right)}\subseteq 
\Nc^{\nun}
\]
treated as a subscheme of $\Nc^{\nun}$.
\end{defn}

This definition is justified by the following lemma:

\begin{lemma}
Take any $\nun = (n_1,\ldots,n_k)$ and let $n = \sum_in_i$. Let $\mun = (m_1,\ldots,m_c)$ be any (unordered) partition of $n$. Take any $(N_1,\ldots,N_{k-1})\in \Wc^{\nun}(\Ebar)$. Then the nilpotent matrix
\[
\eta(N_1,\ldots,N_{k-1}):=
\begin{pmatrix}
0&N_1&0&\cdots &0\\
0&0&N_2&\cdots &0\\
\vdots&\vdots&\vdots&\ddots&\vdots\\
0&0&0&\cdots& N_{k-1}\\
0&0&0&\cdots& 0
\end{pmatrix}
\in M_{n\times n}(\Ebar)
\]
is conjugate to $N_{\mun}$ in $M_{n\times n}(\Ebar)$ if any only if $(N_1,\ldots,N_{k-1})\in \Wc^{\nun}(\st_{\mun})$.

Moreover we have $\Is^{\nun} = \bigsqcup_{\mun} \Is^{\nun}(\st_{\mun})$ and $\Nc^{\nun} = \bigcup_{\mun}\Nc^{\nun}(\st_{\mun})$, where the unions run over all unordered partitions $\mun$ of $n$.
\end{lemma}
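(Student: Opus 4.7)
The plan is to exploit the fact that a nilpotent matrix over a field is classified up to conjugation by the ranks of its powers, and to identify these ranks with the invariants $d_{ij}$ defining $\Is^{\nun}$.

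First I would compute the powers of $\eta = \eta(N_1,\ldots,N_{k-1})$ in block form. A direct induction shows that $\eta^i$ is the block matrix whose $(j,j+i)$ block is the product $N_jN_{j+1}\cdots N_{j+i-1}$ for $1\le j\le k-i$, and whose other blocks are zero. Since the block rows and columns are disjoint in their index ranges, this gives
\[
\rank \eta^i = \sum_{j=1}^{k-i}\rank(N_jN_{j+1}\cdots N_{j+i-1}) = \sum_{j=1}^{k-i}d_{ij},
\]
where $(d_{ij})_{ij} = \Delta^{\nun}(N_1,\ldots,N_{k-1})$. On the other hand, $N_{\mun}$ is block diagonal with blocks $N_{m_1},\ldots,N_{m_c}$, each a single Jordan block, so $\rank N_{\mun}^i = \sum_{a=1}^c\max\{m_a-i,0\}$.

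Two nilpotent matrices in $M_{n\times n}(\Ebar)$ are $\GL_n(\Ebar)$-conjugate if and only if their Jordan types coincide, which by standard Jordan theory is equivalent to the equality of the rank sequences of their powers. Comparing the two expressions above, $\eta(N_1,\ldots,N_{k-1})$ is conjugate to $N_{\mun}$ in $M_{n\times n}(\Ebar)$ if and only if $\sum_{j=1}^{k-i}d_{ij} = \sum_{a=1}^c\max\{m_a-i,0\}$ for all $i\ge 0$ (the case $i=0$ being $\sum_j n_j = n$, which holds automatically). By Proposition \ref{prop:H orbits}, this condition is exactly that $\Delta^{\nun}(N_1,\ldots,N_{k-1})\in \Is^{\nun}(\st_{\mun})$, equivalently that $(N_1,\ldots,N_{k-1})\in \Wc^{\nun}(P)$ for some $P\in \Is^{\nun}(\st_{\mun})$, i.e. $(N_1,\ldots,N_{k-1})\in \Wc^{\nun}(\st_{\mun})$.

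For the second assertion, given any $P\in\Is^{\nun}$, pick $(N_1,\ldots,N_{k-1})\in\Wc^{\nun}(P)$ (which exists by Proposition \ref{prop:H orbits}). The matrix $\eta(N_1,\ldots,N_{k-1})$ is nilpotent in $M_{n\times n}(\Ebar)$, hence $\GL_n(\Ebar)$-conjugate to $N_{\mun}$ for a \emph{unique} unordered partition $\mun$ of $n$ (its Jordan type). By the first part, $P\in \Is^{\nun}(\st_{\mun})$, and uniqueness of $\mun$ (from the uniqueness of Jordan type, which is determined by the ranks $\sum_j d_{ij}$) shows that the sets $\Is^{\nun}(\st_{\mun})$ are pairwise disjoint. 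Hence $\Is^{\nun} = \bigsqcup_{\mun}\Is^{\nun}(\st_{\mun})$. Finally, since $\Nc^{\nun}$ is $\Oc$-flat of finite type, it equals the Zariski closure of $\Nc^{\nun}(\Ebar)$, and by Proposition \ref{prop:Nc^nun irreducible components} combined with Proposition \ref{prop:H orbits} its irreducible components are precisely the $\Nc^{\nun}(P)$ for $P\in\Is^{\nun}$. Grouping these components according to the partition into $\Is^{\nun}(\st_{\mun})$'s yields $\Nc^{\nun} = \bigcup_{\mun}\Nc^{\nun}(\st_{\mun})$. The only slightly delicate point is the block-matrix computation of $\eta^i$ and checking that the rank of the block-diagonal-free structure is the sum of ranks of the individual product blocks, but this is a straightforward direct verification.
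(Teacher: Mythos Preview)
Your proof is correct and follows essentially the same approach as the paper: compute $\eta^i$ in block form to express $\rank \eta^i$ as $\sum_j d_{ij}$, compare with $\rank N_{\mun}^i = \sum_a \max\{m_a-i,0\}$, and use that nilpotent matrices are conjugate iff their rank sequences agree; the final statement then follows from uniqueness of Jordan type. The paper's argument is slightly terser (it simply observes that the $\Wc^{\nun}(\st_{\mun})$ partition $\Wc^{\nun}(\Ebar)$ rather than invoking Propositions \ref{prop:H orbits} and \ref{prop:Nc^nun irreducible components} explicitly), but the substance is the same.
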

\begin{proof}
This follows from the above discussion and definitions after noting that
\[
\eta(N_1,\ldots,N_{k-1})^i =
\begin{pmatrix}
	0&\cdots &N_1\cdots N_{i}&\cdots &0\\
	\vdots&\vdots&\vdots&\ddots&\vdots\\
	0&\cdots&0&\cdots & N_{k-i}\cdots N_{k-1}\\
	0&\cdots&0&\cdots&0\\
	\vdots&\vdots&\vdots&\vdots&\vdots\\	
	0&\cdots&0&\cdots&0
\end{pmatrix}
\]
and so
\[\rank \eta(N_1,\ldots,N_{k-1})^i = \sum_{j=1}^{k-i} \rank(N_j\cdots N_{i+j-1})\]
for all $i=1,\ldots,k-1$ (and $\rank\eta(N_1,\ldots,N_{k-1})^i = 0$ for $i\ge k$).

The final statement follows by noting that each $\eta(N_1,\ldots,N_{k-1})$ is nilpotent, and thus is conjugate to $N_{\mun}$ for \emph{exactly} one unordered partition $\mun$ of $n$, and so the sets $\Wc^{\nun}(\st_{\mun})$ form a partition of $\Wc^{\nun}(\Ebar)$.
\end{proof}

Now fix $\nun^{(\alpha)} = (n^{(\alpha)}_1,\ldots,n^{(\alpha)}_{k_\alpha})$ for $\alpha=1,\ldots,d$. Let $n^{(\alpha)} = \sum_i n^{(\alpha)}_i$ and let $n = \sum_\alpha n^{(\alpha)}$. Define the $\Oc$-scheme
\[
\Ns^{\nun^{(1)};\ldots;\nun^{(d)}} := \Nc^{\nun^{(1)}}\times_{\Oc}\cdots \times_{\Oc} \Nc^{\nun^{(d)}}.
\]
Note that this is reduced, flat and equidimensional over $\Oc$ of relative dimension $\sum_\alpha\sum_i(n^{(\alpha)}_i)^2$. Moreover each irreducible component of $\Ns^{\nun^{(1)};\ldots;\nun^{(d)}}$ is in the form
\[
\Nc^{\nun^{(1)}}(P^{(1)})\times_{\Oc}\cdots \times_{\Oc} \Nc^{\nun^{(d)}}(P^{(d)})
\]
for some $(P^{(1)},\ldots,P^{(d)})\in \Is^{\nun^{(1)}}\times \cdots \times \Is^{\nun^{(d)}}$.

For any collection of unordered partitions $\mun^{(\alpha)} = (m_1^{(\alpha)},\ldots,m_{c_\alpha}^{(\alpha)})$ of $n^{(\alpha)}$, define
\[
\bigcup_\alpha \mun^{(\alpha)} = (m_1^{(1)},\ldots,m_{c_1}^{(1)},\ldots,m_1^{(d)},\ldots,m_{c_d}^{(d)})
\]
treated as an unordered partition of $n$.

For any unordered partition $\mun$ of $n$, define
\[
\Ns^{\nun^{(1)};\ldots;\nun^{(d)}}(\st_{\mun}) = \bigcup_{\bigcup_\alpha \mun^{(\alpha)} = \mun}\Nc^{\nun^{(1)}}(\st_{\mun^{(1)}})\times_{\Oc}\cdots \times_{\Oc} \Nc^{\nun^{(d)}}(\st_{\mun^{(d)}})
\]
and note that this is a union of irreducible components of $\Ns^{\nun^{(1)},\ldots,\nun^{(d)}}$.

We can now prove the main result of this section:

\begin{thm}\label{thm:fixed type local model}
Let $q$ be banal and let $\rbar:G_L\to \GL_n(\F)$ be a Galois representation with $\rbar(\Pt_L) = 1$ and $\rbar(\sigma)$ unipotent. Let $\nun^{(1)},\ldots,\nun^{(d)}$ and
\[
x^{(\alpha)} = \left(\Ubar_1^{(\alpha)}-I_{n_1^{(\alpha)}},\Nbar^{(\alpha\alpha)}_{12},\Ubar_2^{(\alpha)}-I_{n_2^{(\alpha)}},\ldots,\Nbar^{(\alpha\alpha)}_{k_\alpha-1,k_\alpha},\Ubar_{k_\alpha}^{(\alpha)}-I_{n_{k_\alpha}}^{(\alpha)}\right) \in \Nc^{\nun^{(\alpha)}}(\F)
\]
be as in the statement of Theorem \ref{thm:banal local model}. Let 
\[
x = (x^{(1)},\ldots,x^{(d)}) \in \Ns^{\nun^{(1)};\ldots;\nun^{(d)}}(\F).
\]
For any unordered partition $\mun$ of $n$ we have an isomorphism
\[
R^{\square}(\rbar,\st_{\mun})\cong R^{\nun^{(1)};\ldots;\nun^{(d)}}_{x}(\st_{\mun})\left[\left[t_1,\ldots,t_{n^2-\sum_\alpha\sum_i \left(n_i^{(\alpha)}\right)^2}\right]\right].
\]
where $R^{\nun^{(1)};\ldots;\nun^{(d)}}_{x}(\st_{\mun})$ is the complete local ring of the scheme $\Ns^{\nun^{(1)};\ldots;\nun^{(d)}}(\st_{\mun})$ at the $\F$-point $x$ (taken to be $0$ if $x\not\in \Ns^{\nun^{(1)};\ldots;\nun^{(d)}}(\st_{\mun})(\F)$).
\end{thm}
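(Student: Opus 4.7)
The plan is to extend Theorem \ref{thm:banal local model} by tracking the type condition through its bijection, using the combinatorial description of irreducible components of $\Nc^{\nun}$ developed in this section.

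First, since $q$ is banal and $\rbar$ is unipotent, Propositions \ref{prop:N(n,q) local model} and \ref{prop:Nc(n,q) is CI} give $R^\square(\rbar) = R^{\unip}(\rbar)$, so every $\Ebar$-lift $r_x$ of $\rbar$ is unipotent. Consequently $\WD(r_x)$ has trivial inertia action with monodromy operator $N = \log r_x(\sigma)$, and $r_x$ has inertial type $\tau_{\mun}$ if and only if the nilpotent matrix $N$ has Jordan type $\mun$ (i.e.\ is $\GL_n(\Ebar)$-conjugate to $N_{\mun}$).

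Next, by Corollary \ref{cor:(Phi,N) decomposition}(2), any such $\Ebar$-lift $(\Phi, N)$ can be written uniquely as $\Phi = B \Phi' B^{-1}$ and $N = B N' B^{-1}$, where $N'$ is block-diagonal with $\alpha$-th block
\[
N'^{(\alpha)} = \eta\bigl(N^{(\alpha\alpha)}_{12}, N^{(\alpha\alpha)}_{23}, \ldots, N^{(\alpha\alpha)}_{k_\alpha - 1, k_\alpha}\bigr)
\]
in the notation of the lemma immediately preceding the statement of Theorem \ref{thm:fixed type local model}. Since conjugation preserves Jordan type, $N$ has Jordan type $\mun$ if and only if there exist unordered partitions $\mun^{(\alpha)}$ of $n^{(\alpha)}$ with $\bigcup_\alpha \mun^{(\alpha)} = \mun$ such that each $N'^{(\alpha)}$ has Jordan type $\mun^{(\alpha)}$; by that preceding lemma, this is equivalent to $(N^{(\alpha\alpha)}_{12}, \ldots, N^{(\alpha\alpha)}_{k_\alpha - 1, k_\alpha}) \in \Wc^{\nun^{(\alpha)}}(\st_{\mun^{(\alpha)}})$ for each $\alpha$.

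Combining these two observations, the identification of Theorem \ref{thm:banal local model} matches the type-$\st_{\mun}$ $\Ebar$-lifts of $\rbar$ with the $\Ebar$-points of $\Ns^{\nun^{(1)};\ldots;\nun^{(d)}}(\st_{\mun})[[t_1,\ldots,t_r]]$ in a formal neighborhood of $x$, where $r = n^2 - \sum_\alpha\sum_i (n^{(\alpha)}_i)^2$ (the formal disk parametrizes the change of basis $B$, which is irrelevant to the type condition). By Proposition \ref{prop:R(tau)}(1), $\Spec R^\square(\rbar,\st_{\mun})$ is a reduced, $\Oc$-flat union of irreducible components of $\Spec R^\square(\rbar)$, namely the Zariski closure of its type-$\st_{\mun}$ $\Ebar$-points. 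By construction, $\Ns^{\nun^{(1)};\ldots;\nun^{(d)}}(\st_{\mun})$ is likewise a reduced, $\Oc$-flat union of irreducible components of $\Ns^{\nun^{(1)};\ldots;\nun^{(d)}}$. Hence the isomorphism of Theorem \ref{thm:banal local model} restricts to the claimed isomorphism on complete local rings. The main technical point is the Jordan type identification above, but this is immediate from conjugation-invariance of Jordan type together with the explicit block structure given by Corollary \ref{cor:(Phi,N) decomposition}; everything else follows formally from earlier structural results.
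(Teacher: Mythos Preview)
Your proposal is correct and follows essentially the same approach as the paper: identify the type-$\st_{\mun}$ $\Ebar$-lifts via the bijection of Theorem~\ref{thm:banal local model} (using that $\log r_z(\sigma)$ is conjugate to the block-diagonal matrix $\eta(y)$ built from the $N^{(\alpha\alpha)}_{i,i+1}$'s), then pass to Zariski closures using that both sides are reduced, $\Oc$-flat, and determined by their $\Ebar$-points. The paper phrases the conjugacy observation slightly more tersely (simply asserting that the isomorphism of Theorem~\ref{thm:banal local model} makes $\log r_z(\sigma)$ conjugate to $\eta(y)$), while you unpack it via Corollary~\ref{cor:(Phi,N) decomposition} and the preceding lemma, but the content is identical.
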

\begin{proof}
Let $R^{\nun^{(1)};\ldots;\nun^{(d)}}_x$ be the complete local ring of $\Ns^{\nun^{(1)};\ldots;\nun^{(d)}}$ at $x$, so that $R^{\nun^{(1)};\ldots;\nun^{(d)}}_x(\st_{\mun})$ is a quotient of $R^{\nun^{(1)};\ldots;\nun^{(d)}}_x$. Let $\Ns^{\nun^{(1)};\ldots;\nun^{(d)}}_x =\Spec R^{\nun^{(1)};\ldots;\nun^{(d)}}_x$ and $\Ns^{\nun^{(1)};\ldots;\nun^{(d)}}_x(\st_{\mun}) = \Spec R^{\nun^{(1)};\ldots;\nun^{(d)}}_x(\st_{\mun})$. By Theorem \ref{thm:banal local model}, $R^{\square}(\rbar)\cong R^{\nun^{(1)};\ldots;\nun^{(d)}}_x\left[\left[t_1,\ldots,t_{n^2-\sum_\alpha\sum_i \left(n_i^{(\alpha)}\right)^2}\right]\right]$ and so there is an isomorphism
\[
\psi:(\Spec R^\square(\rbar))(\Ebar) \isomto \Ns^{\nun^{(1)};\ldots;\nun^{(d)}}_x(\Ebar)\times (\Spec \Oc[[t]])(\Ebar)^{n^2-\sum_\alpha\sum_i \left(n_i^{(\alpha)}\right)^2}.
\]
For each $\alpha$ and each $y^{(\alpha)} = (X_1^{(\alpha)},N_1^{(\alpha)},\ldots,N_{k_\alpha-1}^{(\alpha)},X_{k_\alpha}^{(\alpha)}) \in \Nc^{\nun^{(\alpha)}}(\Ebar)$ set
\[\eta^{(\alpha)}(y^{(\alpha)})
=
\begin{pmatrix}
	0&N^{(\alpha)}_1&0&\cdots &0\\
	0&0&N^{(\alpha)}_2&\cdots &0\\
	\vdots&\vdots&\vdots&\ddots&\vdots\\
	0&0&0&\cdots& N^{(\alpha)}_{k_\alpha-1}\\
	0&0&0&\cdots& 0
\end{pmatrix}
\in M_{n^{(\alpha)}\times n^{(\alpha)}}(\Ebar)\]
and set
\[
\eta(y^{(1)},y^{(2)},\ldots,y^{(d)}) = 
\begin{pmatrix}
	\eta^{(1)}(y^{(1)})&0&\cdots &0\\
	0&\eta^{(2)}(y^{(2)})&\cdots &0\\
	\vdots&\vdots&\ddots&\vdots\\
	0&0&\cdots& \eta^{(d)}(y^{(d)})\\
\end{pmatrix}
\in M_{n\times n}(\Ebar)
\]
for all $(y^{(1)},y^{(2)},\ldots,y^{(d)})\in \Ns^{\nun^{(1)};\ldots;\nun^{(d)}}(\Ebar)$.

Then note that $\Ns^{\nun^{(1)};\ldots;\nun^{(d)}}(\st_{\mun})$ is the Zariski closure of the set of points $y\in \Ns^{\nun^{(1)};\ldots;\nun^{(d)}}(\Ebar)$ with $\eta(y)$ conjugate to $N_{\mun}$ in $M_{n\times n}(\Ebar)$.

As $\Ns^{\nun^{(1)};\ldots;\nun^{(d)}}$ and $\Ns^{\nun^{(1)};\ldots;\nun^{(d)}}(\st_{\mun})$ are finite type $\Oc$-schemes it follows that $\Ns^{\nun^{(1)};\ldots;\nun^{(d)}}_x(\st_{\mun})$ is the Zariski closure of the set of points $y\in \Ns^{\nun^{(1)};\ldots;\nun^{(d)}}_x(\Ebar)$ with $\eta(y)$ conjugate to $N_{\mun}$.

Now take any $z\in R^\square(\rbar)(\Ebar)$ and note that the Weil--Deligne representation $\WD(r_z)$ will have inertial type $\tau_{\mun}$ if and only if $\log r_z(\sigma)$ is conjugate to $N_{\mun}$ in $M_{n\times n}(\Ebar)$.

Now take $\psi(z)=(y,b)\in  \Ns^{\nun^{(1)};\ldots;\nun^{(d)}}_x(\Ebar)\times (\Spec \Oc[[t]])(\Ebar)^{n^2-\sum_\alpha\sum_i \left(n_i^{(\alpha)}\right)^2}$. Note that the isomorphism constructed in Theorem \ref{thm:banal local model} implies that $\log r_z(\sigma)$ is conjugate to $\eta(y)$ in $M_{n\times n}(\Ebar)$, and thus $\log r_z(\sigma)$ will be conjugate to $N_{\mun}$ if and only if $\eta(y)$ is.

Thus the isomorphism $\psi$ sends the set of points $z$ for which $\WD(r_z)$ has inertial type $\tau_{\mun}$ to the set of points $(y,b)$ where $\eta(y)$ is conjugate to $N_{\mun}$. Taking the Zariski closures of these respective sets of points thus gives the desired isomorphism
\[
R^{\square}(\rbar,\st_{\mun})\cong R^{\nun^{(1)};\ldots;\nun^{(d)}}_{x}(\st_{\mun})\left[\left[t_1,\ldots,t_{n^2-\sum_\alpha\sum_i \left(n_i^{(\alpha)}\right)^2}\right]\right].
\]
\end{proof}

\begin{rem}
In general $\Is^{\nun}(\st_{\mun})$ may have more than one element (see for example Proposition \ref{prop:Nc^1,...,1}) which means that $\Nc^{\nun}(\st_{\mun})$, and hence $\Ns^{\nun^{(1)};\ldots;\nun^{(d)}}(\st_{\mun})$, will not always be irreducible. On the other hand, by Proposition \ref{prop:N(n,q) components} (and Proposition \ref{prop:Nc(n,q) is CI}(1)) the space $\Nc(n,q)$ has a single irreducible component, which we can denote $\Nc(n,q)(\st_{\mun})$, corresponding to Weil--Deligne representations of inertial type $\tau_{\mun}$. This does not contradict Theorem \ref{thm:fixed type local model} and Proposition \ref{prop:N(n,q) local model}. Rather it simply shows that if $\rbar$ is chosen so that the space $\Ns^{\nun^{(1)};\ldots;\nun^{(d)}}(\st_{\mun})$ from Theorem \ref{thm:fixed type local model} has multiple components, then the formal completion of $\Nc(n,q)(\st_{\mun})$ at the $\F$-point corresponding to $\rbar$ must also have multiple components.
\end{rem}

\subsection{Special cases}
In this section we will study the spaces $\Nc^{\nun}$ and $\Nc^{\nun}(\st_{\mun})$ in the special cases that will be needed for our main results.

First, note the following simple consequences of our definitions:

\begin{lemma}\label{lem:Nc^nun(triv)}
For any $\nun = (n_1,\ldots,n_k)$,
$\Nc^{\nun}(\st_{1,1,\ldots,1}) \cong \A_{\Oc}^{n_1^2+\cdots+n_k^2}$.
\end{lemma}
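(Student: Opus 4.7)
The plan is to unfold the definitions and observe that the type $\st_{1,1,\ldots,1}$ corresponds (via the map $\eta$ introduced in the lemma preceding the definition of $\Is^{\nun}(\st_\mun)$) to the zero nilpotent matrix, which forces the off-diagonal blocks $N_1,\ldots,N_{k-1}$ to vanish.

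First I would determine $\Is^{\nun}(\st_{1,1,\ldots,1})$. With $\mun=(1,1,\ldots,1)$ (the $n$-fold partition), $\max\{m_a-i,0\}=0$ for all $i\ge 1$, so the defining condition $\sum_{j=1}^{k-i}d_{ij}=\sum_a\max\{m_a-i,0\}$ forces $d_{ij}=0$ whenever $i\ge 1$ (since the $d_{ij}$ are nonnegative). Hence $\Is^{\nun}(\st_{1,1,\ldots,1})=\{P_0\}$ consists of the single element with all $d_{ij}=0$. By Proposition \ref{prop:H orbits}, the corresponding $\Hc^{\nun}(\Ebar)$-orbit is $\Wc^{\nun}(P_0)=\{(0,\ldots,0)\}$, so $\Nc^{\nun}(\st_{1,1,\ldots,1})=\Nc^{\nun}(P_0)$ is the Zariski closure in $\Nc^{\nun}$ of the fiber $(\Pi^{\nun})^{-1}\bigl(\{(0,\ldots,0)\}\bigr)$.

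Next I would identify this closure explicitly. Let $Z\subseteq \Nc^{\nun}$ be the closed subscheme cut out by the equations $N_1=\cdots=N_{k-1}=0$. Substituting $N_i=0$ into the defining relations $X_iN_i=N_iX_{i+1}$ of $\Nc^{\nun}$ (Definition \ref{def:Nc^nun}) makes them all trivially satisfied, so the coordinate ring of $Z$ is the free polynomial ring $\Oc[X_1,\ldots,X_k]$ in the $n_1^2+\cdots+n_k^2$ entries of the matrices $X_i$. Thus $Z\cong \A_{\Oc}^{n_1^2+\cdots+n_k^2}$; in particular $Z$ is closed, $\Oc$-flat, and irreducible.

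Finally, I would conclude $Z=\Nc^{\nun}(P_0)$. By construction the $\Ebar$-points of $Z$ are precisely $(\Pi^{\nun})^{-1}\bigl(\{(0,\ldots,0)\}\bigr)$, and since $Z$ is already closed and $\Oc$-flat it coincides with the Zariski closure of these points inside $\Nc^{\nun}$. This yields $\Nc^{\nun}(\st_{1,1,\ldots,1})=Z\cong \A_{\Oc}^{n_1^2+\cdots+n_k^2}$, as required. There is no substantive obstacle: the proof is a direct unpacking of the definitions, powered by the single observation that the trivial nilpotent type forces all the connecting blocks to vanish.
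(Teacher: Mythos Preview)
Your proof is correct and follows essentially the same approach as the paper: both compute $\Is^{\nun}(\st_{1,1,\ldots,1})=\{(0)_{ij}\}$, identify $\Nc^{\nun}(\st_{1,1,\ldots,1})$ with the closed fiber $(\Pi^{\nun})^{-1}(0)$ cut out by $N_1=\cdots=N_{k-1}=0$, and observe that this is affine space since the relations $X_iN_i=N_iX_{i+1}$ become vacuous. Your explicit mention of $\Oc$-flatness to justify that the scheme-theoretic fiber equals the closure of its $\Ebar$-points is a welcome bit of care that the paper leaves implicit.
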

\begin{proof}
For any $(d_{ij})_{ij}\in \Is^{\nun}$, by definition we will have $(d_{ij})_{ij}\in \Is^{\nun}(\st_{1,1,\ldots,1})$ if and only if $\sum_jd_{ij} = 0$ for all $i\ge 1$, which is equivalent to saying that $d_{ij} = 0$ for all $i\ge 1$ and all $j$. Thus $\Is^{\nun}(\st_{1,1,\ldots,1})$ is the singleton set $\left\{(0)_{ij}\right\}$. It follows that
\[
\Nc^{\nun}(\st_{1,1,\ldots,1}) = \overline{(\Pi^{\nun})^{-1}(0)} = (\Pi^{\nun})^{-1}(0).
\]
But $(\Pi^{\nun})^{-1}(0)$ is simply the subscheme of $\Nc^{\nun}$ cut out by the equations $N_1 = N_2 = \cdots = N_{k-1} = 0$, which is obviously isomorphic to $\A^{n_1^2+\cdots n_k^2}_{\Oc}$ (since the equations $N_1 = N_2 = \cdots = N_{k-1} = 0$ already imply that $X_i N_i = N_i X_{i+1}$ for all $i$).
\end{proof}

\begin{lemma}\label{lem:Nc^n}
For any $n\ge 1$, $\Nc^n = \Nc^n(\st_{1,1,\ldots,1}) \cong \A^{n^2}_{\Oc}$.
\end{lemma}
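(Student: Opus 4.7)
The plan is quite direct since both assertions reduce to unwinding the definitions in the degenerate case $k = 1$, where $\nun = (n)$ has just one part.

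First I would note that by Definition \ref{def:Nc^nun}, an $R$-point of $\Nc^n$ is a $(2k-1) = 1$-tuple consisting of a single matrix $X_1 \in M_{n\times n}(R)$, and the defining equations $X_iN_i = N_iX_{i+1}$ for $i = 1,\ldots,k-1$ form an empty set of constraints. Thus $\Nc^n \cong \A^{n^2}_{\Oc}$ tautologically, with no work needed.

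Next, for the equality $\Nc^n = \Nc^n(\st_{1,1,\ldots,1})$, I would observe that the index set $\Is^{(n)}$ of Definition \ref{def:Is^nun} consists of tuples $(d_{ij})$ indexed by pairs $(i,j)$ with $1 \le j \le k-i \le k-1 = 0$, which is empty. Hence $\Is^{(n)}$ contains exactly one (empty) tuple, and it remains only to determine which partitions $\mun$ of $n$ satisfy the type condition. The condition at $i = 0$ gives $n = \sum_a m_a$, which holds automatically, while the condition at $i = 1$ reads $0 = \sum_a \max\{m_a - 1, 0\}$, forcing $m_a \le 1$ for all $a$, i.e.\ $\mun = (1,1,\ldots,1)$. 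Therefore $\Is^{(n)}(\st_{1,1,\ldots,1}) = \Is^{(n)}$, and the union defining $\Nc^n(\st_{1,1,\ldots,1})$ is all of $\Nc^n$.

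There is no genuine obstacle; the lemma is essentially a sanity check that the local model $\Nc^{\nun}$ specializes correctly in the simplest case. (Alternatively, one could deduce the dimension count from Lemma \ref{lem:Nc^nun(triv)} applied to $\nun = (n)$, but the direct unwinding above is even shorter.)
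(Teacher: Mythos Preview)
Your proposal is correct and follows the same approach as the paper: unwinding the definitions in the degenerate case $k=1$. The paper's proof is a single sentence (``by definition, $\Nc^n$ is simply the moduli space of matrices $X_1\in M_{n\times n}$, subject to no equations, so the claim follows trivially''), while you spell out more carefully why the unique element of $\Is^{(n)}$ has type $\st_{1,1,\ldots,1}$; this extra detail is fine and arguably clearer.
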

\begin{proof}
By definition, $\Nc^n$ is simply the moduli space of matrices $X_1\in M_{n\times n}$, subject to no equations, so the claim follows trivially.
\end{proof}

Next, it is possible to completely describe the space $\Nc^{1,1,\ldots,1}$.

\begin{prop}\label{prop:Nc^1,...,1}
Take any $n\ge 1$ and consider the space $\Nc^{1,1,\ldots,1}$ corresponding to the ordered partition $(1,1,\ldots,1)$ of $n$. Then we have an isomorphism
\[
\Nc^{1,1,\ldots,1} \cong \Spec\frac{\Oc[y_1,z_1,y_2,z_2,\ldots,y_{n-1},z_{n-1},t]}{(y_1z_1,y_2z_2,\ldots,y_{n-1}z_{n-1})}.
\]
Hence $\Nc^{1,1,\ldots,1}$ has $2^{n-1}$ irreducible components, each isomorphic to $\A^n_{\Oc}$.

For any partition $\mun = (m_1,\ldots,m_c)$ and any $i=1,\ldots,n$ let $c_i = \#\{a|m_a = i\}$. Then $\Nc^{1,1,\ldots,1}(\st_{\mun})$ is a union of
\[
|\Is^{\nun}(\st_{\mun})| = \frac{c!}{c_1!c_2!\cdots c_n!}
\]
irreducible components of $\Nc^{1,1,\ldots,1}$.

In particular, for any $d|n$ we have $\Nc^{1,1,\ldots,1}(\st_{d,d,\ldots,d})\cong \A^n_{\Oc}$.
\end{prop}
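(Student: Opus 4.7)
The plan is to first establish the ring-theoretic description by an explicit change of coordinates, and then analyze $\Is^{(1,\ldots,1)}$ combinatorially, matching its elements with ordered compositions of $n$.

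Since each $X_i$ and $N_i$ is a $1\times 1$ matrix, I would introduce variables $y_i := N_i$ and $z_i := X_i - X_{i+1}$ for $i=1,\ldots,n-1$, together with $t := X_n$. This gives a polynomial-algebra isomorphism $\Oc[X_1,\ldots,X_n,N_1,\ldots,N_{n-1}] \cong \Oc[y_1,z_1,\ldots,y_{n-1},z_{n-1},t]$ with inverse $X_i = t + z_i + z_{i+1} + \cdots + z_{n-1}$, $N_i = y_i$. The defining relation $X_i N_i = N_i X_{i+1}$ becomes $N_i(X_i - X_{i+1}) = y_i z_i = 0$, yielding the claimed presentation. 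The minimal primes of $(y_1 z_1,\ldots,y_{n-1}z_{n-1})$ are indexed by subsets $S \subseteq \{1,\ldots,n-1\}$, where $S$ corresponds to the prime $(y_i : i \in S) + (z_i : i \notin S)$; each quotient is a polynomial ring in $n$ variables, so there are $2^{n-1}$ irreducible components, each isomorphic to $\A^n_{\Oc}$.

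Next, I would describe $\Is^{(1,\ldots,1)}$ combinatorially. Since $d_{0j} = 1$ for all $j$, the condition $d_{ij}\le \min(d_{i-1,j},d_{i-1,j+1})$ forces $d_{ij} \in \{0,1\}$. Setting $\epsilon_j := d_{1j}$, a short induction on $i$ shows $d_{ij} = \epsilon_j \epsilon_{j+1}\cdots \epsilon_{i+j-1}$: the upper bound gives $d_{ij} \le \epsilon_j\cdots\epsilon_{i+j-1}$, while the additivity inequality $d_{i-1,j} + d_{i-1,j+1} \le d_{i,j-1}+d_{i-2,j+1}$ (suitably indexed) forces $d_{ij} = 1$ whenever all of $\epsilon_j,\ldots,\epsilon_{i+j-1}$ are $1$. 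Conversely, any $(\epsilon_j)\in \{0,1\}^{n-1}$ produces a valid element of $\Is^{(1,\ldots,1)}$ (the two inequalities of Definition \ref{def:Is^nun} reduce to $ab + bc \le abc + b$ for $a,b,c\in\{0,1\}$, which is immediate). Hence $\Is^{(1,\ldots,1)}$ is in bijection with $\{0,1\}^{n-1}$, equivalently with ordered compositions $(g_1,\ldots,g_c)$ of $n$ obtained by partitioning $\{1,\ldots,n\}$ into maximal runs of indices glued by the $1$'s in $\epsilon$.

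To finish, I would verify the type condition. A group of size $g_s$ in the composition corresponds to a run of $g_s - 1$ consecutive $1$'s in $\epsilon$, and contributes exactly $\max(g_s - i,0)$ starting positions for a run of $i$ consecutive $1$'s. Summing,
\[
\sum_{j=1}^{n-i} d_{ij} \;=\; \#\{j : \epsilon_j = \cdots = \epsilon_{i+j-1} = 1\} \;=\; \sum_{s=1}^{c}\max(g_s - i, 0),
\]
so $P$ has type $\st_{\mun}$ precisely when the multiset $\{g_1,\ldots,g_c\}$ equals $\mun$. The number of ordered compositions of $n$ with this prescribed multiset of parts is the multinomial coefficient $c!/(c_1!\cdots c_n!)$, giving the stated formula. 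When $\mun = (d,\ldots,d)$ with $d\mid n$, there is a unique such composition, so $\Is^{(1,\ldots,1)}(\st_{d,\ldots,d})$ is a singleton and the corresponding component is $\A^n_{\Oc}$ by the first part. There is no real obstacle here; the only care needed is keeping track of indexing in the combinatorial identification of $\Is^{(1,\ldots,1)}$.
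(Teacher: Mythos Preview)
Your proof is correct. The first half (the coordinate change and the description of the $2^{n-1}$ components) is essentially identical to the paper's argument, up to swapping the names of $y_i$ and $z_i$.

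For the type count, you take a genuinely different route from the paper. The paper works geometrically: it identifies the $\Hc^{1,\ldots,1}(\Ebar)$-orbits on $\Wc^{1,\ldots,1}(\Ebar)$ with $\{0,1\}^{n-1}$, forms the nilpotent matrix $\eta(Z)$ (which is already in Jordan normal form), and reads off that $\eta(Z)\sim N_{\mun}$ exactly when the block sizes are a permutation of $(m_1,\ldots,m_c)$; the count is then immediate. You instead work purely combinatorially inside $\Is^{(1,\ldots,1)}$: you show that the constraints of Definition~\ref{def:Is^nun} force $d_{ij}=\epsilon_j\cdots\epsilon_{i+j-1}$ for a free choice of $(\epsilon_j)\in\{0,1\}^{n-1}$, and then verify the type condition directly via the sum $\sum_j d_{ij}=\sum_s\max(g_s-i,0)$. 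Both routes land on ``ordered compositions of $n$ with part multiset $\mun$'' and the multinomial count. The paper's approach is shorter because the Jordan-form observation replaces your induction; your approach has the virtue of being self-contained and of making the bijection $\Is^{(1,\ldots,1)}\cong\{0,1\}^{n-1}$ explicit from the definitions rather than via the orbit parametrization. One cosmetic point: the ``additivity inequality'' you invoke should be written as $d_{i+1,j}\ge d_{ij}+d_{i,j+1}-d_{i-1,j+1}$; your displayed version has shifted indices, though your parenthetical ``suitably indexed'' shows you are aware of this.
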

\begin{proof}
Write $\Nc^{1,1,\ldots,1} = \Spec \Rc^{1,1,\ldots,1}$. Then by definition (writing $X_i = (x_i)$ and $N_i = (z_i)$),
\[
\Rc^{1,1,\ldots,1} \cong \frac{\Oc[x_1,z_1,x_2,z_2,\ldots,z_{n-1},x_{n}]}{(x_1z_1-x_2z_1,x_2z_2-x_3z_2,\ldots,x_{n-1}z_{n-1}-x_nz_{n-1})}.
\]
The variable change $y_i = x_i-x_{i+1}$ for $i=1,\ldots,n-1$ and $t = x_n$ then gives the desired isomorphism. It is clear from this isomorphism that $\Nc^{1,1,\ldots,1}$ indeed has $2^{n-1}$ irreducible components each isomorphic to $\A^n_{\Oc}$.

Now the action of $\Hc^{1,1,\ldots,1}(\Ebar)\cong (\Ebar^\times)^n$ on $\Wc^{1,1,\ldots,1}(\Ebar)\cong \Ebar^{n-1}$ is given by
\[
(g_1,\ldots,g_n)\cdot (z_1,\ldots,z_{n-1}) = (g_1g_2^{-1}z_1,\ldots,g_{n-1}g_n^{-1}z_{n-1}).
\]
It is easy to see that $(z_1,\ldots,z_{n-1}),(z_1',\ldots,z_{n-1}')\in \Wc^{1,1,\ldots,1}(\Ebar)$ are in the same $\Hc^{1,1,\ldots,1}(\Ebar)$-orbit if and only if $\{i|z_i = 0\} = \{i|z'_i = 0\}$. Thus there are indeed $2^{n-1}$ $\Hc^{1,1,\ldots,1}(\Ebar)$-orbits in $\Wc^{1,1,\ldots,1}(\Ebar)$, generated by the elements of the set $\{0,1\}^{n-1}\subseteq \Wc^{1,1,\ldots,1}(\Ebar)$. For any $Z = (z_1,\ldots,z_{n-1})\in \{0,1\}^{n-1}$, let 
\[
\eta(Z) :=
\begin{pmatrix}
	0&z_1&0&\cdots &0\\
	0&0&z_2&\cdots &0\\
	\vdots&\vdots&\vdots&\ddots&\vdots\\
	0&0&0&\cdots& z_{k-1}\\
	0&0&0&\cdots& 0
\end{pmatrix}\in M_{n\times n}(\Ebar).
\]
Then as $z_i\in \{0,1\}$ for each $i$, $\eta(Z)$ is already in Jordan canonical form. Thus $\eta(Z)$ will be conjugate to $N_{\mun}$ if and only if $\eta(Z)$ is a permutation of the Jordan blocks $N_{m_1},\ldots,N_{m_c}$ of $N_{\mun}$.

Then $|\Is^{1,1,\ldots,1}(\st_{\mun})|$ is equal to the number of nontrivial permutations of the $c$-tuple $(m_1,\ldots,m_c)$, which is indeed equal to $\frac{c!}{c_1!c_2!\cdots c_n!}$, and so $\Nc^{1,1,\ldots,1}(\st_{\mun})$ is indeed equal to a union of $\frac{c!}{c_1!c_2!\cdots c_n!}$ irreducible components of $\Nc^{1,1,\ldots,1}$.

For the final claim, if $d|n$ and $m = (d,d,\ldots,d)$, then we have $c = c_d = n/d$ and $c_i = 0$ for $i\ne n/d$. Hence 
\[
|\Is^{1,1,\ldots,1}(\st_{d,d,\ldots,d})| = \frac{(n/d)!}{0!\cdots 0!(n/d)!0!\cdots 0!} = 1.
\]
So $\Nc^{1,1,\ldots,1}(\st_{d,d,\ldots,d})$ is an irreducible component of $\Nc^{1,1,\ldots,1}$, which is thus isomorphic to $\A^n_{\Oc}$ by the above.
\end{proof}

\begin{cor}\label{cor:Nc(st_n)}
For any ordered partition $\nun = (n_1,\ldots,n_k)$ of $n\ge 1$,
$
\Nc^{\nun}(\st_n) \cong
\begin{cases}
\A^n_{\Oc} & \text{ if }\nun = (1,1,\ldots,1)\\
\es & \text{otherwise}
\end{cases}
$.
\end{cor}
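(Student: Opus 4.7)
The plan is to reduce to the combinatorial description of $\Is^{\nun}(\st_n)$ and then apply the last sentence of Proposition \ref{prop:Nc^1,...,1}. First I would note that by the definition of $\Is^{\nun}(\st_n)$, any $P = (d_{ij}) \in \Is^{\nun}(\st_n)$ must satisfy $\sum_{j=1}^{k-i} d_{ij} = \max\{n-i, 0\}$ for all $i \geq 0$. Specializing to $i = k-1$ (assuming $k \geq 2$) gives $d_{k-1,1} = n - k + 1$, and specializing to $i \geq k$ forces $\max\{n-i, 0\} = 0$, i.e.\ $n \leq k$.

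Next I would use the fundamental inequality $d_{ij} \leq \min\{d_{i-1,j}, d_{i-1,j+1}\}$ from Definition \ref{def:Is^nun}, iterated from the initial values $d_{0,j} = n_j$, to obtain $d_{k-1,1} \leq \min\{n_1, n_2, \ldots, n_k\}$. Combined with $d_{k-1,1} = n - k + 1$ this yields $\min_j n_j \geq n - k + 1$, and since $n = \sum_j n_j \geq k \min_j n_j$, we deduce $n \geq k(n-k+1)$, i.e.\ $(k-1)(n-k) \leq 0$. Because $n \geq k$ (each $n_j \geq 1$), this forces either $k = 1$ or $n = k$. In the case $k = 1$ the condition $n \leq k$ from the previous paragraph forces $n = 1$; in the case $n = k$ every $n_j$ equals $1$. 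Either way, $\nun = (1, 1, \ldots, 1)$.

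This shows $\Is^{\nun}(\st_n) = \varnothing$, and hence $\Nc^{\nun}(\st_n) = \varnothing$, whenever $\nun \neq (1, 1, \ldots, 1)$. Conversely, when $\nun = (1, 1, \ldots, 1)$ is the partition of $n$ into $n$ ones, the final assertion of Proposition \ref{prop:Nc^1,...,1} (applied with $d = n$, so $\st_{d,d,\ldots,d} = \st_n$) immediately gives $\Nc^{1,1,\ldots,1}(\st_n) \cong \A^n_{\Oc}$, completing the proof.

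There is no substantive obstacle here: once the characterization of $\Is^{\nun}(\st_n)$ via the rank identities is in hand, the argument is a short numerical manipulation using only the two constraints in Definition \ref{def:Is^nun} and the bound $\sum n_j = n$. The only mild care needed is to handle the degenerate $k = 1$ case separately, since the constraint $d_{k-1,1} = n - k + 1$ is vacuous there and one must instead use the vanishing condition coming from $i \geq k$.
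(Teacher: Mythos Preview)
Your argument is correct and follows essentially the same route as the paper: both reduce to showing $\Is^{\nun}(\st_n)=\varnothing$ unless $\nun=(1,\ldots,1)$, using the rank identity at a value of $i$ between $k$ and $n$, and then invoke Proposition~\ref{prop:Nc^1,...,1} for the remaining case. One remark: your second paragraph is redundant, since the observation $n\le k$ from $i\ge k$, together with the trivial $n=\sum_j n_j\ge k$, already forces $n=k$ and hence $\nun=(1,\ldots,1)$ without any appeal to $d_{k-1,1}$ or the inequality $(k-1)(n-k)\le 0$; the paper's version is correspondingly shorter, simply noting that $k<n$ makes the condition at $i=n-1$ impossible.
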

\begin{proof}
The case $\nun = (1,1,\ldots,1)$ follows from Proposition \ref{prop:Nc^1,...,1}. So now assume that $\nun \ne (1,1,\ldots,1)$. This in particular implies that $k<n$. Now if $(d_{ij})_{ij} \in \Is^{\nun}(\st_n)$. Then by definition
\[
\sum_{j=1}^{k-(n-1)}d_{ij} = \max\{n-(n-1),0\} = 1,
\]
which is impossible if $k\le n-1$. Thus $\Is^{\nun}(\st_n) = \es$ and so $\Nc^{\nun}(\st_n) = \es$.
\end{proof}

We have thus completely determined $\Nc^{\nun}(\st_{1,1,\ldots,1})$ and $\Nc^{\nun}(\st_n)$ for all $n$ and all $\nun$. To study banal deformation rings at ``spherical level'' for $n=4$ it remains to study the spaces $\Nc^{\nun}(\st_{2,2})$.

\begin{prop}\label{prop:st(2,2) cases}
Let $\nun = (n_1,\ldots,n_k)$ be an ordered partition of $n=4$. Then $\Nc^{\nun}(\st_{2,2}) = \es$ unless $\nun = (2,2),(1,2,1)$ or $(1,1,1,1)$. In each of these cases, $\Nc^{\nun}(\st_{2,2})$ is a single irreducible component of $\Nc^{\nun}$. If $\nun=(2,2)$ then $\Nc^{2,2}(\st_{2,2}) = \Nc^{2,2}(2)$, which is irreducible of relative dimension $8$ over $\Oc$. If $\nun=(1,2,1)$ then $\Nc^{1,2,1}(\st_{2,2}) = \Nc^{1,2,1}(1,1;0)$, which is irreducible of relative dimension $6$ over $\Oc$. If $\nun = (1,1,1,1)$, then $\Nc^{1,1,1,1}(\st_{2,2}) \cong \A^4_{\Oc}$.
\end{prop}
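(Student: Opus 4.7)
The plan is to reduce the statement to a finite combinatorial check using the characterization of $\Nc^{\nun}(\st_{\mun})$ developed earlier. By Proposition \ref{prop:Nc^nun irreducible components} and the definition of $\Is^{\nun}(\st_{\mun})$, $\Nc^{\nun}(\st_{2,2})$ is the union of the $\Nc^{\nun}(P)$ for $P \in \Is^{\nun}(\st_{2,2})$, and each such $\Nc^{\nun}(P)$ is an irreducible component of $\Nc^{\nun}$ of relative dimension $\sum_i n_i^2$ over $\Oc$ by Corollary \ref{cor:Nc^nun is CI}. So it suffices to enumerate $\Is^{\nun}(\st_{2,2})$ for each of the eight ordered partitions $\nun$ of $4$ and invoke these results.

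Unpacking Definition \ref{def:Is^nun} together with the $\st_{2,2}$-condition, an element $(d_{ij}) \in \Is^{\nun}(\st_{2,2})$ satisfies $d_{ij}=0$ for $i \ge 2$, $\sum_{j=1}^{k-1} d_{1j} = 2$, each $d_{1j} \le \min\{n_j,n_{j+1}\}$, and (setting $i=1$ in the second bullet of Definition \ref{def:Is^nun}) $d_{1j}+d_{1,j+1} \le n_{j+1}$ for all $1 \le j < k-1$. I would then run through the eight cases. For $\nun=(4)$ there is no index $j$ with $1 \le j \le k-1$, so the sum condition fails. For $\nun=(3,1)$ and $(1,3)$ the bound $d_{11} \le \min\{n_1,n_2\} = 1$ prevents $d_{11}=2$. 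For $\nun=(2,1,1)$ and $(1,1,2)$ the marginality constraint $d_{11}+d_{12} \le n_2 = 1$ contradicts the sum condition $d_{11}+d_{12}=2$. This leaves only the three partitions $(2,2),(1,2,1),(1,1,1,1)$.

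In each remaining case one directly reads off a unique element of $\Is^{\nun}(\st_{2,2})$. For $\nun=(2,2)$ one gets $d_{11}=2$, yielding $\Nc^{2,2}(\st_{2,2}) = \Nc^{2,2}(2)$, irreducible of relative dimension $8$. For $\nun=(1,2,1)$ the only possibility is $d_{11}=d_{12}=1$ and $d_{21}=0$ (the constraint $d_{11}+d_{12} \le n_2=2$ is now satisfied), yielding $\Nc^{1,2,1}(\st_{2,2}) = \Nc^{1,2,1}(1,1;0)$, irreducible of relative dimension $6$. For $\nun=(1,1,1,1)$ one has $d_{1j}\in\{0,1\}$ with $d_{11}+d_{12}+d_{13}=2$; the inequalities $d_{11}+d_{12} \le 1$ and $d_{12}+d_{13}\le 1$ force $d_{12}=0$ and $d_{11}=d_{13}=1$, so $\Is^{1,1,1,1}(\st_{2,2})$ is again a singleton. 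The description $\Nc^{1,1,1,1}(\st_{2,2}) \cong \A^4_{\Oc}$ then follows from Proposition \ref{prop:Nc^1,...,1}, which furthermore confirms via its orbit count $c!/(c_1!\cdots c_n!) = 2!/2! = 1$ that there is a single component.

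The main obstacle is simply the care needed to enumerate $\Is^{\nun}(\st_{2,2})$ correctly; the key observation that makes the case analysis short is the ``marginality'' inequality $d_{1j}+d_{1,j+1} \le n_{j+1}$ (from setting $i=1$ and using $d_{2j}=0$), which rules out all the mixed ordered partitions involving a run of two consecutive $1$'s while leaving $(1,2,1)$ just barely feasible. No deep input beyond the structural results of Section \ref{sec:banal local models} is required.
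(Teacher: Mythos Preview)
Your proposal is correct and follows essentially the same approach as the paper: both reduce to enumerating $\Is^{\nun}(\st_{2,2})$ using Definition \ref{def:Is^nun}, then invoke Proposition \ref{prop:Nc^nun irreducible components}, Corollary \ref{cor:Nc^nun is CI}, and Proposition \ref{prop:Nc^1,...,1}. Your organizing observation $d_{1j}+d_{1,j+1}\le n_{j+1}$ (obtained by setting $i=1$ and $d_{2j}=0$ in the second inequality of Definition \ref{def:Is^nun}) is exactly what the paper uses in the $k=3$ case, just stated more uniformly; and for $\nun=(1,1,1,1)$ the paper simply cites Proposition \ref{prop:Nc^1,...,1} rather than rechecking the singleton condition directly, but your explicit verification is of course fine.
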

\begin{proof}
By definition for $(d_{ij})_{ij}\in \Is^{\nun}$ we have $(d_{ij})_{ij}\in \Is^{\nun}(\st_{2,2})$ if and only if $d_{11}+\cdots+d_{1,k-1} = 2$ and $d_{ij} = 0$ for $i\ge 2$.

If $k=1$ then $\nun = (4)$ and $\Nc^4(\st_{2,2}) = \es$ by Lemma \ref{lem:Nc^n}.

If $k=4$ then $\nun = (1,1,1,1)$, and $\Nc^{1,1,1,1}(\st_{2,2}) = \A^4_{\Oc}$ by Proposition \ref{prop:Nc^1,...,1}, which is indeed an irreducible component of $\Nc^{1,1,\ldots,1}$.

Now let $k=2$ so $\nun = (n_1,n_2) = (1,3),(2,2)$ or $(3,1)$. Then $\Ic^{n_1,n_2} \subseteq \Z^{2(2-1)/2}_{\ge 0} = \Z_{\ge 0}$ and we have $d_{11}\in \Ic^{n_1,n_2}$ if and only if $0\le d_{11}\le \min\{n_1,n_2\}$. But by the above, $d_{11}\in \Ic^{n_1,n_2}(\st_{2,2})$ if and only if $d_{11} = 2$. Hence $\Ic^{n_1,n_2}(\st_{2,2}) = \es$, and thus $\Nc^{n_1,n_2}(\st_{2,2}) = \es$, unless $\min\{n_1,n_2\}\ge 2$, which happens if and only if $(n_1,n_2) = (2,2)$. If $(n_1,n_2)=(2,2)$ then we have $\Ic^{n_1,n_2}(\st_{2,2}) = \{2\}$, so $\Nc^{2,2}(\st_{2,2}) = \Nc^{2,2}(2)$ is an irreducible component of $\Nc^{2,2}$.

Now finally let $k=3$, so $\nun = (n_1,n_2,n_3) = (1,1,2),(1,2,1)$ or $(2,1,1)$. Then $\Ic^{n_1,n_2,n_3}\subseteq \Z^{3(3-1)/2}_{\ge 0} = \Z_{\ge 0}^3$ is by definition the set of $3$-tuples of integers $(d_{11},d_{12};d_{21})$ satisfying the inequalities
\begin{align*}
0&\le d_{11} \le \min\{n_1,n_2\},&
0&\le d_{12} \le \min\{n_2,n_3\},&
\max\{d_{11}+d_{12}-n_2,0\} &\le d_{21} \le \min\{d_{11},d_{12}\}.
\end{align*}
Note that in all three cases for $\nun$ we have $\min\{n_1,n_2\} = \min\{n_2,n_3\} = 1$.

Now we will have $(d_{11},d_{12};d_{21})\in \Is^{n_1,n_2,n_3}(\st_{2,2})$ if and only if $d_{11}+d_{12} = 2$ and $d_{21} = 0$. Since $d_{11},d_{12}\in\{0,1\}$ this happens if and only if $(d_{11},d_{12};d_{21}) = (1,1;0)$.

But from the above inequalities we see that $(1,1;0)\in \Is^{n_1,n_2,n_3}$ if and only if $0 = d_{21}\ge d_{11}+d_{12}-n_2 = 2-n_2$, which happens if and only if $n_2 = 2$. Thus $\Is^{1,1,2}(\st_{2,2}) = \Is^{2,1,1}(\st_{2,2}) = \es$ and $\Is^{1,2,1}(\st_{2,2}) = \{(1,1;0)\}$. So $\Nc^{1,1,2}(\st_{2,2}) = \Nc^{2,1,1}(\st_{2,2}) = \es$ and $\Nc^{1,2,1}(\st_{2,2}) = \Nc^{1,2,1}(1,1;0)$ is a single irreducible component of $\Nc^{1,2,1}$.

The claims about dimensions follow from Corollary \ref{cor:Nc^nun is CI}, as complete intersections are equidimensional.
\end{proof}

It remains to study the two irreducible schemes $\Nc^{2,2}(\st_{2,2})$ and $\Nc^{1,2,1}(\st_{2,2})$. This will be the subject of Sections \ref{sec:R22} and \ref{sec:R121}. For the convenience of the reader, we will summarize all of the relevant results from these two sections in the following two theorems:

\begin{thm}\label{thm:R22 main results}
There is an isomorphism $\Nc^{2,2}(\st_{2,2})\cong\Spec \Rct^{2,2}(\st_{2,2})\times_{\Oc} \A^1_\Oc$, for some \emph{graded} $\Oc$-algebra $\Rct^{2,2}(\st_{2,2})$. Moreover letting $\Rct^{2,2}_k(\st_{2,2}) = \Rct^{2,2}(\st_{2,2})\otimes_{\Oc} k$ for $k = \F,E$, we have
\begin{enumerate}
	\item\label{R22:domain} $\Rct^{2,2}_\F(\st_{2,2})$ and $\Rct^{2,2}_E(\st_{2,2})$ are domains of dimension $7$. $\Rct^{2,2}(\st_{2,2})$ is a domain, which is flat over $\Oc$ of relative dimension $7$;
	\item\label{R22:smooth}The singular locus of $\Nc^{2,2}(\st_{2,2})$ is precisely the subscheme paramaterized by 
	\[\left\{\left(\begin{pmatrix}t&0\\0&t\end{pmatrix}, \begin{pmatrix}0&0\\0&0\end{pmatrix},\begin{pmatrix}t&0\\0&t\end{pmatrix} \right)\right\}.\]
	Under the isomorphism $\Nc^{2,2}(\st_{2,2})\cong \Spec \Rct^{2,2}(\st_{2,2})\times \A^1_{\Oc}$ this is identified with the subscheme $\Spec (\Rct^{2,2}(\st_{2,2})/\Ic_+)\times \A^1_{\Oc}$, where $\Ic_+\subseteq  \Rct^{2,2}(\st_{2,2})$ is the ideal generated by the positive degree elements of the graded $\Oc$-algebra $\Rct^{2,2}(\st_{2,2})$;
	\item\label{R22:Rational} For $k=\F,E$, there is a closed subscheme $\Zc^{2,2}_k\subseteq \Spec \Rct^{2,2}_k(\st_{2,2})$ of codimension $1$ such that each component of $\Zc^{2,2}_k$ is geometrically integral and $(\Spec \Rct^{2,2}_k(\st_{2,2}))\sm \Zc^{2,2}_k$ is isomorphic to a Zariski open subset of $\A^7_k$ containing at least one point of $\A^7_k(k)$.
	\item\label{R22:Gorenstein} $\Rct^{2,2}(\st_{2,2})$ is normal and Gorenstein but is not a complete intersection;
	\item\label{R22:class group} $\Cl(\Rct^{2,2}(\st_{2,2})) = \Cl(\Rct^{2,2}_\F(\st_{2,2})) = \Cl(\Rct^{2,2}_E(\st_{2,2})) = 0$.
\end{enumerate}
\end{thm}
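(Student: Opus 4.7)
The starting point is the identification $\Nc^{2,2}(\st_{2,2}) = \Nc^{2,2}(2)$ from Proposition~\ref{prop:st(2,2) cases}: by Proposition~\ref{prop:Nc^nun irreducible components} this is the Zariski closure of the locus $\{N_1 \in \GL_2\}$ inside $\Nc^{2,2}$. The free $\G_a$-action $t \cdot (X_1, N_1, X_2) = (X_1 + tI, N_1, X_2 + tI)$ preserves the defining relation $X_1 N_1 = N_1 X_2$; choosing the transverse slice $\tr X_1 = 0$ produces the splitting $\Nc^{2,2}(\st_{2,2}) \cong \Spec \Rct^{2,2}(\st_{2,2}) \times_\Oc \A^1_\Oc$, and $\Rct^{2,2}(\st_{2,2})$ acquires a natural $\Z_{\ge 0}$-grading since its defining relations are bilinear in the matrix entries.

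The central geometric input is the open stratum $U = \{\det N_1 \ne 0\}$: there $X_2 = N_1^{-1} X_1 N_1$, so $U \cong \GL_2 \times M_2$, which descends to $\GL_2 \times \A^3$ in the slice, a Zariski open subset of $\A^7$. This immediately supplies both the rational parameterization of part~(\ref{R22:Rational}) and the smoothness of $\Nc^{2,2}(\st_{2,2})$ on $U$; the closed complement $\Zc^{2,2}$ is cut out by $\det N_1 = 0$, and its decomposition into geometrically integral components is analyzed via the rank stratification of $N_1$ from Proposition~\ref{prop:H orbits}, together with the resulting conjugacy constraint on $(X_1, X_2)$. For the singular locus in~(\ref{R22:smooth}), the Jacobian of the bare bilinear relations $X_1 N_1 = N_1 X_2$ cutting out $\Nc^{2,2}$ is insufficient, since $\Nc^{2,2}(2)$ is a proper subscheme of the reducible $\Nc^{2,2}$; one first extracts an explicit presentation of the defining ideal of $\Rct^{2,2}(\st_{2,2})$ in the slice's $11$-variable polynomial ring, from which the Jacobian pinpoints the singular locus as the origin of the grading. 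For~(\ref{R22:domain}), the $E$-fiber is a domain by the irreducibility of $\Nc^{2,2}(2)$ from Proposition~\ref{prop:st(2,2) cases}; the $\F$-version follows by combining Cohen--Macaulayness (below) with $\Oc$-flatness to rule out embedded components, together with the irreducibility of the birational model $\GL_{2,\F} \times \A^3_\F$.

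For part~(\ref{R22:Gorenstein}), the plan is to establish Cohen--Macaulayness by exhibiting an explicit regular sequence of length $7$, using the grading to reduce depth computations to the irrelevant ideal. Together with~(\ref{R22:smooth}), Serre's $R_1 + S_2$ criterion yields normality. Gorensteinness is then proved by trivializing the canonical module on the smooth locus $U$ via the volume form on $\GL_2 \times M_2$ and extending uniquely across the codimension-$\ge 2$ singular locus using $S_2$. The failure of the complete intersection property is verified by comparing the minimal number of generators of the defining ideal extracted above with the codimension $4$ that a complete intersection would require. For the class group statement~(\ref{R22:class group}), the smooth locus is a Zariski open of $\A^7$ and hence has trivial class group; the excision sequence identifies $\Cl(\Rct^{2,2}(\st_{2,2}))$ with the cokernel of the principal-divisor map into divisors supported on $\Zc^{2,2}$, and $\det N_1$ supplies a natural global witness that vanishes on all of $\Zc^{2,2}$. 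If $\Zc^{2,2}$ is irreducible this completes the argument; otherwise each component requires its own explicit defining function. The mixed-characteristic vanishing of $\Cl(\Rct^{2,2}(\st_{2,2}))$ is then deduced from the equal-characteristic statements by combining the grading with $\Oc$-flatness.

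The main obstacle throughout is that $\Rct^{2,2}(\st_{2,2})$ is not a complete intersection, so neither the Koszul complex nor the dimension-count argument used for $\Nc^{2,2}$ in Corollary~\ref{cor:Nc^nun is CI} directly applies. The real technical work is to extract, from the bilinear relations $X_1 N_1 = N_1 X_2$ cutting out the reducible $\Nc^{2,2}$, the additional equations distinguishing the specific component $\Nc^{2,2}(2)$; once this explicit presentation is in hand, the singular locus, Cohen--Macaulay, Gorenstein, and class-group statements all follow from standard graded-ring arguments applied to the resulting ideal.
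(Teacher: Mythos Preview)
Your outline matches the paper's approach closely: the same $\G_a$-slice, the same open locus $\{\det N_1 \ne 0\}$, and the same excision strategy for the class group. The presentation you flag as missing is supplied by the observation that on the open locus $X_2 = N_1^{-1} X_1 N_1$ forces $\tr X_1 = \tr X_2$ and $\det X_1 = \det X_2$; adding these two equations to $X_1 N_1 = N_1 X_2$ gives the defining ideal of the closure, and in the slice (where $\tr X_1 = 0$ now forces $\tr X_2 = 0$ as well) this yields a five-generator ideal in ten variables, not eleven. The paper then runs two explicit Gr\"obner basis computations to exhibit a regular sequence of length $7$, establishing Cohen--Macaulayness; the resulting Artinian quotient has a five-generator defining ideal in codimension $3$, ruling out complete intersection.

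Two points where your plan diverges from the paper deserve comment. First, for Gorensteinness the paper does not extend a volume form; it simply deduces that $\omega$ is free from $\Cl = 0$, which avoids any separate analysis of the canonical module. Second, for the irreducibility of $\Zc^{2,2} = \{\det N_1 = 0\}$ and for the singular-locus computation, the rank stratification of $N_1$ alone is not enough, since the fiber over each $N_1$-stratum still involves the constrained pair $(X_1, X_2)$. The paper instead uses the full $(\GL_2)^2 \times \G_m$ action on the slice (conjugating $X_1$ and $X_2$ separately, and scaling), lists all thirteen orbits with their dimensions, and checks the Jacobian at one representative per orbit. This orbit table simultaneously delivers irreducibility of the total space and of $\Zc^{2,2}$ (each has a unique top-dimensional orbit), generic reducedness, and the precise singular locus; your rank-stratification sketch points in the right direction but would need to be refined to this finer decomposition to close the argument, and in particular to remove the conditional ``if $\Zc^{2,2}$ is irreducible'' in your class-group step.
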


\begin{thm}\label{thm:R121 main results}
There is an isomorphism $\Nc^{1,2,1}(\st_{2,2})\cong\Spec \Rct^{1,2,1}(\st_{2,2})\times_{\Oc} \A^1_\Oc$, where $\Rct^{1,2,1}(\st_{2,2})$ is \emph{graded} $\Oc$-algebra. Moreover letting $\Rct^{1,2,1}_k(\st_{2,2}) = \Rct^{1,2,1}(\st_{2,2})\otimes_{\Oc} k$ for $k = \F,E$, we have
\begin{enumerate}
	\item\label{R121:domain} $\Rct^{1,2,1}_\F(\st_{2,2})$ and $\Rct^{1,2,1}_E(\st_{2,2})$ are domains of dimension $5$. $\Rct^{1,2,1}(\st_{2,2})$ is a domain, which is flat over $\Oc$ of relative dimension $5$;
	\item\label{R121:smooth} The singular locus of $\Nc^{1,2,1}(\st_{2,2})$ is precisely the subscheme paramaterized by 
	\[\left\{\left(t,\begin{pmatrix}0&0\end{pmatrix}, \begin{pmatrix}t&0\\0&t\end{pmatrix},\begin{pmatrix}0\\0\end{pmatrix},t \right)\right\}.\]
	Under the isomorphism $\Nc^{1,2,1}(\st_{2,2})\cong \Spec \Rct^{1,2,1}(\st_{2,2})\times \A^1_{\Oc}$ this is identified with the subscheme $\Spec (\Rct^{1,2,1}(\st_{2,2})/\Ic_+)\times \A^1_{\Oc}$, where $\Ic_+\subseteq  \Rct^{1,2,1}(\st_{2,2})$ is the ideal generated by the positive degree elements of the graded $\Oc$-algebra $\Rct^{1,2,1}(\st_{2,2})$;
	\item\label{R121:CM} $\Rct^{1,2,1}(\st_{2,2})$ is normal and Cohen--Macaulay but not Gorenstein;
	\item\label{R121:Rational} For $k=\F,E$, there is a closed subscheme $\Zc^{1,2,1}_k\subseteq \Spec \Rct^{1,2,1}_k(\st_{2,2})$ of codimension $1$ such that each component of $\Zc^{1,2,1}_k$ is geometrically integral and $(\Spec \Rct^{1,2,1}_k(\st_{2,2}))\sm \Zc^{1,2,1}$ is isomorphic to a Zariski open subset of $\A^5_k$ containing at least one point of $\A^5_k(k)$.	
	\item\label{R121:class group} There are isomorphisms $\Cl(\Rct^{1,2,1}(\st_{2,2})) = \Cl(\Rct^{1,2,1}_\F(\st_{2,2})) = \Cl(\Rct^{1,2,1}_E(\st_{2,2})) \cong \Z$. If $\omega = \omega_{\Rct^{1,2,1}(\st_{2,2})}$ is the dualizing module of $\Rct^{1,2,1}(\st_{2,2})$ then these isomorphisms send $\omega$ (respectively $\omega\otimes_\Oc \F$ and $\omega\otimes_\Oc E$) to $2\in \Z$;
	\item\label{R121:M} There is a unique self dual $\Rct^{1,2,1}(\st_{2,2})$-module $\Mc$ of generic rank $1$. Moreover $\Mc\otimes_{\Oc}\F$ and $\Mc\otimes_{\Oc}E$ are, respectively, the unique self dual $\Rct^{1,2,1}_\F(\st_{2,2})$ and $\Rct^{1,2,1}_E(\st_{2,2})$ modules of generic rank $1$;
	\item\label{R121:mult 2} We have $\Mc/\Ic_+\Mc\cong \Oc^2$;
	\item\label{R121:surj} The duality map $\tau_{\Mc}:\Mc\otimes_{\Rct^{1,2,1}(\st_{2,2})}\Mc\to \omega$ is surjective.
\end{enumerate}
\end{thm}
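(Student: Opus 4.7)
The plan is to exploit the explicit coordinate description of $\Nc^{1,2,1}(\st_{2,2})$ and then carry out direct commutative algebra computations. Writing a point of $\Nc^{1,2,1}$ as $(x_1, N_1, X_2, N_2, x_3)$ with $x_1, x_3 \in \A^1$, $N_1 \in M_{1\times 2}$, $X_2 \in M_{2\times 2}$, $N_2 \in M_{2\times 1}$, the change of variables $t = x_3$, $s = x_1 - x_3$, $Y = X_2 - x_3 I_2$ absorbs the simultaneous scalar shift and splits off an $\A^1$-factor in the parameter $t$. The remaining nine variables satisfy the degree-$2$ relations $s N_1 = N_1 Y$, $Y N_2 = 0$, $N_1 N_2 = 0$; this yields the graded $\Oc$-algebra $\Rct^{1,2,1}(\st_{2,2})$, with irrelevant ideal $\Ic_+$ corresponding to the vanishing of all nine generators, matching the singular-locus description in \eqref{R121:smooth} via the Jacobian criterion. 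Some care is needed here, since $s(N_1 N_2) = N_1(Y N_2)$ shows the last relation is redundant on the open set $s\ne 0$, so a primary decomposition argument (or an explicit \grobner basis) is needed to verify that these five scalar equations cut out exactly the irreducible $(1,1;0)$ component and nothing else.

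To establish that $\Rct^{1,2,1}(\st_{2,2})$ is a normal Cohen--Macaulay domain of relative dimension $5$ (parts \eqref{R121:domain}, \eqref{R121:CM}), I would parameterize the open locus where $N_1, N_2 \ne 0$: there $N_1 N_2 = 0$ forces $N_1, N_2$ to span complementary lines in the $2$-dimensional space, $Y N_2 = 0$ restricts $Y$ to have rank at most one with image in the line spanned by $N_2^\perp$, and $s N_1 = N_1 Y$ then determines $s$ in terms of $Y$ and $N_1$. This should exhibit a Zariski-open subscheme isomorphic to an open subset of affine space, yielding irreducibility, the dimension count, and the rationality statement \eqref{R121:Rational}. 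Normality and Cohen--Macaulayness then follow by recognizing $\Rct^{1,2,1}(\st_{2,2})$ as a classical determinantal or Schubert-type ring, or by direct verification of Serre's criterion $(R_1)+(S_2)$ using the singular locus computation.

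For the class group \eqref{R121:class group}, the strategy is to identify a candidate height-one prime, for instance $\pf = (b_1, b_2)$, whose divisor class $[\pf]$ generates $\Cl(\Rct^{1,2,1}(\st_{2,2}))$. Principal divisor computations on the coordinate functions yield relations among the classes of height-one primes, and an explicit identification of $\omega_{\Rct^{1,2,1}(\st_{2,2})}$ with $2[\pf]$ (via a minimal free resolution or the trace through a Noether normalization) simultaneously pins down the infinite order of $[\pf]$, the identity $[\omega] = 2$, and non-Gorensteinness. The descent of $\Cl$ to the reductions over $\F$ and $E$ then follows from the commutative algebra results of Section \ref{sec:comm alg}, exploiting $\Oc$-flatness and the grading.

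Once $\Cl \cong \Z$ is known to be torsion-free with $[\omega] = 2$, the unique self-dual module $\Mc$ of generic rank one is forced to correspond to the class $1 \in \Z$, establishing \eqref{R121:M}; moreover, this correspondence is compatible with reduction mod $\varpi$ since the relevant class group maps are isomorphisms. The identity $\Mc/\Ic_+\Mc \cong \Oc^2$ in \eqref{R121:mult 2} reduces to counting minimal generators of $\Mc$ presented as the fractional ideal $\pf$, and the surjectivity of $\tau_{\Mc}$ in \eqref{R121:surj} reduces to checking that a single product of generators spans $\omega/\Ic_+\omega$, which is a rank-one $\Oc$-module by the canonical module description. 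The main obstacle throughout is the class group computation: proving $\Cl$ is exactly $\Z$ (excluding possible torsion) and identifying $[\omega]$ as the class $2$ rather than a multiple requires a careful enumeration of all height-one primes together with a rigorous determination of the canonical module, most naturally carried out via local cohomology against a carefully chosen Noether normalization.
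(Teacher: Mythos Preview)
There are two concrete gaps in your plan.

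\textbf{The defining equations are insufficient.} After your change of variables, the full $\Nc^{1,2,1}$ becomes $\A^1_t$ times the locus $\{sN_1=N_1Y,\ YN_2=0\}$ in $\A^9$, which is equidimensional of dimension $5$ with five components indexed by $\Is^{1,2,1}$. Adding $N_1N_2=0$ only removes the $(1,1;1)$ component: the components $(0,0;0)$, $(1,0;0)$, $(0,1;0)$ all have $N_1=0$ or $N_2=0$, so they satisfy $N_1N_2=0$ automatically and survive as $5$-dimensional components of your scheme. For instance, the locus $\{N_1=N_2=0\}\cong\A^5$ (with $s,Y$ free) lies entirely in your scheme, but on the $(1,1;0)$ component one has $s=\operatorname{tr}Y$ and $\det Y=0$ by continuity from the generic point. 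So the verification you propose would fail. The paper's fix is precisely to impose the extra relations $R+S=X+W$ and $RS=XW-YZ$ (in your notation, $s=\operatorname{tr}Y$ and $\det Y=0$), then eliminate a variable; this is how the ideal $\Ict$ in Section~\ref{sec:R121} is obtained.

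\textbf{The surjectivity argument for $\tau_{\Mc}$ rests on a false premise.} You assert that $\omega/\Ic_+\omega$ is a rank-one $\Oc$-module, but the rank of $\omega/\Ic_+\omega$ is the Cohen--Macaulay type of $\Rct^{1,2,1}(\st_{2,2})$, and this equals $3$: the Artinian reduction modulo the regular sequence is $k[X,Y,Z]/(X,Y,Z)^2$, whose socle is $3$-dimensional. (Rank one would mean Gorenstein, contradicting part~\eqref{R121:CM}.) The paper instead proves surjectivity by first identifying $\omega\cong\Oc(-\Dc-\Dc')=(A,B)\cap(C,D)$ via adjunction from the ambient complete-intersection $\Nc^{1,2,1}$ (Lemma~\ref{lem:Gorenstein component}), then showing by a \Grobner computation that $(A,B)\cap(C,D)=(A,B)(C,D)$; since $\Mc=(A,B)\cong(C,D)$, the multiplication map $\Mc\otimes\Mc\to(A,B)(C,D)=\omega$ is visibly surjective.

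As a side note, your proposed route to $[\omega]$ via a free resolution or Noether-normalization trace is plausible but the paper's adjunction approach is much cleaner here: it converts the dualizing-module computation into identifying which other irreducible components of $\Nc^{1,2,1}$ meet the $(1,1;0)$ component in codimension one, and this is easily read off from the orbit stratification.
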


Using these theorems, and the previous work we can now give a complete description of the local deformation rings $R^\square(\rbar,\st_{1,1,1,1})$, $R^\square(\rbar,\st_{2,2})$ and $R^{\square}(\rbar,\st_4)$.

\begin{prop}\label{prop:n=4 banal cases}
Let $\Rhat^{2,2}(\st_{2,2})$ and $\Rhat^{1,2,1}(\st_{2,2})$ denote the completions (as graded $\Oc$-algebras) of the rings $\Rct^{2,2}(\st_{2,2})$ and $\Rct^{1,2,1}(\st_{2,2})$ from Theorems \ref{thm:R22 main results} and \ref{thm:R121 main results}, respectively.	
	
Let $n=4$ and let $q$ be banal. Let $\rbar:G_L\to \GL_4(\F)$ be a Galois representation with $\rbar(\Pt_L) = 1$ and $\rbar(\sigma)$ unipotent. Then
\begin{enumerate}
	\item Either $R^\square(\rbar,\st_{1,1,1,1}) = 0$ or $R^\square(\rbar,\st_{1,1,1,1}) \cong \Oc[[t_1,\ldots,t_{16}]]$;
	\item Either $R^\square(\rbar,\st_{4}) = 0$ or $R^\square(\rbar,\st_{4}) \cong \Oc[[t_1,\ldots,t_{16}]]$;
	\item If $\rbar(\sigma) = 1$ and $\rbar(\varphi)$ is conjugate to $\lambdabar\diag(q,q,1,1)$ in $\GL_4(\Fbar)$ for some $\lambdabar \in \Fbar^\times$, then
	\[
	R^\square(\rbar,\st_{2,2}) \cong \Rhat^{2,2}(\st_{2,2})[[t_1,\ldots,t_{9}]];
	\]
	\item If $\rbar(\sigma) = 1$ and $\rbar(\varphi)$ is conjugate to $\lambdabar\diag(q^2,q,q,1)$ in $\GL_4(\Fbar)$ for some $\lambdabar \in \Fbar^\times$, then
	\[
	R^\square(\rbar,\st_{2,2}) \cong \Rhat^{1,2,1}(\st_{2,2})[[t_1,\ldots,t_{11}]];
	\]
	\item If $\rbar$ does not satisfy either of the two previous conditions, then either $R^\square(\rbar,\st_{2,2}) = 0$ or $R^\square(\rbar,\st_{2,2}) \cong \Oc[[t_1,\ldots,t_{16}]]$
\end{enumerate}
\end{prop}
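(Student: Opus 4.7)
The plan is to apply Theorem~\ref{thm:fixed type local model}, which identifies
\[
R^\square(\rbar,\st_{\mun}) \cong R^{\nun^{(1)};\ldots;\nun^{(d)}}_{x}(\st_{\mun})\left[\left[t_1,\ldots,t_{16-M}\right]\right], \qquad M := \sum_{\alpha,i}\bigl(n_i^{(\alpha)}\bigr)^2,
\]
where $\{\nun^{(\alpha)}\}_\alpha$ is the chain decomposition of the eigenvalues of $\rbar(\varphi)$ and $x$ is the associated $\F$-point of $\Ns^{\nun^{(1)};\ldots;\nun^{(d)}}$. Since $\Ns^{\nun^{(1)};\ldots;\nun^{(d)}}(\st_{\mun})$ is the union of $\prod_\alpha\Nc^{\nun^{(\alpha)}}(\st_{\mun^{(\alpha)}})$ over decompositions $\bigcup_\alpha\mun^{(\alpha)} = \mun$ with $|\mun^{(\alpha)}| = n^{(\alpha)}$, the proof reduces to enumerating these decompositions and applying the structural results of this subsection case by case.

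For $\mun = (1,1,1,1)$, any valid $\mun^{(\alpha)}$ must itself be a tuple of $1$'s, and Lemma~\ref{lem:Nc^nun(triv)} gives $\Nc^{\nun^{(\alpha)}}(\st_{1,\ldots,1})\cong\A^{\sum_i(n_i^{(\alpha)})^2}_{\Oc}$. For $\mun = (4)$, the decomposition forces $d = 1$ and $\mun^{(1)} = (4)$, and Corollary~\ref{cor:Nc(st_n)} then forces $\nun^{(1)} = (1,1,1,1)$ with $\Nc^{1,1,1,1}(\st_4)\cong\A^4_{\Oc}$. In both settings the relevant product scheme is an affine space of relative dimension $M$ over $\Oc$, so when $x$ lies on it the completion at $x$ is $\Oc[[t_1,\ldots,t_M]]$ and the remaining $16 - M$ variables yield the claimed $\Oc[[t_1,\ldots,t_{16}]]$; otherwise $x\notin\Ns(\st_{\mun})$ and $R^{\square}(\rbar,\st_{\mun}) = 0$.

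For $\mun = (2,2)$, Proposition~\ref{prop:st(2,2) cases} shows that for $d = 1$ the only $\nun^{(1)}$ with $\Nc^{\nun^{(1)}}(\st_{2,2}) \ne \es$ are $(2,2)$, $(1,2,1)$, and $(1,1,1,1)$. For $d \ge 2$, the constraint that each $\mun^{(\alpha)}$ is a nonempty submultiset of $\{2,2\}$ summing to $n^{(\alpha)}$ forces $d = 2$, $\{n^{(1)}, n^{(2)}\} = \{2,2\}$, and $\mun^{(1)} = \mun^{(2)} = (2)$; Corollary~\ref{cor:Nc(st_n)} then requires $\nun^{(1)} = \nun^{(2)} = (1,1)$, and Proposition~\ref{prop:Nc^1,...,1} gives $\Nc^{1,1}(\st_2)\cong\A^2_{\Oc}$ so that the product is $\A^4_{\Oc}$. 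Combined with the $d = 1$, $\nun^{(1)} = (1,1,1,1)$ subcase (where Proposition~\ref{prop:st(2,2) cases} again yields $\A^4_{\Oc}$), all these ``smooth'' chain structures produce $\Oc[[t_1,\ldots,t_{16}]]$ when $x$ lies on the component, while every other chain structure gives $R^{\square}(\rbar,\st_{2,2}) = 0$. This settles (5).

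The exceptional cases (3) and (4) correspond respectively to $\nun^{(1)} = (2,2)$ and $\nun^{(1)} = (1,2,1)$, matching the eigenvalue patterns stated in the hypotheses. Theorems~\ref{thm:R22 main results} and~\ref{thm:R121 main results} give isomorphisms $\Nc^{\nun^{(1)}}(\st_{2,2})\cong \Spec\Rct^{\nun^{(1)}}(\st_{2,2})\times_{\Oc}\A^1_{\Oc}$. Under the hypotheses $\rbar(\sigma) = 1$ and $\rbar(\varphi)$ semisimple of the specified diagonal form, one has $\Ubar_i^{(1)} = I$ and $\Nbar^{(11)}_{i,i+1} = 0$, so the point $x$ has all coordinates zero. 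The explicit parameterization of the singular locus in items~\eqref{R22:smooth} and~\eqref{R121:smooth} of those theorems (setting the parameter $t = 0$) then identifies this point with the closed point $(\mathfrak{m},0)$ of $\Spec\Rct^{\nun^{(1)}}(\st_{2,2})\times_{\Oc}\A^1_{\Oc}$, where $\mathfrak{m}$ is the origin corresponding to $\Ic_+$. Completing at this point yields $\Rhat^{\nun^{(1)}}(\st_{2,2})[[t]]$, and adjoining the remaining $16 - M$ variables (with $M = 8$ for $\nun^{(1)} = (2,2)$ and $M = 6$ for $\nun^{(1)} = (1,2,1)$) produces $\Rhat^{2,2}(\st_{2,2})[[t_1,\ldots,t_9]]$ and $\Rhat^{1,2,1}(\st_{2,2})[[t_1,\ldots,t_{11}]]$. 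The main obstacle is not any single computation but rather the bookkeeping in the $\mun = (2,2)$ case analysis, together with verifying that $x$ is identified with the origin of $\Spec\Rct\times\A^1$ rather than some other point of the singular locus; the latter is immediate from the coordinate-level description of the singular locus in the two cited theorems.
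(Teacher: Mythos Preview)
Your approach is essentially the same as the paper's, and the arguments for (1)--(4) are correct. However, your treatment of (5) has a gap: you declare ``This settles (5)'' after handling only those chain structures for which $\Ns^{\nun^{(1)};\ldots;\nun^{(d)}}(\st_{2,2})$ is either empty or an affine space. But case (5) also encompasses those $\rbar$ with $d=1$ and $\nun^{(1)} \in \{(2,2),(1,2,1)\}$ for which the hypothesis of (3) or (4) nonetheless fails---for example, when $\rbar(\varphi)$ has eigenvalues $\{\lambdabar q,\lambdabar q,\lambdabar,\lambdabar\}$ but is not diagonalizable, or when $\rbar(\sigma)\ne 1$. In such situations the $\F$-point $x\in\Nc^{\nun^{(1)}}(\st_{2,2})(\F)$ is nonzero, and one must invoke the smoothness statements of Theorems~\ref{thm:R22 main results}\eqref{R22:smooth} and~\ref{thm:R121 main results}\eqref{R121:smooth} (which you cite, but only to locate the singular point) to conclude that the completion at $x$ is a power series ring over $\Oc$ of the correct dimension.

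The paper handles this explicitly: for each of $\nun^{(1)}=(2,2)$ and $(1,2,1)$ it first records that $R^{\nun^{(1)}}_x(\st_{2,2})$ is formally smooth \emph{unless} $x$ lies in the singular locus, and only then specializes to the hypotheses of (3) or (4) to identify the non-smooth case. Your argument needs the same two-step structure; as written, the sentence ``The exceptional cases (3) and (4) correspond respectively to $\nun^{(1)}=(2,2)$ and $\nun^{(1)}=(1,2,1)$'' conflates the chain structure with the stronger hypotheses of (3) and (4).
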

\begin{proof}
Let $\nun^{(1)},\ldots,\nun^{(d)}$ and $x\in \Ns^{\nun^{(1)},\ldots,\nun^{(d)}}(\F)$ be as in the statement of Theorem \ref{thm:fixed type local model}, so that $\sum_\alpha\sum_i n^{(\alpha)}_i = 4$.
	
By Theorem \ref{thm:fixed type local model}, for any $\mun$ we will have $R^\square(\rbar,\st_{\mun}) = 0$ if $\Ns^{\nun^{(1)},\ldots,\nun^{(d)}}(\st_{\mun}) = \es$. Moreover if $\Ns^{\nun^{(1)},\ldots,\nun^{(d)}}(\st_{\mun}) \cong \A^{\sum_\alpha\sum_i\left(n_i^{(\alpha)}\right)^2}_{\Oc}$ then either $R^\square(\rbar,\st_{\mun}) = 0$ or $R^\square(\rbar,\st_{\mun}) \cong \Oc[[t_1,\ldots,t_{16}]]$, depending on whether $x\in \Ns^{\nun^{(1)},\ldots,\nun^{(d)}}(\st_{\mun})(\F)$.

Now if $\mun^{(\alpha)}$ is a partition of $n^{(\alpha)} = \sum_in_i^{(\alpha)}$ for each $i$, then $\bigcup_\alpha \mun^{(\alpha)} = (1,1,1,1)$ if and only if $\mun^{(\alpha)} = (1,\ldots,1)$ for each $\alpha$. Hence
\[
\Ns^{\nun^{(1)},\ldots,\nun^{(d)}}(\st_{1,1,1,1}) = \prod_{\alpha}\Nc^{\nun^{(\alpha)}}(\st_{1,\ldots,1}) \cong \A^{\sum_\alpha\sum_i\left(n_i^{(\alpha)}\right)^2}_{\Oc}.
\]
Part (1) follows.

Now if $\bigcup_\alpha \mun^{(\alpha)} = (4)$ then we must have $d=1$ and $\mun^{(1)} = (4)$. Thus from Corollary \ref{cor:Nc(st_n)} we have either $\Ns^{\nun^{(1)},\ldots,\nun^{(d)}}(\st_{4}) = \es$ or $\Ns^{\nun^{(1)},\ldots,\nun^{(d)}}(\st_{4}) = \Nc^{1,1,1,1}(\st_{4}) \cong \A^4_{\Oc}$. Part (2) follows.

Now consider $\Ns^{\nun^{(1)},\ldots,\nun^{(d)}}(\st_{2,2})$. If $\bigcup_\alpha \mun^{(\alpha)} = (2,2)$ then either $d=1$ and $\mun^{(1)} = (2,2)$ or $d=2$ and $\mun^{(1)} = \mun^{(2)} = (2)$. In the second case we have
\[
\Ns^{\nun^{(1)},\ldots,\nun^{(d)}}(\st_{2,2}) \cong \Nc^{\nun^{(1)}}(\st_2)\times \Nc^{\nun^{(2)}}(\st_2)
\]
so Corollary \ref{cor:Nc(st_n)} gives that this is either $\es$ or $\A^{\sum_\alpha\sum_i\left(n_i^{(\alpha)}\right)^2}_{\Oc}$.

So now assume that $d=1$ and $\mun^{(1)} = (2,2)$. Then $\Ns^{\nun^{(1)},\ldots,\nun^{(d)}}(\st_{2,2}) = \Nc^{\nun^{(1)}}(\st_{2,2})$. By Proposition \ref{prop:st(2,2) cases}, this is either $\es$ or $\A^4_{\Oc}$ unless $\nun^{(1)} = (2,2)$ or $(1,2,1)$.

First assume that $\nun^{(1)} = (2,2)$. By assumption this means that up to conjugation 
\begin{align*}
\rbar(\varphi) &= \lambdabar\begin{pmatrix}q\Ubar_1&0\\0&\Ubar_2\end{pmatrix}&
&\text{and}&
\rbar(\sigma) &= \begin{pmatrix}0&N_1\\0&0\end{pmatrix}
\end{align*}
for some $\lambdabar\in \F^\times$, $N\in M_{2\times 2}(\F)$ and some unipotent $\Ubar_1,\Ubar_2\in \GL_2(\F)$. Then by Theorem \ref{thm:fixed type local model}, $R^\square(\rbar,\st_{2,2})\cong R^{2,2}_x(\st_{2,2})[[t_1,\ldots,t_8]]$, where $R^{2,2}_x(\st_{2,2})$ is the complete local ring of $\Nc^{2,2}(\st_{2,2})$ at the point $x = (\Ubar_1 - I_2,N_1,\Ubar_2 - I_2)\in \Nc^{2,2}(\st_{2,2})(\F)$. By Theorem \ref{thm:R22 main results}, $R^{2,2}_x(\st_{2,2})\cong \Oc[[z_1,\ldots,z_8]]$ unless $N_1 = 0$ and $\Ubar_1 = \Ubar_2$ are scalar (which implies $\Ubar_1 = \Ubar_2 = I_2$, since $\Ubar_1$ and $\Ubar_2$ are unipotent). In this latter case we have $R^{2,2}_x(\st_{2,2}) \cong \Rhat^{2,2}(\st_{2,2})[[t]]$. Part (3) follows.

Now assume that $\nun^{(1)} = (1,2,1)$. By assumption this means that up to conjugation 
\begin{align*}
	\rbar(\varphi) &= \lambdabar\begin{pmatrix}q^2&0&0\\0&q\Ubar&0\\0&0&1\end{pmatrix}&
	&\text{and}&
	\rbar(\sigma) &= \begin{pmatrix}0&N_1&0\\0&0&N_2\\0&0&0\end{pmatrix}
\end{align*}
for some $\lambdabar\in \F^\times$, $N_1\in M_{1\times 2}(\F), N_2\in M_{2\times 1}(\F)$ and some unipotent $\Ubar\in \GL_2(\F)$. Again by Theorem \ref{thm:fixed type local model}, $R^\square(\rbar,\st_{2,2})\cong R^{1,2,1}_x(\st_{2,2})[[t_1,\ldots,t_10]]$, where $R^{1,2,1}_x(\st_{2,2})$ is the complete local ring of $\Nc^{1,2,1}(\st_{2,2})$ at the point $x = (0,N_1,\Ubar - I_2,N_2,0)\in \Nc^{1,2,1}(\st_{2,2})(\F)$. By Theorem \ref{thm:R121 main results}, $R^{1,2,1}_x(\st_{2,2})\cong \Oc[[z_1,\ldots,z_6]]$ unless $N_1=N_2= 0$ and $\Ubar = I_2$. In this latter case $R^{1,2,1}_x(\st_{2,2}) \cong \Rhat^{1,2,1}(\st_{2,2})[[t]]$. Parts (4) and (5) follow.
\end{proof}

\part{Global Theory}\label{part:global}

Let $F$ be a CM field and let $F^+\subseteq F$ be the maximal totally real subfield. Write $\Gal(F/F^+) = \{1,c\}$. Let $\Sc$ be the set of all finite places of $F^+$ which split in $F$, and let $\Sct$ be the set of all finite places of $F$ lying over places in $\Sc$.

Let $\ell>n$ be a prime number, and assume that for each prime $v$ of $F^+$ with $v|\ell$ that $v$ is split in the quadratic extension $F/F^+$, that is that $v\in \Sc$. Write $\Sigma_\ell\subseteq \Sc$ (respectively $\Sigmat_\ell$) for the set of places of $F^+$ (respectively $F$) lying over $\ell$.

We will fix an isomorphism $\iota:\Qbar_\ell\to \C$, and use this to identify embeddings $\tau:F\to \Qbar_\ell$ with embeddings $\tau:F\to \C$.

As before, let $E/\Q_\ell$ be a finite extension with ring of integers $\Oc$, uniformizer $\varpi\in\Oc$ and residue field $\Oc/\varpi = \F$.

Let $\Z^n_+$ denote the set
\[\Z^n_+ := \{(\lambda_1,\ldots,\lambda_n)\in \Z^n|\lambda_1\ge \cdots \ge\lambda_n\}.\]
For any integer $w\in\Z$, let $(\Z^n_+)^{\Hom(F,\C)}_w$ denote the set of tuples $(\lambda_{\tau,i})_{\tau,i}$ indexed by integers $i\in\{1,\ldots,n\}$ and embeddings $\tau:F\to \C$ with the property that:
\begin{itemize}
	\item $\lambda_{\tau,1}\ge \lambda_{\tau,2}\ge \cdots \ge \lambda_{\tau,n}$ for each $\tau:F\to\C$ (that is, $(\lambda_{1,\tau},\ldots,\lambda_{n,\tau})\in\Z^n_+$);
	\item $\lambda_{i,\tau\circ c} = w - \lambda_{n+1-i,\tau}$ for all $\tau:F\to \C$ and all $i=1,\ldots,n$.
\end{itemize}

\section{Global deformation rings}\label{sec:global Galois}
In this section, we introduce the global Galois deformation rings that will be needed in our future arguments. Our presentation will largely follow that in \cite[Section 1.5]{BLGGT14} and \cite[Section 2.3]{CHT}.

Consider the algebraic groups $\Gc_n^0 := \GL_n\times \GL_1$ and $\Gc_n := \Gc_n^0\rtimes \{1,j\}$ where the group $\{1,j\}\cong \Z/2\Z$ acts on $\Gc_n^0$ via
\[
j(g,a)j^{-1} = (a{^tg^{-1}},a)
\]
where for $g\in \GL_n$, $^tg$ denotes the transpose of $g$. Let $\nu:\Gc_n\to \GL_1$ be the map defined by $\nu(g,a) = a$ for $(g,a)\in \Gc_n^0$ and $\nu(j) = -1$. 

For any ring $A$ and any homomorphism $r:G_{F^+}\to \Gc_n(A)$ with $r^{-1}(\Gc_n^0(A)) = G_F$, let $\breve{r}:G_F\to \GL_n(A)$ denote the composition $G_F\xrightarrow{r|_{G_F}}\Gc_n^0(A) = \GL_n(A)\times \GL_1(A) \onto \GL_n(A)$. Note that $\breve{r}$ is \emph{essentially conjugate self dual} in the sense that
\[
\breve{r}^\vee \cong \breve{r}^c \otimes \mu
\]
where $\breve{r}^\vee(g) = ^t\breve{r}(g)^{-1}$, $\breve{r}^c(g) = \breve{r}(cgc^{-1})$ and $\mu$ is the character $\mu = \nu\circ r|_{G_F}:G_F\to A^\times$.

Now fix a finite subset $S\subseteq \Sc$, and assume that $\Sigma_\ell\subseteq S$. Consider a continuous homomorphism
\[
\rhobar:G_{F^+}\onto G_{F^+,S^+}\to \Gc_n(\F)
\]
with $\rhobar^{-1}(\Gc_n^0(\F)) = G_F$, and assume that $\breve{\rhobar}:G_F\to \GL_n(\F)$ is absolutely irreducible. Let $\mu:G_{F^+}\to \Oc^\times$ be a continuous character lifting $\nu\circ \rhobar:G_{F^+}\to \F^\times$. Assume that $\mu$ is de Rham, that $\mu(c_v) = -1$ for all $v|\infty$ (where $c_v$ is complex conjugation at the infinite place $v$), and that there is some integer $w$ for which $\HT_\tau(\mu) = \{w\}$ for all $\tau:F\to \Qbar_\ell$.

Now fix a tuple $\lambda = (\lambda_{\tau,i})_{\tau,i}\in (\Z^n_+)^{\Hom(F,\C)}_w$. Then as in \cite[Section 2]{CHT}, there exists a \emph{universal deformation ring} $R^{\univ}_{S,\lambda,\mu}$ and a universal lift $\rho^{\univ}_{S,\lambda,\mu}:G_{F^+}\onto G_{F^+,S}\to \Gc_n(R^{\univ}_{S,\lambda,\mu})$. We will refrain from giving a full definition of of $R^{\univ}_{S,\lambda,\mu}$, and simply note the following properties (which are not enough to uniquely characterize $R^{\univ}_{S,\lambda,\mu}$): 
\begin{itemize}
	\item $(\rho^{\univ}_{S,\lambda,\mu})^{-1}(\Gc_n^0(R^{\univ}_{S,\lambda,\mu})) = G_{F,S}$;
	\item $\nu\circ \rho^{\univ}_{S,\lambda,\mu} = \mu$;
	\item For each $x\in \left(\Spec R^{\univ}_{S,\lambda,\mu}\right)(\Ebar)$, each place $\vt|\ell$ of $F$ and each embedding $\tau:F\to \Qbar_\ell$ inducing $\vt$ we have \[
	\HT_\tau\left(\breve{\rho}_x|_{G_{F_{\vt}}}\right) = \{\lambda_{\iota\circ \tau,1},\ldots,\lambda_{\iota\circ \tau,n}\},\]
	where for $x\in \left(\Spec R^{\univ}_{S,\lambda,\mu}\right)(\Ebar)$ we let $\rho_x$ denote the map $\rho_x:G_{F^+,S}\to \Gc_n(R^{\univ}_{S,\lambda,\mu}) \xrightarrow{x} \Gc_n(\Ebar)$ induced by $x$.
\end{itemize}
See in particular \cite[Section 2.4.1]{CHT} for more details on the precise statement that should replace the third condition above.

Now fix any finite set $\Sigma\subseteq S$ containing $\Sigma_\ell$. For each $v\in\Sigma$ fix a choice of place $\vt$ of $F$ with $\vt|v$, so that $v = \vt\vt^c$. For each $v\in \Sigma$, identify $G_{F^+_v}$ with $G_{F_{\vt}}$ and let $\rhobar_v$ be the representation 
\[\rhobar_v = \breve{\rhobar}|_{G_{F^+_v}}:G_{F^+_v} = G_{F_{\vt}}\to  \GL_n(\F).\]

For each $v\in\Sigma\sm\Sigma_\ell$, let $R_v^\square = R^\square(\rhobar_v)$ denote the universal framed deformation ring of $\rhobar_v$. For each $v\in \Sigma_\ell$, let $R^\lambda_v$ denote the ring $R^{\square,\crys,\{\lambda_{\tau}\}}(\rhobar_v)$, where $\lambda_\tau = (\lambda_{\iota\circ\tau,1},\ldots,\lambda_{\iota\circ\tau,n})$ and $\tau$ runs over all embeddings $\tau:F\to \Qbar_\ell$ inducing $\vt$. Let 
\[
R^{\loc}_\Sigma := \left(\widehat{\bigotimes_{v\in \Sigma\sm\Sigma_\ell}} R_v^\square\right) \widehat{\otimes}_{\Oc} \left(\widehat{\bigotimes_{v\in\Sigma_\ell}} R_v^\lambda\right),
\]
and note that we are suppressing the $\lambda$ in our notation for $R^{\loc}_\Sigma$.

Then as in \cite[Section 2.2]{CHT} we may also define the universal $\Sigma$-framed deformation ring $R^{\square_\Sigma}_{S,\lambda,\mu}$. Note that this is naturally an $R^{\loc}_\Sigma$-algebra and we have a natural embedding $R^{\univ}_{S,\lambda,\mu}\into R^{\square_\Sigma}_{S,\lambda,\mu}$. For any $x\in\left(\Spec R^{\square_\Sigma}_{S,\lambda,\mu}\right)(\Ebar)$, we will also use $\rho_x$ for the map $\rho_x:G_{F^+}\onto G_{F^+,S}\to \Gc_n(\Ebar)$ induced by the restriction $x|_{R^{\univ}_{S,\lambda,\mu}}\in \left(\Spec \rhobar^{\univ}_{S,\lambda,\mu}\right)(\Ebar)$. 

Note that for any $x\in\left(\Spec R^{\square_\Sigma}_{S,\lambda,\mu}\right)(\Ebar)$, if the pullback of $x$ under the map $R^{\loc}_\Sigma\to R^{\square_\Sigma}_{S,\lambda,\mu}$ is equal to
\[\left(r_{x,v}\right)_{v\in \Sigma} \in \left(\Spec R^{\loc}_\Sigma\right)(\Ebar) = \prod_{v\in\Sigma\sm\Sigma_\ell}\left(\Spec R_v^\square\right)(\Ebar)\times\prod_{v\in \Sigma_\ell}\left(\Spec R_v^\lambda\right)(\Ebar)\]
then for each $v\in \Sigma$, the lift $r_{x,v}$ of $\rhobar_v$ is conjugate to $\breve{\rho}_x|_{G_{F_{\vt}}}$.
\section{Unitary groups and Hecke algebras}\label{sec:hecke}

Let $B$ be a central simple $F$-algebra with $\dim_FB = n^2$ and assume that
\begin{itemize}
\item $B^{\op}\cong B\otimes_{F,c}F$
\item For any finite place $w$ of $F$ with $w\not\in\Sc$ (i.e. which is inert or ramified over $F^+$), $B\otimes_FF_w\cong M_n(F_w)$.
\end{itemize}

Say that $B$ is split at $w$ if $B_w\cong M_n(F_w)$. Let $\Deltat$ be the set of all places of $F$ at which $B$ does not split. By assumption, $\Deltat\subseteq \Sct$ and we have $w\in\Deltat$ if and only if $w^c\in\Deltat$. Let $\Delta$ be the set of all finite places of $F^+$ lying under an element of $\Deltat$.

Now let $\ddagger:B\to B$ be any involution satisfying $(xy)^\ddagger = y^\ddagger x^\ddagger$ and $\ddagger|_F = c$.
Define an algebraic group $G_{\ddagger}/F^+$ by
\[G_{\dagger}(R) = \{x\in (B\otimes_{F^+}R)^\times | xx^{\ddagger\otimes 1} = 1\}.\]
Assume that there exists a $\ddagger$ for which $G_\ddagger$ has the following properties:\footnote{Note that as in \cite{CHT}, the existence of such a $\ddagger$ requires certain restrictions on $[F:\Q]$ and on $B$.}
\begin{itemize}
	\item For all $v\in \Sc\sm \Delta$, $G_{\ddagger}(F^+_v)$ is quasisplit;
	\item For all $v|\infty$, $G_{\ddagger}(F^+_v)\cong U_n(\R)$, where $U_n(\R)$ is the compact real unitary group.
\end{itemize}
From now on, let $G = G_\ddagger$.

As in \cite[Section 3.3]{CHT} we can fix an order $\O_B\subseteq B$ for which $\O_B^\ddagger = \O_B$ and $\O_{B,w} := \O_{B}\otimes_{\O_F}F_w$ is a maximal compact in $B_w$ for all $w\in\Sc$.

Define an algebraic group $G/F^+$ by
\[G(R) = \{x\in (B\otimes_{F^+}R)^\times | xx^{\ddagger\otimes 1} = 1\},\]
for any $F^+$-algebra $R$, and use $\O_B$ to give this an integral structure\footnote{As noted in \cite{CHT}, this integral structure may be rather poorly behaved at places $v\not\in\Sc^+$, but this is not relevant for our purposes}. By our assumption on $\ddagger$, $G(F^+_\tau) \cong U_n(\R)$ for any infinite place $\tau$ of $F^+$. That is, $G$ is compact at all finite places.

Now take any $v\in\Sc$, and let $v$ split in $F$ as $v = ww^c$. The inclusion $F^+\into F$ induces isomorphisms $F_v^+\isomto F_w$ and $F_v^+\isomto F_{w^c}$ compatible with the isomorphism $c:F_w\to F_{w^c}$ induced by $c:F\to F$. Also note that we canonically have $F\otimes_{F^+}F^+_v\cong F_w\oplus F_{w^c}$. Identifying this with $F^+_v\oplus F^+_v$ via the maps above, we see that induced isomorphism $c\otimes 1:F\otimes_{F^+}F^+_v\isomto F\otimes_{F^+}F^+_v$ is just identified with the map $(x,y)\mapsto (y,x)$.

For any place $w$ of $F$, let $B_w$ denote $B\otimes_FF_w$, and note that $\ddagger$ induces an isomorphism $\ddagger_w:B_{w}\isomto B^{\op}_{w^c}$ of $F^+$-algebras, satisfying $\ddagger_{w^c}\circ \ddagger_w = \id$ and moreover $\O_{B,w}^\ddagger = \O_{B,w^c}$ for $w\in\Sc$. By the above, for any place $v\in \Sc$ with $v=ww^c$ we have $B\otimes_{F^+}F^+_v\cong B_w\oplus B_{w^c}$ and under this isomorphism, $\ddagger\otimes 1$ is identified with the map $(x,y)\mapsto (y^{\ddagger_{w^c}},x^{\ddagger_w})$.

This gives us an isomorphism
\begin{align*}
G(F_v^+) 
&= \{x\in (B\otimes_{F^+}F^+_v)^\times | xx^{\ddagger\otimes 1} = 1\}
 \isomto \{(x,y)\in B_w^\times\oplus B_{w^c}^\times | xy^{\ddagger_{w^c}} = yx^{\ddagger_w} = 1\}\\
& \isomto \{(x,(x^{-1})^{\ddagger_w})|x\in B_w^\times\}
 \isomto B_w^\times.
\end{align*}

Now for any $w\in\Sct$ we can identify $B_w = M_{r_v}(D_w)$ for some integer $r_v|n$ and some division algebra $D_w$ over $F_w$ of dimension $(n/r_v)^2$. Since $B_{w^c}\cong B_w^{\op}= M_{r_v}(D_w)^{\op}\cong M_{r_v}(D_w^{\op})$, we may identify $D_{w^c} = D_w^{\op}$ (and note that this implies that $r_v$ indeed depends only on $v$, and not on $w$)

By definition $w\in\Sct\sm\Deltat$ if and only if $D_w = F_w$ (or equivalently, $r_v = n$).

As in \cite[Section 3.3]{CHT}, we can and do pick the identifications $B_w = M_{r_v}(D_w)$ so that the isomorphisms $G(F_v^+)\isomto B_w^\times$ induce a collection of isomorphisms $\iota_w:G(F^+_v)\isomto B_w^\times = \GL_{r_v}(D_w)$, for all $v\in \Sc$ and all $w|v$, with the following properties:
\begin{itemize}
	\item If $w\not\in\Deltat$, then $\iota_w(\Oc_{B,w}) = \GL_n(\Oc_{F,w})$.
	\item If $w\in\Deltat$, then $\iota_w(\Oc_{B,w}) = \GL_{r_v}(\Oc_{D_w})$, where $\Oc_{D_w}\subseteq D_w$ is the ring of integers.
	\item If $w\not\in\Deltat$, the map 
	\[\GL_n(F_w)\xrightarrow{\iota_w^{-1}} G(F_v^+)\xrightarrow{\iota_{w^c}} \GL_n(F_{w^c}) \to \GL_n(F_{w})\]
	(where the last map is induced by the isomorphism $F_{w^c}\isomto F_w$ coming from complex conjugation) is given by $x\mapsto {^tx^{-1}}$.
\end{itemize}
We will also use $\iota_w^{-1}$ to denote the injection $\GL_{r_v}(D_w)\xrightarrow{\iota_w^{-1}} G(F^+_v)\into G(\A_{F^+,f})$.

\subsection{Hecke modules}

Now we will say that a \emph{level} is a compact open subgroup $K
\subseteq G(\A_{F^+,f})$ in the form $K=\prod_vK_v$ for some $K_v\subseteq G(F^+_v)$ satisfying the following:
\begin{itemize}
	\item There is a finite set $\Sigma(K)\subseteq \Sc$ such that $\Sigma(K)\cap (\Delta\cup\Sigma_\ell) = \es$ and for all $v\in \Sc\sm \Sigma(K)$, $K_v = G(\O_{F^+,v})$.
	\item For all places $v\not\in\Sc$, $K_v$ is a hyperspecial maximal compact subgroup of $G(F^+_v)$.
\end{itemize}
In more general contexts, one could also consider levels where $K_v$ is also nonmaximal finitely many $v\not\in \Sc$. However for our applications in this paper we will never need to consider such levels, and so for convenience we exclude such levels from our definition.

For any level $K\subseteq G(\A_{F^+,f})$, consider the Shimura set
\[X_K = G(F^+)\backslash G(\A_{F^+,f})/K \]
and let $M(K) = H^0(X_K,\O)$.

Let $\Sigmat(K)\subseteq \Sct$ be the set of places of $F$ lying over the places in $\Sigma(K)$. For any $w\in \Sct\sm (\Sigmat(K)\cup \Deltat)$ and any $j=0,1,\ldots,n$ let $T_w^{(j)}$ denote the double coset operator:
\[
T_w^{(j)} = \left[
K\iota_w^{-1}\left(
\begin{pmatrix}
\varpi I_j&0\\
0& I_{n-j}
\end{pmatrix}\right)
K
\right]
\]
acting on $M(K)$, and note that $T_w^{(0)} = 1$ and $T_{w^c}^{(j)} = (T_w^{(n)})^{-1}T_w^{(n-j)}$.

Let $\Stilde\subseteq \Sct$ be a finite set of finite places of $F$ containing $\Sigmat(K)\cup \Deltat\cup \Sigmat_\ell$ and define
\[\T(K) = \O\left[T_w^{(1)}, T_w^{(2)},\ldots,T_w^{(n)},(T_w^{(n)})^{-1}\middle|w\in \Sct\sm \Stilde\right] \subseteq \End_{\O}(M(K))\]
and note that this is a finite rank commutative $\Oc$-algebra, which is independent of the choice of $\Stilde$.

Let $\mf\subseteq \T(K)$ be a \emph{non-Eisenstein} maximal ideal of $\T(K)$, and note that the localization $\T(K)_{\mf}$ is reduced and finite flat over $\Oc$. Then we have the following well known result:

\begin{prop}\label{prop:local global compatibility}
Let $\Sigma = \Sigma(K)\cup \Delta\cup\Sigma_\ell$. There is a unique continuous homomorphism
\[
\rho^{\mod}_{\mf}:G_{F^+}\onto G_{F^+,\Sigma}\to \Gc_n(\T(K)_{\mf})
\]
satisfying the following properties:
\begin{enumerate}
	\item $(\rho^{\univ}_{\mf})^{-1}(\Gc_n^0(\T(K)_{\mf})) = G_F$;
	\item $\nu\circ \rho^{\mod}_{\mf} = \varepsilon^{1-n}\delta_{F/F^+}^u$, where $\varepsilon:G_F\to \Oc^\times$ is the cyclotomic character, $\delta_{F/F^+}$ is the unique quadratic character of  $G_{F^+}/G_F$ and $u\in\Z/2\Z$ is some integer depending on $B$.
	\item For $v\in \Sc\sm \Sigma$, if $v$ splits in $F$ as $v = ww^c$, then $\breve{\rho}^{\mod}_{\mf}(\Frob_w)\in \GL_n(\T(K)_{\mf})$ has characteristic polynomial:
	\[
	\sum_{j=0}^n(-1)^j(\Nm_{F/\Q}(w))^{j(j-1)/2}T^{(j)}_w t^{n-j}.
	\]
	\item For any homomorphism $x:\T(K)_{\mf}\to \Ebar$, if $\rho^{\mod}_x$ is the composition $G_{F^+}\xrightarrow{\rho^{\mod}_{\mf}} \Gc_n(\T(K)_{\mf}) \xrightarrow{x} \Gc_n(\Ebar)$, then for any place $w$ of $F$ lying over $\ell$, and any embedding $\tau:F\into \Qbar_\ell$ inducing $w$, we have
	\[
	\HT_{\tau}(\breve{\rho}^{\mod}_x|_{G_{F_w}}) = \{n-1,n-2,\ldots,1,0\}
	\]
	\item Take any $v\in\Delta$, and let $v = ww^c$ in $F$. Write $B_w \cong M_{n/d_v}(D_w)$ as above, with $\dim_{F_w}D_w = d_v^2$. Assume that $K_v = G(\Oc_{F^+,v}) \cong \GL_{n/d_v}(\Oc_{D_w})$ is a maximal compact subgroup. Then for any homomorphism $x:\T(K)_{\mf}\to \Ebar$, $\breve{\rho}^{\mod}_x|_{G_{F_w}}$ has inertial type $\tau_{d_v,\ldots,d_v}$ (where $\rho_x:G_{F^+}\to \Gc_n(\Ebar)$ is as above).
\end{enumerate}
\end{prop}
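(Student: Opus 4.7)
My plan is to construct $\rho^{\mod}_{\mf}$ by decomposing $M(K)_{\mf}\otimes_\Oc\Qbar_\ell$ into Hecke eigenspaces indexed by automorphic representations $\pi$ of $G(\A_{F^+})$ with $\pi_\infty$ trivial and $\pi_f^K\ne 0$, and then gluing together the Galois representations attached to the transfers of these $\pi$ to $\GL_n(\A_F)$. Concretely, for each such $\pi$, Labesse's base change and the global Jacquet--Langlands correspondence (as invoked in \cite{CHT,Labesse}) produce a regular algebraic, essentially conjugate self-dual cuspidal automorphic representation $\Pi$ of $\GL_n(\A_F)$; the cuspidality is guaranteed by the non-Eisenstein hypothesis on $\mf$. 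To $\Pi$ one attaches a Galois representation $\rho_\Pi:G_{F^+}\to\Gc_n(\Qbar_\ell)$ by the usual Clozel--Harris--Taylor package (as extended by Shin, Chenevier--Harris, etc.). The absolute irreducibility of $\breve{\rhobar}_{\mf}$ forces $\T(K)_{\mf}$ to be reduced, so a standard pseudorepresentation/Chebotarev argument assembles the individual $\rho_\Pi$ into a single $\T(K)_{\mf}$-valued $\rho^{\mod}_{\mf}$; uniqueness is then immediate from property (3) and Chebotarev density.

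Properties (1) and (2) record the essential conjugate self-duality of $\Pi$ (with the parity $u$ depending on $n$ and the sign character of $B$); property (3) is the Satake/Eichler--Shimura matching at unramified split places; and property (4) is the standard Hodge--Tate weight computation, with the weights $\{n-1,\ldots,0\}$ reflecting the trivial coefficient system used in $M(K) = H^0(X_K,\Oc)$.

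Property (5) is the main local content. Fix $v\in\Delta$ with $v = ww^c$ and $B_w\cong M_{n/d_v}(D_w)$, and let $x:\T(K)_{\mf}\to\Ebar$ factor through an automorphic $\pi$ as above. The hypothesis $K_v = G(\Oc_{F^+,v})\cong \GL_{n/d_v}(\Oc_{D_w})$ together with $\pi_f^K\ne 0$ forces $\pi_v^{\GL_{n/d_v}(\Oc_{D_w})}\ne 0$, where $\pi_v$ is regarded as a representation of $\GL_{n/d_v}(D_w)$ via $\iota_w$. Writing $\Pi_w$ for the local factor of $\Pi$ at $w$, we have $\pi_v = \LJ(\Pi_w)\ne 0$, and $\Pi_w$ is generic by temperedness (which in turn follows from the purity of $\rho_\Pi$, itself known in this setting). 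Proposition \ref{prop:LJ(st) = triv} then gives that $\Pi_w$ has type $\st_{d_v,d_v,\ldots,d_v}$. Local-global compatibility at $v\nmid \ell$, in its full (monodromy-preserved) form, then transfers this to the statement that $\WD(\breve{\rho}^{\mod}_x|_{G_{F_w}})$ has inertial type $\tau_{d_v,d_v,\ldots,d_v}$, as required.

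The main technical obstacle is invoking full local-global compatibility rather than just compatibility of Frobenius-semisimplifications, since the inertial type $\tau_{d_v,\ldots,d_v}$ encodes the monodromy operator $N_{d_v,\ldots,d_v}$ itself; fortunately this is by now standard (thanks to Caraiani and subsequent refinements), so the proof ultimately reduces to citing this compatibility and applying Proposition \ref{prop:LJ(st) = triv}. A smaller subtlety is the initial gluing of the $\rho_\Pi$ into a $\T(K)_{\mf}$-valued representation, where one must exploit absolute irreducibility of $\breve{\rhobar}_{\mf}$ to lift the Chebotarev-produced pseudorepresentation to a genuine representation in $\Gc_n(\T(K)_{\mf})$.
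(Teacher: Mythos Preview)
Your proposal is correct and follows essentially the same approach as the paper: cite \cite{CHT} (together with base-change input) for parts (1)--(4) and the construction of $\rho^{\mod}_{\mf}$, then for (5) use that the local component $\pi_v$ satisfies $\pi_v^{\GL_{n/d_v}(\Oc_{D_w})}\ne 0$ and $\pi_v=\LJ(\Pi_w)$, invoke genericity of $\Pi_w$, apply Proposition~\ref{prop:LJ(st) = triv}, and conclude via local-global compatibility. The only minor difference is that the paper obtains genericity of $\Pi_w$ more directly from cuspidality of $\Pi$ (which follows from absolute irreducibility of $\breve{\rhobar}_{\mf}$), rather than via temperedness and purity as you do; both routes are valid.
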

\begin{proof}
Parts (1), (2), (3) and (4) follow from \cite[Proposition 3.4.4]{CHT} (using \cite[Proposition 6.5.1]{HKV} for a more general form of the needed base change results). 

For (5) fix any $x:\T(K)_{\mf}\to \Ebar$. Then $x$ corresponds to an automorphic representation $\sigma_x$ of $G(\A_{F^+})$. By \cite[Proposition 6.5.1]{HKV} and the construction of $\rho^{\mod}_{\mf}$, there is an automorphic representation $\Pi_x = \BC(\sigma_x)$ of $\GL_n(\A_F)$ with the property that, for each place $v\in \Sc$ with $v = ww^c$ in $F$:
\begin{itemize}
	\item $\breve{\rho_x}|_{G_{F_{w}}} \cong \rec_{L_{w}}(\Pi_{x,w})$;
	\item If $v\not\in\Delta$, $\Pi_{x,w} = \sigma_{x,v}\circ \iota_{w}^{-1}$;
	\item If $v\in\Delta$, $\LJ(\Pi_{x,w}) = \sigma_{x,v}\circ \iota_w^{-1}$, where $\LJ$ is the map from Section \ref{ssec:LJ}.
\end{itemize}
As $\breve{\rhobar}_{\mf}$ is absolutely irreducible (as $\mf$ is assumed to be non-Eisenstein), $\breve{\rho}_x$ is absolutely irreducible as well, which implies that $\Pi$ is necessarily cuspidal. In particular, $\Pi_{x,w}$ is generic for all $w$, so Proposition \ref{prop:LJ(st) = triv} implies that $\Pi_{x,w}$ has type $\st_{d_v,\ldots,d_v}$ for all $w\in \Deltat$, so $\breve{\rho_x}|_{G_{F_{w}}} \cong \rec_{L_{w}}(\Pi_{x,w})$ has inertial type $\tau_{d_v,\ldots,d_v}$, proving (5).
\end{proof}

We will also let 
\[\rhobar_{\mf}:G_{F^+}\onto G_{F^+,\Sigma}\xrightarrow{\rho_{\mf}^{\mod}}\Gc_n(\T(K)_{\mf})\to \Gc_n(\T(K)_{\mf}/\mf)\into \Gc_n(\Fbar)\]
denote the mod $\mf$ reduction of $\rho^{\mod}_{\mf}$. Since we assumed that $\mf$ was non-Eisenstein, $\breve{\rhobar}_{\mf}$ is absolutely irreducible.

We will also need the following generic multiplicity one result:

\begin{prop}\label{prop:gen rank 1}
Assume that $K_v = G(\Oc_{F^+,v})$ for all $v\in\Sc$ and $K_v$ is a hyperspecial maximal compact subgroup of $G(F^+_v)$ for all $v\not\in \Sc$. Also assume that the extension $F/F^+$ is unramified at all finite places.

Then for any homomorphism $x:\T(K)_{\mf}\to \Ebar$ we have
\[
\dim_{\Ebar}(M(K)_{\mf}\otimes_{x}\Ebar) = 1.
\]
\end{prop}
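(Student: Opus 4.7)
The plan is to compute $\dim_{\Ebar}(M(K)_{\mf}\otimes_x \Ebar)$ via its automorphic interpretation, and reduce the statement to the combination of strong multiplicity one for cuspidal representations of $\GL_n(\A_F)$, Labesse's multiplicity one result for base change, and Proposition \ref{prop:LJ(st) = triv}. First I would use that $G$ is compact at infinity to write
\[
M(K)\otimes_{\Oc}\Ebar \;\cong\; \bigoplus_\sigma m(\sigma)\cdot \sigma_f^K,
\]
where the sum runs over automorphic representations $\sigma$ of $G(\A_{F^+})$ with trivial archimedean component, and $m(\sigma)$ is the automorphic multiplicity. Under this decomposition $M(K)_{\mf}\otimes_x \Ebar$ corresponds to the subsum over those $\sigma$ whose Hecke eigenvalues at the unramified split places match those determined by $x$ via Proposition \ref{prop:local global compatibility}(3). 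Thus the goal reduces to showing that there is exactly one such $\sigma$, that $m(\sigma) = 1$, and that $\dim_{\Ebar}\sigma_f^K = 1$.

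Next, for any such $\sigma$, Labesse's base change produces an automorphic representation $\Pi = \BC(\sigma)$ of $\GL_n(\A_F)$ whose Hecke eigenvalues at unramified split places are determined by $x$. Because $\breve\rhobar_{\mf}$ is absolutely irreducible, the associated $\breve\rho_x$ is absolutely irreducible, forcing $\Pi$ to be cuspidal (this is the step that also appeared in the proof of Proposition \ref{prop:local global compatibility}(5)). By strong multiplicity one for cuspidal representations of $\GL_n(\A_F)$, such a $\Pi$ is uniquely determined. Then I would invoke the base change multiplicity one theorem of \cite{Labesse} — whose hypotheses are satisfied because $F/F^+$ is unramified at all finite places — to conclude that there is a \emph{unique} $\sigma$ with $\BC(\sigma) = \Pi$, and that $m(\sigma) = 1$.

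It then remains to show that $\dim_{\Ebar}\sigma_f^K = 1$ for this unique $\sigma$, by treating each local factor separately. For $v \notin \Sc$ the representation $\sigma_v$ is unramified and $K_v$ is hyperspecial, so $\dim \sigma_v^{K_v} = 1$. For $v \in \Sc\sm\Delta$ one transfers along $\iota_w$ so that $\sigma_v\circ \iota_w^{-1}$ is an unramified representation of $\GL_n(F_w)$ with $K_v$ corresponding to $\GL_n(\Oc_{F,w})$, again giving dimension one. The interesting case is $v \in \Delta$, where $\iota_w$ identifies $(\sigma_v, K_v)$ with $(\LJ(\Pi_w), \GL_{n/d_v}(\Oc_{D_w}))$. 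Since $\Pi$ is cuspidal, $\Pi_w$ is generic, so Proposition \ref{prop:LJ(st) = triv} applies: combined with the nonvanishing $\sigma_v^{K_v} \ne 0$ (forced by the appearance of $\sigma$ in $M(K)_{\mf}\otimes_x \Ebar$), its proof shows that $\LJ(\Pi_w) = \chi\circ\nu$ for some unramified character $\chi:F_w^\times\to \C^\times$. Such a character has a one-dimensional space of $\GL_{n/d_v}(\Oc_{D_w})$-fixed vectors, as needed.

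The main obstacle is the invocation of Labesse's base change: one must ensure that, at the ramified places in $\Delta$, the local base change lifts remain compatible with the generic components of the cuspidal $\Pi$, and that the multiplicity one results of \cite{Labesse} apply with the right normalizations — this is precisely where the hypothesis that $F/F^+$ is unramified at all finite places enters. Once this is in place, the rest of the argument is a local dimension count controlled by Proposition \ref{prop:LJ(st) = triv}.
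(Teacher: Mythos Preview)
Your approach is essentially the same as the paper's, which is terser: it simply asserts $\dim_{\C}\sigma_{x,v}^{K_v} = 1$ for all $v$ from the assumptions on $K_v$, identifies $\dim_{\Ebar}(M(K)_{\mf}\otimes_x\Ebar)$ with the automorphic multiplicity $m(\sigma_x)$, and cites \cite[Theoreme 5.4]{Labesse} directly for $m(\sigma_x) = 1$, without explicitly factoring through strong multiplicity one for $\GL_n(\A_F)$ and the fibres of base change as you do. One small correction: at $v\in\Delta$ the proof of Proposition~\ref{prop:LJ(st) = triv} shows that $\LJ(\Pi_w)$ is an unramified principal series $\nInd^{\GL_r(D_w)}_{P'_{1,\ldots,1}}(\chi_1\circ\nu\otimes\cdots\otimes\chi_r\circ\nu)$ with $r = n/d_v$, not a single character $\chi\circ\nu$; the conclusion $\dim\sigma_v^{K_v}=1$ then follows from the Iwasawa decomposition for $\GL_r(D_w)$ with respect to the maximal compact $\GL_r(\Oc_{D_w})$.
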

\begin{proof}
As in the proof of Proposition \ref{prop:local global compatibility}, $x$ corresponds to an automorphic representation $\sigma_x = \bigotimes'_v\sigma_{x,v}$ of $G(\A_{F^+})$ over $\C$. Let $m(\sigma_x)$ be the multiplicity with which $\sigma_x$ occurs in the $G(\A_{F^+})$ representation $L_{\mathrm{disc}}^2(G(F^+)\backslash G(\A_{F^+}))$

By the assumptions on $K_v$ we get that $\dim_{\C}\sigma_{x,v}^{K_v} = 1$ for all $v$, so $\dim_{\C}\sigma_x^K = \dim_{\C}\bigotimes'_v\sigma_{x,v}^{K_v} = 1$. It follows from this that
\[
\dim_{\Ebar}(M(K)_{\mf}\otimes_{x}\Ebar) = m(\sigma_x).
\]
Now \cite[Theoreme 5.4]{Labesse} implies that $m(\sigma_x) = 1$, completing the proof. Note that the conditions of that theorem are indeed satisfied in our setup, as $F/F^+$ is unramified, $K_v$ is maximal hyperspecial at all $v\not\in \Sc$, and $G$ is compact at all infinite places.
\end{proof}
We note that the automorphic multiplicity one result needed in the above proof has been announced in far greater generality in \cite[Theorem 1.7.1]{KMSW} (although in the case relevant to us this fact is still conditional on a sequel to that paper, as we are considering unitary groups associated to nonsplit central simple algebras), so it is likely that the conditions of Proposition \ref{prop:gen rank 1} can be weakened significantly.

\subsection{Sufficiently small levels}

In order to apply the standard Taylor--Wiles--Kisin patching method, one must restrict attention to levels satisfying the following condition:

\begin{defn}
A level $K = \prod_vK_v$ is \emph{sufficiently small} if for all $t\in G(\A_{F^+,f})$ the order of the finite group $t^{-1}G(F^+)t\cap K$ is not divisible by $\ell$.
\end{defn}

Fortunately, the hypotheses of Theorem \ref{main theorem} already imply that all levels we consider will automatically be sufficiently small (as we assume that $D$ is nonsplit, and so $\Delta$ is nonempty, and any $v_0\in\Delta$ splits in $F$ and is banal):

\begin{prop}\label{prop:banal => sufficiently small}
If there exists a finite prime $v_0$ of $F^+$ which splits in $F$ and is banal, then any level $K = \prod_vK_v\subseteq G(\A_{F^+,f})$ is sufficiently small.
\end{prop}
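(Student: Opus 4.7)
The plan is to reduce everything to showing that $G(F^+_{v_0})$ contains no element of order $\ell$, and then to derive this directly from banality at $v_0$ via a simple eigenvalue/Frobenius-orbit count. Since $t^{-1}G(F^+)t\cap K$ is already known to be finite, Cauchy's theorem reduces the claim to showing this group contains no element of order exactly $\ell$. So suppose for contradiction that $\gamma = t^{-1}\gamma_0 t$ lies in $t^{-1}G(F^+)t\cap K$ and has order $\ell$, where $\gamma_0\in G(F^+)$. Conjugation being a group isomorphism, $\gamma_0$ also has order $\ell$ in $G(F^+)$, and because the base-change map $G(F^+)\to G(F^+_{v_0})$ coming from $F^+\into F^+_{v_0}$ is injective on points (since $G$ is affine), the image of $\gamma_0$ in $G(F^+_{v_0})$ still has order exactly $\ell$.

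Next I would analyse $G(F^+_{v_0})$ via the isomorphism $G(F^+_{v_0})\cong B_w^\times$ for $v_0 = ww^c$, where $B_w = B\otimes_F F_w$ is central simple of dimension $n^2$ over $F_w$. Base change to an algebraic closure gives an $F_w$-algebra embedding $B_w\into B_w\otimes_{F_w}\overline{F_w}\cong M_n(\overline{F_w})$. An element $g\in B_w^\times$ of order $\ell$, viewed in $\GL_n(\overline{F_w})$, satisfies $g^\ell = 1$ and is therefore diagonalisable with eigenvalues $\zeta_1,\dots,\zeta_n\in\overline{F_w}^\times$ that are $\ell$-th roots of unity; at least one of them is primitive, since $g$ has order exactly $\ell$. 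The multiset $\{\zeta_1,\dots,\zeta_n\}$ is $\Gal(\overline{F_w}/F_w)$-stable because it is determined by the reduced characteristic polynomial of $g$, which has coefficients in $F_w$.

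The core of the argument is the resulting contradiction with banality. Because $v_0\nmid\ell$, the extension $F_w(\zeta_\ell)/F_w$ is unramified of degree $k$, where $k$ is the multiplicative order of $q_{v_0}$ modulo $\ell$, and its Galois group is generated by the Frobenius $\zeta_\ell\mapsto\zeta_\ell^{q_{v_0}}$. This Frobenius acts freely on the set of primitive $\ell$-th roots of unity, so the primitive roots among $\zeta_1,\dots,\zeta_n$ form a disjoint union of Frobenius orbits of size exactly $k$. Since at least one such eigenvalue occurs and there are only $n$ eigenvalues in total, one gets $k\le n$, i.e.\ $q_{v_0}^k\equiv 1\pmod\ell$ for some $1\le k\le n$, contradicting banality of $v_0$. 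No step is a serious obstacle; the one place requiring minor care is insisting on the \emph{reduced} characteristic polynomial rather than treating $B_w$ as split, so that the Galois-invariance of the eigenvalue multiset (and hence the orbit count) is legitimate without assuming $B_w\cong M_n(F_w)$.
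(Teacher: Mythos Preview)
Your proof is correct and takes a genuinely different route from the paper. The paper first invokes the Chebotarev density theorem to produce \emph{another} banal split prime $v_1\notin\Delta\cup\Sigma(K)$, so that $K_{v_1}\cong\GL_n(\Oc_{F^+,v_1})$ sits inside a split group; it then embeds $t^{-1}G(F^+)t\cap K$ into $K_{v_1}$ and finishes by observing that $\GL_n(\Oc_{F^+,v_1})$ is pro-prime-to-$\ell$, since $|\GL_n(\F_{q_{v_1}})|=q_{v_1}^{n(n-1)/2}\prod_{i=1}^n(q_{v_1}^i-1)$ is prime to $\ell$ by banality. Your argument bypasses Chebotarev entirely by working at $v_0$ itself, at the cost of handling a possibly nonsplit $B_w$; you compensate cleanly via the reduced characteristic polynomial and a Frobenius-orbit count on $\mu_\ell$. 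The trade-off: the paper's route is more pedestrian (reduce to split, quote a group-order formula), while yours is shorter and more conceptual, and in particular shows directly that $G(F^+_{v_0})\cong B_w^\times$ contains no element of order $\ell$, which is a slightly stronger local statement than the paper extracts.
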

\begin{proof}
Consider the Galois extension $F(\zeta_\ell)/F^+$. As $v_0$ splits in $F/F^+$, and does not ramify in $F^+(\zeta)/F^+$ (as $v_0\nmid \ell$), $v_0$ is unramified in $F(\zeta_\ell)/F^+$. By the Chebotarev density theorem, there are infinitely many primes $v$ of $F^+$ with $\Frob_v = \Frob_{v_0}$ in $\Gal(F(\zeta_\ell)/F^+)$. But for any such $v$, this implies that $\Frob_v = \Frob_{v_0} = 1$ in $\Gal(F/F^+)$, so $v$ is split in $F/F^+$, and $q_v = \Frob_v = \Frob_{v_0} = q_{v_0}\in \Gal(F^+(\zeta_\ell)/F^+) \subseteq (\Z/\ell\Z)^\times$, so $v$ is also banal.

Thus there are infinitely many banal primes $v\in\Sc$, and so we may fix a banal prime $v_1\in\Sc\sm(\Delta\cup\Sigma(K))$, so that $K_{v_1} = G(\Oc_{F^+,v_1})\cong \GL_n(\Oc_{F^+,v_1})$ (in fact, for our argument it would be enough to simply pick $v_1\in \Sc\sm\Delta$, so that $K_{v_1}\subseteq \GL_n(\Oc_{F^+,v_1})$ up to conjugation). Let $r=\Char(\Oc_{F^+}/v_1)\ne \ell$.

Now take any $t = (t_v)_v\in G(\A_{F^+,f})$. As the map $G(F^+)\to G(F^+_{v_1})$, $g\mapsto t_{v_1}^{-1}gt_{v_1}$ is injective, $t^{-1}G(F^+)t\cap K$ is isomorphic to a subgroup of $K_{v_1}\cong \GL_n(\Oc_{F^+,v_1})$. But if $U(v_1) = \ker(\GL_n(\Oc_{F^+,v_1})\onto \GL_n(\Oc_{F^+}/v_1))$ then $U(v_1)$ is a pro-$r$ group, and
\[
| \GL_n(\Oc_{F^+,v_1})/U(v_1)| \cong |\GL_n(\Oc_{F^+}/v_1)| = \prod_{i=0}^{n-1}(q_{v_1}^n-q_{v_1}^i) = q^{n(n-1)/2}_{v_1}\prod_{i=1}^n(q_{v_1}^i-1)
\]
and so $\ell\nmid |\GL_n(\Oc_{F^+,v_1})/U(v_1)|$, as $v_1$ is banal. Hence $ \GL_n(\Oc_{F^+,v_1})$ is a pro prime to $\ell$ group, and so any finite subgroup of $\GL_n(\Oc_{F^+,v_1})$ has order prime to $\ell$, giving that $\ell\nmid |t^{-1}G(F^+)t\cap K|$.
\end{proof}

\subsection{Duality}

We now recall the following consequence of the main result of \cite[Section 5.2]{CHT}:

\begin{prop}\label{prop:self dual}
In addition to our running assumptions, assume further that either $B\cong M_n(F)$ or $B\cong M_{n/2}(D)$, for a quaternion algebra $D$ over $F$. Then there is a perfect pairing
\[
\langle\ ,\ \rangle:M(K)_{\mf}\times M(K)_{\mf}\to \Oc
\]
which is equivariant for the action of all Hecke operators at all primes where $B$ does not ramify.
\end{prop}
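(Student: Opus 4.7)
The plan is to adapt \cite[Section 5.2]{CHT}: produce a Hecke-equivariant perfect pairing directly on $M(K)$, then restrict to $M(K)_{\mf}$. Since $X_K$ is a finite set, $M(K) = \Oc^{X_K}$ is a finite free $\Oc$-module. The hypothesis of Theorem \ref{main theorem} that some $v \in \Delta$ is banal, combined with Proposition \ref{prop:banal => sufficiently small}, ensures that $K$ is sufficiently small, so every $x \in X_K$ has trivial stabilizer. Consequently the counting pairing $\langle f, g \rangle_{0} := \sum_{x \in X_K} f(x)\, g(x)$ is perfect over $\Oc$. A standard double-coset computation, exploiting the identifications $\iota_w$ and $\iota_{w^c}$, shows that for $w \in \Sct \setminus \Deltat$, the adjoint of $T_w^{(j)}$ under $\langle\ ,\ \rangle_0$ equals $(T_w^{(n)})^{-1} T_w^{(n-j)} = T_{w^c}^{(j)}$ rather than $T_w^{(j)}$, so $\langle\ ,\ \rangle_0$ fails Hecke equivariance.

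To fix this, I would use the hypothesis $B \cong B^{\op}$. When $B = M_n(F)$ the required isomorphism is matrix transpose, and when $B = M_{n/2}(D)$ with $D$ quaternion, it is the composition of transpose with the main involution of $D$. In either case we obtain an $F$-algebra anti-involution $\theta : B \to B$ that commutes with $\ddagger$ and, after possibly replacing $\Oc_B$ by its $\theta$-stabilization (still a maximal order), preserves $\Oc_B$. Then $\sigma(g) := \theta(g^{-1})$ is an involutory $F^+$-algebraic automorphism of $G$: it is a homomorphism because $\theta$ reverses multiplication, and it maps $G$ to $G$ because $\theta$ commutes with $\ddagger$. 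Locally, at a split prime $v = ww^c$, $\sigma$ interchanges $\iota_w$ and $\iota_{w^c}$ up to an anti-automorphism of $\GL_{r_v}(D_w)$, and so swaps $T_w^{(j)}$ with $T_{w^c}^{(j)}$. Moreover $\sigma$ preserves $K_v$ place-by-place: at $v \not\in \Sc$ by hyperspeciality, at $v \in \Sc \setminus (\Delta \cup \Sigma(K))$ from $\theta(\Oc_B) = \Oc_B$, and at $v \in \Delta$ from the maximality of $K_v$ (absorbing the conjugating element into $\sigma$ if necessary).

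The induced involution $\iota : X_K \isomto X_K$ now lets me define
\[
\langle f, g \rangle := \sum_{x \in X_K} f(x)\, g(\iota(x)).
\]
Its Gram matrix in the basis $\{e_x\}_{x \in X_K}$ is the permutation matrix of $\iota$, hence invertible over $\Oc$, so $\langle\ ,\ \rangle$ is perfect. Hecke equivariance follows by combining the two previous observations: the naive adjoint of $T_w^{(j)}$ is $T_{w^c}^{(j)}$, while twisting by $\iota$ swaps $T_w^{(j)}$ and $T_{w^c}^{(j)}$, so the two effects cancel and $T_w^{(j)}$ is self-adjoint under $\langle\ ,\ \rangle$ for every $w \in \Sct \setminus \Deltat$. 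Finally, since $\T(K)$ is a finite (hence semilocal) $\Oc$-algebra, the idempotent decomposition $M(K) = \bigoplus_{\nf} M(K)_{\nf}$ over the maximal ideals $\nf$ of $\T(K)$ is $\T(K)$-equivariant and orthogonal under $\langle\ ,\ \rangle$ (as each Hecke operator is self-adjoint), and the restriction of $\langle\ ,\ \rangle$ to the direct summand $M(K)_{\mf}$ is the desired perfect, Hecke-equivariant pairing.

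The main obstacle I anticipate is the careful verification that $\sigma$, or a suitable modification of it by an inner automorphism, genuinely preserves $K_v$ as a subgroup at the ramified places $v \in \Delta$: \emph{a priori} the maximality of $K_v$ only yields $\sigma(K_v) = g K_v g^{-1}$ for some $g \in G(F^+_v)$, and one must track these local conjugations carefully to ensure that the induced map $\iota$ on the global double-coset space $X_K$ is well-defined and involutory. The compatibility of $\theta$ with the fixed isomorphisms $\iota_w$ of Section \ref{sec:hecke} is the key computation here.
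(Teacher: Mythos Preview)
Your approach is essentially the same as the paper's, just phrased more intrinsically. The paper externalizes your automorphism $\sigma$ by passing to the unitary group $G'$ attached to $B^{\op}$: it first cites \cite[Section 5.2]{CHT} for a Hecke-equivariant perfect pairing $M(K)_{\mf}\times M'(I(K))_{\mf}\to\Oc$ (where $I:G\to G'$ is $g\mapsto g^{-1}$), and then uses the $F$-algebra isomorphism $B\cong B^{\op}$ to obtain a Hecke-equivariant identification $M(K)\cong M'(I(K))$, which it composes with the CHT pairing. Your twist by $\iota$ on $X_K$ is exactly this composition viewed inside $G$ rather than between $G$ and $G'$, and the obstacle you flag (checking $\sigma$ preserves $K_v$ at $v\in\Delta$) is precisely the content of the paper's unproved assertion that the Hecke-preserving isomorphism $M(K)\cong M'(I(K))$ is ``easy to see.'' The paper's framing has the mild advantage that the CHT pairing between $M(K)_{\mf}$ and $M'(I(K))_{\mf}$ is already packaged and localized, whereas you reconstruct it as the counting pairing and then localize via idempotents; both are fine.
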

\begin{proof}
This is essentially proved in \cite[Section 5.2]{CHT}. For completeness, we shall summarize the results of that paper in our notation.

Consider the opposite algebra $B^{\op}$ of $B$. The involution $\ddagger$ on $B$ induces a corresponding involution on $B^{\op}$, which we will also denote $\ddagger$. Let $G'/F^+$ be the unitary group associated to $B^{\op}$ and $\ddagger$, which will also be compact at all infinite places of $F^+$. For any $K'\subseteq G'(\A_{F^+,f})$ let $X'_{K'}$ be the corresponding Shimura set, and let $M'(K') = H^0(X'_{K'},\Oc)$.

Note that one has $G' \cong G^{\op}$ as groups, and so in particular there is a group isomorphism $I:G\isomto G'$ given by $g\mapsto g^{-1}$. For any $K\subseteq G(\A_{F^+,f})$ the Jacquet--Langlands correspondence gives a natural identification between the Hecke algebras $\T(K)$ and $\T(I(K))$ acting on $M(K)$ and $M'(I(K))$ respectively, and so we may treat both $M(K)$ and $M'(I(K))$ as $\T(K)$-modules. Now $I$ induces a bijection $X_K\isomto X'_{I(K)}$ and so induces an isomorphism $M(K)\cong M'(I(K))$ of $\Oc$-modules. However, under the identification above this map is not a $\T(K)$-module homomorphism. Instead it interchanges the operator $T_w^{(j)}$ on $M(K)$ with the operator $T_{w^c}^{(j)}$ on $M'(I(K))$.

The work of \cite[Section 5.2]{CHT} (as well as the proof of Proposition 5.3.5 of \emph{loc. cit.}) constructs a perfect pairing
\[
\langle\ ,\ \rangle:M(K)_{\mf}\times M'(I(K))_{\mf}\to \Oc
\]
which is equivariant for the actions of all Hecke operators at all places where $B$ does not ramify.

But now the condition that either $B\cong M_n(F)$ or $B\cong M_n(D)$ implies that $B\cong B^{\op}$ as $F$-algebras. From this it is easy to see that there is an isomorphism $M_(K)\cong M'(I(K))$ preserving the action of all Hecke operators. Combining this with the above perfect pairing gives the desired result.
\end{proof}

\section{Patching}\label{sec:patch}

In this section we combine the results of Sections \ref{sec:global Galois} and \ref{sec:hecke} to produce the patched module needed for our main argument.

First recall the definition of an \emph{adequate subgroup} from \cite{ThorneAdequate} (and recall that we have assumed that $\ell>n$):

\begin{defn}
A subgroup $H\subseteq \GL_n(\Fbar_\ell)$ is \emph{adequate} if:
\begin{itemize}
	\item $H^1(H,\Fbar_\ell) = 0$;
	\item $H^0(H,\ad^0(\Fbar_\ell^n)) = H^1(H,\ad^0(\Fbar_\ell^n)) = 0$, where $\ad^0(\Fbar_\ell^n)\subseteq M_{n\times n}(\Fbar_\ell)$ is the set of trace $0$ matrices, and $H$ acts on $\ad^0(\Fbar_\ell^n)$ by conjugation;
	\item The elements of $H$ with order prime to $\ell$ span $M_{n\times n}(\Fbar_\ell)$ as a $\Fbar_\ell$-vector space.
\end{itemize}
\end{defn}

As shown in \cite[Appendix A]{ThorneAdequate}, if $\ell\ge 2(n+1)$, then any subgroup $H\subseteq \GL_n(\Fbar_\ell)$ which acts absolutely irreducibly on $\Fbar_\ell^n$ is adequate.

Fix a central simple $F$-algebra $B$ satisfying the conditions from Section \ref{sec:hecke}, and let $\Delta$ and $G$ be as in that section.

Fix a level $K = \prod_v K_v\subseteq G(\A_{F^+,f})$ where:
\begin{itemize}
	\item For all $v\in \Sc$, $K_v = G(\Oc_{F^+,v})$;
	\item For all $v\not\in \Sc$, $K_v$ is a hyperspecial maximal compact subgroup of $G(F_v^+)$.
\end{itemize}

Fix a non-Eisentein maximal ideal $\mf\subseteq \T(K)$ with $\T(K)/\mf = \F$ (which can always be achieved by enlarging $\Oc$ if necessary) and consider the representation $\rhobar_{\mf}:G_{F^+}\to \Gc_n(\Fbar)$ defined after Proposition \ref{prop:local global compatibility}.

We shall apply the results of Section \ref{sec:global Galois} with $\rhobar = \rhobar_{\mf}$.

Take $\Sigma = \Delta\cup\Sigma_\ell$. Also pick $\mu = \varepsilon^{1-n}\delta_{F/F^+}^u$, so that $w = n-1$, and define $\lambda = (\lambda_{\tau,i})_{\tau,i}\in (Z^n_+)^{\Hom(F,\C)}$ by $\lambda_{\tau,i} = n-i$ for all $\tau$ and all $i=1,\ldots,n$. As we have assumed that $\ell>n$, this $\lambda$ is in the Fontaine--Laffaille range, and so each of the rings $R_v^\lambda$ for $v\in\Sigma_\ell$ is a power series ring by Proposition \ref{prop:Fontaine--Laffaille}. 

As before take
\[
R^{\loc}_{\Sigma} = \left(\widehat{\bigotimes_{v\in \Delta}} R_v^\square\right) \widehat{\otimes}_{\Oc} \left(\widehat{\bigotimes_{v\in\Sigma_\ell}} R_v^\lambda\right)
\]
where $R_v^\square$ is again the unrestricted framed deformation ring of $\rhobar_{\mf,v} = \breve{\rhobar}_{\mf}|_{G_{F_{\vt}}}$.

Now for each $v\in\Delta$, write $v=ww^c$ in $F$, and let $B_{w} \cong M_{n/d_v}(D_w)$ for some $d_v|n$ and some division algebra $D_w$ over $F_w$ (and note that $d_v$ depends only on $v$ and not on $w$, as $B_{w^c} = (B_w)^{\op} \cong M_{n/d_v}(D_w^{\op})$).

Define the quotient
\[
R_\Sigma^{\loc,B} := \left(\widehat{\bigotimes_{v\in \Delta}} R_v^\square(\rhobar_{\mf,v},\st_{d_v,\ldots,d_v})\right) \widehat{\otimes}_{\Oc} \left(\widehat{\bigotimes_{v\in\Sigma_\ell}} R_v^\lambda\right)
\]
of $R^{\loc}_\Sigma$, so that $\Spec R^{\loc,B}_{\Sigma}$ is a union of irreducible components of $\Spec F^{\loc}_\Sigma$ (in our applications, $R^{\loc,B}_{\Sigma}$ will in fact be a domain). Note that by Proposition \ref{prop:local global compatibility}, $\rhobar_{\mf}$ has a lift with inertial type $\tau_{d_v,\ldots,d_v}$ for all $v\in \Delta$, so $R_v^\square(\rhobar_{\mf,v},\st_{d_v,\ldots,d_v})\ne 0$. Hence $R^{\loc,B}_\Sigma \ne 0$.

Now for each $v\in \Delta$, $\Spec R_v^\square(\rhobar_{\mf,v},\st_{d_v,\ldots,d_v})$ is a union of irreducible components of $\Spec R^\square_v$, by Proposition \ref{prop:R(tau)}(1), and hence defines a local deformation problem (in the sense of \cite[Definition 2.2.2]{CHT}) by \cite[Lemma 3.2]{BLGHT}. By \cite[Proposition 2.2.9]{CHT}, there is a canonical choice of quotient $R^B_{\Sigma,\lambda,\mu}$ of $R^{\univ}_{\Sigma,\lambda,\mu}$ charachterized by the property that for any $A\in\Cc_{\Oc}^{\wedge}$ and any $R^{\univ}_{\Sigma,\lambda,\mu}\to A$ inducing a representation $\rho_A:G_{F^+}\onto G_{F^+,\Sigma}\to \Gc_n(R^{\univ}_{\Sigma,\lambda,\mu})\to\Gc_n(A)$, the map $R^{\univ}_{\Sigma,\lambda,\mu}\to A$ factors through $R^{\univ}_{\Sigma,\lambda,\mu}\onto R^B_{\Sigma,\lambda,\mu}$ if and only if for each $v\in \Delta$, the map $R^\square_v\to A$ induced by $\breve{\rho}_A|_{G_{F^+_v}}:R_v^\square\to \GL_n(A)$ factors through $R_v^\square \onto R_v^\square(\rhobar_{\mf,v},\st_{d_v,\ldots,d_v})$.

Note that Proposition \ref{prop:local global compatibility} implies that there exists a surjective map $R^{B}_{\Sigma,\lambda,\mu}\onto \T(K)_{\mf}$. Hence we may regard $M(K)_{\mf}$ as an $R^{B}_{\Sigma,\lambda,\mu}$-module

A standard patching argument now produces the following:

\begin{thm}\label{thm:patching}
Assume that $\breve{\rhobar}_{\mf}(G_{F(\zeta_{\ell})})\subseteq \GL_n(\Fbar_\ell)$ is adequate and that there exists a finite prime $v_0$ of $F^+$ which splits in $F$ and is banal. Then there exist:
\begin{itemize}
	\item Rings $R_\infty^B := R^{\loc,B}_\Sigma[[x_1,\ldots,x_g]]$ and $S_\infty = \Oc[[y_1,\ldots,y_r]]$, for some integers $g,r\ge 0$;
	\item An embedding $S_\infty\to R_\infty^B$ making $R_\infty^B$ into a finite $S_\infty$-algebra;
	\item A finitely generated $R_\infty^B$-module $M_\infty$.
\end{itemize}
satisfying:
\begin{enumerate}
	\item $\dim S_\infty = \dim R_\infty^B$;
	\item $M_\infty$ is a maximal Cohen--Macaulay $R_\infty^B$-module;
	\item There is a surjective map $R_\infty^B/(y_1,\ldots,y_r)R_{\infty}^B\onto R^B_{\Sigma,\lambda,\mu}$ such that the action of $R_\infty^B/(y_1,\ldots,y_r)R_{\infty}^B$ on $M_\infty/(y_1,\ldots,y_r)M_{\infty}$, factors through $R^B_{\Sigma,\lambda,\mu}$ and we have an isomorphism 
	$M_\infty/(y_1,\ldots,y_r)M_{\infty}\cong M(K)_{\mf}$ of $R^{B}_{\Sigma,\lambda,\mu}$-modules;
	\item If $F/F^+$ is unramified at all finite places, then $M_\infty$ has generic rank either $0$ or $1$ on each irreducible component of $\Spec R_\infty^B$.
	\item Provided that either $B\cong M_n(F)$ or $B\cong M_{n/2}(D)$ for a quaternion algebra $D$ over $F$, $M_\infty$ is self-dual, in the sense that
	\[
	M_\infty \cong \RHom_{R_\infty^B}(M_\infty,\omega^\bullet_{R_\infty^B})[\dim R_\infty^B]
	\]
	as $R_\infty^B$-modules, where $\omega^\bullet_{R_\infty^B}$ is the dualizing complex of $R_\infty^B$. (In the case when $R_\infty^B$ is Cohen--Macaulay, this may be written more simply as $M_\infty \cong \Hom_{R_\infty^B}(M_\infty,\omega_{R_\infty^B})$).
\end{enumerate}
\end{thm}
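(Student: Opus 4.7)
The plan is to apply the Taylor--Wiles--Kisin patching method in the form of \cite{CHT}, with the modifications needed for properties (4) and (5) taken from \cite{Manning}. The adequacy hypothesis on $\breve{\rhobar}_{\mf}(G_{F(\zeta_\ell)})$ is precisely what allows us to invoke the Galois-cohomological inputs of \cite{ThorneAdequate}: for each $N \ge 1$, we may produce a Taylor--Wiles datum $Q_N$, namely a finite set of finite places of $F^+$, each split in $F$ and disjoint from $\Sigma$, such that $q_v \equiv 1 \pmod{\ell^N}$ and $\breve{\rhobar}_{\mf}|_{G_{F_{\tilde{v}}}}$ is unramified with distinct Frobenius eigenvalues, and with $|Q_N| = q$ a fixed integer independent of $N$. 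This $q$ controls both the number of auxiliary Taylor--Wiles variables ($r = q + n^2|\Sigma|$) and, via a standard Euler-characteristic computation, the number of patching variables ($g = q - [F^+{:}\Q]n(n-1)/2$). Note that Proposition~\ref{prop:banal => sufficiently small}, applied to the banal split prime $v_0$, guarantees that every level we encounter in the argument is automatically sufficiently small, so no auxiliary prime needs to be added to force this.

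\textbf{Patching and properties (1)--(3).} For each $N$, one enlarges $K$ to a level $K_{Q_N}$ by imposing an Iwahori-type condition at the places of $Q_N$; the localization $M(K_{Q_N})_{\mf_{Q_N}}$ is simultaneously a module over a framed deformation ring $R^{\square_\Sigma,B}_{\Sigma \cup Q_N,\lambda,\mu}$ and a free module over the group algebra $\Oc[\Delta_{Q_N}]$, where $\Delta_{Q_N}$ is the maximal $\ell$-power quotient of the product of the tame inertia groups at the primes of $Q_N$. Choosing compatible surjections and taking an ultrafilter (or Mittag-Leffler) limit in the standard way yields the rings $R_\infty^B$ and $S_\infty = \Oc[[y_1,\ldots,y_r]]$ together with the module $M_\infty$. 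Property (3) is built into the construction. Using the local dimension formulas from Proposition~\ref{prop:R(tau)}(1) for the primes in $\Delta$ and Proposition~\ref{prop:Fontaine--Laffaille} for the primes in $\Sigma_\ell$, one finds $\dim R^{\loc,B}_\Sigma = 1 + n^2|\Sigma| + [F^+{:}\Q]n(n-1)/2$, whence $\dim R_\infty^B = r+1 = \dim S_\infty$, giving (1). Finite freeness of $M(K_{Q_N})_{\mf_{Q_N}}$ over $\Oc[\Delta_{Q_N}]$ survives the limit and shows $M_\infty$ is finite free over $S_\infty$; combined with the dimension equality in (1) and the Auslander--Buchsbaum formula, this gives (2).

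\textbf{Properties (4) and (5); main obstacle.} For (4), observe that the automorphic $\Ebar$-points are Zariski dense in $\Spec R^B_{\Sigma,\lambda,\mu}[1/\ell]$ by Proposition~\ref{prop:R(tau)}(2)--(3), and at any such point Proposition~\ref{prop:gen rank 1} (which uses that $F/F^+$ is unramified everywhere) asserts one-dimensionality of the fiber $M(K)_{\mf} \otimes_x \Ebar$; running the same analysis at the auxiliary levels $K_{Q_N}$ and passing to the limit, one concludes that $M_\infty$ has generic rank $0$ or $1$ on each irreducible component of $\Spec R_\infty^B$. For (5), the CHT pairing of Proposition~\ref{prop:self dual} exists on $M(K)_{\mf}$ in our setting ($B = M_n(F)$ or $B = M_{n/2}(D)$); the argument of \cite[Section 5.2]{CHT} extends verbatim to each auxiliary level $K_{Q_N}$ because enlarging level at the split primes of $Q_N$ is compatible with the $w \leftrightarrow w^c$ interchange underlying the construction of the pairing, and because the pairing is equivariant under the diamond operators $\Delta_{Q_N}$ (which act through unramified characters at the Taylor--Wiles places). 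Passing to the limit produces a perfect $R_\infty^B$-linear pairing $M_\infty \otimes_{R_\infty^B} M_\infty \to \omega^\bullet_{R_\infty^B}[\dim R_\infty^B]$, which given (2) simplifies to the stated identification $M_\infty \cong \Hom_{R_\infty^B}(M_\infty,\omega_{R_\infty^B})$. The main technical subtlety is this last point, namely verifying carefully that the CHT pairing is $\Delta_{Q_N}$-equivariant and that the chosen surjections in the patching step can be made compatible with the pairing so that self-duality survives the limit; all other properties follow essentially mechanically from the patching formalism once the local rings $R_v^\square(\rhobar_{\mf,v},\st_{d_v,\ldots,d_v})$ are understood via the analysis of Section~\ref{sec:banal local models}.
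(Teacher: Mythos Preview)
Your proposal is correct and follows essentially the same approach as the paper: standard Taylor--Wiles--Kisin patching (with the adequacy hypothesis from \cite{ThorneAdequate} and Proposition~\ref{prop:banal => sufficiently small} to handle the sufficiently-small condition) gives (1)--(3), Proposition~\ref{prop:gen rank 1} gives (4), and Proposition~\ref{prop:self dual} together with equivariance of the pairing at the Taylor--Wiles primes gives (5). The only point where the paper is more specific is that it invokes \cite[Theorem 11.3]{IKM} to convert the compatible system of perfect $\Oc$-pairings into the self-duality statement $M_\infty \cong \RHom_{R_\infty^B}(M_\infty,\omega^\bullet_{R_\infty^B})[\dim R_\infty^B]$, which is exactly the step you flag as the ``main technical subtlety.''
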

\begin{proof}
If $R_\infty^B$ is replaced by $R_\infty:=R^{\loc}_\Sigma[[x_1,\ldots,x_g]]$, the construction of $M_\infty$, and parts (1),(2) and (3), follow by a standard Taylor--Wiles--Kisin patching argument (see for example \cite[Section 3.6]{BLGG} and \cite[Section 6]{ThorneAdequate}), where we use Proposition \ref{prop:banal => sufficiently small} to avoid any issues with non sufficiently small levels.

Proposition \ref{prop:local global compatibility}(5) then ensures that the action of $R_\infty$ on $M_\infty$ factors through $R_\infty^B$, giving (1), (2) and (3).

Proposition \ref{prop:gen rank 1}, and the fact that $M_\infty/(y_1,\ldots,y_r)M_{\infty}\cong M(K)_{\mf}$, then ensures that $M_\infty$ has generic rank $1$ on each component of $\Spec R_\infty^B$ on which it is supported, proving (4).

Part (5) then follows from Proposition \ref{prop:self dual} and 
\cite[Theorem 11.3]{IKM}. Note that the perfect pairings from Proposition \ref{prop:self dual} are equivariant for the Hecke operators at the ``Taylor--Wiles primes'' and so the patching construction does not interfere with the duality.
\end{proof}

\section{The structure of $\Mbar_\infty$}\label{sec:M_infty}

We now specialize to the setting of Theorem \ref{main theorem}. In particular, we now take $n=4$ and assume each prime $v\in\Delta$ is banal, and moreover that $d_v=2$ for all $v\in \Delta$. Define the integers:
\begin{align*}
a &= \#\left\{v\in \Delta\middle|d_v=2,\rhobar_{\mf,v} \text{ is unramified, and } \rhobar_{\mf,v}(\Frob_v)\sim \lambdabar\diag(q_v^2,q_v,q_v,1) \text{ for some }\lambdabar\in\Fbar_\ell^\times\right\}\\
b &= \#\left\{v\in \Delta\middle|d_v=2,\rhobar_{\mf,v} \text{ is unramified, and } \rhobar_{\mf,v}(\Frob_v)\sim \lambdabar\diag(q_v,q_v,1,1) \text{ for some  }\lambdabar\in\Fbar_\ell^\times\right\}.
\end{align*}

Now as each $R^\lambda_v$ (for $v\in\Sigma_\ell$) is a power series ring by Proposition \ref{prop:Fontaine--Laffaille}, Proposition \ref{prop:n=4 banal cases} (recalling that $R_\infty^B\ne 0$) gives that
\[
R_\infty^B \cong \left(\Rhat^{1,2,1}(\st_{2,2})\right)^{\widehat{\otimes} a} \widehat{\otimes} \left(\Rhat^{2,2}(\st_{2,2})\right)^{\widehat{\otimes} b}[[t_1,\ldots,t_s]]
\]
for some integer $s\ge 0$. In particular, $R_\infty^B$ is a Cohen--Macaulay normal domain, and (as $M_\infty\ne 0$) Theorem \ref{thm:patching}(4) gives that $M_\infty$ has generic rank $1$ over $R_\infty^B$.

Let $\Rhat_{\F}^{1,2,1}(\st_{2,2}) := \Rhat^{1,2,1}(\st_{2,2})/(\varpi)$, $\Rhat_{\F}^{2,2}(\st_{2,2}) := \Rhat^{2,2}(\st_{2,2})/(\varpi)$ and
\[
\Rbar^B_\infty := R^B_\infty/(\varpi) = \left(\Rhat_{\F}^{1,2,1}(\st_{2,2})\right)^{\widehat{\otimes} a} \widehat{\otimes} \left(\Rhat_{\F}^{2,2}(\st_{2,2})\right)^{\widehat{\otimes} b}[[t_1,\ldots,t_s]].
\]
Then $\Rbar^B_\infty$ is also a Cohen--Macaulay normal domain. Also let $\Mbar_\infty := M_\infty/\varpi M_\infty$. Then $\Mbar_\infty$ is also a maximal Cohen--Macaulay $\Rbar^B_\infty$-module of generic rank $1$.

Let $\Mc$ be the $\Rct^{1,2,1}(\st_{2,2})$-module from Theorem \ref{thm:R121 main results}(\ref{R121:M}), and let $\Mbar := \Mc\otimes_{\Rct^{1,2,1}(\st_{2,2})}\Rhat_{\F}^{1,2,1}(\st_{2,2})$.

The main result of this section is the following:

\begin{thm}\label{thm:M_infty structure}
There is an isomorphism
\[
\Mbar_\infty \cong \Mbar^{\boxtimes a}\boxtimes \Rhat_{\F}^{2,2}(\st_{2,2})^{\boxtimes b}[[t_1,\ldots,t_s]]
\]
of $\Rbar^B_\infty$-modules.
\end{thm}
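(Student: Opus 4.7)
The plan is to follow the strategy of \cite{Manning}, exploiting the self-duality of $\Mbar_\infty$ together with a computation of $\Cl(\Rbar_\infty^B)$. Concretely, since $\Mbar_\infty$ is a maximal Cohen--Macaulay module of generic rank $1$ over the normal Cohen--Macaulay domain $\Rbar_\infty^B$, the reflexive hull sends $\Mbar_\infty$ to a well-defined class $[\Mbar_\infty]\in\Cl(\Rbar_\infty^B)$. The self-duality $\Mbar_\infty\cong \Hom_{\Rbar_\infty^B}(\Mbar_\infty,\omega_{\Rbar_\infty^B})$ inherited from Theorem \ref{thm:patching}(5) then translates (by the usual calculus of divisor classes on a normal Cohen--Macaulay domain) into the equation $2[\Mbar_\infty] = [\omega_{\Rbar_\infty^B}]$ in $\Cl(\Rbar_\infty^B)$. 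Hence if $\Cl(\Rbar_\infty^B)$ is $2$-torsion free, $[\Mbar_\infty]$ is pinned down by $[\omega_{\Rbar_\infty^B}]$, and $\Mbar_\infty$ is determined up to isomorphism by its class and its generic rank.

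The next step is to compute both $\Cl(\Rbar_\infty^B)$ and $[\omega_{\Rbar_\infty^B}]$. Using Theorem \ref{thm:R121 main results}(\ref{R121:class group}) and Theorem \ref{thm:R22 main results}(\ref{R22:class group}), the individual factors have class groups $\Z$ and $0$, with dualizing modules of class $2$ and $0$ respectively. What I need is a K\"unneth-type statement: for a completed tensor product over $\F$ of Cohen--Macaulay normal $\F$-algebras satisfying a suitable geometric integrality hypothesis, the class group and the dualizing class are additive. The geometric-integrality input is exactly what parts (\ref{R22:Rational}) and (\ref{R121:Rational}) of Theorems \ref{thm:R22 main results}/\ref{thm:R121 main results} are designed to supply, and passage to the completion is handled by the reference to ``Proposition \ref{prop:class group completion}'' alluded to in the introduction. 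Combining these inputs with the fact that adding formal variables preserves the class group and the dualizing module class yields
\[
\Cl(\Rbar_\infty^B) \cong \Z^a, \qquad [\omega_{\Rbar_\infty^B}] = (2,2,\ldots,2) \in \Z^a,
\]
which is in particular $2$-torsion free.

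It then suffices to check that the candidate module $\Mbar^{\boxtimes a}\boxtimes (\Rhat_{\F}^{2,2}(\st_{2,2}))^{\boxtimes b}[[t_1,\ldots,t_s]]$ is (i) maximal Cohen--Macaulay, (ii) of generic rank $1$, and (iii) self-dual with class $(1,1,\ldots,1)\in\Z^a$. Properties (i) and (ii) follow from Theorem \ref{thm:R121 main results}(\ref{R121:M}) together with flatness of completed tensor products. Self-duality reduces, via the same K\"unneth/duality formalism, to self-duality of each tensor factor: $\Mbar$ is self-dual over $\Rhat_{\F}^{1,2,1}(\st_{2,2})$ by Theorem \ref{thm:R121 main results}(\ref{R121:M}), while $\Rhat_{\F}^{2,2}(\st_{2,2})$ is Gorenstein (Theorem \ref{thm:R22 main results}(\ref{R22:Gorenstein})) with trivial class group so the regular module is self-dual. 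The divisor-class computation then gives $2[\Mbar^{\boxtimes a}\boxtimes\cdots] = [\omega_{\Rbar_\infty^B}]$, and by $2$-torsion-freeness the candidate module is isomorphic to $\Mbar_\infty$.

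The principal obstacle is the K\"unneth-type statement for class groups and dualizing modules under completed tensor products over $\F$. The ordinary (non-completed) statement requires geometric integrality, which is why Theorems \ref{thm:R22 main results} and \ref{thm:R121 main results} are set up to produce a codimension-one closed subscheme whose complement is open in affine space; this is precisely the hypothesis needed to invoke the standard additivity of Weil divisor class groups on normal varieties and to control the dualizing sheaf. Passing this from $\Spec \Rct^{?}(\st_{2,2})\otimes\F$ to its completion along the irrelevant ideal, and then to the iterated completed tensor product, is the place where a careful invariance-under-completion result (the promised Proposition \ref{prop:class group completion}) is essential; everything else is formal manipulation of self-dual modules on normal Cohen--Macaulay rings.
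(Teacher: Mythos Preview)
Your proposal is correct and follows essentially the same approach as the paper: compute $\Cl(\Rbar_\infty^B)\cong\Z^a$ via the K\"unneth-type Proposition \ref{prop:Cl(X x Y)} (using the geometric integrality inputs from Theorems \ref{thm:R22 main results}(\ref{R22:Rational}) and \ref{thm:R121 main results}(\ref{R121:Rational})) together with the completion result Proposition \ref{prop:class group completion}, observe that $2$-torsion-freeness forces uniqueness of a self-dual reflexive rank-$1$ module, and then check that both $\Mbar_\infty$ and the candidate $\Mbar^{\boxtimes a}\boxtimes \Rhat_{\F}^{2,2}(\st_{2,2})^{\boxtimes b}[[t_1,\ldots,t_s]]$ are self-dual of generic rank $1$. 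Your version is in fact slightly more explicit than the paper's, in that you track the actual classes $(1,\ldots,1)$ and $(2,\ldots,2)$ in $\Z^a$, whereas the paper simply invokes uniqueness once $2$-torsion-freeness is established.
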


\begin{rem}
It is likely that one could in fact prove the integral version of this theorem, namely that $M_\infty \cong M^{\boxtimes a}\boxtimes \Rhat^{2,2}(\st_{2,2})^{\boxtimes b}[[t_1,\ldots,t_s]]$, where $M := \Mc\otimes_{\Rct^{1,2,1}(\st_{2,2})}\Rhat^{1,2,1}(\st_{2,2})$, by a similar method. We have refrained from doing so here to avoid having to prove a mixed characteristic version of Proposition \ref{prop:class group completion}. Fortunately knowing the structure of $\Mbar_\infty$ is sufficient to prove Theorems \ref{main theorem} and \ref{thm:balanced}, so the full structure of $M_\infty$ is not needed for our results.
\end{rem}

We will prove Theorem \ref{thm:M_infty structure} by a class group argument, similar to the one used in \cite[Theorem 3.3]{Manning}. Before proving this, we shall note that Theorem \ref{thm:M_infty structure} implies Theorems \ref{main theorem} and \ref{thm:balanced}:

\begin{proof}[Proof of Theorems \ref{main theorem} and \ref{thm:balanced}]
By Theorem \ref{thm:patching}(3) we have
\[
M(K)_{\mf}/\mf M(K)_{\mf} \cong \frac{M_\infty/(y_1,\ldots,y_r)M_\infty}{\mf M_\infty/(y_1,\ldots,y_r)M_\infty} = M_\infty/\mf_{R_\infty^B}M_{\infty} = \Mbar_\infty/\mf_{\Rbar_\infty^B}\Mbar_{\infty}.
\]
Now the description in Theorem \ref{thm:M_infty structure} gives
\begin{align*}
\Mbar_\infty/\mf_{\Rbar_\infty^B}\Mbar_{\infty} 
&\cong \Mbar_\infty\otimes_{\Rbar^B_\infty}\F
 \cong  \left(\Mbar^{\boxtimes a}\boxtimes \Rhat_{\F}^{2,2}(\st_{2,2})^{\boxtimes b}[[t_1,\ldots,t_s]]\right) \otimes_{\Rbar_\infty^B} \F\\
&\cong\left(\Mbar\otimes_{\Rhat^{1,2,1}_{\F}(\st_{2,2})} \F\right)^{\otimes a}\otimes_{\F}
 \left(\Rhat_{\F}^{2,2}(\st_{2,2})\otimes_{\Rhat^{2,2}_{\F}(\st_{2,2})} \F\right)^{\otimes b}\\
&\cong\left(\F^{\oplus 2}\right)^{\otimes a}\otimes_{\F}
\F^{\otimes b}
\cong \F^{\oplus 2^a}
\end{align*}
where we used the fact that
\[
\Mbar\otimes_{\Rhat^{1,2,1}_{\F}(\st_{2,2})} \F \cong \Mc\otimes_{\Rct^{1,2,1}(\st_{2,2})} \F \cong \Mc/(\Ic_+\Mc+\varpi\Mc) \cong \frac{\Mc/\Ic_+\Mc}{\varpi(\Mc/\Ic_+\Mc)} \cong \frac{\Oc^{\oplus 2}}{\varpi \Oc^{\oplus 2}} \cong \F^{\oplus 2}
\]
by Theorem \ref{thm:R121 main results}(\ref{R121:mult 2}).

Thus $\dim_\F M(K)_{\mf}/\mf M(K)_{\mf} = 2^a$, proving the first part of Theorem \ref{main theorem}. At this point, one could deduce the second part of Theorem \ref{main theorem} using the fact that $\Rhat^{1,2,1}(\st_{2,2})$ and $\Rhat^{2,2}(\st_{2,2})$ are Cohen--Macaulay by the strategy from \cite[Section 5]{Snowden}. However this fact will also follow from our argument below.

For any Cohen--Macaulay ring $A$ and any finitely generated $A$-module $M$, we will let $\tau_M$ denote the natural map $M\otimes_A \Hom_A(M,\omega_A) \to \omega_A$, where $\omega_A$ is the dualizing module of $A$. By the argument in \cite[Proposition 4.14]{Manning} (using \cite[Lemmas 2.4, 2.6]{EmTheta}) showing that $\tau_{M_\infty}$ is surjective will imply that the map
\[
R_\infty^B/(y_1,\ldots,y_r)R_\infty^B \to \End_{R_\infty^B}(M_\infty/(y_1,\ldots,y_r)M_\infty) = \End_{\T(K)_{\mf}}(M(K)_{\mf})
\]
is an isomorphism. As this map factors as
\[
R_\infty^B/(y_1,\ldots,y_r)R_\infty^B\onto R^B_{\Sigma,\lambda,\mu}\onto \T(K)_{\mf}\to \End_{\T(K)_{\mf}}(M(K)_{\mf})
\]
this implies that $R_\infty^B/(y_1,\ldots,y_r)R_\infty^B\onto  R^B_{\Sigma,\lambda,\mu}$, $R^B_{\Sigma,\lambda,\mu}\onto \T(K)_{\mf}$ and $\T(K)_{\mf}\onto \End_{\T(K)_{\mf}}(M(K)_{\mf})$ are all isomorphisms, which proves the second part of Theorem \ref{main theorem} and Theorem \ref{thm:balanced}.

Note that by \cite[Lemma 3.4]{Manning}, $\tau_M$ is surjective if and only if there exists an $A$-module surjection $M\otimes_A \Hom_A(M,\omega_A) \to \omega_A$.

Let $\omegabar^{1,2,1}$ denote the dualizing module of $\Rhat_{\F}^{1,2,1}(\st_{2,2})$. As $\Rhat_{\F}^{2,2}(\st_{2,2})$ is Gorenstein we have 
\[
\omega_{\Rbar_\infty^B} \cong \left(\omegabar^{1,2,1}\right)^{\boxtimes a}\boxtimes \Rhat_{\F}^{2,2}(\st_{2,2})^{\boxtimes b}[[t_1,\ldots,t_s]].
\]
By Theorem \ref{thm:R121 main results}(\ref{R121:surj}), $\tau_{\Mc}$ is surjective. Tensoring with $\Rhat_{\F}^{1,2,1}(\st_{2,2})$ this implies that $\tau_{\Mbar}:\Mbar\otimes_{\Rhat_{\F}^{1,2,1}(\st_{2,2})}\Mbar \to \omegabar^{1,2,1}$ is surjective. Thus by Theorem \ref{thm:M_infty structure} and the above description of $\omega_{\Rbar_\infty^B}$ we may construct a surjection $\Mbar_{\infty}\otimes_{\Rbar_\infty^B} \Mbar_{\infty}\onto \omega_{\Rbar_\infty^B}$, which implies that $\tau_{\Mbar_\infty}$ is surjective.

Now as $\varpi$ is a regular element on $R_\infty^B$, and hence on $M_\infty$ and $\omega_{R_\infty^B}$, it follows that $\tau_{\Mbar_\infty}$ is the mod-$\varpi$ reduction of $\tau_{M_\infty}$ (as in \cite[Section 3A]{Manning}), and so by Nakayama's Lemma, $\tau_{M_\infty}$ is also surjective, completing the proof of Theorem \ref{thm:balanced}.
\end{proof}

\subsection{Class groups}

We will now recall the key properties of class groups that will be needed in the proof of Theorem \ref{thm:M_infty structure}.

Fix a noetherian, normal domain $A$. We will say that a finitely generated $A$-module $M$ is \emph{reflexive} if the natural map $M\to \Hom_A(\Hom_A(M,A),A)$ is an isomorphism. It is well known that if $M$ is finitely generated, then $\Hom_A(M,A)$ is always reflexive, so in fact a finitely generated $A$-module $M$ is reflexive if any only if $M\cong \Hom_A(\Hom_A(M,A),A)$.

Let $K(A)$ be the fraction field of $A$. We will say that the \emph{generic rank} of an $A$-module $M$ is $\dim_{K(A)}(M\otimes_AK(A))$.

Let $\Cl(A)$ denote the set of (isomorphism classes) of finitely generated reflexive $A$-modules of generic rank $1$. Then $\Cl(A)$ has the structure of an abelian group with group operation:
\[
M + N := \Hom_A(\Hom_A(M\otimes_A N,A),A).
\]
Now further assume that $A$ is Cohen--Macaulay. Let $\omega_A$ be its dualizing module. For any finitely generated $A$-module $M$, let $M^* = \Hom_A(M,\omega_A)$. Observe that any finitely generated maximal Cohen--Macaulay $A$-module is automatically reflexive, and hence $\omega_A\in \Cl(A)$. Moreover it is well known that for any $M\in\Cl(A)$, $M^*\in\Cl(A)$ as well and $M^* = \omega - M$ in $\Cl(A)$. 

In particular, $M$ is self-dual (in the sense that $M\cong M^*$) if and only if $2M = \omega$ in $\Cl(A)$. This means that if $\Cl(A)$ is $2$-torsion free, then there is at most one solution to the equation $2x = \omega$ in $\Cl(A)$, and so there is at most one self-dual reflexive $A$-module of generic rank $1$.

The following two results will allow us to compute $\Cl(\Rbar_\infty^B)$:

\begin{prop}\label{prop:Cl(X x Y)}
Let $\Bbbk$ be a field, and take $\Bbbk$-algebras $A_1,\ldots,A_m$ which are noetherian, normal domains, and such that each $\Spec A_i$ is geometrically integral.

Assume that for each $i$, there is a closed subscheme $\Zc_i\subseteq \Spec A_i$ of codimension at least $1$ such that each component of $\Zc_i$ is geometrically integral, and $(\Spec A_i)\sm \Zc_i$ is isomorphic to an open affine subset of $\A^{d_i}_\Bbbk$ for some $d_i$, which contains at least one point of $\A^{d_i}_\Bbbk(\Bbbk)$.

Then the natural map
\[
\varphi:\Cl(A_1)\oplus \Cl(A_2)\oplus\cdots\oplus \Cl(A_m)\to \Cl(A_1\otimes_{\Bbbk} A_2\otimes_{\Bbbk} \cdots \otimes_{\Bbbk}A_m)
\]
given by $(M_1,M_2,\ldots,M_m)\mapsto M_1\boxtimes M_2\boxtimes \cdots \boxtimes M_m$ is an isomorphism.
\end{prop}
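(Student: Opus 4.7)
My plan is to reduce to the case $m = 2$ by induction on $m$, and then prove that case using Nagata's excision sequence together with a Gysin pullback along the inclusion of a rational fiber.

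For the inductive step I would verify that the hypotheses are closed under tensor products over $\Bbbk$. Given $A, B$ both satisfying the assumptions, $A \otimes_\Bbbk B$ is a Noetherian domain by geometric integrality of both factors, and is normal by Serre's criterion (R1 uses that the singular loci of $\Spec A$ and $\Spec B$ have codimension at least $2$ by normality, while S2 is preserved under flat base change over $\Bbbk$). Setting $\Zc_{A \otimes B} := (\Zc_A \times \Spec B) \cup (\Spec A \times \Zc_B)$, its complement is $U_A \times_\Bbbk U_B$, which is open in $\A^{d_A + d_B}_\Bbbk$ and contains a $\Bbbk$-point. Hence the $m = 2$ case implies the general case.

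For $m = 2$, write $X = \Spec(A_1 \otimes_\Bbbk A_2)$, $U_i = \Spec A_i \sm \Zc_i$, and $U = U_1 \times_\Bbbk U_2 \subseteq X$. Since $U$ is an open subscheme of $\A^{d_1+d_2}_\Bbbk$, $\Cl(U) = 0$. The codimension-$1$ components of $X \sm U$ are exactly the products $V_1 \times \Spec A_2$ and $\Spec A_1 \times V_2$, where $V_i$ ranges over the codimension-$1$ components of $\Zc_i$; each such product is integral because the $V_i$ are geometrically integral. Nagata's excision sequence then shows that $\Cl(X)$ is generated by the classes of these products, and the analogous excision applied to each $A_i$ shows that $\Cl(A_i)$ is generated by the $[V_i]$. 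Flat pullback along $A_i \to A_1 \otimes_\Bbbk A_2$ defines $\varphi$ at the level of Weil divisors, and matching generators shows that $\varphi$ is surjective.

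For injectivity, fix a $\Bbbk$-point $x_2 \in U_2(\Bbbk)$ supplied by the hypothesis. Since $x_2$ is a smooth $\Bbbk$-point of $\Spec A_2$, the closed embedding $\iota_2 \colon \Spec A_1 \cong \Spec A_1 \times \{x_2\} \hookrightarrow X$ is a regular embedding of codimension $d_2$. The Gysin pullback $\iota_2^* \colon \Cl(X) \to \Cl(A_1)$ acts on generators by $\iota_2^*[V_1 \times \Spec A_2] = [V_1]$ (the intersection with $\Spec A_1 \times \{x_2\}$ is $V_1$ with multiplicity one) and $\iota_2^*[\Spec A_1 \times V_2] = 0$ (since $V_2 \subseteq \Zc_2$ and $x_2 \in U_2 = \Spec A_2 \sm \Zc_2$). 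A symmetric construction using $x_1 \in U_1(\Bbbk)$ gives $\iota_1^* \colon \Cl(X) \to \Cl(A_2)$, and $(\iota_1^*, \iota_2^*) \circ \varphi = \mathrm{id}$, which proves injectivity.

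The step I expect to require the most care is the rigorous construction of the Gysin pullback $\iota_i^*$ on $\Cl(X)$, since $X$ is not assumed smooth or Cohen--Macaulay. The cleanest option is to appeal to Fulton's refined intersection theory for regular closed embeddings, which defines $\iota^*$ on the full Chow group of any scheme. A more elementary alternative is to define $\iota_i^*$ directly on the excision generators (whose components all meet $\iota_i(\Spec A_j)$ properly, since no codimension-$1$ component of $X \sm U$ contains $\iota_i(\Spec A_j)$), and then verify that any principal divisor lying in the kernel of excision restricts to a principal divisor on $\Spec A_j$.
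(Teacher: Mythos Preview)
Your approach is essentially the same as the paper's, with only presentational differences. Both proofs use Nagata's excision sequence for surjectivity: since $\Uc = U_1 \times \cdots \times U_m$ is open in affine space, $\Cl(\Uc) = 0$, and the geometric integrality hypotheses ensure that every codimension-$1$ component of $\Xc \setminus \Uc$ is a pullback from some factor. Both proofs handle injectivity by restricting to a fiber through a smooth $\Bbbk$-rational point $p_i \in U_i$.

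Two small differences are worth noting. First, the paper does not induct on $m$; it treats all factors at once, which avoids having to verify that the hypotheses are stable under tensor products (your normality argument for $A \otimes_\Bbbk B$ via Serre's criterion would need care if $\Bbbk$ is not perfect, since R1 need not transfer). Second, for injectivity the paper avoids Gysin pullback entirely: instead of intersecting divisors with the fiber, it works module-theoretically, computing $(M_1 \boxtimes \cdots \boxtimes M_m)/I_j(M_1 \boxtimes \cdots \boxtimes M_m) \cong M_j$ where $I_j$ is the ideal of the fiber $\{p_1\} \times \cdots \times \Spec A_j \times \cdots \times \{p_m\}$. This is more elementary than invoking Fulton's refined intersection theory, and it sidesteps exactly the technicality you flagged. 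An even cleaner variant (implicit in both arguments) is to restrict to the open $V_j = U_1 \times \cdots \times \Spec A_j \times \cdots \times U_m$ and use that $\Cl(V_j) \cong \Cl(A_j)$ since $V_j$ is an open in $\A^N \times \Spec A_j$; this gives a retraction $\Cl(\Xc) \to \Cl(A_j)$ without any regular-embedding machinery.
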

\begin{proof}
For each $i$, let $\Uc_i = (\Spec A_i)\sm \Zc_i$ and fix some point $p_i\in \Uc_i(\Bbbk)\subseteq (\Spec A_i)(\Bbbk)$, interpreted as a prime ideal $p_i\subseteq A_i$ with $A_i/p_i = \Bbbk$. Since $\Uc_i$ is isomorphic to a Zariski open subset of $\A^{d_i}_{\Bbbk}$, $\Spec A_i$ is regular at $p_i$. It follows that if $M_i$ is any reflexive, generic rank $1$ $A_i$-module, then $M_i/p_iM_i = \F$.

For any $j=1,\ldots,m$, let 
\[I_j = p_1\boxtimes \cdots \boxtimes p_{j-1} \boxtimes 0 \boxtimes p_{j+1}\boxtimes \cdots \boxtimes p_m\in \Spec (A_1\otimes_{\Bbbk} \cdots \otimes_{\Bbbk}A_m),\]
and note that $(A_1\otimes_{\Bbbk} \cdots \otimes_{\Bbbk}A_m)/I_j \cong \Bbbk\otimes_{\Bbbk}\cdots\otimes_{\Bbbk}\Bbbk  \otimes_{\Bbbk}A_j\otimes_{\Bbbk}\Bbbk\otimes_{\Bbbk}\cdots \otimes_{\Bbbk}\Bbbk \cong A_j$.

Then for any $(M_1,\ldots,M_m)\in \Cl(A_1)\oplus\cdots\oplus \Cl(A_m)$ we get that
\begin{align*}
\frac{M_1\boxtimes \cdots \boxtimes M_m}{I_j(M_1\boxtimes \cdots \boxtimes M_m)} &\cong \frac{M_1}{p_1M_1}\boxtimes \cdots \boxtimes \frac{M_{j-1}}{p_{j-1}M_{j-1}} \boxtimes \frac{M_j}{0} \boxtimes \frac{M_{j+1}}{p_{j+1}M_{j+1}}\boxtimes \cdots \boxtimes \frac{M_m}{p_1M_m}\\
&\cong \Bbbk\boxtimes \cdots \boxtimes \Bbbk\boxtimes M_j\boxtimes \Bbbk\boxtimes \cdots \boxtimes \Bbbk
\cong M_j
\end{align*}
as $A_j$-modules. It follows that $\varphi$ is indeed injective.

For surjectivity, write $\Xc = \Spec(A_1\otimes_{\Bbbk} \cdots \otimes_{\Bbbk}A_m) = \Spec(A_1)\times_{\Bbbk}\cdots\times_{\Bbbk} \Spec(A_m)$ and let
\begin{align*}
\Zc &= \bigcup_{j=1}^m\left(\Zc_j\times_{\Bbbk}\prod_{i\ne j}\Spec(A_i)\right)&
&\text{and}&
\Uc &= \Xc\sm \Zc = \prod_{i=1}^m\Uc_i.
\end{align*}
Now let $\Ic$ be the set of irreducible components of $\Zc$ which have codimension $1$ in $\Xc$. By \cite[Proposition II.6.5]{Hartshorne}, there is an exact sequence
\[\Z^\Ic\xrightarrow{\psi} \Cl(\Xc)\onto\Cl(\Uc)\to 0\] 
where $\psi:\Z^{\Ic}\to \Cl(\Xc):= \Cl(A_1\otimes_{\Bbbk}\cdots \otimes_{\Bbbk} A_m)$ sends each $D\in \Ic$  to the reflexive module associated to $D$.

But now as each $\Uc_i$ is a Zariski open subset of $\A^{d_i}_{\Bbbk}$, $\Uc$ is a Zariski open subset of $\A^{d_1+\cdots+d_m}_{\Bbbk}$. Hence (by \cite[Proposition II.6.5]{Hartshorne} again), there is a surjective map $0 = \Cl(\A^{d_1+\cdots+d_m}_{\Bbbk})\onto \Cl(\Uc)$, and so $\Cl(\Uc) = 0$, which implies that $\psi:\Z^{\Ic}\to \Cl(\Xc)$ is surjective, and so $\Cl(\Xc)$ is generated by the set $\{\psi(D)|D\in \Ic\}$.

But now by assumption, each irreducible component of each $\Zc_j$ is geometrically integral, as is each $\Spec A_i$. It follows that every irreducible component of $Y\subset\Zc$ must be in the form $Y = Y_j\times_{\Bbbk}\prod_{i\ne j}\Spec(A_i)$ for some irreducible component $Y_j\subseteq \Zc_j$. It now easily follows (as $Y$ will have codimension $1$ in $\Xc$ if and only if $Y_j$ has codimension $1$ in $\Spec A_j$) that for any $D\in\Ic$, its image $\psi(D)\in \Cl(\Xc)$ is in the image of the map 
\begin{align*}
\Cl(A_j)&\to \Cl(A_1)\oplus \cdots\oplus \Cl(A_m)\to \Cl(\Xc)=\Cl(A_1\otimes_{\Bbbk}\cdots\otimes_{\Bbbk}A_m)\\
M_j&\mapsto (A_1,\ldots,M_j,\ldots,A_j)\mapsto A_1\boxtimes \cdots \boxtimes M_j\boxtimes \cdots \boxtimes A_m
\end{align*}
for some $j$, and so is in the image of $\varphi$. It follows that $\varphi$ is indeed surjective, and hence is an isomorphism. 
\end{proof}

\begin{prop}\label{prop:class group completion}
Let $\Bbbk$ be a field and let $A_1,\ldots,A_m$ be finitely generated \emph{graded} $\Bbbk$-algebras which are noetherian normal domains.

Assume that for each $i$, $\Proj A_j$ is a (geometrically integral) projective variety of dimension at least $3$.

Let $R = (A_1\otimes_{\Bbbk} A_2\otimes_{\Bbbk} \cdots \otimes_{\Bbbk} A_m)[t_1,\ldots,t_s]$ for some integer $s\ge 0$, and let $\Rhat$ denote the completion of $R$ as a graded $\Bbbk$-algebra.

Then if each $A_i$ is Cohen--Macaulay, the natural map $\Cl(R)\to \Cl(\Rhat)$ given by $M\mapsto (M\otimes_R\Rhat)$ is an isomorphism.
\end{prop}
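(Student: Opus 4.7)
The plan is to reduce the statement to a classical Grothendieck--Lefschetz type theorem on the behavior of divisor class groups under $\mf$-adic completion of Cohen--Macaulay local rings.

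First, I would verify that $R := (A_1 \otimes_\Bbbk \cdots \otimes_\Bbbk A_m)[t_1,\ldots,t_s]$ is a graded normal Cohen--Macaulay $\Bbbk$-domain of dimension $\geq 4$. Cohen--Macaulayness is preserved under tensor products over a field and under polynomial extensions. For the domain property, the geometric integrality of each $\Proj A_j$ forces $\Spec A_j$ to be geometrically integral (the punctured cone over $\Proj A_j$ is a $\G_m$-bundle over a geometrically integral base, and the vertex has codimension $\geq 4$), hence the tensor product $A_1 \otimes_\Bbbk \cdots \otimes_\Bbbk A_m$ is also a domain. For normality, one uses that over a perfect field (which covers the applications to $\Bbbk = \F, E$) normality coincides with geometric normality, and geometric normality is preserved by tensor product over $\Bbbk$. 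The dimension bound $\dim R \geq \dim A_1 = \dim \Proj A_1 + 1 \geq 4$ is immediate.

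Next, I would exploit the graded structure to pass to a local comparison. Since $R_0 = \Bbbk$ is a field, every height-one prime of $R$ is contained in the irrelevant ideal $\mf$, so $\Cl(R) = \Cl(R_\mf)$. Moreover, the graded completion $\Rhat$ coincides with the $\mf R_\mf$-adic completion of the local ring $R_\mf$. The proposition therefore reduces to showing that for the local Cohen--Macaulay normal domain $R_\mf$ of dimension $\geq 4$, the natural map $\Cl(R_\mf) \to \Cl(\widehat{R_\mf})$ is an isomorphism.

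This last step is a classical theorem which I would invoke from SGA 2 (Exp.\ XI) or equivalent results on class groups of excellent local Cohen--Macaulay rings: for a local Cohen--Macaulay normal ring $A$ with $\dim A \geq 4$, the map $\Cl(A) \to \Cl(\Ahat)$ is an isomorphism. Injectivity comes from faithful flatness of $\Ahat$ over $A$ combined with normality (so that Weil divisors descend). The main obstacle, surjectivity, is the Grothendieck--Lefschetz parafactoriality/algebraization statement: the depth $\geq 4$ hypothesis at the closed point guarantees that every line bundle (equivalently, every Weil divisor) on the formal punctured spectrum of $\Ahat$ algebraizes along the closed fiber, so every element of $\Cl(\widehat{R_\mf})$ lifts to an element of $\Cl(R_\mf)$.
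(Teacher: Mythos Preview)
Your overall strategy---reduce to the local ring at the irrelevant ideal and then invoke a Grothendieck--Lefschetz type completion result with Cohen--Macaulay depth hypotheses supplying the needed cohomology vanishing---is exactly the right shape, and it is the same mechanism that underlies the paper's proof. The paper, however, does not argue this from scratch: it simply cites \cite[Proposition~3.19]{Manning}, a result tailored to divisor class groups of graded $\Bbbk$-algebras which already identifies the cokernel of $\Cl(R)\to\Cl(\Rhat)$ as an inverse limit of sheaf cohomology groups on $\Proj$, and then observes (via \cite[Exercise~18.16]{Eisenbud}) that Cohen--Macaulayness together with $\dim\Proj A_i\ge 3$ forces those groups to vanish.

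Two points in your write-up would need tightening if you actually carried it out. First, the sentence ``every height-one prime of $R$ is contained in the irrelevant ideal $\mf$'' is false as stated (take $R=\Bbbk[x]$ and the prime $(x-1)$); what is true is that for a graded normal domain with $R_0$ a field one has $\Cl(R)\cong\Cl(R_{\mf})$, but this is a theorem (every divisor class has a homogeneous representative), not the trivial containment you wrote. Second, and more substantively, the results in SGA~2, Exp.~XI that you invoke are formulated for \emph{Picard} groups, i.e.\ invertible sheaves on the punctured spectrum. Passing from $\Pic$ to $\Cl$ requires either that the punctured spectrum be locally factorial (not assumed here) or an extension of the algebraization argument to reflexive rank-one sheaves, which is not what the bare SGA~2 citation gives you. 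The paper's reference to \cite[Proposition~3.19]{Manning} is precisely a statement about $\Cl$ in the graded setting, so it sidesteps this issue; your version would need either an additional hypothesis or an additional argument at this step.
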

\begin{proof}
This is simply \cite[Proposition 3.19]{Manning}, combined with the observation that (by \cite[Exercise 18.16]{Eisenbud}) the conditions that $A_i$ is Cohen--Macaulay and $\dim \Proj A_i \ge 3$ imply all of the necessary vanishing results for sheaf cohomology.
\end{proof}

We are now ready to prove Theorem \ref{thm:M_infty structure}:

\begin{proof}[Proof of Theorem \ref{thm:M_infty structure}]
Let $\Rct^{2,2}_{\F}(\st_{2,2})$ and $\Rct^{1,2,1}_{\F}(\st_{2,2})$ be the rings from Theorems \ref{thm:R22 main results} and \ref{thm:R121 main results}, and let
\[
\Rc_\infty^B := \left(\Rct^{1,2,1}_{\F}(\st_{2,2})\right)^{\widehat{\otimes} a} \widehat{\otimes} \left(\Rct^{2,2}_{\F}(\st_{2,2})\right)^{\widehat{\otimes} b}[t_1,\ldots,t_s].
\]
Then $\Rc^B_\infty$ is a finitely generated graded $\F$-algebra, and the completion of $\Rc^B_\infty$ at the irrelevant ideal is $\Rbar^B_\infty$.

Now by Propositions \ref{prop:Cl(X x Y)} and \ref{prop:class group completion}, using Theorems \ref{thm:R22 main results} and \ref{thm:R121 main results} and in particular the fact that $\Cl(\Rct^{2,2}_{\F}(\st_{2,2})) = 0$ and $\Cl(\Rct^{1,2,1}_{\F}(\st_{2,2}))\cong \Z$, we have an isomorphism
\[
\Cl(\Rbar_\infty^B)\cong \Cl(\Rc^B_\infty)\cong \Cl(\Rct^{1,2,1}_{\F}(\st_{2,2}))^{\oplus a}\oplus \Cl(\Rct^{2,2}_{\F}(\st_{2,2}))^{\oplus b}\cong \Z^a.
\]
In particular, $\Cl(\Rbar_\infty^B)$ is $2$-torsion free, and hence there is, up to isomorphism, at most one self-dual $\Rbar_\infty^B$-module of generic rank $1$. 

Now as $\Mbar$ is a self-dual $\Rhat_{\F}^{1,2,1}(\st_{2,2})$-module of generic rank $1$, and $\Rhat_{\F}^{2,2}(\st_{2,2})$ is Gorenstein, it follows that
\[
\Mbar^{\boxtimes a}\boxtimes \Rhat_{\F}^{2,2}(\st_{2,2})^{\boxtimes b}[[t_1,\ldots,t_s]]
\]
is self-dual with generic rank $1$.

Also by Theorem \ref{thm:patching}, $M_\infty$ is a self-dual $R_\infty^B$ module of generic rank $1$, and hence $\Mbar_\infty$ is also a self-dual $\Rbar_\infty^B$ module of generic rank $1$. Thus by the above, we indeed have
\[\Mbar_\infty \cong \Mbar^{\boxtimes a}\boxtimes \Rhat_{\F}^{2,2}(\st_{2,2})^{\boxtimes b}[[t_1,\ldots,t_s]],\]
completing the proof.
\end{proof}

\begin{rem}
The proof of Theorem \ref{thm:M_infty structure} relied on a number of strong assumptions about the rings $\Rhat^{1,2,1}(\st_{2,2})$ and $\Rhat^{2,2}(\st_{2,2})$, in particular that they are Cohen--Macaulay. While this was provable in the special cases needed for our main results in this paper, proving that components of local Galois deformation are Cohen--Macaulay is quite difficult in general, which would make it difficult to directly generalize this method.

However these requirements are far less necessary than may appear to be based on the above argument, and can likely be weakened significantly with a little extra work.

In particular, the Cohen--Macaulay assumption in Proposition \ref{prop:class group completion} was needed only to ensure that certain sheaf cohomology groups vanish. The argument in \cite[Proposition 3.19]{Manning} in fact shows that without this assumption, the cokernel of the map $\Cl(R)\to \Cl(\Rhat)$ can be described explicitly as a inverse limit of certain sheaf cohomology groups, each of which is a finite dimensional $\Bbbk$-vector space. Provided that $\Char \Bbbk \ne 2$, this implies that $\Cl(\Rhat)[2]\cong \Cl(R)[2]$, so if our goal is simply to prove that $\Cl(\Rhat)$ is $2$-torsion free, the Cohen--Macaulay assumption is not needed. Note that in the case when $\Rbar_\infty^B$ is not Cohen--Macaulay one must replace the definition of $M^*$ by $M^*\cong \RHom_{\Rbar_\infty^B}(M,\omega^\bullet_{\Rbar_\infty^B})$, however as $M^*$ is automatically concentrated in a single degree whenever $M$ is maximal Cohen--Macaulay, one can still interpret $M^*$ in terms of $\Cl(\Rbar_\infty^B)$ whenever $M$ is maximal Cohen--Macaulay, so this is only a minor inconvenience.

The smoothness and normality assumptions can also likely be weakened to the condition that $\Rbar_\infty^B$ is regular in codimension $1$. Indeed, if $(\Spec \Rbar_\infty^B)^{\mathrm{sm}}$ is the smooth locus of $\Spec \Rbar_\infty^B$, then  $(\Spec \Rbar_\infty^B)\sm (\Spec \Rbar_\infty^B)^{\mathrm{sm}}$ has codimension $2$. As $\Mbar_\infty$ is maximal Cohen--Macaulay over $\Rbar_\infty^B$, $\Mbar_\infty$ is uniquely determined by the restriction $\Mbar_\infty|_{(\Spec \Rbar_\infty^B)^{\mathrm{sm}}}$, and hence it suffices to determine the class group of $(\Spec \Rbar_\infty^B)^{\mathrm{sm}}$, which is automatically normal and smooth. It is likely that a careful analysis of the situation will allow us to prove that $\Cl((\Spec \Rbar_\infty^B)^{\mathrm{sm}})$ is $2$-torsion free in much greater generality than we can prove that $\Rbar_\infty^B$ itself is normal.
\end{rem}
\part{Computations}\label{part:computations}

To prove our main results, it remains to prove Theorems \ref{thm:R22 main results} and \ref{thm:R121 main results}, which we will do in Sections \ref{sec:R22} and \ref{sec:R121}, by analyzing the spaces $\Nc^{2,2}(\st_{2,2})$ and $\Nc^{1,2,1}(\st_{2,2})$.

\section{Commutative algebra preliminaries}\label{sec:comm alg}

Here we present the main commutative algebra results that will be needed for our computations in Section \ref{sec:R22} and \ref{sec:R121}.

\subsection{Graded algebras}

Let $R$ be a commutative ring. By a \emph{graded $R$-algebra} we will always mean a commutative graded ring $A = A_0\oplus A_1 \oplus A_2\oplus \cdots$, with $A_0 = R$ (which will then make $A$ into an $R$-algebra).

We will say that the \emph{irrelevant ideal} of $A$ is the ideal $A_+ = A_1\oplus A_2\oplus \cdots$ generated by the elements of positive degree. Then $A/A_+ \cong R$, so $A_+$ is a prime ideal if $R$ is a domain, and is a maximal ideal if $R$ is a field.

Most of the rings we will consider in Sections \ref{sec:R22} and \ref{sec:R121} will be finitely generated graded $\Oc$-algebras. For any finitely generated graded $\Oc$-algebra $A$, define $A_{\F} := A/\varpi A$ and $A_E := A[1/\varpi]$, and note that $A_\F$ is a finitely generated graded $\F$-algebra and $A_E$ is a finitely generated graded $E$-algebra. As it is often easier to work with algebras over a field rather than over $\Oc$, many of our arguments will works with the rings $A_\F$ and $A_E$ rather than $A$ directly. In this section, we will introduce some results that allow us to deduce properties of $A$ from those of $A_\F$ and $A_E$.

\begin{lemma}\label{lem:graded minimal primes}
Let $R$ be a local Noetherian ring with maximal ideal $\mf_R$. Let $A$ be a finitely generated graded $R$-algebra. Consider the maximal ideal $\mf_A = \mf_R A + A_+$ of $A$. Then:
\begin{enumerate}
	\item the localization map $A\to A_{\mf_A}$ is injective;
	\item The map $\Pf\mapsto \Pf_{\mf_A}$ induces a bijection between the minimal primes of $A$ and those of $A_{\mf_A}$.
	\item For each minimal prime $\Pf$ of $A$, the rings $A/\Pf$ and $A_{\mf_A}/\Pf_{\mf_A}$ have the same dimension.
\end{enumerate} 
\end{lemma}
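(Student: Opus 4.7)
The three parts of the lemma all rest on two structural observations about the graded ring $A$: every minimal prime of $A$ is homogeneous, and every homogeneous prime of $A$ lies in $\mf_A$. The plan is to establish these first, and then extract each of the three parts. For the first, recall the standard fact that for any prime $\Pf \subseteq A$, the ideal $\Pf^*$ generated by the homogeneous elements of $\Pf$ is itself prime; minimality of $\Pf$ then forces $\Pf^* = \Pf$, so $\Pf$ is homogeneous. For the second, if $\Pf$ is homogeneous then $\Pf \cap R$ is a prime of the local ring $R$ (hence contained in $\mf_R$), while the positive-degree part of $\Pf$ lies in $A_+$, so $\Pf \subseteq \mf_R A + A_+ = \mf_A$.

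With these in hand, part (1) will be direct: pick any $s \in A \setminus \mf_A$ and decompose it into homogeneous components $s = s_0 + s_1 + \cdots + s_d$. The condition $s \notin \mf_R A + A_+$ forces $s_0 \notin \mf_R$, so $s_0$ is a unit in the local ring $R$. If $a \in A \setminus \{0\}$ satisfied $sa = 0$, then taking the lowest-degree nonzero homogeneous component $a_i$ of $a$, the degree-$i$ component of $sa$ would be $s_0 a_i$, forcing $a_i = 0$, a contradiction. Part (2) will then follow immediately: the standard correspondence for localization identifies primes of $A_{\mf_A}$ with primes of $A$ contained in $\mf_A$, and minimal primes correspond to minimal primes under this map; by the preliminary observations every minimal prime of $A$ lies in $\mf_A$, giving the bijection.

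For part (3), set $B = A/\Pf$ (a graded $R$-algebra that is a domain) and $\mf_B = \mf_A/\Pf$. The plan is to show $\height(\mf_B) = \dim B$, which will give $\dim B_{\mf_B} = \height(\mf_B) = \dim B$. First, $\mf_B$ is the unique maximal homogeneous proper ideal of $B$: any such ideal has quotient a graded field, and since $B$ is non-negatively graded with no units in positive degree, the quotient is concentrated in degree zero, forcing containment of $B_+$; its degree-zero part must then be the maximal ideal of the local ring $B_0 = R/(R \cap \Pf)$. Next, invoke the standard result that for any prime $\qf \subseteq B$, its homogenization $\qf^*$ satisfies $\height(\qf) \leq \height(\qf^*) + 1$. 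Taking $\qf$ to be a maximal ideal of $B$ realizing $\height(\qf) = \dim B$: if $\qf$ is homogeneous then $\qf = \mf_B$ by uniqueness; otherwise $\qf^* \subsetneq \mf_B$ strictly (equality would force $\qf = \mf_B$ by maximality of $\mf_B$, contradicting non-homogeneity), and extending a maximal chain in $B$ realizing $\height(\qf^*)$ by the single step $\qf^* \subsetneq \mf_B$ yields $\height(\mf_B) \geq \height(\qf^*) + 1 \geq \dim B$. The reverse inequality $\dim B_{\mf_B} \leq \dim B$ is automatic.

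The main obstacle will be the height inequality $\height(\qf) \leq \height(\qf^*) + 1$ used in part (3); this is a standard but non-trivial fact about Noetherian graded rings, relying in turn on the observation that a prime in a graded domain containing no nonzero homogeneous element has height at most one (proved by localizing at the set of all nonzero homogeneous elements and checking that the result has dimension at most one). The remaining ingredients are direct consequences of the structure of graded algebras over local base rings.
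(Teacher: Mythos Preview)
Your proof is correct. Parts (1) and (2) follow essentially the same line as the paper's: the paper derives (2) from the observation that minimal primes consist entirely of zero divisors (which by (1) all lie in $\mf_A$), while you derive it from the fact that minimal primes are homogeneous and every homogeneous prime lies in $\mf_A$; both are immediate.

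Part (3) is where your route genuinely diverges from the paper's. The paper's argument in fact silently specializes to the case $R = \Oc$ (a DVR), which is all that is needed later: it case-splits on whether $\varpi \in \Pf$, and in each case reduces to the standard fact that in a finite-type integral domain over a field every maximal ideal has height equal to the Krull dimension, bridging the mixed-characteristic case via the intermediate prime $A_+/\Pf$ and the localization $C[1/\varpi]$. Your argument instead works uniformly: you show $\mf_B$ is the unique maximal homogeneous ideal of $B = A/\Pf$ and combine this with the graded height inequality $\height(\qf) \le \height(\qf^*) + 1$ to force $\height(\mf_B) = \dim B$. This proves the lemma in the full generality in which it is stated (arbitrary local Noetherian base $R$), whereas the paper's more elementary argument is specific to the DVR case. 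The trade-off is that you invoke the height inequality, a standard but not entirely trivial fact about Noetherian graded rings, while the paper avoids it at the cost of generality.
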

\begin{proof}
Take any $s\in A\sm \mf_A$, and write $s = s_0+s_1+\cdots$ with $s_i\in A_i$. By the definition of $\mf_A$, $s_0\in A_0\sm \mf_A = R\sm \mf_R = R^\times$.

We claim that $s$ is not a zero divisor in $A$. Indeed, assume that $sa = 0$ for some $a\in A\sm \{0\}$. Write $a = a_0+a_1+\cdots$ for $a_i \in A_i$. Let $j\in \Z_{\ge 0}$ be the smallest integer with $a_j\ne 0$. Then
\[
0=sa =(s_0+s_1+\cdots)(a_j+a_{j+1}+\cdots)= s_0a_j+(s_1a_j+s_0a_{j+1})+\cdots
\]
This implies that each homogeneous piece of $sa$ is equal to $0$, so $s_0a_j = 0$. But now as $A_j$ is a $R$-module and $s_0\in R^\times$, this implies that $a_j = s_0^{-1}(s_0a_j) = 0$ in $A_j$, contradicting the choice of $j$. So indeed $s$ is not a zero divisor.

Claims (1) and (2) immediately follow from this. Indeed, if $f/1 = 0$ in $A_{\mf_A}$ for some $f\in A$, then $sf = 0$ for some $s\in A\sm \mf_A$ by definition, and so $f=0$, proving (1).

For (2), the localization map $\Pf \mapsto \Pf_{\mf_A}$ induces a bijection between the prime ideals of $A$ contained in $\mf_A$ and the prime ideals of $A_{\mf_A}$. But any minimal prime $\Pf$ of $A$ consists entirely of zero divisors in $A$, and so $\mf_A$ contains all minimal primes of $A$, proving (2).

Also note that any minimal prime $\Pf$ of $A$ must be homogeneous by \cite[\href{https://stacks.math.columbia.edu/tag/00JU}{Lemma 00JU}]{stacks-project}.

For (3), write $C = A/\Pf$ and $\mf_C = \mf_A/\Pf$. As $C$ is a domain, the map $\Oc\to C$ has image either $\F = \Oc/\varpi$ or $\Oc$. In the first case, $C$ is a finite type $\F$-algebra which is an integral domain, and so $\dim A_{\mf_A}/\Pf_{\mf_A} = \dim C_{\mf_C} = \dim C = \dim A/\Pf$ by \cite[\href{https://stacks.math.columbia.edu/tag/00OS}{Lemma 00OS}]{stacks-project}. 

In the second case, $\Pf\cap A_0 = 0$, so as $\Pf$ is homogeneous, $\Pf\subseteq A_+$. Let $\pf = A_+/\Pf\subseteq B$. Then $\pf$ is a prime ideal with $\pf\subsetneq \mf_C$ and so 
\[\dim A_{\mf}/\Pf_{\mf} = \dim C_{\mf_C}= \height \mf_C \ge 1+\height \pf = 1+\dim C_{\pf}.\]
But now $C_{\pf}$ is the localization of the integral domain $C[1/\varpi]$ at the maximal ideal $\pf[1/\varpi]$ and so by \cite[\href{https://stacks.math.columbia.edu/tag/00OS}{Lemma 00OS}]{stacks-project} again $\dim C_{\pf} = \dim C[1/\varpi]\ge \dim C - 1$. Combining this gives $\dim C_{\mf_C} \ge \dim C$ and so $\dim C_{\mf_C} = \dim C$, again giving $\dim A_{\mf_A}/\Pf_{\mf_A} = \dim A/\Pf$. This proves (3).
\end{proof}

\begin{prop}\label{prop:graded O flat}
Let $A$ be a finitely generated graded $\Oc$-algebra. Assume that $A[1/\varpi]$ and $A/\varpi A$ are equidimensional of the same dimension and have the same number of minimal primes, and that $A/\varpi A$ is reduced. Then $A$ is flat over $\Oc$ and equidimensional, and the minimal primes of $A$ are in bijection with those of $A[1/\varpi]$. Furthermore if $A[1/\varpi]$ is reduced then $A$ is reduced as well.
\end{prop}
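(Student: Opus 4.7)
The plan is to reduce to the local case using Lemma~\ref{lem:graded minimal primes}: localizing at the irrelevant maximal ideal $\mf_A = \varpi A + A_+$, the ring $B := A_{\mf_A}$ is a Noetherian local $\Oc$-algebra whose minimal primes and their quotient dimensions match those of $A$, and the hypotheses carry over. Partition the minimal primes of $B$ into $S_1 := \{\pf : \varpi \notin \pf\}$ and $S_2 := \{\pf : \varpi \in \pf\}$. Standard localization gives a bijection between $S_1$ and the minimal primes of $B[1/\varpi]$, and the minimal primes of $B/\varpi B$ correspond to the set $T_2$ of primes of $B$ minimal over $(\varpi)$, so the assumption becomes $|S_1| = |T_2|$. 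The crux is to show $S_2 = \es$ and that sending $\qf' \in T_2$ to the unique minimal prime of $B$ contained in $\qf'$ defines a bijection $T_2 \to S_1$.

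For each $\pf \in S_1$, the quotient $B/\pf$ is an $\Oc$-flat domain of dimension $d+1$, so $(B/\pf)/\varpi$ has dimension $d$; any top-dimensional component of this quotient is then a closed irreducible subset of dimension $d$ in the equidimensional space $\Spec B/\varpi B$, hence a minimal prime of $B/\varpi B$, producing at least one $\qf' \in T_2$ with $\pf \subseteq \qf'$. Conversely, fix $\qf' \in T_2$ and consider $B_{\qf'}$: since $\overline{\qf'}$ is a minimal prime of the reduced ring $B/\varpi B$, we have $B_{\qf'}/\varpi = \kappa(\qf')$, so $\mf_{B_{\qf'}} = (\varpi)$ is principal and $\dim B_{\qf'} \le 1$. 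Reducedness of $B_{\qf'}/\varpi$ forces the nilradical of $B_{\qf'}$ into $\varpi B_{\qf'}$, so $B_{\qf'}$ modulo its nilradical is a reduced local Noetherian ring with principal maximal ideal, and is therefore either a field or a DVR; in either case $B_{\qf'}$ has a unique minimal prime. In the field case $\qf' \in S_2$; in the DVR case the unique minimal prime lies in $S_1$ (any $\pf \in S_2$ with $\pf \subseteq \qf'$ would have to equal $\qf'$, since distinct primes of $B$ minimal over $(\varpi)$ are incomparable). This yields a surjection $T_2 \setminus S_2 \twoheadrightarrow S_1$, whence $|T_2| \ge |S_1| + |S_2|$; combined with $|S_1| = |T_2|$ we conclude $S_2 = \es$ and that $T_2 \to S_1$ is bijective.

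To prove $\Oc$-flatness of $B$, consider the $\varpi$-power torsion submodule $I := \bigcup_n (0 :_B \varpi^n)$, which is finitely generated with $\varpi^N I = 0$ for some $N$. If $I \subseteq \varpi B$, then $\varpi^N I = 0$ easily gives $I = \varpi I$, so $I \subseteq \bigcap_n \varpi^n B = 0$ by Krull's intersection theorem. Otherwise, pick $a \in I$ with nonzero image $\bar a \in B/\varpi B$; by reducedness of $B/\varpi B$, $\bar a \notin \overline{\qf'}$ for some $\qf' \in T_2$, and the bijection $T_2 \cong S_1$ supplies the unique $\pf \in S_1$ with $\pf \subseteq \qf'$. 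Then $a \notin \qf' \supseteq \pf$ gives $a \notin \pf$, while $\varpi^N a = 0$ forces $\bar a = 0$ in the domain $B/\pf$ (where $\varpi$ is nonzero) and hence $a \in \pf$, a contradiction. So $I = 0$, proving $\varpi$ is $B$-regular; equidimensionality then follows from $\dim B/\pf = d+1$ for each $\pf \in S_1 =$ (all minimal primes of $B$); and if $A[1/\varpi]$ is reduced then the flat embedding $B \hookrightarrow B[1/\varpi]$ exhibits $B$ (and hence $A$) as a subring of a reduced ring.

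The main obstacle is the analysis in the second paragraph: extracting the field-or-DVR dichotomy for $B_{\qf'}$ from the combined hypotheses (reducedness of $B/\varpi B$, matching minimal prime counts) and using it to pin each component of $\Spec B/\varpi B$ to a unique component of $\Spec B$, which is what collapses the counting inequality to an equality and forces $S_2 = \es$.
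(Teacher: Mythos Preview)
Your argument is correct and follows the same opening move as the paper: localize at the graded maximal ideal $\mf_A$ to reduce to the local ring $B = A_{\mf_A}$, invoking Lemma~\ref{lem:graded minimal primes} to transfer the hypotheses. Where the paper then simply cites \cite[Proposition~2.2.1]{Snowden} as a black box to conclude that $\varpi$ is a nonzerodivisor on $B$, you instead supply a self-contained proof of that step. Your argument --- the field-or-DVR dichotomy for $B_{\qf'}$ with $\qf'$ minimal over $(\varpi)$, the counting inequality $|T_2| \ge |S_1| + |S_2|$ collapsing to $S_2 = \varnothing$, and the torsion-ideal analysis using reducedness of $B/\varpi B$ to locate a minimal prime in $S_1$ avoiding a putative torsion element --- is essentially a streamlined reproof of Snowden's proposition in this setting. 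The payoff of your route is that it is elementary and entirely internal to the paper; the paper's route is shorter on the page but defers the real work to an external reference.

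One small point worth tightening: when you assert that ``the hypotheses carry over'' to $B$, the passage of equidimensionality and the minimal-prime count from $A_E = A[1/\varpi]$ to $B[1/\varpi]$ is not quite a direct application of Lemma~\ref{lem:graded minimal primes}, since $B[1/\varpi]$ is not the localization of $A_E$ at its irrelevant ideal. The paper handles this by sandwiching $A_E \to B[1/\varpi] \to (A_E)_{A_{E,+}}$ between two localizations and applying Lemma~\ref{lem:graded minimal primes} to the graded $E$-algebra $A_E$; since the outer two rings have matching minimal primes and dimensions, so must the middle one. Your proof implicitly uses this (both for $|S_1| = |T_2|$ and for $\dim B/\pf = d+1$ when $\pf \in S_1$), so it deserves a sentence.
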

\begin{proof}
Note that all of the remaining claims follow from the claim that $A$ is $\Oc$-flat. Certainly this implies that $A\subseteq A[1/\varpi]$, so if $A[1/\varpi]$ is reduced then $A$ is reduced. Furthermore if  $\Pf\subseteq A$ is any minimal prime, then $\varpi\not\in \Pf$, and so $\Pf[1/\varpi]$ is a minimal prime of $A[1/\varpi]$ and we have $\dim A[1/\varpi]/\Pf[1/\varpi] = \dim (A/\Pf)[1/\varpi] = \dim A/\Pf - 1$. Moreover every minimal prime of $A[1/\varpi]$ can be written as $\Pf[1/\varpi]$ for a minimal prime $\Pf$ of $A$. So indeed $A$ is equidimensional of dimension $\dim A[1/\varpi]+1$, and its minimal primes are in bijection with those of $A[1/\varpi]$.

So it remains to show that $A$ is $\Oc$-flat, or equivalently that $\varpi$ is not a zero divisor on $A$.

Let $\mf_A = \varpi A + A_+$ be as in Lemma \ref{lem:graded minimal primes} and let $B = A_{\mf_A}$. By Lemma \ref{lem:graded minimal primes}(1), there is an injection $A\into B$, so it will suffice to prove that $\varpi$ is not a zero divisor on $B$. We shall prove this via \cite[Proposition 2.2.1]{Snowden}. Note that $B$ is Catenary, as it is the localization of a finitely generated $\Oc$-algebra.

Consider the finitely generated graded $\F$- and $E$-algebras $A_{\F} = A/\varpi A$ and $A_E = A[1/\varpi]$ from before. Let $A_{\F,+}$ and $A_{E,+}$ be their irrelevant ideals.

We have $B/\varpi B = A_{\mf_A}/\varpi A_{\mf_A} = (A/\varpi A)_{\mf_A} = (A_\F)_{A_{\F,+}}$. Thus by Lemma \ref{lem:graded minimal primes} (with $R = \F$) we get that $B/\varpi B$ is equidimensional of dimension $\dim A_\F$ and has the same number of minimal primes as $A_\F$. As as $A_\F$ is reduced by assumption, its localization $B/\varpi B$ is reduced as well.

Also we have a sequence of localizations $A_E \to B[1/\varpi] \to (A_E)_{A_{E,+}}$, as every element of $A\sm \mf_A$ is certainly a unit in $(A_E)_{A_{E,+}}$. By Lemma \ref{lem:graded minimal primes} again, $(A_E)_{A_{E,+}}$ is equidimensional of dimension $\dim A_E$ with the same number of minimal primes as $A_E$. But this is only possible if $B[1/\varpi]$ is also equidimensional of dimension $\dim A_E$ with the same number of minimal primes as $A_E$ and $(A_E)_{A_{E,+}}$.

Thus as, by assumption, $\dim A_\F = \dim A_E$ and $A_\F$ and $A_E$ have the same number of minimal primes, \cite[Proposition 2.2.1]{Snowden} indeed applies to $B$, giving that $\varpi$ is not a zero divisor on $\varpi$, completing the proof.
\end{proof}

\begin{cor}\label{cor:graded domain}
Let $A$ be a finitely generated graded $\Oc$-algebra. Assume that $A[1/\varpi]$ and $A/\varpi A$ are both domains of dimension $d$. Then $A$ is domain of dimension $d+1$, which is flat over $\Oc$.
\end{cor}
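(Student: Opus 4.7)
The plan is to deduce this corollary as a direct consequence of Proposition \ref{prop:graded O flat}, which was set up precisely for this kind of situation. The hypothesis that $A[1/\varpi]$ and $A/\varpi A$ are both domains of dimension $d$ immediately verifies the hypotheses of that proposition: each ring is equidimensional of dimension $d$, each has exactly one minimal prime (namely $(0)$), and $A/\varpi A$ is in particular reduced.

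So the first step is simply to invoke Proposition \ref{prop:graded O flat}, which yields that $A$ is flat over $\Oc$, that $A$ is reduced (since the additional hypothesis that $A[1/\varpi]$ is reduced is also satisfied), and that $A$ is equidimensional with its minimal primes in bijection with those of $A[1/\varpi]$. Since $A[1/\varpi]$ has exactly one minimal prime, so does $A$; combined with reducedness this forces that unique minimal prime to be $(0)$, and hence $A$ is a domain.

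For the dimension, the bijection between minimal primes of $A$ and those of $A[1/\varpi]$ in the proof of Proposition \ref{prop:graded O flat} comes via $\Pf \mapsto \Pf[1/\varpi]$, with the dimension formula $\dim A/\Pf = \dim A[1/\varpi]/\Pf[1/\varpi] + 1$ (coming from $\varpi$ being a non-zerodivisor on $A/\Pf$). Applied to the unique minimal prime $\Pf = (0)$, this gives $\dim A = \dim A[1/\varpi] + 1 = d+1$, as claimed.

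There is no real obstacle here; the corollary is essentially a specialization of the preceding proposition to the case of one minimal prime. The only point requiring the tiniest amount of care is the translation from ``equidimensional with one minimal prime, reduced'' to ``domain,'' which is immediate.
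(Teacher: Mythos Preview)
Your proof is correct and is essentially the same as the paper's own: the corollary is just the special case of Proposition \ref{prop:graded O flat} where $A/\varpi A$ and $A[1/\varpi]$ are each reduced with a single minimal prime. The paper's proof is a one-liner to this effect; your version spells out the (immediate) translation from ``reduced with one minimal prime'' to ``domain'' and the dimension count, which is fine.
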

\begin{proof}
This is simply the special case of Proposition \ref{prop:graded O flat} in which $A/\varpi A$ and $A[1/\varpi]$ are each reduced with a single minimal prime.
\end{proof}

\begin{lemma}\label{lem:I cap J = IJ}
Let $R$ be a ring and let $I,J\subseteq R$ be two ideals for which $R/I$ and $R/J$ are both reduced. Then $I\cap J = IJ$ in $R$ if and only if $R/IJ$ is reduced.
\end{lemma}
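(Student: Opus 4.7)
The plan is to prove both implications directly, using the fact that $IJ\subseteq I\cap J$ always holds.

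For the forward direction, suppose $I\cap J = IJ$. The Chinese remainder type inclusion $R/(I\cap J)\into R/I\times R/J$ (obtained from the two quotient maps) is an injection of rings. Since $R/I$ and $R/J$ are reduced by hypothesis, the product $R/I\times R/J$ is reduced, and hence so is the subring $R/(I\cap J) = R/IJ$.

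For the reverse direction, suppose $R/IJ$ is reduced. Take any $x\in I\cap J$. Then $x\in I$ and $x\in J$, so $x^2 = x\cdot x\in IJ$. Reducing modulo $IJ$, the image of $x$ in $R/IJ$ is nilpotent (its square is zero), and since $R/IJ$ is reduced, $x\in IJ$. Thus $I\cap J\subseteq IJ$, and combined with the always-valid containment $IJ\subseteq I\cap J$, we conclude $I\cap J = IJ$.

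The argument is entirely elementary: the only nontrivial observation is that elements of $I\cap J$ always have squares in $IJ$, which pairs perfectly with the reducedness hypothesis. No commutative algebra machinery beyond the definitions is needed, and there is no real obstacle.
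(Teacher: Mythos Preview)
Your proof is correct and rests on the same elementary observation as the paper's, namely that $(I\cap J)^2\subseteq IJ\subseteq I\cap J$. The paper packages both directions at once by computing $\sqrt{IJ}=I\cap J$ (using that $I,J$ are radical) and then noting $IJ=I\cap J$ iff $IJ=\sqrt{IJ}$; you instead prove the forward direction via the embedding $R/(I\cap J)\hookrightarrow R/I\times R/J$ and the reverse direction directly from $x^2\in IJ$, but the mathematical content is the same.
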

\begin{proof}
Since $R/I$ and $R/J$ are reduced, $I$ and $J$ are radical ideals, i.e. $I = \sqrt{I}$ and $J = \sqrt{J}$. It follows that $\sqrt{I\cap J} = \sqrt{I}\cap \sqrt{J} = I\cap J$. Now as $I\cap J\subseteq I,J$ we get $(I\cap J)^2 \subseteq IJ \subseteq I\cap J$ and so $\sqrt{IJ} = \sqrt{I\cap J} = I\cap J$. Hence $I\cap J = IJ$ if and only if $IJ = \sqrt{IJ}$, which indeed happens if and only if $R/IJ$ is reduced.
\end{proof}

\begin{prop}\label{prop:graded p cap q = pq}
Let $A$ be a finitely generated graded $\Oc$-algebra, and let $A_\F = A/\varpi A$ and $A_E = A[1/\varpi]$. Assume that $\dim A_\F = \dim A_E$. Let $\pf,\qf\subseteq A$ be two homogeneous ideals. Let $\pf_{\F},\qf_{\F}\subseteq A_\F$ be the images of $\pf$ and $\qf$ in $A_\F$, and let $\pf_E = \pf[1/\varpi], \qf_E = \qf[1/\varpi] \subseteq A_E$. Assume that there is some integer $d\ge 0$ such that for each $k = \F,E$ the following conditions are satisfied:
\begin{itemize}
	\item $\pf_k$ and $\qf_k$ are distinct prime ideals of $A_k$ of height $d$;
	\item $\pf_k\cap \qf_k = \pf_k\qf_k$ in $A_k$.
\end{itemize}
Then $\pf\cap \qf = \pf\qf$ in $A$.
\end{prop}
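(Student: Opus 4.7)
My plan is to reduce the problem to showing that $A/\pf\qf$ is reduced using Lemma \ref{lem:I cap J = IJ}, which in turn requires showing that $A/\pf$ and $A/\qf$ are reduced. Since $\pf$ and $\qf$ are homogeneous, each of $A/\pf$, $A/\qf$, and $A/\pf\qf$ is a finitely generated graded $\Oc$-algebra, so I can verify the necessary properties using Corollary \ref{cor:graded domain} and Proposition \ref{prop:graded O flat} applied to these quotients.

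First I would handle $A/\pf$ and $A/\qf$ via Corollary \ref{cor:graded domain}. For $k=\F,E$ we have $(A/\pf)_k = A_k/\pf_k$, which by hypothesis is a domain; moreover $\dim A_k/\pf_k = \dim A_k - d$, and since $\dim A_\F = \dim A_E$ these dimensions agree. Corollary \ref{cor:graded domain} then gives that $A/\pf$ is a (reduced) domain, and analogously $A/\qf$ is a domain.

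Next I would show $A/\pf\qf$ is reduced via Proposition \ref{prop:graded O flat}. Since the formation of products of ideals commutes with quotients and localizations, we have $(A/\pf\qf)_k = A_k/\pf_k\qf_k$ for $k=\F,E$. Applying Lemma \ref{lem:I cap J = IJ} inside $A_k$ (using that $A_k/\pf_k$ and $A_k/\qf_k$ are domains, hence reduced, together with the hypothesis $\pf_k\cap\qf_k = \pf_k\qf_k$) gives that $A_k/\pf_k\qf_k$ is reduced. Its minimal primes are exactly $\pf_k$ and $\qf_k$: any prime containing $\pf_k\qf_k$ contains one of $\pf_k$ or $\qf_k$, and since $\pf_k$, $\qf_k$ are \emph{distinct} primes of the same height $d$, neither can properly contain the other (the strict containment would force a strict inequality of heights). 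Hence both $A_\F/\pf_\F\qf_\F$ and $A_E/\pf_E\qf_E$ are equidimensional of dimension $\dim A_k - d$ with exactly two minimal primes each; the two dimensions agree since $\dim A_\F = \dim A_E$. Proposition \ref{prop:graded O flat} therefore applies to $A/\pf\qf$ and shows that it is reduced (in fact $\Oc$-flat).

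Finally I would apply Lemma \ref{lem:I cap J = IJ} to the ideals $\pf$ and $\qf$ in $A$: knowing that $A/\pf$, $A/\qf$, and $A/\pf\qf$ are all reduced, the lemma yields $\pf\cap\qf = \pf\qf$. There is no substantive obstacle here; the only nontrivial bookkeeping is the minimal-prime count for $A_k/\pf_k\qf_k$, which uses the distinct-but-equal-height hypothesis in an essential way to rule out containments between $\pf_k$ and $\qf_k$.
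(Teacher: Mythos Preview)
Your proposal is correct and follows essentially the same approach as the paper: reduce via Lemma~\ref{lem:I cap J = IJ} to showing $A/\pf$, $A/\qf$, $A/\pf\qf$ are reduced, handle the first two by Corollary~\ref{cor:graded domain}, and handle the third by checking that $(A/\pf\qf)_k = A_k/\pf_k\qf_k$ is reduced, equidimensional with exactly two minimal primes (namely $\pf_k,\qf_k$), and then invoking Proposition~\ref{prop:graded O flat}. The paper's proof is organized identically, differing only in that it spells out in slightly more detail the verification that $(A/\pf\qf)_k = A_k/\pf_k\qf_k$.
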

\begin{proof}
By Lemma \ref{lem:I cap J = IJ}, it is enough to prove that the rings $C_1:=A/\pf$, $C_2:=A/\qf$ and $C_3:=A/\pf\qf$ are reduced. Let $I_1 = \pf$, $I_2 = \qf$ and $I_3 = \pf\qf$, so that $C_i = A/I_i$. Also write $I_{1,k} = \pf_k$, $I_{2,k} = \qf_k$ and $I_{3,k} = \pf_k\qf_k$ for each $k =\F,E$ and set $C_{i,k} = A_k/I_{i,k}$. We claim that for each $i=1,2,3$, $C_{i,\F} = C_i/\varpi C_i$ and $C_{i,E} = C_i[1/\varpi]$. 

For $C_{i,\F}$, note that note that for each $i$, $I_{i,\F} = (I_i+\varpi A)/\varpi A$, that is, $I_{i,\F}$ is the image of $I_i$ under the quotient map $\pi:A\onto A_{\F}$. For $i=1,2$ this is by the definition of $\pf_\F$ and $\qf_\F$. For $i=3$ it follows from $\pi(I_3) = \pi(\pf\qf) = \pi(\pf)\pi(\qf) = \pf_\F\qf_\F = I_{3,\F}$. Thus 
\[C_i/\varpi C_i = (A/I_i)/\varpi (A/ I_i) = (A/\varpi A)/((I_i+\varpi A)/\varpi A) = A_\F/I_{i,\F}=C_{i,\F}.\]
For $C_{i,E}$, we have
\[
C_i[1/\varpi] = (A/I_i)[1/\varpi] = A[1/\varpi]/I_i[1/\varpi] = A_E/I_i[1/\varpi].
\]
For $i=1,2$ we have $I_i[1/\varpi] = I_{i,E}$ by definition. For $i=3$, inverting $\varpi$ in the map $\pf\otimes_A \qf\to A$ gives $I_{3}[1/\varpi] = \pf[1/\varpi]\qf[1/\varpi] = I_{3,E}$. So indeed, $C_i[1/\varpi] = C_{i,E}$ for each $i$.

Now take any $k= \F,E$. By assumption $C_{1,k} = A_k/\pf_k$ and $C_{2,k} = A_k/\pf_k$ are both domains of dimension $d$. By Corollary \ref{cor:graded domain}, $C_1$ and $C_2$ are domains, and in particular are reduced. Also by Lemma \ref{lem:I cap J = IJ}, the assumption that $\pf_k\cap\qf_k=\pf_k\qf_k$ implies that $C_{3,k} = A_k/\pf_k\qf_k$ is reduced. Also by assumption $C_{3,k}$ has two distinct minimal primes: $\pf_k$ and $\qf_k$, and $A_k/\pf_k$ and $A_k/\qf_k$ both have dimension $\dim A_\F - d = \dim A_E-d$. So for each $k=\F,E$, $C_{3,k}$ is reduced and equidimensional of dimension $\dim A_{\F}-d = \dim A_E-d$, with exactly two minimal primes. Hence by Proposition \ref{prop:graded O flat}, $C_3$ is reduced, completing the proof.
\end{proof}

\subsection{Class groups and dualizing modules}

Let $A$ be a \emph{normal} Noetherian domain. In this section, we will present some techniques for computing the class group $\Cl(A)$ and (in the case when $A$ is Cohen--Macaulay) the dualizing module $\omega_A$ of $A$.

Let $K(A)$ denote the fraction field of $A$. For any divisor $D$ of $A$, define the $A$-module
\[
\Oc(D) = \{f\in K(A)|\div(f)+D\ge 0\}.
\]
The assumption that $A$ is normal implies that $\Oc(0) = A$. Now note that we have:

\begin{lemma}\label{lem:O(-D) = P}
Let $\pf_1,\pf_2,\ldots,\pf_n\subseteq A$ be \emph{distinct} height $1$ primes of $A$. Let $D_i = \Spec A/\pf_i$. Then
\[
\Oc(-D_1-D_2-\cdots-D_n) = \pf_1\cap\pf_2\cap \cdots \cap \pf_n.
\]
as $A$-modules.
\end{lemma}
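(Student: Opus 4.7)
The plan is to unwind the definition of $\Oc(D)$ in terms of valuations at the height-one primes of $A$. Since $A$ is a normal Noetherian domain, each localization $A_{\qf}$ at a height-one prime $\qf$ is a DVR, with associated normalized valuation $v_{\qf}$ on $K(A)$, and we have the classical intersection formula $A = \bigcap_{\height \qf = 1} A_{\qf}$ inside $K(A)$. This is the only nontrivial input.

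Writing $\div(f) = \sum_{\qf} v_{\qf}(f) \,[\Spec A/\qf]$ for $f \in K(A)^\times$, the condition $\div(f) + (-D_1 - \cdots - D_n) \ge 0$ translates to: (a) $v_{\pf_i}(f) \ge 1$ for $i = 1, \ldots, n$, and (b) $v_{\qf}(f) \ge 0$ for every other height-one prime $\qf$ of $A$. Since (a) forces $v_{\pf_i}(f) \ge 0$ a fortiori, conditions (a) and (b) together are equivalent to asserting $v_{\qf}(f) \ge 0$ for all height-one primes $\qf$, plus $v_{\pf_i}(f) \ge 1$ for each $i$. By the intersection formula, the former is exactly the statement $f \in A$.

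For $f \in A$, I would then observe that $v_{\pf_i}(f) \ge 1$ is the statement $f \in \pf_i A_{\pf_i}$, and intersecting with $A$ gives $f \in \pf_i A_{\pf_i} \cap A = \pf_i$ (the contraction of the extension of a prime equals the prime). Doing this simultaneously for $i = 1, \ldots, n$ yields $f \in \pf_1 \cap \cdots \cap \pf_n$, and all steps reverse, giving the desired equality of $A$-submodules of $K(A)$. There is no genuine obstacle here; the argument is routine bookkeeping with valuations on a normal Noetherian domain, and the convention $\Oc(0) = A$ noted just before the lemma is just the $n = 0$ case of the same reasoning.
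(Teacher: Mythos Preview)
Your proof is correct and follows essentially the same approach as the paper's own argument. Both unwind the definition of $\Oc(-\Dc)$ in terms of valuations at height-one primes; the paper phrases the step ``$f\in A$'' by invoking the previously stated fact $\Oc(0)=A$, while you invoke the equivalent intersection formula $A=\bigcap_{\height\qf=1}A_{\qf}$ directly, but the logic is identical.
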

\begin{proof}
As $\Dc:=D_1+D_2+\cdots+D_n$ is an effective divisor, the condition $\div(f)\ge \Dc$ implies that $f\in \Oc(0) =  A$. Thus, using the fact that $D_1,D_2,\ldots,D_n$ are \emph{distinct} prime divisors, we indeed get
\begin{align*}
\Oc(-\Dc) 
&= \{f\in K(A)|\div(f)-\Dc\ge 0\}
 = \{f\in K(A)|\div(f)\ge \Dc\}\\
&= \{f\in A|\div(f)\ge \Dc\}= \{f\in A|\div(f)\ge D_1+D_2+\cdots+D_n\}\\
&= \bigcap_{i=1}^n \{f\in A|\div(f)\ge D_i\}= \bigcap_{i=1}^n \{f\in A|\ord_{\pf_i}(f)>0\}
 = \bigcap_{i=1}^n \pf_i.
\end{align*}
\end{proof}

Now write $\Div(A)$ for the group of divisors on $A$. This is the free abelian group generated by the prime divisors on $A$. Say that two divisors $D_1,D_2\in \Div(A)$ are \emph{linearly equivalent}, and write $D_1\sim D_2$ if $D_1-D_2 = \div(f)$ for some $f\in K(A)^\times$. It is well known that $\Oc(D_1) \cong \Oc(D_2)$ as $A$-modules if and only if $D_1\sim D_2$, and moreover that the map $D\mapsto \Oc(D)$ defines an isomorphism $\Div(A)/{\sim} \cong \Cl(A)$, where as before, $\Cl(A)$ is group of reflexive $A$-modules of generic rank $1$.

From now on, we will identify $\Cl(A)$ with $\Div(f)/{\sim}$. For $D\in \Div(A)$, write $[D]$ for the corresponding element $\Oc(D)$ of $\Cl(A)$.

Now we recall a few standard results that will be needed for our computations of class groups in Sections \ref{sec:R22} and \ref{sec:R121}:

\begin{lemma}\label{lem:Cl(UFD)}
If $A$ is a noetherian UFD, then $\Cl(A) = 0$.
\end{lemma}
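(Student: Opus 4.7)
The plan is to prove this via the standard correspondence between $\Cl(A)$ and the divisor class group, together with the fact that in a UFD every height $1$ prime is principal.

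First, I would recall that since $A$ is a noetherian UFD, it is in particular a noetherian normal domain, so the identification $\Cl(A) \cong \Div(A)/{\sim}$ made in the excerpt applies. The group $\Div(A)$ is the free abelian group on the prime divisors, i.e.\ on the height $1$ prime ideals of $A$. Therefore it suffices to show that every height $1$ prime divisor $D = \Spec A/\pf$ is principal in the sense that $D \sim 0$.

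Next, I would invoke the standard fact that in a noetherian UFD every height $1$ prime $\pf \subseteq A$ is principal, say $\pf = (f)$ for some $f \in A$. This follows because if $g \in \pf$ is any nonzero element and $g = u\, p_1^{a_1}\cdots p_r^{a_r}$ is its factorization into irreducibles (with $u$ a unit), then $\pf$ contains one of the $p_i$, and the ideal $(p_i)$ is prime (irreducibles are prime in a UFD), nonzero, and of height $\le 1$, so by minimality $\pf = (p_i)$. With $\pf = (f)$, one has $\div(f) = D$ as divisors, since $f$ has a simple zero along $D$ and is a unit at every other height $1$ prime (as $A_{\qf}$ is a DVR with uniformizer coming from $\qf = (g)$ for a distinct irreducible $g$).

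Hence $[D] = [\div(f)] = 0$ in $\Cl(A)$ for every prime divisor $D$, and since these generate, $\Cl(A) = 0$. The only substantive input is the UFD fact that height $1$ primes are principal; everything else is bookkeeping with the isomorphism $\Div(A)/{\sim} \cong \Cl(A)$ recorded just above in the excerpt.
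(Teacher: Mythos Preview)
Your proof is correct. The paper does not give an argument at all: it simply cites \cite[\href{https://stacks.math.columbia.edu/tag/0AFT}{Lemma 0AFT}]{stacks-project}. Your write-up is essentially the standard proof of that cited lemma (height $1$ primes in a UFD are principal, hence every prime divisor is linearly equivalent to zero), so there is no substantive difference in approach---you have just unpacked the reference.
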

\begin{proof}
This follows from \cite[\href{https://stacks.math.columbia.edu/tag/0AFT}{Lemma 0AFT}]{stacks-project}.
\end{proof}

\begin{lemma}\label{lem:Cl(X-Z)}
Let $A$ be a noetherian normal domain, and let $x\in A\sm\{0\}$ be any nonzero element. Say that
\[
\div(x) = \sum_{i=1}^n a_iD_i
\]
for distinct prime divisors $D_1,\ldots,D_n$ and $a_1,\ldots,a_n\ge 1$. Then there is an exact sequence
\[
\Z^n/\langle(a_1,\ldots,a_n)\rangle\to \Cl(A)\to \Cl(A[1/x]) \to 0
\]
where the first map is given by $(j_1,\ldots,j_n)\mapsto \sum_{i=1}^n j_iD_i$ and the second map is given by $M\mapsto M[1/x] := M\otimes_AA[1/x]$.
\end{lemma}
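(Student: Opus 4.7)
The plan is to deduce this from the standard excision exact sequence for divisor class groups of normal noetherian domains (cf.\ \cite[Proposition II.6.5]{Hartshorne}), applied to the open immersion $U:=\Spec A[1/x]\hookrightarrow \Spec A$ whose closed complement is $V(x)\subseteq \Spec A$. That general result gives an exact sequence
\[
\bigoplus_{Y}\Z\cdot Y \xrightarrow{\psi} \Cl(A)\to \Cl(A[1/x])\to 0,
\]
where $Y$ runs over the prime divisors of $\Spec A$ contained in $V(x)$ and $\psi$ sends $Y$ to its class $[Y]\in\Cl(A)$.

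First I would identify the indexing set for $\psi$. A prime divisor $Y\subseteq \Spec A$ corresponds to a height $1$ prime $\qf\subseteq A$, and $Y\subseteq V(x)$ precisely means $x\in\qf$, i.e.\ $\ord_\qf(x)\ge 1$. By the definition of $\div(x)$ and the hypothesis $\div(x)=\sum_{i=1}^n a_iD_i$ with $a_i\ge 1$, the primes $\qf$ containing $x$ are exactly the ones corresponding to $D_1,\ldots,D_n$. Hence the source of $\psi$ is canonically $\Z^n$, with the map $(j_1,\ldots,j_n)\mapsto \sum_i j_i[D_i]$.

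Next I would observe that $(a_1,\ldots,a_n)$ lies in the kernel of $\psi$: indeed $\psi(a_1,\ldots,a_n)=[\sum_i a_iD_i]=[\div(x)]=0$ in $\Cl(A)$, since principal divisors are trivial. Consequently $\psi$ factors through the quotient $\Z^n/\langle(a_1,\ldots,a_n)\rangle$, producing the desired map $\Z^n/\langle(a_1,\ldots,a_n)\rangle\to\Cl(A)$. Exactness of the resulting three-term sequence at $\Cl(A)$ and surjectivity at $\Cl(A[1/x])$ are then inherited from the exact sequence above, since quotienting by a subgroup of $\ker\psi$ does not alter the image of the map into $\Cl(A)$.

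The only subtle point is the appeal to \cite[Proposition II.6.5]{Hartshorne}, whose hypotheses require $\Spec A$ to be noetherian and regular in codimension one; both hold because $A$ is a noetherian normal domain (normality is equivalent to Serre's conditions $R_1$ and $S_2$). No routine computation remains; this is essentially a bookkeeping argument, and I expect no real obstacle.
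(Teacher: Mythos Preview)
Your proposal is correct and follows essentially the same approach as the paper: both invoke the excision exact sequence of \cite[Proposition II.6.5]{Hartshorne} for the open set $\Spec A[1/x]\subseteq\Spec A$, together with the observation that $\sum_i a_i[D_i]=[\div(x)]=0$ in $\Cl(A)$. The paper's proof is a one-line citation, while you have spelled out the identification of the prime divisors in $V(x)$ and the factoring through the quotient, but there is no substantive difference.
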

\begin{proof}
This follows from a repeated application of \cite[Proposition II.6.5]{Hartshorne}, and the fact that $\sum_{i=1}^na_i[D_i] = [\div(x)] = 0$ in $\Cl(A)$.
\end{proof}

\begin{cor}\label{cor:Cl(A)=Cl(A[1/x])}
If $A$ is a normal noetherian domain of dimension $d$, and $x\in A$ is any element for which $A/(x)$ is a domain of dimension $d-1$, then the map $\Cl(A)\to \Cl(A[1/x])$ defined by $M\mapsto M\otimes_AA[1/x]$ for any $M\in \Cl(A)$ is an isomorphism.
\end{cor}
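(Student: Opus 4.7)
The plan is to apply Lemma \ref{lem:Cl(X-Z)} directly, after observing that the hypothesis on $A/(x)$ forces $\div(x)$ to be a single prime divisor of multiplicity $1$. First, since $A/(x)$ has dimension $d-1 < d$, the element $x$ is nonzero, so Lemma \ref{lem:Cl(X-Z)} applies. Since $A/(x)$ is a domain, the ideal $\pf := (x)$ is prime, and by Krull's principal ideal theorem $\height(\pf) \leq 1$; combined with $x\ne 0$ this gives $\height(\pf) = 1$. (Alternatively, $\height(\pf) + \dim(A/\pf) = d$ follows from the catenary structure of the noetherian domain $A$.)

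Next I would identify $\div(x)$. Since $A$ is normal, the localization $A_{\pf}$ is a discrete valuation ring with uniformizer generating $\pf A_{\pf}$; as $xA_{\pf} = \pf A_{\pf}$, the valuation satisfies $\ord_{\pf}(x) = 1$. For any other height-$1$ prime $\qf\ne \pf$, we have $x \notin \qf$: indeed, $x\in\qf$ would force $\pf = (x)\subseteq \qf$, and since both are height $1$, this would give $\pf = \qf$, a contradiction. Hence $\ord_{\qf}(x) = 0$ for all height-$1$ primes $\qf\ne \pf$, and so $\div(x) = D$, where $D$ is the prime divisor corresponding to $\pf$.

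Finally, Lemma \ref{lem:Cl(X-Z)} with $n=1$ and $a_1=1$ produces the exact sequence
\[
\Z/\langle 1 \rangle \longrightarrow \Cl(A) \longrightarrow \Cl(A[1/x]) \longrightarrow 0.
\]
The leftmost group is trivial, so the map $\Cl(A)\to \Cl(A[1/x])$ given by $M\mapsto M\otimes_A A[1/x]$ is an isomorphism, as claimed. There is no real obstacle here; the entire argument is a routine verification that the hypotheses of Lemma \ref{lem:Cl(X-Z)} reduce to the trivial case, with the only substantive input being that $(x)$ is prime of height $1$ with trivial ramification along itself.
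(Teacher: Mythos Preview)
Your proof is correct and takes exactly the same approach as the paper: both reduce to Lemma \ref{lem:Cl(X-Z)} with $n=1$ and $a_1=1$. The paper's proof is a single sentence stating this, while you have spelled out the verification that the hypotheses on $A/(x)$ force $\div(x)$ to be a single prime divisor with coefficient $1$; this elaboration is correct and welcome. One small quibble: the parenthetical alternative invoking ``the catenary structure of the noetherian domain $A$'' is not valid in general (arbitrary noetherian domains need not be catenary, and even for catenary rings the dimension formula requires more), but your primary argument via Krull's principal ideal theorem is fine and does not depend on this aside.
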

\begin{proof}
This is simply the special case of Lemma \ref{lem:Cl(X-Z)} with $n = 1$ and $a_1 = 1$.
\end{proof}

We now turn our attention to computing dualizing modules. 

\begin{lemma}\label{lem:A[P]}
Let $A$ be a reduced Noetherian ring, and let $P_0,P_1,\ldots,P_n$ be the minimal primes of $A$. Then 
\[
A[P_0] = P_1\cap \cdots \cap P_n
\]
as ideals of $A$.
\end{lemma}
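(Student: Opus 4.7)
The plan is to interpret $A[P_0]$ as the annihilator $\{a\in A : aP_0 = 0\}$ (the $P_0$-torsion submodule of $A$) and prove the two inclusions directly. The key input will be the standard fact that in a reduced Noetherian ring the nilradical is zero and equals the intersection of all minimal primes, so $P_0\cap P_1\cap\cdots\cap P_n = 0$ in $A$.

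For the inclusion $P_1\cap\cdots\cap P_n \subseteq A[P_0]$, I would take $a$ in the intersection and observe that for any $b\in P_0$ the product $ab$ lies in $P_0\cap P_1\cap\cdots\cap P_n = 0$, so $aP_0 = 0$. For the reverse inclusion $A[P_0]\subseteq P_1\cap\cdots\cap P_n$, I would use the incomparability of distinct minimal primes: for each $i\geq 1$, since $P_0 \not\subseteq P_i$, pick $b\in P_0\setminus P_i$; then for $a\in A[P_0]$ we have $ab = 0 \in P_i$, and primality of $P_i$ together with $b\notin P_i$ forces $a\in P_i$. Ranging over $i=1,\ldots,n$ gives the inclusion.

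There is essentially no obstacle here; the proof is a two-line application of (i) reducedness giving $\bigcap_i P_i = 0$ and (ii) the standard minimality/incomparability argument. The only thing worth being a little careful about is noting explicitly that distinct minimal primes satisfy $P_0\not\subseteq P_i$, which is immediate from minimality.
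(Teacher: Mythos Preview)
Your proof is correct and essentially identical to the paper's own argument: both use $\bigcap_i P_i = 0$ for the inclusion $P_1\cap\cdots\cap P_n\subseteq A[P_0]$, and the incomparability $P_0\not\subseteq P_i$ together with primality of $P_i$ for the reverse inclusion.
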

\begin{proof}
As $A$ is reduced we have $P_0\cap P_1\cap \cdots \cap P_n = 0$. It follows that $P_0(P_1\cap \cdots \cap P_n) = 0$ and so $P_1\cap \cdots \cap P_n\subseteq A[P_0]$. On the other hand, if $x\in A[P_0]$ then for any $i=1,\ldots,n$ we have $xP_0 = 0 \subseteq P_i$. Since $P_i$ is prime and $P_0\not\subseteq P_i$, this is only possible if $x \in P_i$ for each $i$. Thus indeed $A[P_0] = P_1\cap \cdots \cap P_n$.
\end{proof}

\begin{lemma}\label{lem:Gorenstein component}
Let $A$ be a reduced complete local Noetherian ring, and assume that $A$ is Gorenstein. Let $P_0,P_1,\ldots,P_n$ be the minimal primes of $A$ and let $B = A/P_0$. Assume that $B$ is Cohen--Macaulay and normal.

For each $i=1,\ldots,n$ let $D_i = \Spec A/(P_0+P_i)\subseteq \Spec B$. Assume (after reordering the $P_i$'s if necessary) that for some $m\le n$, $D_1,\ldots,D_m$ are \emph{distinct} prime divisors of $\Spec B$ and for each $i> m$, $\dim D_i \le \dim B - 2$.

Then if $\omega_B$ is the dualizing module of $B$ we have
\[
\omega_B \cong \Oc(-D_1-D_2-\cdots-D_m)
\]
and so $\omega_B = -[D_1]-\cdots-[D_m]$ in $\Cl(B)$.
\end{lemma}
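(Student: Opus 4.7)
The plan is to reduce the computation of $\omega_B$ to an explicit ideal in $B$ via Gorenstein duality on $A$, and then identify that ideal as $\Oc(-D_1-\cdots-D_m)$ by localizing at height $1$ primes.

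First I would exploit Gorensteinness: since $A$ is Gorenstein (hence Cohen--Macaulay), every minimal prime of $A$ has coheight $\dim A$, so $\dim B = \dim A/P_0 = \dim A$. Local duality for the quotient $B = A/P_0$ of a Cohen--Macaulay local ring of the same dimension gives $\omega_B \cong \Ext^0_A(B,\omega_A) = \Hom_A(A/P_0,\omega_A)$. Because $A$ is Gorenstein, $\omega_A\cong A$, and so $\omega_B \cong \Hom_A(A/P_0,A) = A[P_0]$. By Lemma~\ref{lem:A[P]}, $A[P_0] = P_1\cap\cdots\cap P_n$. Since $A$ is reduced we have $P_0\cap(P_1\cap\cdots\cap P_n)=0$, so the quotient map $A\twoheadrightarrow B$ embeds $P_1\cap\cdots\cap P_n$ as an ideal $I\subseteq B$, with $\omega_B\cong I$ as $B$-modules.

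Next I would identify $I$ with $\Oc(-D_1-\cdots-D_m)$ via Lemma~\ref{lem:O(-D) = P}. Because $B$ is normal and $I$ is a rank $1$ reflexive $B$-module (it is isomorphic to $\omega_B$ on a Cohen--Macaulay ring), its class in $\Cl(B)=\Div(B)/{\sim}$ is determined by the valuations $\ord_\qf(I):=\length(B_\qf/I_\qf)$ at each height $1$ prime $\qf$ of $B$. For a height $1$ prime $\qf$ of $B$, let $\Qf\subseteq A$ be its preimage, so $P_0\subseteq \Qf$. For each $i\ge 1$, the inclusion $P_i\subseteq \Qf$ is equivalent to $\bar P_i := (P_i+P_0)/P_0 \subseteq \qf$; for $i\le m$ this forces $\qf = \bar P_i$ (distinct height $1$ primes), and for $i>m$ it is ruled out by the codimension hypothesis $\dim D_i\le \dim B-2$, since $V(\qf)$ has codimension $1$. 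Thus the only $P_i$ contained in $\Qf$ are $P_0$ and (if $\qf=\bar P_j$ with $j\le m$) $P_j$, so after localizing at $\Qf$ the intersection collapses to $(P_j)_\Qf$ or to $A_\Qf$ respectively.

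From this local computation one obtains $I_\qf = \qf B_\qf$ when $\qf=\bar P_j$ for some $j\le m$ (giving $\ord_\qf(I)=1$ since $B_\qf$ is a DVR by normality of $B$) and $I_\qf = B_\qf$ otherwise (giving $\ord_\qf(I)=0$). Hence as a reflexive rank $1$ $B$-module, $I$ has divisor $D_1+\cdots+D_m$, so by Lemma~\ref{lem:O(-D) = P} it agrees with $\bar P_1\cap\cdots\cap \bar P_m=\Oc(-D_1-\cdots-D_m)$ in $\Cl(B)$. Combining with $\omega_B\cong I$ gives the stated isomorphism.

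The main technical obstacle is the case analysis at each height $1$ prime $\qf$: one must carefully rule out containments $P_i\subseteq \Qf$ for $i>m$ using the assumption $\dim D_i\le \dim B-2$, and verify that the $D_i$ for $i\le m$ being \emph{distinct} prime divisors prevents any extra contribution at $\qf=\bar P_j$. Everything else is formal once the Gorenstein duality identification $\omega_B\cong A[P_0]$ is in hand.
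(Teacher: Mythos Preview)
Your proof is correct and follows essentially the same strategy as the paper: both identify $\omega_B$ with the image $I\subseteq B$ of $A[P_0]=P_1\cap\cdots\cap P_n$ via Gorenstein duality and Lemma~\ref{lem:A[P]}, and both then analyze $I_\qf$ at each height~$1$ prime $\qf$ of $B$ by the same localization argument (only $P_0$ and at most one $P_j$ with $j\le m$ survive in $A_\Qf$). The only difference is in the concluding step: the paper shows $B/I$ is Cohen--Macaulay of dimension $\dim B-1$ (via the depth inequality from $0\to I\to B\to B/I\to 0$) and hence reduced, forcing $I=\sqrt{I}=Q_1\cap\cdots\cap Q_m$, whereas you finish more directly by observing that $I\cong\omega_B$ is reflexive on the normal domain $B$ and is therefore determined by its height~$1$ localizations --- a slight streamlining of the paper's argument.
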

\begin{proof}
The case $n=0$ is trivial (as then $B=A$ is Gorenstein and so $\omega_B = B$) so from now on we will assume $n>0$.	
	
First, as $A$ is Gorenstein, and hence Cohen--Macaulay, it is equidimensional and so $\dim A = \dim B$. Thus by \cite[Lemma 4.12]{Manning} and Lemma \ref{lem:A[P]} we have
\[
\omega_B \cong \Hom_B(B,\omega_B) \cong \Hom_A(B,\omega_A) \cong \Hom_A(A/P_0,A) = A[P_0] = P_1\cap \cdots \cap P_n 
\]

Now let $I = (P_0+A[P_0])/P_0 = (P_0 + ( P_1\cap \cdots \cap P_n))/P_0\subseteq B$ be the image of $A[P_0] = P_1\cap \cdots \cap P_n$ under the quotient map $A\onto A/P_0 = B$. Since $A[P_0]\cap P_0  = 0$, we have $I \cong A[P_0]\cong \omega_B$ as $B$-modules.

For $i = 1,\ldots,m$ let $Q_i = (P_0+P_i)/P_0\subseteq B$. By assumption $Q_1,\ldots,Q_m$ are distinct height one primes of $B$. Hence we have
\begin{align*}
\Oc(-D_1-\cdots - D_m) &= Q_1\cap \cdots \cap Q_m
\end{align*}
by Lemma \ref{lem:O(-D) = P}. So it will suffice to show that $I = Q_1\cap\cdots\cap Q_m$ as ideals of $B$. Clearly $P_0+(P_1\cap \cdots \cap P_n) \subseteq P_0+P_i$ for each $i$, and so $I\subseteq Q_1\cap\cdots\cap Q_m$.

But now note that as $B$ is Cohen--Macaulay, $I\cong \omega_B$ is a maximal Cohen--Macaulay $B$-module, so $\depth(B) = \depth(I) = \dim(B)$. The exact sequence $0\to I \to B \to B/I\to 0$ now implies that $\depth(B/I)\ge \dim(B)-1$. As $\dim B/I < \dim B$ (since $B$ is a domain and $I\ne 0$) this implies that $B/I$ is a Cohen--Macaulay ring of dimension $\dim(B)-1$. In particular, it is equidimensional of dimension $\dim(B)-1$, and so every minimal prime of $B/I$ is of the form $Q/I$ for a height $1$ prime $Q$ of $B$ containing $I$.

Now let $Q$ be any height $1$ prime of $B$, and consider the localization $(B/I)_Q = B_Q/I_Q$. Write $Q = Q'/P_0$, for some prime $Q'$ of $A$. As $P_0\subseteq Q'$ we have $(A[P_0])_{Q'} = A_{Q'}[(P_0)_{Q'}]$. Now as $P_0$ is a minimal prime of $A$ contained in $Q'$, $(P_0)_{Q'}$ will be a minimal prime of $A_{Q'}$. Moreover all minimal primes of $A_{Q'}$ are in the form $(P_i)_{Q'}$ for some $i$ satisfying $P_i\subseteq Q'$. Also as $A$ is reduced, the ring $A_{Q'}$ will be reduced as well, so Lemma \ref{lem:A[P]} gives
\[
I_Q = I_{Q'} = \frac{(P_0)_{Q'}+A_{Q'}[(P_0)_{Q'}]}{(P_0)_{Q'}} = 
\frac{(P_0)_{Q'}+\bigcap_{i>0,P_i\subseteq Q'} (P_i)_{Q'}}{(P_0)_{Q'}}.
\]
Now if there is no $i>0$ with $P_i\subseteq Q'$ this gives $I_Q = B_Q$ and so $(B/I)_Q = 0$. Now assume that $P_i\subseteq Q'$ for some $i>0$. This implies $P_0+P_i\subseteq Q'$ and so there is a quotient map
\[
B/Q_i = A/(P_0+P_i) \onto A/Q' = B/Q.
\] 
As $Q$ is a height $1$ prime of $B$ this implies $\dim(B/Q_i) \ge \dim(B/Q) = \dim(B)-1$. By assumption this is only possible if $i\le m$ and $Q = Q_i$ (since for $i\le m$, $B/Q_i$ is a domain of dimension $\dim(B)-1$). In this case, $P_0$ and $P_i$ are the only minimal primes of $A$ contained in $Q'$ (as if $P_i,P_j\subseteq Q'$ we would have $Q_i = Q = Q_j$ by the above, contradicting the fact that the $Q_i$'s are assumed to be distinct), and so we get
\[
I_{Q_i} = ((P_0)_{Q_i}+(P_i)_{Q_i})/((P_0)_{Q_i}) = ((P_0+P_i)_{Q_i})/((P_0)_{Q_i}) = (Q_i)_{Q_i}\subseteq B_{Q_i}.
\]
So for any height $1$ prime $Q$ of $B$, we have $(B/I)_Q = 0$, unless $Q = Q_i$ for some $i = 1,\ldots,m$, in which case we have $(B/I)_{Q_i} = B_{Q_i}/(Q_i)_{Q_i}$.

From this we get that the minimal primes of $B/I$ are precisely $Q_1/I,\ldots,Q_m/I$, which implies $\sqrt{I} = Q_1\cap\cdots\cap Q_m$. Also as $(B/I)_{Q_i} = B_{Q_i}/(Q_i)_{Q_i}$ is a field, and hence reduced, for each $i = 1,\ldots,m$, $B/I$ is generically reduced. Since $B/I$ is Cohen--Macaulay, this implies that it is reduced, and so $I = \sqrt{I} = Q_1\cap\cdots\cap Q_m$, completing the proof.
\end{proof}

\subsection{\Grobner bases}\label{ssec:Grobner}

In this subsection, we will review the properties of \Grobner bases we will need for our computations. Fix a field $\Bbbk$ and a polynomial ring $\Bbbk[x_1,\ldots,x_n]$. For any $\aun = (a_1,\ldots,a_n)\in \Z_{\ge 0}^n$, write $x^{\aun} = x_1^{a_1}\cdots x_n^{a_n}$. Let $\Mf=\{x^{\aun}|\aun\in \Z_{\ge 0}^n\}$ be the set of all monomials in $\Bbbk[x_1,\ldots,x_n]$. For any $x^{\aun}\in \Mf$ write $\deg x^{\aun} = a_1+\cdots+a_n$ for the total degree of $x^a$.

We will say that a \emph{monomial ordering} on $\Bbbk[x_1,\ldots,x_n]$ is a total order $\succ$ on $\Mf$ satisfying:
\begin{itemize}
	\item For any $m,p\in\Mf$, $m\preceq mp$.
	\item For any $m_1,m_2,p\in \Mf$, $m_1\preceq m_2$ if and only if $m_1p\preceq m_2p$.
\end{itemize}
For convenience, we will also write $0\prec m$ for all monomials $m\in\Mf$ 

We will mainly be concerned with the following monomial ordering, called the \emph{reverse lexicographic order with variable ordering $(x_1,\ldots,x_n)$} defined by:
\begin{center}
$x^{\aun}\prec x^{\bun}$ iff $\deg x^{\aun}<\deg x^{\bun}$ or $\deg x^{\aun} = \deg x^{\bun}$ and $a_i>b_i$, where $i$ is the largest index such that $a_i\ne b_i$.
\end{center}
(So for example $x_1x_3\prec x_2^2\prec x_1x_2$).

For any $\ds f = \sum_{\aun\in \Z_{\ge 0}^n}c_{\aun}x^{\aun}\in \Bbbk[x_1,\ldots,x_n]$ we will say the the \emph{leading term} of $f$ is $\LT(f) = c_{\aun}x^{\aun}$ where $x^{\aun}$ is the largest monomial (with respect to $\succ$) for which $c_{\aun}\ne 0$ (and we say $\LT(0) = 0$). While this depends on the choice of $\succ$, we will suppress this from our notation.

We will also let the \emph{leading coefficient} and \emph{leading monomial} of $f$ be $\LC(f) = c_{\aun}$ and $\LM(f) = x^{\aun}$, respectively (and again, $\LC(0) = \LM(0) = 0$). 

For any ideal $I\subseteq \Bbbk[x_1,\ldots,x_n]$, we will let $\init(I)$ be the ideal:
\[
\init(I) = \left(\LT(f) \middle|f\in I\right)\subseteq \Bbbk[x_1,\ldots,x_n]
\]
(which again depends on the choice of $\succ$). We will say that a finite set $\Gs = \{g_1,\ldots,g_t\}\subseteq \Bbbk[x_1,\ldots,x_n]$ is a \emph{\Grobner basis} (with respect to $\succ$) for $I$ if $I = (g_1,\ldots,g_t)$ and $\init(I) = (\LT(g_1),\ldots,\LT(g_t))$.

We will need the following useful criterion for identifying \Grobner bases. First, for any nonzero $g,h\in \Bbbk[x_1,\ldots,x_n]$ define
\[
\Ssf(g,h) = \frac{\LT(h)}{\gcd(\LT(g),\LT(h))}g - \frac{\LT(g)}{\gcd(\LT(g),\LT(h))}h.
\]
Given a set $\Gs = \{g_1,\ldots,g_t\}\subseteq \Bbbk[x_1,\ldots,x_n]$ and a pair of elements $g_i,g_j\in \Gs$, we say that $(g_i,g_j)$ satisfies (\ref{buchberger}) (with respect to $\Gs$ and $\prec$) if:
\begin{equation*}
\label{buchberger}
\begin{split}
\text{There are polynomials $f^{(ij)}_1,\ldots,f^{(ij)}_t\in \Bbbk[x_1,\ldots,x_n]$ for which}\\
\Ssf(g_i,g_j) = f^{(ij)}_1g_1+f^{(ij)}_2g_2+\cdots+f^{(ij)}_tg_t\\
\shoveleft{\text{and $\LM(f^{(ij)}_s)\LM(g_s) \preceq \LM(\Ssf(g_i,g_j))$ for each $s=1,\ldots,t$}.}
\end{split}
\tag{B}
\end{equation*}
Clearly $(g_i,g_i)$ always satisfies (\ref{buchberger}) and $(g_i,g_j)$ satisfies (\ref{buchberger}) if and only if $(g_j,g_i)$ does.

Then we have (see \cite[Theorem 15.8]{Eisenbud}, as well as exercises 15.19 and 15.20):

\begin{prop}\label{prop:Buchberger}[Buchberger's Criterion]
If $I=(g_1,\ldots,g_t)\subseteq \Bbbk[x_1,\ldots,x_n]$ is an ideal, then $\Gs=\{g_1,\ldots,g_t\}$ is a \Grobner basis for $I$ if and only if $(g_i,g_j)$ satisfies (\ref{buchberger}) for each $1\le i<j\le t$.

Moreover, (\ref{buchberger}) automatically holds whenever $\gcd(\LT(g_i),\LT(g_j)) = 1$.
\end{prop}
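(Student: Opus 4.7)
The plan is to establish the two directions of the equivalence separately, followed by the automatic-holding assertion for coprime leading monomials. For the forward direction (Gröbner basis $\Rightarrow$ (\ref{buchberger}) holds for every pair), I would invoke the multivariate division algorithm: since $\Gs$ is a Gröbner basis of $I$ and $\Ssf(g_i,g_j)\in I$, we can repeatedly subtract monomial multiples of the $g_k$'s, the Gröbner basis property guaranteeing that at each stage the current leading term is divisible by some $\LT(g_k)$. The process terminates and produces a representation $\Ssf(g_i,g_j)=\sum_k f^{(ij)}_k g_k$ in which, by construction, each $\LM(f^{(ij)}_k)\LM(g_k)\preceq \LM(\Ssf(g_i,g_j))$.

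For the reverse direction I would argue by contradiction. Assume (\ref{buchberger}) holds for every pair but $\Gs$ is not a Gröbner basis; then there exists $f\in I$ with $\LT(f)\notin(\LT(g_1),\ldots,\LT(g_t))$. Among all representations $f=\sum_k h_k g_k$, pick one minimizing $M:=\max_k \LM(h_k)\LM(g_k)$ in the order $\succ$. We cannot have $M=\LM(f)$: if the maximum were attained by a single index $k$, then $\LM(f)=\LM(h_k)\LM(g_k)$ would be divisible by $\LM(g_k)$, contradicting the choice of $f$. Hence $M\succ\LM(f)$, so among the indices $S=\{k:\LM(h_k)\LM(g_k)=M\}$ the leading terms $\LT(h_k)\LT(g_k)$ must sum to zero.

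The main obstacle is then the standard \emph{syzygy rewriting step}: I would expand each $h_k$ for $k\in S$ as a sum of its top homogeneous piece plus lower terms, and observe that any $\Bbbk$-linear relation among the monomials $\LT(h_k)\LT(g_k)=M$ can be broken into telescoping pairs, each pair giving a multiple of an S-polynomial $\Ssf(g_i,g_j)$ up to a monomial factor whose leading monomial times $\LM(\Ssf(g_i,g_j))$ equals $M$. Substituting the standard representation of each $\Ssf(g_i,g_j)$ guaranteed by (\ref{buchberger}) into the sum $\sum_{k\in S}\LT(h_k)g_k$ then rewrites the top-degree contribution as a combination $\sum_k \tilde f_k g_k$ with each $\LM(\tilde f_k)\LM(g_k)\prec M$. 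Folding this back into the full representation yields a new representation of $f$ with strictly smaller $M$, contradicting minimality. This is where the bulk of the work lies; the pair-telescoping step is the classical observation behind Buchberger's theorem.

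For the final claim, assume $\gcd(\LT(g_i),\LT(g_j))=1$. Writing $g_i=\LT(g_i)+g_i'$ and $g_j=\LT(g_j)+g_j'$, a direct expansion gives
\[
\Ssf(g_i,g_j)=\LT(g_j)g_i-\LT(g_i)g_j=g_i'g_j-g_j'g_i,
\]
so we take $f^{(ij)}_i=-g_j'$, $f^{(ij)}_j=g_i'$, and $f^{(ij)}_k=0$ otherwise. It remains to verify $\LM(g_i')\LM(g_j),\,\LM(g_j')\LM(g_i)\preceq\LM(\Ssf(g_i,g_j))$. If these two monomials were equal, then from $\LM(g_i')\LM(g_j)=\LM(g_j')\LM(g_i)$ and $\gcd(\LM(g_i),\LM(g_j))=1$ one deduces $\LM(g_i)\mid\LM(g_i')$, hence $\LM(g_i)\preceq\LM(g_i')$, contradicting $\LM(g_i')\prec\LM(g_i)$ (the degenerate case $g_i'=0$ or $g_j'=0$ is immediate). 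Thus the two monomials differ, the larger equals $\LM(\Ssf(g_i,g_j))$ and the smaller is strictly below it, so condition (\ref{buchberger}) holds automatically.
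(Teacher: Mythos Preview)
The paper does not supply its own proof of this proposition: it simply cites \cite[Theorem 15.8]{Eisenbud} together with Exercises 15.19 and 15.20 there. Your proposal is a correct sketch of the classical argument that those references contain, so there is nothing to compare at the level of strategy.

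One small presentational point in your reverse direction: when you write ``if the maximum were attained by a single index $k$'' you only explicitly exclude the case $|S|=1$, but the same observation works for any $k\in S$ regardless of $|S|$: if $M=\LM(f)$ then any $k$ with $\LM(h_k)\LM(g_k)=M$ already gives $\LM(g_k)\mid\LM(f)$, contradicting the choice of $f$. So the clause about a single index is unnecessary and could be dropped. Otherwise the syzygy-rewriting step and the coprime-leading-term computation are fine; in particular your divisibility argument $\LM(g_i)\mid\LM(g_i')\Rightarrow\LM(g_i)\preceq\LM(g_i')$, contradicting $\LM(g_i')\prec\LM(g_i)$, is exactly the standard one.
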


Our computations will depend heavily on the following property of \Grobner bases (see \cite[Theorem 15.13]{Eisenbud}):
\begin{prop}\label{prop:Grob reg seq}
If $\succ$ is the reverse lexicographical order with the variable ordering $(x_1,\ldots,x_n)$,\footnote{While most of the results mentioned in this section still hold with $\succ$ replaced by an arbitrary monomial ordering, Proposition \ref{prop:Grob reg seq} specifically requires the reverse lexicographical order.} then for any $r\le n$, $(x_n,x_{n-1},\ldots,x_r)$ is a regular sequence for $\Bbbk[x_1,\ldots,x_n]/I$ if and only if it is a regular sequence for $\Bbbk[x_1,\ldots,x_n]/\init(I)$.

Equivalently, if $\Gs = (g_1,\ldots,g_t)$ is a \Grobner basis for $I$, then $(x_n,x_{n-1},\ldots,x_r)$ is a regular sequence for $\Bbbk[x_1,\ldots,x_n]/I$ if and only $\LT(g_1),\ldots,\LT(g_t)\in \Bbbk[x_1,\ldots,x_{r-1}]$ (i.e. none the leading terms of $g_1,\ldots,g_t$ contain any of the variables $x_r,\ldots,x_n$).
\end{prop}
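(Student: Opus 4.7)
The plan is to follow the standard Gröbner basis approach, as in \cite[Theorem 15.13]{Eisenbud}. Writing $R = \Bbbk[x_1, \ldots, x_n]$, the argument rests on a single key identity specific to the reverse lexicographic order with variable ordering $(x_1, \ldots, x_n)$:
$$\init(I + (x_n)) = \init(I) + (x_n).$$
Granted this identity, the regular sequence statement follows by an induction on $n - r$: iterating gives $\init(I + (x_n, \ldots, x_{j+1})) = \init(I) + (x_n, \ldots, x_{j+1})$ for each $j \ge r$, which reduces the proposition to the assertion that, for any homogeneous ideal $J \subseteq R$, the variable $x_r$ is a non-zero-divisor on $R/J$ if and only if it is one on $R/\init(J)$.

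To prove the key identity, the inclusion $\supseteq$ is immediate, while for $\subseteq$ I would restrict to the homogeneous case (which suffices for our applications; the general case follows by a standard homogenization argument). Given a homogeneous $f \in I + (x_n)$, write $f = g + x_n h$ with $g \in I$ and decompose $g = g_0 + x_n g_1$, with $g_0$ the sum of terms of $g$ not divisible by $x_n$. The defining property of reverse lex with this variable ordering is that, within a fixed total degree, any monomial not involving $x_n$ is strictly larger than any monomial that does involve $x_n$: if two such monomials are compared, the largest index of disagreement is exactly $n$, and the monomial with positive $x_n$-exponent is then smaller under the order. Hence when $g_0 \ne 0$ one has $\LT(g) = \LT(g_0)$. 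The $x_n$-free and $x_n$-divisible parts of $f = g_0 + x_n(g_1 + h)$ have disjoint monomial supports, so $\LT(f)$ is either $\LT(g_0) = \LT(g) \in \init(I)$ or a term divisible by $x_n$; in either case, $\LT(f) \in \init(I) + (x_n)$, as required.

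For the non-zero-divisor reduction, I would appeal to Hilbert functions: one always has $H_{R/J} = H_{R/\init(J)}$ from the flat degeneration to the initial ideal, and applying the key identity to $J$ and $x_r$ gives $H_{R/(J + (x_r))} = H_{R/(\init(J) + (x_r))}$. Since $x_r$ is a non-zero-divisor on $R/J$ exactly when $H_{R/(J + (x_r))}(d) = H_{R/J}(d) - H_{R/J}(d-1)$ for all $d$ (the condition that multiplication by $x_r$ is injective in each graded piece), and the identical equation detects the non-zero-divisor property for $R/\init(J)$, the two conditions coincide. The main obstacle is the key identity itself, which genuinely relies on the specific reverse-lex structure and fails for other monomial orderings (for instance, pure lexicographic order); the remaining Hilbert function comparison is entirely routine.
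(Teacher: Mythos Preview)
The paper does not prove this proposition; it simply cites \cite[Theorem 15.13]{Eisenbud}. Your approach is the standard Bayer--Stillman argument presented there, and your proof of the key identity $\init(I + (x_n)) = \init(I) + (x_n)$ is correct.

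There is, however, a real imprecision in your reduction. You claim the induction reduces to: ``for any homogeneous ideal $J \subseteq R$, the variable $x_r$ is a non-zero-divisor on $R/J$ if and only if it is one on $R/\init(J)$.'' Taken literally this is false for $r < n$: with $R = \Bbbk[x_1,x_2]$, $J = (x_1^2 - x_2^2)$, and $r=1$, the element $x_1$ is regular on the one-dimensional complete intersection $R/J$ but is nilpotent on $R/\init(J) = R/(x_1^2)$. Your key identity applies only to the \emph{last} variable, and you cannot invoke it for $x_r$ directly in $R$. What makes the induction work is that the ideals actually arising, namely $J = I + (x_{j+1},\ldots,x_n)$, contain $(x_{j+1},\ldots,x_n)$, so one may pass to $\bar R = \Bbbk[x_1,\ldots,x_j]$, where $x_j$ \emph{is} the last variable and the restriction of $\succ$ is again reverse lex; one must also verify that $\init_R(J)/(x_{j+1},\ldots,x_n) = \init_{\bar R}\bigl(J/(x_{j+1},\ldots,x_n)\bigr)$, which follows from the same leading-term property you already used. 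With this correction your Hilbert-function comparison goes through. (You also do not address the ``equivalently'' clause, but the only direction used in the paper---that if no $\LT(g_i)$ involves $x_r,\ldots,x_n$ then the sequence is regular---is immediate from the first part, since then $R/\init(I)$ is free as a module over $\Bbbk[x_r,\ldots,x_n]$.)
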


The proof of Theorem \ref{thm:R121 main results} will also require us to compute the intersection of two ideals. Fortunately \Grobner bases also give us a simple way of calculating such intersections.

First, for any monomial ordering $\succ$ on $\Bbbk[x_1,\ldots,x_n]$, define a new ordering $\succ_t$ on $\Bbbk[x_1,\ldots,x_n,t]$ by:
\begin{center}
	$t^ax^{\aun}\succ_t t^bx^{\bun}$ iff $a>b$ or $a=b$ and $x^{\aun}\succ x^{\bun}$
\end{center}
(so in particular, any monomial containing $t$ will be greater than any monomial not containing $t$). Then \cite[Proposition 15.29, Excercise 15.43]{Eisenbud} gives:

\begin{prop}\label{prop:Grob intersection}
Let $I = (f_1,\ldots,f_p)$ and $J = (g_1,\ldots,g_q)$ be two ideals of $k[x_1,\ldots,x_n]$ and let $\succ$ be any monomial ordering on $\Bbbk[x_1,\ldots,x_n]$ ($\{f_1,\ldots,f_s\}$ and $\{g_1,\ldots,g_t\}$ are not assumed to be \Grobner bases). Define the ideal
\[
\Kc = (tf_1,tf_2,\ldots,tf_p,(t-1)g_1,(t-1)g_2,\ldots,(t-1)g_q) = \left(ta+(1-t)b\middle|a\in I,b\in J\right)\subseteq \Bbbk[x_1,\ldots,x_n,t].
\]
If $\Gs$ is a \Grobner basis for $\Kc$ with respect to $\succ_t$ then the elements of $\Gs$ which do not contain the variable $t$ (i.e. the set $\Gs\cap \Bbbk[x_1,\ldots,x_n]$) form a \Grobner basis for $I\cap J$ with respect to $\succ$. In particular, $I\cap J = (\Gs\cap \Bbbk[x_1,\ldots,x_n])$.
\end{prop}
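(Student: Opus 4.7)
The plan is to reduce the claim to two simple observations: a scheme-theoretic identity and the elimination property of the monomial order $\succ_t$. All of the real content sits in these two steps; the rest is bookkeeping.

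First I would verify that $\Kc\cap\Bbbk[x_1,\ldots,x_n]=I\cap J$. The inclusion $I\cap J\subseteq\Kc$ follows from the identity $h=t\cdot h+(1-t)\cdot h$, which expresses any $h\in I\cap J$ as a sum of $th\in tI\subseteq\Kc$ and $(1-t)h=-(t-1)h\in (t-1)J\subseteq\Kc$; since $h$ is already in $\Bbbk[x_1,\ldots,x_n]$, it lies in the intersection. For the reverse inclusion, given $h\in\Kc\cap\Bbbk[x_1,\ldots,x_n]$, write
\[
h=\sum_{i=1}^p a_i(x,t)\,t f_i+\sum_{j=1}^q b_j(x,t)\,(t-1)g_j
\]
for some $a_i,b_j\in\Bbbk[x_1,\ldots,x_n,t]$. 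Specializing $t=0$ gives $h=-\sum_j b_j(x,0)g_j\in J$, and specializing $t=1$ gives $h=\sum_i a_i(x,1)f_i\in I$, so $h\in I\cap J$.

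Next I would extract the key consequence of the order $\succ_t$, namely that \emph{every} monomial involving $t$ is larger than every monomial not involving $t$. This means that for $g\in\Bbbk[x_1,\ldots,x_n,t]$, if $\LT_{\succ_t}(g)$ lies in $\Bbbk[x_1,\ldots,x_n]$ then $g$ itself already lies in $\Bbbk[x_1,\ldots,x_n]$, and in that case $\LT_{\succ_t}(g)=\LT_{\succ}(g)$. This is the elimination property and is a direct consequence of the definition of $\succ_t$.

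Finally, set $\Gs'=\Gs\cap\Bbbk[x_1,\ldots,x_n]$. The inclusion $\Gs'\subseteq\Kc\cap\Bbbk[x_1,\ldots,x_n]=I\cap J$ is immediate from the first step, so $(\Gs')\subseteq I\cap J$. To upgrade $\Gs'$ to a $\succ$-Gröbner basis of $I\cap J$, take any nonzero $h\in I\cap J=\Kc\cap\Bbbk[x_1,\ldots,x_n]$. Since $\Gs$ is a $\succ_t$-Gröbner basis for $\Kc$, some $g\in\Gs$ satisfies $\LT_{\succ_t}(g)\mid\LT_{\succ_t}(h)=\LT_{\succ}(h)$. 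The right-hand side is $t$-free, hence so is $\LT_{\succ_t}(g)$, so by the elimination property $g\in\Gs'$ and $\LT_{\succ}(g)=\LT_{\succ_t}(g)$ divides $\LT_{\succ}(h)$. This shows $\init_{\succ}(I\cap J)=(\LT_{\succ}(g):g\in\Gs')$, and in particular $I\cap J=(\Gs')$, completing the proof.

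The only nontrivial step is the first one; the main (mild) obstacle is keeping track of which monomial order is being used where, and in particular exploiting the fact that $\succ_t$ is an elimination order for $t$ so that dropping the $t$-containing elements of $\Gs$ does not lose any leading-term information for elements of $I\cap J$.
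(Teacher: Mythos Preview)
Your proof is correct and complete. The paper does not actually prove this proposition; it simply cites \cite[Proposition 15.29, Exercise 15.43]{Eisenbud}, and your argument is precisely the standard elimination-theory proof one finds there: identify $\Kc\cap\Bbbk[x_1,\ldots,x_n]$ with $I\cap J$ via the specializations $t=0,1$, then use that $\succ_t$ is an elimination order for $t$ to conclude that the $t$-free members of a Gr\"obner basis for $\Kc$ form a Gr\"obner basis for the elimination ideal.
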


\section{The space $\Nc^{2,2}(\st_{2,2})$}\label{sec:R22}
The goal of this section is to prove Theorem \ref{thm:R22 main results}. Write $\Nc^{2,2} = \Spec \Rc^{2,2}$, so that $\Rc^{2,2}$ is the quotient of $\Oc[A,B,C,D,S,T,U,V,X,Y,Z,W]$ cut out by the matrix equation
\begin{align*}
	\begin{pmatrix}S&T\\U&V\end{pmatrix}\begin{pmatrix}A&B\\C&D\end{pmatrix} &= \begin{pmatrix}A&B\\C&D\end{pmatrix}\begin{pmatrix}X&Y\\Z&W\end{pmatrix}
\end{align*}

By Proposition \ref{prop:st(2,2) cases}, $\Nc^{2,2}(\st_{2,2})$ is an irreducible component of $\Nc^{2,2}$. Let 
\[\Rc^{2,2}(\st_{2,2}) = \Rc^{2,2}/(S+V-X-W, SV-TU-XW+YZ)\]

\begin{prop}
	$\Nc^{2,2}(\st_{2,2}) \subseteq \Spec \Rc^{2,2}(\st_{2,2})$ as subschemes of $\Nc^{2,2}$.
\end{prop}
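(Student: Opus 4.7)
The plan is to reduce the statement to checking that the two generators of the ideal defining $\Spec \Rc^{2,2}(\st_{2,2})$, namely $S+V-X-W$ and $SV-TU-XW+YZ$, vanish as regular functions on $\Nc^{2,2}(\st_{2,2})$. This is legitimate because, by Proposition \ref{prop:st(2,2) cases}, $\Nc^{2,2}(\st_{2,2}) = \Nc^{2,2}(2)$ is a single irreducible component of $\Nc^{2,2}$, and by Corollary \ref{cor:Nc^nun is CI} the scheme $\Nc^{2,2}$ is reduced; hence $\Nc^{2,2}(\st_{2,2})$ is an integral subscheme of $\Nc^{2,2}$, carrying its reduced induced structure.

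The key observation is that, by the definition of $\Nc^{2,2}(P)$ for $P = (d_{11}) = (2) \in \Is^{2,2}(\st_{2,2})$, the subscheme $\Nc^{2,2}(\st_{2,2})$ is the Zariski closure of $(\Pi^{2,2})^{-1}(\Wc^{2,2}(2))$, where by Proposition \ref{prop:H orbits} the orbit $\Wc^{2,2}(2) \subseteq M_{2\times 2}(\Ebar)$ consists exactly of the matrices $N$ of full rank (equivalently $\det N \neq 0$). Thus the Zariski open subset $U := \{\det N \neq 0\} \cap \Nc^{2,2}(\st_{2,2})$ is dense in $\Nc^{2,2}(\st_{2,2})$.

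On $U$ the defining matrix equation $X_1 N = N X_2$ (where $X_1 = \bigl(\begin{smallmatrix}S&T\\U&V\end{smallmatrix}\bigr)$, $X_2 = \bigl(\begin{smallmatrix}X&Y\\Z&W\end{smallmatrix}\bigr)$ and $N = \bigl(\begin{smallmatrix}A&B\\C&D\end{smallmatrix}\bigr)$) can be rewritten as $X_1 = N X_2 N^{-1}$. In particular $X_1$ and $X_2$ are conjugate, so they have the same characteristic polynomial; comparing coefficients yields $\tr X_1 = \tr X_2$ and $\det X_1 = \det X_2$, i.e.\ the two polynomials $S+V-X-W$ and $SV-TU-XW+YZ$ vanish identically on $U$.

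Since $\Nc^{2,2}(\st_{2,2})$ is reduced and $U$ is dense in it, both polynomials vanish on all of $\Nc^{2,2}(\st_{2,2})$. This gives the desired containment of closed subschemes $\Nc^{2,2}(\st_{2,2}) \subseteq \Spec \Rc^{2,2}(\st_{2,2})$ inside $\Nc^{2,2}$. There is no real obstacle; the argument is essentially the trivial observation that the invariants $\tr$ and $\det$ are conjugation-invariant, together with the density of the full-rank locus in the relevant component.
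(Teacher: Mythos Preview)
Your proof is correct and follows essentially the same approach as the paper: both arguments identify $\Nc^{2,2}(\st_{2,2})$ as the closure of the locus where $N$ is invertible, observe that on this locus the relation $X_1 N = N X_2$ forces $X_1$ and $X_2$ to be conjugate (hence to share trace and determinant), and then pass to the closure. The paper phrases the last step in terms of $\Ebar$-points and the closedness of $\Spec\Rc^{2,2}(\st_{2,2})$, whereas you invoke the reducedness of $\Nc^{2,2}(\st_{2,2})$ and density of $U$; these are equivalent formulations of the same idea.
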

\begin{proof}
By definition, $(\Spec\Rc^{2,2}(\st_{2,2}))(\Ebar)\subseteq \Nc^{2,2}(\Ebar)$ is the set of points $(X_1,N_1,X_2)\in \Nc^{2,2}(\Ebar)$ with $\tr X_1 = \tr X_2$ and $\det X_1 = \det X_2$. Thus the equation $X_1N_1 = N_1X_2$ implies that $(X_1,N_1,X_2)\in (\Spec\Rc^{2,2}(\st_{2,2}))$ whenever $N_1\in \GL_2(\Ebar)\subseteq \Wc^{2,2}(\Ebar)$. So $(\Pi^{2,2})^{-1}(\GL_2(\Ebar)) \subseteq (\Spec\Rc^{2,2}(\st_{2,2}))(\Ebar)$.

By Proposition \ref{prop:st(2,2) cases}, 
\[\Nc^{2,2}(\st_{2,2}) = \Nc^{1,2,1}(2) = \overline{(\Pi^{2,2})^{-1}(\Wc^{2,2}(2))} = \overline{(\Pi^{2,2})^{-1}(\GL_2(\Ebar))},
\]
so the claim follows as $\Spec \Rc^{2,2}(\st_{2,2})$ is Zariski closed inside $\Nc^{2,2}$.
\end{proof}

From this proposition, it follows that $\Nc^{2,2}(\st_{2,2})$ is an irreducible component of $\Spec \Rc^{2,2}(\st_{2,2})$ of relative dimension $8$ over $\Oc$.

Let $\Rct^{2,2} = \Rc^{2,2}(\st_{2,2})/(S+V)$, so that $\Rct^{2,2}(\st_{2,2})=\Oc[A,B,C,D,S,T,U,X,Y,Z]/\Ict$ where $\Ict$ is the ideal cut out by the equations:
\begin{align*}
	\begin{pmatrix}S&T\\U&-S\end{pmatrix}\begin{pmatrix}A&B\\C&D\end{pmatrix} &= \begin{pmatrix}A&B\\C&D\end{pmatrix}\begin{pmatrix}X&Y\\Z&-X\end{pmatrix},&
	S^2+TU&=X^2+YZ.
\end{align*}
One can easily verify that there is an isomorphism $\Rc^{2,2}(\st_{2,2})\isomto\Rct^{2,2}[t]$ given by
\[
(A,B,C,D,S,T,U,V,X,Y,Z,W)\mapsto (A,B,C,D,t+S,T,U,t-S,t+X,Y,Z,t-X).
\]
We shall take $\Rct^{2,2}$ to be the ring $\Rct^{2,2}(\st_{2,2})$ from Theorem \ref{thm:R22 main results} (we are omitting the $\st_{2,2}$ to simplify notation). By definition, $\Rct^{2,2}$ is a graded $\Oc$-algebra with irrelevant ideal $\Ic_+ = (A,B,C,D,S,T,U,X,Y,Z)$, and the isomorphism $\Rc^{2,2}(\st_{2,2})\isomto\Rct^{2,2}[t]$ identifies $\Spec (\Rct^{2,2}(\st_{2,2})/\Ic_+)\times \A^1_{\Oc}$ with the subscheme 
\[\left\{\left(\begin{pmatrix}t&0\\0&t\end{pmatrix}, \begin{pmatrix}0&0\\0&0\end{pmatrix},\begin{pmatrix}t&0\\0&t\end{pmatrix} \right)\right\}.\]
of $\Nc^{2,2}(\st_{2,2})\subseteq \Spec \Rc^{2,2}(\st_{2,2})$.

Now for any $\Oc$-algebra $\Ac$, let $\Rct^{2,2}_\Ac = \Rct^{2,2}\otimes_{\Oc} \Ac$ and $0_{\Ac} = (\Spec \Rct^{2,2}/\Ic_+)\times_{\Oc}\Ac$. Let $\Ict_A = \Ict \otimes_{\Oc} \Ac$, so that $\Rct^{2,2}_A = \Ac[A,B,C,D,S,T,U,X,Y,Z]/\Ict_{\Ac}$. We shall prove the following:

\begin{prop}\label{prop:R22tilde}
For $k = \F,E$,	$\Rct^{2,2}_k$ is a normal Cohen--Macaulay domain of dimension $7$, which is not a complete intersection. $\Spec \Rct^{2,2}_k$ is smooth away from the point $0_{k}$.
\end{prop}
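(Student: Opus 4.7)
The plan is to establish, in order: (i) $\dim \Rct^{2,2}_k = 7$ and smoothness of $\Spec \Rct^{2,2}_k$ outside the point $0_k$, via an explicit parameterization on a principal open together with a Jacobian calculation on its complement; (ii) Cohen--Macaulayness via a \Grobner basis argument; (iii) normality and the domain property, formally from (i), (ii), and the cone structure; and (iv) failure of the complete intersection property via a minimal-generator count.

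For (i), let $M$, $P$, $Q$ denote the three $2 \times 2$ matrices appearing in the defining relations, and consider the principal open $\Uc \subseteq \Spec \Rct^{2,2}_k$ cut out by $\det M \ne 0$. On $\Uc$ the relation $PM = MQ$ forces $P = MQM^{-1}$, after which the equation $\det P = \det Q$ holds automatically. Hence $\Uc$ is identified with a Zariski open subset of $\GL_{2,k} \times \{Q \in M_{2,k} : \tr Q = 0\}$, which is smooth and irreducible of dimension $7$. To extend smoothness to the remainder of $\Spec \Rct^{2,2}_k \setminus \{0_k\}$, I would compute the Jacobian of the five defining relations at an arbitrary point of $V(\det M) \setminus \{0_k\}$ and exhibit a nonvanishing $3 \times 3$ minor, organizing the argument by the rank of $M$ (and, when $M = 0$, by which of $P$, $Q$ is nonzero). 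This is direct but requires some case analysis.

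For (ii), I would produce a \Grobner basis $\Gs$ for $\Ict_k$ with respect to the reverse lexicographic order on $k[A, B, C, D, S, T, U, X, Y, Z]$ with a carefully chosen variable ordering, applying Buchberger's criterion (Proposition \ref{prop:Buchberger}). Since $\Rct^{2,2}_k$ is not a complete intersection, the five defining relations alone cannot form a \Grobner basis, and additional generators must be produced by reducing $S$-pairs; natural candidates include $\det M = AD - BC$ and $2 \times 2$ minors of various sub-matrices built from the entries of $M$, $P$, $Q$. Once $\Gs$ is in hand, Proposition \ref{prop:Grob reg seq} will provide a regular sequence of length $7$ in $\Rct^{2,2}_k$, and the same basis computes $\dim \Rct^{2,2}_k = \dim k[A, \ldots, Z]/\init(\Ict_k) = 7$, yielding $\depth \Rct^{2,2}_k = \dim \Rct^{2,2}_k = 7$ and hence Cohen--Macaulayness.

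With (i) and (ii) in hand, the remaining claims follow formally. Smoothness outside the codimension $7$ point $0_k$ gives Serre's condition $(R_1)$, and Cohen--Macaulayness gives $(S_2)$, so $\Rct^{2,2}_k$ is normal. Since $\Rct^{2,2}_k$ is a positively graded $k$-algebra with $(\Rct^{2,2}_k)_0 = k$, $\Spec \Rct^{2,2}_k$ is a connected cone with vertex $0_k$, and normality then forces irreducibility, so $\Rct^{2,2}_k$ is a domain. For (iv), $\Ict_k$ has height $3$, but the five defining relations have pairwise distinct degree-$2$ monomials $SA$, $SB$, $UA$, $UB$, $S^2$, each appearing with coefficient $1$, so they are linearly independent in the degree-$2$ part of $k[A, \ldots, Z]$, forcing $\Ict_k$ to require at least $5 > 3$ generators; hence $\Rct^{2,2}_k$ is not a complete intersection. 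The principal obstacle will be the \Grobner basis computation in Step (ii): choosing an effective variable order and patiently verifying Buchberger's criterion for each $S$-pair, introducing auxiliary generators as necessary.
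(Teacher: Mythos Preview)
Your overall strategy is sound and will succeed, but one concrete claim in step (ii) is wrong: you assert that because $\Rct^{2,2}_k$ is not a complete intersection, the five defining relations cannot by themselves form a \Grobner basis. This is a non sequitur --- there is no relation between the height of an ideal and the size of a \Grobner basis for it --- and it is factually false here. With the reverse lexicographic order for the variable ordering $(X,Y,Z,A,D,B,C,S,T,U)$, the five given relations (with leading terms $XA$, $YA$, $ZD$, $XD$, $X^2$) already satisfy Buchberger's criterion; no auxiliary generators are needed. So step (ii) is in fact easier than you anticipate. The paper proceeds exactly this way, then passes to the quotient $\Rct^{2,2}_k/(U,T,S,C,B)$ and runs a second short \Grobner computation to extend the regular sequence to length $7$.

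The other genuine difference is in how you establish smoothness and irreducibility. You propose a direct Jacobian case analysis on $V(\det M)\setminus\{0_k\}$ stratified by $\rank M$; the paper instead uses the natural action of $\Hct_k = (\GL_{2,k})^2\times \G_{m,k}$ on $\Spec\Rct^{2,2}_k$, classifies the finitely many orbits on $\kbar$-points, and checks the Jacobian rank only at one representative per orbit. This also yields irreducibility cheaply: exactly one orbit has dimension $7$, and Cohen--Macaulayness forces equidimensionality. Your route to the domain property via ``normal $+$ connected graded'' is perfectly valid and arguably cleaner on that point, but the orbit approach spares you the full case analysis for smoothness. Your argument for not-CI (minimal generator count via the distinguished monomials $SA,SB,UA,UB,S^2$) is correct and slightly more direct than the paper's, which instead checks that the Artinian quotient by the length-$7$ regular sequence needs five generators.
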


Note that this implies parts (\ref{R22:domain}) and (\ref{R22:smooth}) of Theorem \ref{thm:R22 main results}. Indeed, Corollary \ref{cor:graded domain} implies that $\Rct^{2,2}$ is a domain of relative dimension $7$ over $\Oc$, which in particular implies that $\Rc^{2,2}(\st_{2,2}) \cong \Rct^{2,2}[t]$ is a domain, giving $\Nc^{2,2}(\st_{2,2}) = \Spec \Rc^{2,2}(\st_{2,2})$. (\ref{R22:domain}) follows. (\ref{R22:smooth}) follows from the statement that $\Spec\Rct^{2,2}_{\F}$ is smooth away from $0_{\F}$ by noting that if $x\in \Nc^{2,2}(\st_{2,2})(\Fbar)$ and $R^{2,2}_x(\st_{2,2})$ is the complete local ring at $x$, then $R^{2,2}_x(\st_{2,2})$ will be regular if any only if $R^{2,2}_x(\st_{2,2})/(\varpi)$ is (as $\varpi\in \mf_{R^{2,2}_x(\st_{2,2})}\sm \mf_{R^{2,2}_x(\st_{2,2})}^2$), and this latter ring is the complete local ring of $\Nc^{2,2}(\st_{2,2})_{\F}$ at the point $x$.

Also this implies that $\Rct^{2,2}$ is Cohen--Macaulay, as $\varpi$ is a regular element on $\Rct^{2,2}$ and $\Rct^{2,2}/(\varpi) = \Rct^{2,2}_{\F}$ is Cohen--Macaulay. As $\Rct^{2,2}$ is Cohen--Macaulay (and hence $S_2$) and smooth away $0_{\Oc}$ (and hence $R_1$) Serre's criterion implies that $\Rct^{2,2}$ is normal.

From now on let $k$ denote either $\F$ or $E$. Let $\succ$ be the reverse lexicographical ordering on $k[A,B,C,D,S,T,U,X,Y,Z]$ with variable ordering $(X,Y,Z,A,D,B,C,S,T,U)$. For ease of reading, we will underline the leading term of every polynomial under a monomial ordering.

Then we have:

\begin{prop}\label{prop:R22 Grob}
	The polynomials
	\begin{align*}
		g_1 &= \ul{XA}+ZB-AS-CT,&
		g_2 &= \ul{YA}-XB-BS-DT,&
		g_3 &= \ul{ZD}+XC+CS-AU,\\
		g_4 &= \ul{XD}-YC-DS+BU,&
		g_5 &= \ul{X^2}+YZ-S^2-TU
	\end{align*}
	form a \Grobner basis for $\Ict_k$ with respect to $\succ$. In particular, $(U,T,S,C,B)$ is a regular sequence for $\Rct^{2,2}_k$.
\end{prop}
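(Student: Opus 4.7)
The plan is to establish the Gr\"obner basis claim by Buchberger's criterion (Proposition \ref{prop:Buchberger}), after which the regular sequence statement follows immediately from Proposition \ref{prop:Grob reg seq}.

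First I would confirm that $\{g_1, g_2, g_3, g_4\}$ are the four entries of the matrix identity in the definition of $\Ict_k$ and that $g_5 = X^2 + YZ - S^2 - TU$ is the remaining (trace/determinant) generator, so that $(g_1, \ldots, g_5) = \Ict_k$. I would then verify the leading terms. Under the reverse-lexicographic order with variable ordering $(X, Y, Z, A, D, B, C, S, T, U)$, two degree-$2$ monomials are compared first by their exponents at the later variables, so within each $g_i$ the unique term containing no variable from the ``late'' block $\{B, C, S, T, U\}$ automatically dominates the others; inspection of $g_1, \ldots, g_5$ shows these are $XA, YA, ZD, XD, X^2$ respectively.

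Next I would check Buchberger's criterion on each of the $\binom{5}{2}=10$ pairs. Of these, the five pairs $(g_1, g_3), (g_2, g_3), (g_2, g_4), (g_2, g_5), (g_3, g_5)$ have coprime leading monomials and hence satisfy (\ref{buchberger}) automatically. For each of the remaining five pairs $(g_1, g_2), (g_1, g_4), (g_1, g_5), (g_3, g_4), (g_4, g_5)$, I would compute $\Ssf(g_i, g_j)$ and exhibit an explicit decomposition $\Ssf(g_i, g_j) = \sum_s f_s^{(ij)} g_s$ with $\LM(f_s^{(ij)})\LM(g_s) \preceq \LM(\Ssf(g_i, g_j))$. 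A direct computation gives, for instance, $\Ssf(g_1, g_2) = -S g_2 + T g_4 + B g_5$, $\Ssf(g_1, g_4) = C g_2 + B g_3$, and $\Ssf(g_1, g_5) = -S g_1 - Z g_2 - T g_3$; the two remaining reductions are of the same flavor and encode analogous algebraic identities among the $2\times 2$ matrix entries.

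Once the Gr\"obner basis claim is established, the regular sequence statement is immediate: the leading terms $XA, YA, ZD, XD, X^2$ all lie in $k[X, Y, Z, A, D]$, the subring generated by the first five variables in the ordering, so none of them involves $B, C, S, T$ or $U$. Applying Proposition \ref{prop:Grob reg seq} with cut-off $r = 6$ then yields that $(U, T, S, C, B)$ is a regular sequence on $\Rct^{2,2}_k$. The main obstacle is purely computational: verifying the leading-monomial bound $\LM(f_s^{(ij)})\LM(g_s) \preceq \LM(\Ssf(g_i, g_j))$ for each of the five nontrivial pairs requires careful bookkeeping in reverse-lex order, although once the correct decomposition is guessed the verification is entirely mechanical.
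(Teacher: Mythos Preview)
Your proposal is correct and follows exactly the paper's approach: confirm that $g_1,\ldots,g_5$ generate $\Ict_k$, identify the leading terms, dispose of the five pairs with coprime leading monomials, and exhibit explicit reductions for the remaining five S-polynomials before invoking Proposition~\ref{prop:Grob reg seq}. In fact your decomposition $\Ssf(g_1,g_5) = -Sg_1 - Zg_2 - Tg_3$ is the correct one (the paper's printed sign $+Tg_3$ is a typo); the only thing left implicit in your write-up is the analogous explicit reductions for $(g_3,g_4)$ and $(g_4,g_5)$, which are indeed routine.
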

\begin{proof}
	Note that $\{g_1,\ldots,g_5\}$ is just the generating set for $\Ict_k$ given above. Hence it generates $\Ict_k$, and so it remains to verify Buchberger's Criterion (Proposition \ref{prop:Buchberger}).
	
	By the last part of Proposition \ref{prop:Buchberger}, we do not need to check (\ref{buchberger}) for $(g_1,g_3), (g_2,g_3),(g_2,g_4),(g_2,g_5)$ or $(g_3,g_5)$. For the remaining $5$ pairs of polynomials we have the expressions:
	\begin{align*}
		\Ssf(g_1,g_2) &= Yg_1-Xg_2 = \ul{X^2B}+YZB-YAS+XBS+XDT-YCT = -Sg_2+Tg_4+Bg_5\\
		\Ssf(g_1,g_4) &= Dg_1-Ag_4 = \ul{ZDB}+YAC-DCT-ABU = Cg_2+Bg_3\\
		\Ssf(g_1,g_5) &= Xg_1-Ag_5 = -\ul{YZA}+XZB-XAS+AS^2-XCT+ATU = -Sg_1-Zg_2+Tg_3\\
		\Ssf(g_3,g_4) &= Xg_3-Zg_4 = \ul{X^2C}+YZC+ZDS+XCS-XAU-ZBU = -Ug_1+Sg_3+Cg_5\\
		\Ssf(g_4,g_5) &= Xg_4-Dg_5 = -\ul{YZD}-XYC-XDS+DS^2+XBU+DTU = Ug_2-Yg_3-Sg_4
	\end{align*}
	all of which satisfy the criterion $\LM(f^{(ij)}_s)\LM(g_s)\preceq \LM(\Ssf(g_i,g_j))$ from (\ref{buchberger}). So indeed, $\{g_1,g_2,g_3,g_4,g_5\}$ is a \Grobner basis for $\Ict_k$. The last claim follows Proposition \ref{prop:Grob reg seq}.
\end{proof}

Now let 
\[\Sc_k = \Rct^{2,2}_k/(U,T,S,C,B) = \frac{k[A,D,X,Y,Z]}{(AX,AY,DX,DZ,X^2+YZ)}\]
Letting $\alpha = Y-A$ and $\beta = D-Z$ we may rewrite this as $\Sc_k = k[X,Y,Z,\alpha,\beta]/\Jc$ where
\[
\Jc = \bigg(X(Y+\alpha),Y(Y+\alpha),X(Z+\beta),Z(Z+\beta),X^2+YZ\bigg)\subseteq  k[X,Y,Z,\alpha,\beta]
\]
Now let $\succ_{\Sc}$ be the reverse lexicographical ordering on $k[X,Y,Z,\alpha,\beta]$ with variable ordering $(X,Y,Z,\alpha,\beta)$.

\begin{prop}\label{prop:S22 Grob}
	The polynomials
	\begin{align*}
		h_1 &= \ul{X}(\ul{Y}+\alpha),&
		h_2 &= \ul{Y}(\ul{Y}+\alpha),&
		h_3 &= \ul{X}(\ul{Z}+\beta),&
		h_4 &= \ul{Z}(\ul{Z}+\beta),&
		h_5 &= \ul{X^2}+YZ
	\end{align*}
	form a \Grobner basis for $\Jc$ with respect to $\succ_{\Sc}$. In particular, $(\beta,\alpha) = (D-Z,Y-A)$ is a regular sequence for $\Sc_k$.
\end{prop}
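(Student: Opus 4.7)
The plan is to directly apply Buchberger's Criterion (Proposition \ref{prop:Buchberger}) to the given generating set $\{h_1,h_2,h_3,h_4,h_5\}$ of $\Jc$, then deduce the regular sequence statement via Proposition \ref{prop:Grob reg seq}. First I would verify that the underlined monomials are indeed the leading monomials with respect to $\succ_\Sc$: the reverse lexicographic order with ordering $(X,Y,Z,\alpha,\beta)$ puts a monomial $m_1$ above a monomial $m_2$ of equal degree precisely when the exponent of the last variable at which they differ is smaller in $m_1$, so $XY\succ X\alpha$, $Y^2\succ Y\alpha$, $XZ\succ X\beta$, $Z^2\succ Z\beta$, and $X^2\succ YZ$, matching the underlined terms.

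Next I would enumerate the ten pairs $(h_i,h_j)$ with $i<j$. By the second part of Proposition \ref{prop:Buchberger}, the five pairs $(h_1,h_4)$, $(h_2,h_3)$, $(h_2,h_4)$, $(h_2,h_5)$, $(h_4,h_5)$ need no check because their leading monomials are coprime. For the five remaining pairs, I would compute $\Ssf(h_i,h_j)$ and exhibit an expression of the form required by (\ref{buchberger}). The computations I anticipate are
\begin{align*}
\Ssf(h_1,h_2) &= Y\,h_1-X\,h_2 = 0,\\
\Ssf(h_3,h_4) &= Z\,h_3-X\,h_4 = 0,\\
\Ssf(h_1,h_3) &= Z\,h_1-Y\,h_3 = XZ\alpha - XY\beta = \alpha\,h_3-\beta\,h_1,\\
\Ssf(h_1,h_5) &= X\,h_1-Y\,h_5 = X^2\alpha - Y^2Z = \alpha\,h_5 - Z\,h_2,\\
\Ssf(h_3,h_5) &= X\,h_3-Z\,h_5 = X^2\beta - YZ^2 = \beta\,h_5 - Y\,h_4.
\end{align*}
In each case I would then check that the summands on the right have leading monomial bounded above by $\LM(\Ssf(h_i,h_j))$; for instance, for the third identity, $\LM(\Ssf(h_1,h_3))=XZ\alpha$ and the two contributing terms have leading monomials $\alpha\cdot XZ$ and $\beta\cdot XY=XY\beta$, and $XY\beta\prec_\Sc XZ\alpha$ since the two monomials have equal degree and the last variable at which they differ is $\beta$, with exponent $1$ in $XY\beta$ and $0$ in $XZ\alpha$. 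Analogous checks for the fourth and fifth identities are the same calculation. This verifies Buchberger's Criterion and so $\{h_1,\ldots,h_5\}$ is a \grobner\ basis.

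Finally, $\init(\Jc)=(XY,\,Y^2,\,XZ,\,Z^2,\,X^2)$ lies in $k[X,Y,Z]\subseteq k[X,Y,Z,\alpha,\beta]$, so the quotient $k[X,Y,Z,\alpha,\beta]/\init(\Jc)$ is a polynomial ring in $\alpha,\beta$ over $k[X,Y,Z]/(XY,Y^2,XZ,Z^2,X^2)$. In particular $(\beta,\alpha)$ is trivially a regular sequence on this quotient, and hence on $\Sc_k$ itself by Proposition \ref{prop:Grob reg seq}. The main ``obstacle'' is purely clerical: keeping the book-keeping straight in the reverse lexicographic comparisons and the S-pair reductions, but no pair requires reduction by more than two basis elements, so the computation is entirely routine.
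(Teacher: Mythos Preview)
Your proof is correct and follows essentially the same approach as the paper's: you identify the same five pairs with coprime leading terms, compute the same five S-polynomials with the same reductions, and invoke Proposition \ref{prop:Grob reg seq} for the regular sequence statement. The only differences are cosmetic---you add a brief verification of the leading monomials and spell out why $\init(\Jc)\subseteq k[X,Y,Z]$ makes $(\beta,\alpha)$ regular on the initial quotient, details the paper leaves implicit.
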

\begin{proof}
	By definition, $\Jc = (h_1,h_2,h_3,h_4,h_5)$, so we must verify Proposition \ref{prop:Buchberger}. By the last part of Proposition \ref{prop:Buchberger}, we do not need to check (\ref{buchberger}) for $(h_1,h_4),(h_2,h_3),(h_2,h_4),(h_2,h_5)$ or $(h_4,h_5)$. For the remaining $5$ pairs of polynomials we have:
	\begin{align*}
		\Ssf(h_1,h_2) &= Yh_1-Xh_2 = 0\\
		\Ssf(h_1,h_3) &= Zh_1-Yh_3 = \ul{XZ\alpha}-XY\beta = -\beta h_1+\alpha h_3\\
		\Ssf(h_1,h_5) &= Xh_1-Yh_5 = -\ul{Y^2Z}+X^2\alpha = -Zh_2+\alpha h_5\\
		\Ssf(h_3,h_4) &= Zh_3-Xh_4 = 0\\
		\Ssf(h_3,h_5) &= Xh_3-Zh_5 = -\ul{YZ^2}+X^2\beta = -Yh_4+\beta h_5
	\end{align*}
	so indeed $\{h_1,h_2,h_3,h_4,h_5\}$ is a \Grobner basis for $\Jc$. Again, the last claim follows by Proposition \ref{prop:Grob reg seq}.
\end{proof}

Combining these propositions, we get that $(U,T,S,C,B,D-Z,Y-A)$ is a regular sequence for $\Rc^{2,2}_k$. But now
\[\frac{\Rct^{2,2}_k}{(U,T,S,C,B,D-Z,Y-A)} = \frac{\Sc_k}{(\alpha,\beta)} = \frac{k[X,Y,Z]}{(XY,Y^2,XZ,Z^2,X^2+YZ)},\]
which clearly has dimension $0$ (observe that $X^3 = -XYZ = 0$ in this ring, so $X,Y$ and $Z$ are all nilpotent). It follows that $\Rct^{2,2}_k$ is Cohen--Macaulay of dimension $7$. 

Moreover, as the ideal $(XY,Y^2,XZ,Z^2,X^2+XY)$ has $5$ generators in its minimal generating set, $\Sc_k/(\alpha,\beta)$ is not a complete intersection, and so $\Rc^{2,2}_k$ isn't either.

To complete the proof of Proposition \ref{prop:R22tilde} we need to show that $\Rct^{2,2}_k$ is a domain, smooth away from $0_{\Rct,k}$. 

Let $\Hct_k = \Hc^{2,2}_k\times \G_{m,k} = (\GL_{2,k})^2\times \G_{m,k}$ treated as an algebraic group over $k$, and note that $\Hct_k$ acts on $\Spec \Rct^{2,2}_k$ via
\[
(g,h,t)\cdot\left(\begin{pmatrix}S&T\\U&-S\end{pmatrix};\begin{pmatrix}A&B\\C&D\end{pmatrix};\begin{pmatrix}X&Y\\Z&-X\end{pmatrix}\right)
=
\left(tg\begin{pmatrix}S&T\\U&-S\end{pmatrix}g^{-1};g\begin{pmatrix}A&B\\C&D\end{pmatrix}h^{-1};th\begin{pmatrix}X&Y\\Z&-X\end{pmatrix}h^{-1}\right)
\]
(for $g,h\in \GL_{2,k}$ and $t\in \G_{m,k}$). One can check that the action of $\Hct_k(\kbar)$ on $(\Spec \Rct^{2,2}_k)(\kbar)$ has $13$ orbits, generated by the points:
\begin{align*}
	x &= \left(\begin{pmatrix}1&0\\0&-1\end{pmatrix};\begin{pmatrix}1&0\\0&1\end{pmatrix};\begin{pmatrix}1&0\\0&-1\end{pmatrix}\right) &
	x_{\e} &= \left(\begin{pmatrix}0&\e\\0&0\end{pmatrix};\begin{pmatrix}1&0\\0&1\end{pmatrix};\begin{pmatrix}0&\e\\0&0\end{pmatrix}\right)\\
	y &= \left(\begin{pmatrix}1&0\\0&-1\end{pmatrix};\begin{pmatrix}1&0\\0&0\end{pmatrix};\begin{pmatrix}1&0\\0&-1\end{pmatrix}\right) &
	y_{\e_1\e_2} &= \left(\begin{pmatrix}0&\e_1\\0&0\end{pmatrix};\begin{pmatrix}0&1\\0&0\end{pmatrix};\begin{pmatrix}0&\e_2\\0&0\end{pmatrix}\right)\\
	z &= \left(\begin{pmatrix}1&0\\0&-1\end{pmatrix};\begin{pmatrix}0&0\\0&0\end{pmatrix};\begin{pmatrix}1&0\\0&-1\end{pmatrix}\right) &
	z_{\e_1\e_2} &= \left(\begin{pmatrix}0&\e_1\\0&0\end{pmatrix};\begin{pmatrix}0&0\\0&0\end{pmatrix};\begin{pmatrix}0&\e_2\\0&0\end{pmatrix}\right)
\end{align*}
for $\e,\e_1,\e_2\in \{0,1\}$. Now as $\Hct_k(\kbar)$ is a connected group variety of dimension $9$, for any $p\in (\Spec \Rct^{1,2,1}_k)(\kbar)$ the orbit $\Hct_k(\kbar)(p)$ is irreducible of dimension $9-\dim\stab_{\Hct_k(\kbar)}(p)$. By computing stabilizers it is now easy to compute the dimension of each of these $13$ orbits (we omit the details):

\begin{lemma}\label{lem:H22 orbits dimensions}
	The dimensions of the $13$ orbits of $\Hct_k(\kbar)$ on $(\Spec \Rct^{2,2}_k)(\kbar)$ are as follows:
	
	For $p\in \{x,y,z,x_0,x_1\}$ the dimension of $\Hct_k(\kbar)(p)$ is:
	\begin{align*}
	\dim \Hct_k(\kbar)(x) &= 7, &
	\dim \Hct_k(\kbar)(y) &= 6, &
	\dim \Hct_k(\kbar)(z) &= 5,\\
	\dim \Hct_k(\kbar)(x_0) &= 4, &
	\dim \Hct_k(\kbar)(x_1) &= 6,
	\end{align*}
	and the dimensions of the remaining $8$ orbits are given in the following table:
	\[
	\begin{array}{c|c|c|c|c|}
		(\e_1,\e_2) & (0,0) & (0,1) & (1,0) & (1,1)\\
		\hline
		\dim \Hct_k(\kbar)(y_{\e_1\e_2}) &3&4&4&5\\
		\hline
		\dim \Hct_k(\kbar)(z_{\e_1\e_2}) &0&2&2&4\\
		\hline
	\end{array}
	\]
\end{lemma}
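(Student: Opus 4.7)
The plan is to use the orbit-stabilizer formula $\dim \Hct_k(\kbar)(p) = 9 - \dim \stab_{\Hct_k(\kbar)}(p)$ stated just before the lemma, and compute $\dim \stab_{\Hct_k(\kbar)}(p)$ explicitly for each of the 13 representatives by solving the three matrix equations
\[
tgX_1g^{-1} = X_1, \qquad gN_1 h^{-1} = N_1, \qquad thX_2 h^{-1} = X_2
\]
in the entries of $g,h\in \GL_2$ and $t\in \G_m$ cutting out the stabilizer.

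For the first five representatives, where $N_1$ is either $I_2$ or a scalar multiple of $E_{12}:=\bigl(\begin{smallmatrix}0&1\\0&0\end{smallmatrix}\bigr)$, the middle equation imposes strong rigidity: when $N_1=I_2$ it forces $g=h$, and when $N_1=E_{12}$ it forces $g,h$ to be upper triangular with $g_{11}=h_{22}$. At the same time, when $X_1=X_2=\diag(1,-1)$, comparing characteristic polynomials in $tgX_1g^{-1}=X_1$ gives $t^2=1$, splitting the analysis cleanly into $t=1$ (where $g,h$ are diagonal) and $t=-1$ (where $g,h$ are antidiagonal). Combining these constraints with invertibility yields stabilizer dimensions $2,3,4,5,3$ for $x,y,z,x_0,x_1$ respectively.

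For the remaining eight representatives $y_{\e_1\e_2}$ and $z_{\e_1\e_2}$, each of $X_1,N_1,X_2$ is either zero or a scalar multiple of $E_{12}$. Each nonzero equation of the form $tgE_{12}g^{-1}=E_{12}$ forces $g=\bigl(\begin{smallmatrix}a&b\\0&ta\end{smallmatrix}\bigr)$, and each nonzero equation of the form $gE_{12}h^{-1}=E_{12}$ forces $g,h$ to be upper triangular with $g_{11}=h_{22}$. I will sweep through the four patterns of $(\e_1,\e_2)$ for each of $y$ and $z$: since $N_1=E_{12}$ (for $y$) only links $g$ and $h$ through this single diagonal relation, and $N_1=0$ (for $z$) does not link them at all, the free parameters decouple in a bookkeeping-friendly way and produce the tabulated dimensions.

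The computation is entirely mechanical; the only subtlety is tracking the $t=-1$ branches in the orbits of $x$ and $z$, where $g$ and $h$ are antidiagonal and contribute a separate component of the stabilizer of the same dimension as the $t=1$ component (so the dimension count is unchanged). I do not anticipate any conceptual obstacle beyond careful case analysis.
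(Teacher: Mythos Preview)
Your approach is exactly what the paper does (it simply says ``by computing stabilizers it is now easy to compute the dimension of each of these 13 orbits (we omit the details)''), and your orbit-stabilizer computations yield the correct dimensions in every case. One tiny slip in your write-up: for the representative $y$ the middle matrix $N_1$ is $E_{11}=\bigl(\begin{smallmatrix}1&0\\0&0\end{smallmatrix}\bigr)$, not a scalar multiple of $E_{12}$, so the rigidity there is $g$ upper triangular, $h$ lower triangular, $g_{11}=h_{11}$ --- but this does not affect your stated stabilizer dimension of $3$.
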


In particular, $\dim \Hct_k(\kbar)(x) = 7$ and all other orbits have dimension strictly smaller than $7$. Thus $(\Spec \Rct^{2,2})(\kbar)$ has a single component of dimension $7$: $\overline{\Hct_k(\kbar)(x)}$. As $\Rct^{2,2}_k$ is Cohen--Macaulay of dimension $7$, $\Spec \Rct^{2,2}_k$ must be equidimensional of dimension $7$, and so it follows that $\Spec \Rct^{2,2}_k$ must be irreducible and $(\Spec \Rct^{2,2}_k)(\kbar)= \overline{\Hc_k(\kbar)(x)}$.

Also note that the only fixed point of the action is $z_{00} = 0_{k}$.

To show that $\Rct^{2,2}_k$ is a domain, it remains to show that $\Spec \Rct^{2,2}_k$ is reduced. Again as it is Cohen--Macaulay, it suffices to show it is generically reduced. Now we can compute:

\begin{lemma}
	$\Rct^{2,2}_k$ is regular at all of the points $x,x_{\e},y,y_{\e_1\e_2},z,z_{\e_1\e_2}$, except for $z_{00}$.
\end{lemma}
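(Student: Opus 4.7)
The plan is to verify regularity at each representative via the Jacobian criterion, using the $\Hct_k$-action to cut down the amount of checking. Since $\Rct^{2,2}_k$ is Cohen--Macaulay of dimension $7$ (established above) and sits in the polynomial ring $k[A,B,C,D,S,T,U,X,Y,Z]$ of dimension $10$, regularity at a closed $k$-point $p$ is equivalent to the $5\times 10$ Jacobian matrix
\[
J(p) = \left(\tfrac{\partial g_i}{\partial \xi_j}\Big|_p\right)_{1\le i\le 5,\ \xi_j\in\{A,B,C,D,S,T,U,X,Y,Z\}}
\]
having rank exactly $3=10-7$. Since $\Hct_k$ acts on $\Spec\Rct^{2,2}_k$ by $k$-scheme automorphisms, the regular locus is $\Hct_k$-stable, so it suffices to verify the rank condition at the twelve orbit representatives listed (all of which have coordinates in $\{0,1\}$). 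Note that each partial derivative $\partial g_i/\partial \xi_j$ is a polynomial of degree $\le 1$ in the variables, so $J(p)$ is a matrix with entries in $\{0,\pm 1,\pm 2\}$; since $\ell\ge 5$ (or $k=E$ has characteristic $0$), the element $2$ is invertible in $k$ and these ranks may be read off directly.

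The twelve cases fall into natural groups. For the ``top'' orbits $x,y,z$, one has $S=X=1$ and only $A,B,C,D$ varying; in this situation the rows of $J(p)$ coming from $g_1,g_4,g_5$ turn out to be proportional (all supported in the $S,X$ columns), and the rows from $g_2,g_3$ each contribute a new independent direction, yielding rank exactly $3$. For the orbits $x_0,x_1$ (which have $A=D=1$ but $S=X=0$), the rows from $g_1,\ldots,g_4$ split into two pairs of coinciding rows, and together with $g_5$ the three essentially distinct rows have leading entries in distinct columns. For the ``small'' orbits $y_{\e_1\e_2}$ and $z_{\e_1\e_2}$ (with $\e_1,\e_2\in\{0,1\}$, except $z_{00}$), at each representative at least three of the rows $g_i$ contain a single nonzero entry in three distinct columns, and one exhibits a nonzero $3\times 3$ minor by inspection. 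Finally at $z_{00}$ every partial derivative of $g_1,\ldots,g_5$ vanishes, so $J(z_{00})=0$ and $\Rct^{2,2}_k$ fails to be regular there (which is consistent with the fact that $z_{00}$ is excluded from the list).

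The main obstacle is purely organizational: each individual rank computation is elementary, but there are twelve of them, and one must record for each point a specific choice of three rows and three columns in which $J(p)$ has an invertible minor. A systematic way to execute this is to tabulate, for each orbit representative, the nonzero positions of each row of $J(p)$; the patterns above then give the required minors with no nontrivial cancellation, since all denominators in sight are $2$, which is a unit in $k$. Once this verification is complete, combining it with the irreducibility and dimension count of $\Spec\Rct^{2,2}_k$ established via Lemma \ref{lem:H22 orbits dimensions} will finish the proof of Proposition \ref{prop:R22tilde}, and hence of parts (\ref{R22:domain}), (\ref{R22:smooth}), and (\ref{R22:Gorenstein}) of Theorem \ref{thm:R22 main results}.
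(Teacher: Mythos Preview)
Your approach is the same as the paper's: write down the Jacobian of $(g_1,\ldots,g_5)$, use that $\Rct^{2,2}_k$ is Cohen--Macaulay of dimension $7$ inside $\A^{10}_k$, and verify that the Jacobian has rank $3$ at each of the twelve non-origin orbit representatives (and rank $0$ at $z_{00}$). The paper simply displays the $10\times 5$ matrix of partials and says ``one can easily verify''; your organizational outline is a reasonable expansion of this.

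However, several of your specific row-pattern descriptions are inaccurate and should be fixed or dropped. For example, at $x_0$ (where $A=D=1$ and all other coordinates vanish) the rows do \emph{not} split as you describe: one finds $g_1=g_4$, while $g_2$ and $g_3$ are distinct and $g_5=0$, so the three independent rows are $g_1,g_2,g_3$ (supported respectively in the $\{S,X\}$, $\{T,Y\}$, $\{U,Z\}$ columns), not ``two pairs among $g_1,\ldots,g_4$ together with $g_5$''. Similarly, at $y_{00}$ only two of the five rows have a single nonzero entry (namely $g_1$ and $g_4$, in the $Z$ and $U$ columns), not three; the third independent row is $g_2$, which has two nonzero entries. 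Since you correctly defer the actual verification to direct tabulation, these slips do not create a genuine gap in the argument, but as written the narrative would mislead a reader trying to reproduce the check.
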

\begin{proof}
	Let $g_1,\ldots,g_5$ be as in Proposition \ref{prop:R22 Grob}. Write $\vec{g} = \begin{pmatrix}g_1&g_2&g_3&g_4&g_5\end{pmatrix}$. Then we can compute that 
	\[
	\begin{pmatrix}
		\del \vec{g}/\del A\\
		\del \vec{g}/\del B\\
		\del \vec{g}/\del C\\
		\del \vec{g}/\del D\\
		\del \vec{g}/\del S\\
		\del \vec{g}/\del T\\
		\del \vec{g}/\del U\\
		\del \vec{g}/\del X\\
		\del \vec{g}/\del Y\\
		\del \vec{g}/\del Z
	\end{pmatrix}
	=
	\begin{pmatrix}
		X-S&Y&-U&0&0\\
		Z&-X-S&0&U&0\\
		-T&0&X+S&-Y&0\\
		0&-T&Z&X-S&0\\
		-A&-B&C&-D&-2S\\
		-C&-D&0&0&-U\\
		0&0&-A&B&-T\\
		A&-B&C&D&2X\\
		0&A&0&-C&Z\\
		B&0&D&0&Y
	\end{pmatrix}
	\]
	One can easily verify that this matrix has rank $10-7=3$ at each of the $12$ points specified (and is the zero matrix at at $z_{00}$) so as $\Spec \Rct^{2,2}$ has codimension $10-7=3$ in $\A^{10}_k$, it follows that $\Rct^{2,2}_k$ is regular at the requested points.
\end{proof}

Now the action of $\Hc_k(\kbar)$ on $(\Spec \Rct^{2,2}_k)$ implies that if $\Rct^{2,2}_k$ is regular at some point $p$, then it is regular at all points in $\Hc_k(\kbar)p$. Thus the lemma implies that $\Rct^{2,2}_k$ is regular at all points in $(\Spec \Rct^{2,2}_k)(\kbar)\sm \Hc_k(\kbar)z_{00} = (\Spec \Rct^{2,2}_k)(\kbar)\sm\{0_{k}\}$. Thus $(\Spec \Rct^{2,2}_k)\sm \{0_{k}\}$ is smooth.

In particular, this implies that $\Spec \Rct^{2,2}_k$ is generically reduced. As $\Rct^{2,2}_k$ is Cohen--Macaulay and irreducible, this implies that it is reduced, and so it is a domain.

This completes the proof of Proposition \ref{prop:R22tilde}, and so of parts (\ref{R22:domain}) and (\ref{R22:smooth}) of Theorem \ref{thm:R22 main results}, and also shows that $\Rct^{2,2}$ is Cohen--Macaulay.

At this point, one could show that $\Rct^{2,2}$ is Gorenstein by directly showing that the Artinian ring $\Sc_{\F}$ is Gorenstein. We will omit this computation, as this result also follows from part (\ref{R22:class group}) of Theorem \ref{thm:R22 main results}. Indeed, let $\omega$ be the dualizing module of $\Rct^{2,2}$. Since $\omega$ is a generic rank $1$ reflexive module, it corresponds to an element $[\omega]\in \Cl(\Rct^{2,2})$. If $\Cl(\Rct^{2,2}) = 0$, it will then follow that $\omega$ is free over $\Rct^{2,2}$, and hence $\Rct^{2,2}$ is Gorenstein, proving Theorem \ref{thm:R22 main results}(\ref{R22:Gorenstein}).

So it remains to prove Theorem \ref{thm:R22 main results}(\ref{R22:Rational},\ref{R22:class group}). By Corollary \ref{cor:Cl(A)=Cl(A[1/x])}, to prove (\ref{R22:class group}) it is enough to prove that $\Cl(\Rct^{2,2}_k) = 0$ for $k=\F,E$.

Now consider the closed subscheme $\Dc_k = \Spec \Rct^{2,2}_k/(AD-BC)$ of $\Spec \Rct^{2,2}_k$.

\begin{lemma}\label{lem:R/(AD-BC)}
$\Dc_k$ is geometrically integral of dimension $6$.
\end{lemma}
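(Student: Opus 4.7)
Since the conclusion is geometric integrality, I will argue directly that $\Dc_{\kbar}$ is an integral scheme of dimension $6$. All of the arguments in the proof of Proposition \ref{prop:R22tilde} extend verbatim with $k$ replaced by $\kbar$ (the \Grobner basis computation in Proposition \ref{prop:R22 Grob} is insensitive to the base field, and the orbit analysis already takes place over $\kbar$), so $\Rct^{2,2}_{\kbar}$ is a Cohen--Macaulay domain of dimension $7$. The element $AD-BC$ is a degree-$2$ homogeneous polynomial that is visibly not a $\kbar$-linear combination of $g_1,\ldots,g_5$ (none of the $g_i$ contain a pure $AD$ or $BC$ monomial), hence it is nonzero in $\Rct^{2,2}_{\kbar}$; being a nonzero element of a domain, it is regular. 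Therefore $\Dc_{\kbar}$ is Cohen--Macaulay and equidimensional of dimension $6$.

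For irreducibility, note that $AD-BC=\det N$, where $N$ denotes the middle $2\times 2$ matrix, and that under the $\Hct_{\kbar}$-action this function is scaled by $\det(g)\det(h)^{-1}$. Hence $\Dc_{\kbar}$ is $\Hct_{\kbar}$-stable, and so is a union of $\Hct_{\kbar}(\kbar)$-orbits. Since $\Hct_{\kbar}$ is connected, every irreducible component of $\Dc_{\kbar}$ is $\Hct_{\kbar}$-stable and hence equals the closure of its generic orbit, which must have dimension $6$. The orbits contained in $\Dc_{\kbar}$ are exactly those whose representatives satisfy $\det N = 0$, namely $y$, the four $y_{\e_1\e_2}$, $z$, and the four $z_{\e_1\e_2}$; by the dimension table preceding Lemma \ref{lem:H22 orbits dimensions}, only $\Hct_{\kbar}(\kbar)(y)$ has dimension $6$. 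By equidimensionality this is the unique irreducible component, so $\Dc_{\kbar}$ is irreducible.

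It remains to show $\Dc_{\kbar}$ is reduced. Being Cohen--Macaulay, it suffices to find a single smooth point, and I will check smoothness at $y=\left(\diag(1,-1);\diag(1,0);\diag(1,-1)\right)$. Using the generators $g_1,\ldots,g_5$ of $\Ict_{\kbar}$ from Proposition \ref{prop:R22 Grob} together with $g_6:=AD-BC$, a direct evaluation of the Jacobian matrix at $y$ yields four rows (from $g_1,g_2,g_3,g_6$) whose nonzero entries lie in disjoint pairs of columns $\{S,X\}$, $\{B,Y\}$, $\{C,U\}$, $\{D\}$ respectively; these rows are manifestly linearly independent. Since $\Dc_{\kbar}$ has codimension $4$ in $\A^{10}_{\kbar}$, this confirms smoothness at $y$ and hence generic reducedness. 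Combining with the previous paragraphs, $\Dc_{\kbar}$ is integral of dimension $6$, as required.

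The only real piece of calculation is the Jacobian evaluation at $y$, which is short and self-contained; everything else reduces to bookkeeping with orbit dimensions and the Cohen--Macaulayness of $\Rct^{2,2}_{\kbar}$ already established in the proof of Proposition \ref{prop:R22tilde}.
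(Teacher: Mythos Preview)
Your proof is correct and follows essentially the same architecture as the paper's: Cohen--Macaulayness of $\Dc_{\kbar}$ from regularity of $AD-BC$, irreducibility from the orbit count (only $\Hct(\kbar)(y)$ has dimension $6$) combined with equidimensionality, and reducedness from generic reducedness plus Cohen--Macaulayness. The one genuine point of departure is how you establish generic reducedness. The paper does not compute a Jacobian at $y$; instead it inverts $A$ and observes that the defining equations allow one to eliminate $D,S,U,Y$, so that $\Rct^{2,2}_k/(AD-BC)[A^{-1}]$ is a localized polynomial ring in six variables, which is manifestly (geometrically) reduced of the correct dimension. Your Jacobian check at $y$ is an equally valid and slightly more elementary route to the same conclusion; the paper's localization argument has the minor advantage that it foreshadows the structure of $\Rct^{2,2}_k[1/(AD-BC)]$ used immediately afterwards in the class group computation.
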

\begin{proof}
	As $\Rct^{2,2}_k$ is Cohen--Macaulay and $AD-BC$ is a regular element on $\Rct^{2,2}_k$ (as $\Rct^{2,2}_k$ is a domain), $\Dc_k$ is Cohen--Macaulay of  dimension $6$. This implies that $\Dc_k$ is equidimensional of dimension $6$, and geometrically reduced provided that it is generically geometrically reduced.
	
	Now the action of $\Hct_k = (\GL_{2,k})^2\times \G_{m,k}$ on $\Spec \Rct^{2,2}_k$ clearly preserves $\Dc_k$, so $\Dc_k(\kbar)$ is a union of orbits of $\Hct_k(\kbar)$. In fact, by the description of the orbits of $\Hct_k(\kbar)$ above we see that $\Dc_k(\kbar)$ is a union of the $10$ orbits $\Hct_k(\kbar)(y)$, $\Hct_k(\kbar)(z)$, $\Hct_k(\kbar)(y_{\e_1\e_2})$ and $\Hct_k(\kbar)(z_{\e_1\e_2})$ for $\e_1,\e_2\in\{0,1\}$. Of these orbits, $\Hct_k(\kbar)(y)$ has dimension $6$, and the other $9$ orbits all have dimension strictly smaller than $6$ by Lemma \ref{lem:H22 orbits dimensions}. Thus $\Dc_k$ has a unique irreducible component of dimension $6$, namely $\overline{\Hct_k(\kbar)(y)}$. Since $\Dc_k$ is equidimensional of dimension $6$, this implies $\Dc_k=\overline{\Hct_k(\kbar)(y)}$, and so is geometrically irreducible.
	
	Now consider the ring $\Rc^{2,2}_k/(AD-BC)[A^{-1}]$, and note that this is the quotient of $k[A,B,C,D,S,T,U,X,Y,Z][A^{-1}]$, given by the equations:
	\begin{align*}
		\begin{pmatrix}S&T\\U&-S\end{pmatrix}\begin{pmatrix}A&B\\C&D\end{pmatrix} &= \begin{pmatrix}A&B\\C&D\end{pmatrix}\begin{pmatrix}X&Y\\Z&-X\end{pmatrix},&
		S^2+TU&=X^2+YZ,&
		AD=BC
	\end{align*}
	These equations imply
	\begin{align*}
		D&=A^{-1}BC,\\
		S&=X+A^{-1}CZ-A^{-1}CT,\\
		U&=A^{-1}CX+A^{-1}DZ+A^{-1}SC,\\
		Y&=A^{-1}BS+A^{-1}DT+A^{-1}BX
	\end{align*}
	implying that $\Rct_k^{2,2}/(AD-BC)[A^{-1}]$ is generated as a $k$-algebra by $A^{\pm 1},B,C,T,Y,Z$, and is thus a quotient of $k[A^{\pm 1},B,C,T,Y,Z]$. Since $\dim \Rct^{2,2}_k/(AD-BC)[A^{-1}] = 6$, we get that $\Rct^{2,2}_k/(AD-BC)[A^{-1}] = k[A,B,C,T,Y,Z][A^{-1}]$. As $k[A,B,C,T,Y,Z][A^{-1}]$ is reduced of dimension $6$, it follows that $\Dc_k$ is generically geometrically reduced, and hence geometrically reduced.
\end{proof}

Now it follows that $\Rct^{2,2}_k/(AD-BC)$ is a domain of dimension $6$, so Corollary \ref{cor:Cl(A)=Cl(A[1/x])} gives that $\Cl(\Rct^{2,2}_k) \cong \Cl(\Rct^{2,2}_k[1/(AD-BC)])$.

But now in the ring $\Rct^{2,2}_k[1/(AD-BC)]$, the matrix $\begin{pmatrix}A&B\\C&D\end{pmatrix}$ is invertible. Thus the equations defining $\Rct^{2,2}_k[1/(AD-BC)]$ simplify to 
\[\begin{pmatrix}X&Y\\Z&-X\end{pmatrix} = \begin{pmatrix}A&B\\C&D\end{pmatrix}^{-1}\begin{pmatrix}S&T\\U&-S\end{pmatrix}\begin{pmatrix}A&B\\C&D\end{pmatrix} = \frac{1}{AD-BC}\begin{pmatrix}D&-B\\-C&A\end{pmatrix}\begin{pmatrix}S&T\\U&-S\end{pmatrix}\begin{pmatrix}A&B\\C&D\end{pmatrix}\]
as this matrix equation clearly implies
\[
S^2 + TU = \det \begin{pmatrix}S&T\\U&-S\end{pmatrix} = \det \begin{pmatrix}X&Y\\Z&-X\end{pmatrix} = X^2+YZ.
\]
Hence $\Rc^{2,2}_k[1/(AD-BC)] \cong k[A,B,C,D,S,T,U]\left[\frac{1}{AD-BC}\right]$, which is the localization of a UFD, and hence also a UFD. Thus $\Cl(\Rct_k^{2,2}) \cong \Cl(\Rc^{2,2}_k[1/(AD-BC)]) = 0$, proving Theorem \ref{thm:R22 main results}(\ref{R22:class group}). Also this implies that $(\Spec \Rc^{2,2}_k)\sm \Dc_k = \Spec \Rc^{2,2}_k[1/(AD-BC)]$, which is isomorphic to a Zariski open subset of $\A^7_k$ (and which obviously contains points of $\A^7_k(k)$), so by Lemma \ref{lem:R/(AD-BC)}, \ref{thm:R22 main results}(\ref{R22:Rational}) holds with $\Zc^{2,2}_k := \Dc_k$. This completes the proof of Theorem \ref{thm:R22 main results}.
	
\section{The space $\Nc^{1,2,1}(\st_{2,2})$}\label{sec:R121}
The goal of this section is to prove Theorem \ref{thm:R121 main results}. We will follow a similar approach to the one used in Section \ref{sec:R22}.

Write $\Nc^{1,2,1} = \Spec \Rc^{1,2,1}$, so that $\Rc^{1,2,1}$ is the quotient of $\Oc[A,B,C,D,R,S,X,Y,Z,W]$ cut out by the matrix equations
\begin{align*}
\begin{pmatrix}A&B\end{pmatrix}\begin{pmatrix}X&Y\\Z&W\end{pmatrix} &= R\begin{pmatrix}A&B\end{pmatrix},&
\begin{pmatrix}X&Y\\Z&W\end{pmatrix}\begin{pmatrix}C\\D\end{pmatrix} &= S\begin{pmatrix}C\\D\end{pmatrix}.
\end{align*}
By Proposition \ref{prop:st(2,2) cases}, $\Nc^{1,2,1}(\st_{2,2})$ is an irreducible component of $\Nc^{1,2,1}$. Let 
\[\Rc^{1,2,1}(\st_{2,2}) = \Rc^{1,2,1}/(AC+BD,R+S-X-W,RS-XW+YZ).\]
\begin{prop}
$\Nc^{1,2,1}(\st_{2,2}) \subseteq \Spec \Rc^{1,2,1}(\st_{2,2})$ as subschemes of $\Nc^{1,2,1}$.
\end{prop}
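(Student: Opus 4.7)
The plan is to follow the template of the preceding proposition for $\Nc^{2,2}(\st_{2,2})$. By Proposition \ref{prop:st(2,2) cases} we have $\Nc^{1,2,1}(\st_{2,2}) = \Nc^{1,2,1}(1,1;0) = \overline{(\Pi^{1,2,1})^{-1}(\Wc^{1,2,1}(1,1;0))}$, and $\Spec \Rc^{1,2,1}(\st_{2,2})$ is a Zariski closed subscheme of $\Nc^{1,2,1}$, so it suffices to verify on $\Ebar$-points that $(\Pi^{1,2,1})^{-1}(\Wc^{1,2,1}(1,1;0)) \subseteq \Spec \Rc^{1,2,1}(\st_{2,2})$.

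Unwinding definitions, a point $(R,N_1,X_2,N_2,S) \in (\Pi^{1,2,1})^{-1}(\Wc^{1,2,1}(1,1;0))(\Ebar)$ consists of data $N_1 = (A,B) \in M_{1 \times 2}(\Ebar)$, $N_2 = (C,D)^t \in M_{2 \times 1}(\Ebar)$, $X_2 \in M_{2\times 2}(\Ebar)$, and scalars $R,S \in \Ebar$ with $R N_1 = N_1 X_2$ and $X_2 N_2 = N_2 S$, together with the orbit condition from Proposition \ref{prop:H orbits} that $\rank N_1 = \rank N_2 = 1$ and $\rank(N_1 N_2) = 0$. Since $N_1 N_2 = AC+BD$ is a $1 \times 1$ matrix, the equation $AC + BD = 0$ is immediate from $\rank(N_1 N_2) = 0$.

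For the remaining two equations, the strategy is to observe that $N_1 \neq 0$ is a left eigenvector of $X_2$ with eigenvalue $R$, while $N_2 \neq 0$ is a right eigenvector of $X_2$ with eigenvalue $S$. Hence both $R$ and $S$ are roots of $\chi_{X_2}(t)$; if moreover $R \neq S$, then $\chi_{X_2}(t) = (t-R)(t-S)$, which gives $X + W = \tr X_2 = R+S$ and $XW - YZ = \det X_2 = RS$. So on the locus $U \subseteq (\Pi^{1,2,1})^{-1}(\Wc^{1,2,1}(1,1;0))$ where $R \neq S$, all three defining equations of $\Rc^{1,2,1}(\st_{2,2})$ hold.

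To conclude, I need $U$ to be Zariski dense in $\Nc^{1,2,1}(\st_{2,2})$. The preimage $(\Pi^{1,2,1})^{-1}(\Wc^{1,2,1}(1,1;0))$ is a vector bundle over the irreducible orbit $\Wc^{1,2,1}(1,1;0)$ (as in the proof of Proposition \ref{prop:Nc^nun irreducible components}), hence irreducible, and $U$ is the complement of the proper closed subset cut out by $R - S = 0$; to check properness, exhibit the point $N_1 = (1,0)$, $N_2 = (0,1)^t$, $X_2 = \diag(R,S)$ with $R \neq S$, which lies in the preimage and has $R \neq S$. Thus $U$ is Zariski dense in $\Nc^{1,2,1}(\st_{2,2})$, and the three defining equations of $\Rc^{1,2,1}(\st_{2,2})$ extend by continuity to all of $\Nc^{1,2,1}(\st_{2,2})$. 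There is no real obstacle here; the only subtlety is confirming the genericity of $R \neq S$, which is handled by the explicit point just described.
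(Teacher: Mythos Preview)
Your argument is correct but takes a different route from the paper's. The paper exploits the $\Hc^{1,2,1}$-action: it observes that the closed subscheme $\Spec \Rc^{1,2,1}(\st_{2,2})$ is $\Hc^{1,2,1}$-stable, picks the single point $x = \bigl((1,0),(0,1)^t\bigr) \in \Wc^{1,2,1}(1,1;0)$, and checks directly that on the fiber $(\Pi^{1,2,1})^{-1}(x)$ the defining equations of $\Nc^{1,2,1}$ force $X=R$, $Y=0$, $W=S$, from which all three equations $AC+BD=0$, $R+S=X+W$, $RS=XW-YZ$ are immediate; equivariance then carries this over the whole orbit. Your approach instead reads the equations $RN_1=N_1X_2$ and $X_2N_2=N_2S$ as saying that $R,S$ are eigenvalues of the $2\times 2$ matrix $X_2$, so generically (when $R\neq S$) the trace/determinant relations follow, and you pass to the closure using irreducibility of the orbit preimage. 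Your method is more elementary (pure linear algebra, no group action bookkeeping), while the paper's method is more structural and avoids the genericity step. As a small sharpening: you can in fact drop the density argument entirely by noting that when $R=S$, the condition $N_1N_2=0$ with $N_1,N_2\neq 0$ forces $R$ to be a \emph{double} eigenvalue of $X_2$ (otherwise $N_1$ would annihilate both right eigenvectors of $X_2$), so $\chi_{X_2}(t)=(t-R)(t-S)$ holds in all cases.
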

\begin{proof}
The equation $AC+BD = 0$, and the functions $R,S,X+W$ and $XW-YZ$ are all clearly preserved by the action of the group $\Hc^{1,2,1}= \G_m\times \GL_2\times \G_m$ on $\Nc^{1,2,1}$, and so the subscheme $\Spec \Rc^{1,2,1}(\st_{2,2})\subseteq \Nc^{1,2,1}$ is stabilized by $\Hc^{1,2,1}$.

Consider the point $x = \left(\begin{pmatrix}1&0\end{pmatrix}, \begin{pmatrix}0\\1\end{pmatrix}\right)\in \Wc^{1,2,1}(\Ebar)$. We have
\[
\Delta^{1,2,1}\left(x\right) = 
\left(\rank \begin{pmatrix}1&0\end{pmatrix}, \rank \begin{pmatrix}0\\1\end{pmatrix}, \rank \begin{pmatrix}1&0\end{pmatrix} \begin{pmatrix}0\\1\end{pmatrix}
\right)
= (1,1;0)
\]
so by Proposition \ref{prop:H orbits} we get $x \in \Wc^{1,2,1}(1,1;0)$ and so $\Wc^{1,2,1}(1,1;0) = \Hc^{1,2,1}(\Ebar)x$. By Proposition \ref{prop:st(2,2) cases}, $
\Nc^{1,2,1}(\st_{2,2}) = \Nc^{1,2,1}(1,1;0)$, so
\[
\Nc^{1,2,1}(\st_{2,2}) = \overline{(\Pi^{1,2,1})^{-1}(\Hc^{1,2,1}(\Ebar)x)} = \overline{\Hc^{1,2,1}(\Ebar)(\Pi^{1,2,1})^{-1}(x)}.
\]
Now $(\Pi^{1,2,1})^{-1}(x)$ is the subscheme of $\Nc^{1,2,1}$ cut out by the equations $A=D=1,B=C=0$, so on $(\Pi^{1,2,1})^{-1}(x)$ we have
\begin{align*}
	\begin{pmatrix}1&0\end{pmatrix}\begin{pmatrix}X&Y\\Z&W\end{pmatrix} &= R\begin{pmatrix}1&0\end{pmatrix},&
	\begin{pmatrix}X&Y\\Z&W\end{pmatrix}\begin{pmatrix}0\\1\end{pmatrix} &= S\begin{pmatrix}0\\1\end{pmatrix}.
\end{align*}
giving $X=R,Y=0$ and $W=S$. But the equations $B=C=Y=0$, $X=R$ and $W=S$ clearly imply the equations $AC+BD = 0$, $R+S-X-W=0$ and $RS-XW+YZ = 0$, so we see that $(\Pi^{1,2,1})^{-1}(x)\subseteq \Spec \Rc^{1,2,1}(\st_{2,2})(\Ebar)$.

Since $\Spec \Rc^{1,2,1}(\st_{2,2})(\Ebar)$ is stabilized by the action of $\Hc^{1,2,1}(\Ebar)$ it follows that $\Hc^{1,2,1}(\Ebar)(\Pi^{1,2,1})^{-1}(x)\subseteq\Spec \Rc^{1,2,1}(\st_{2,2})$, so taking Zariski closures indeed gives $\Nc^{1,2,1}(\st_{2,2})\subseteq \Spec \Rc^{1,2,1}(\st_{2,2})$.
\end{proof}

From this proposition, it follows that $\Nc^{1,2,1}(\st_{2,2})$ is an irreducible component of $\Spec \Rc^{1,2,1}(\st_{2,2})$ of relative dimension $6$ over $\Oc$. 

Let $\Rct^{1,2,1} = \Rc^{1,2,1}(\st_{2,2})/(R+S)$, so that $\Rct^{1,2,1}\cong\Oc[A,B,C,D,R,X,Y,Z]/\Ict$, where $\Ict$ is the ideal generated by the equations:
\begin{align*}
	\begin{pmatrix}A&B\end{pmatrix}\begin{pmatrix}X&Y\\Z&-X\end{pmatrix} &= R\begin{pmatrix}A&B\end{pmatrix},&
	\begin{pmatrix}X&Y\\Z&-X\end{pmatrix}\begin{pmatrix}C\\D\end{pmatrix} &= -R\begin{pmatrix}C\\D\end{pmatrix},\\
	AC+BD&=0, &
	R^2&=X^2+YZ.
\end{align*}
One can easily verify that there is an isomorphism $\Rc^{1,2,1}(\st_{2,2})\isomto\Rct^{1,2,1}[t]$ given by
\[
(A,B,C,D,R,S,X,Y,Z,W)\mapsto (A,B,C,D,t+R,t-R,t+X,Y,Z,t-X)
\]

We shall take $\Rct^{1,2,1}$ to be the ring $\Rct^{1,2,1}(\st_{2,2})$ from Theorem \ref{thm:R121 main results} (we are again omitting the $\st_{2,2}$ to simplify notation). By definition, $\Rct^{1,2,1}$ is a graded $\Oc$-algebra with irrelevant ideal $\Ic_+ = (A,B,C,D,R,X,Y,Z)$, and the isomorphism $\Rc^{2,2}(\st_{2,2})\isomto\Rct^{2,2}[t]$ identifies $\Spec (\Rct^{2,2}(\st_{2,2})/\Ic_+)\times \A^1_{\Oc}$ with the subscheme 
\[\left\{\left(t,\begin{pmatrix}0&0\end{pmatrix},\begin{pmatrix}t&0\\0&t\end{pmatrix}, \begin{pmatrix}0\\0\end{pmatrix},t\right)\right\}.\]
of $\Nc^{1,2,1}(\st_{2,2})\subseteq \Spec \Rc^{1,2,1}(\st_{2,2})$.

Now for any $\Oc$-algebra $\Ac$, let $\Rct^{2,2}_\Ac = \Rct^{2,2}\otimes_{\Oc} \Ac$ and $0_{\Ac} = (\Spec \Rct^{2,2}/\Ic_+)\times_{\Oc}\Ac$. Let $\Ict_A = \Ict \otimes_{\Oc} \Ac$, so that $\Rct^{2,2}_A = \Ac[A,B,C,D,R,X,Y,Z]/\Ict_{\Ac}$. We shall prove the following:

\begin{prop}\label{prop:R121tilde}
	For $k = \F,E$,	$\Rct^{1,2,1}_k$ is a normal Cohen--Macaulay domain of dimension $5$, which is not Gorenstein. $\Spec \Rct^{1,2,1}_k$ is smooth away from the point $0_{k}$.
\end{prop}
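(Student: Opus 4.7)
The plan is to adapt the strategy used for Proposition \ref{prop:R22tilde} in the previous section. I fix $k \in \{\F, E\}$ and work with the ring $\Rct^{1,2,1}_k = k[A,B,C,D,R,X,Y,Z]/\Ict_k$, where $\Ict_k$ is generated by the six polynomials
\begin{align*}
f_1 &= AX+BZ-RA, & f_2 &= AY-BX-RB, & f_3 &= XC+YD+RC, \\
f_4 &= ZC-XD+RD, & f_5 &= AC+BD, & f_6 &= R^2-X^2-YZ.
\end{align*}
Since the codimension of $\Spec\Rct^{1,2,1}_k$ in $\A^8_k$ should be $3$ but $\Ict_k$ is cut out by six equations, $\Rct^{1,2,1}_k$ cannot be a complete intersection, which is the starting point for the non-Gorenstein assertion.

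First I would fix the reverse lexicographic ordering $\succ$ on $k[A,B,C,D,R,X,Y,Z]$ with a variable order chosen so that the leading monomials of $f_1,\dots,f_6$ (or of a slightly augmented generating set obtained by one Buchberger step) lie in the complementary polynomial ring in only three of the eight variables. The analogue of Propositions \ref{prop:R22 Grob} and \ref{prop:S22 Grob} will then give, via Proposition \ref{prop:Grob reg seq}, a regular sequence of length $5$ on $\Rct^{1,2,1}_k$. A short calculation will then show that the quotient is a zero-dimensional $k$-algebra, which proves $\Rct^{1,2,1}_k$ is Cohen--Macaulay of dimension $5$. The main obstacle here is organizing the Gröbner basis computation: the symmetry between the left eigenvector $(A,B)$ and the right eigenvector $(C,D)^T$, combined with the scalar relation $AC+BD=0$, produces several overlapping $S$-pairs, and finding a variable order that keeps the initial ideal manageable will take some experimentation. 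Once a correct order is pinned down, Buchberger's criterion is routine.

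Next I would exploit the action of $\Hct_k := \G_{m,k}\times\GL_{2,k}\times\G_{m,k}$ on $\Spec\Rct^{1,2,1}_k$ given by $(s,g,t)\cdot((A\;B),(X,Y;Z,-X),(C;D),R) = (s(A\;B)g^{-1},\,tg(X,Y;Z,-X)g^{-1},\,stg(C;D),\,tR)$, and enumerate its orbits on $\kbar$-points. By arguments parallel to Lemma \ref{lem:H22 orbits dimensions}, each orbit is irreducible, its dimension is computable via orbit--stabilizer, and only one orbit, namely the one generated by a point with $(A\;B),(C;D)$ linearly independent and $R\ne 0$, will have maximal dimension $5$. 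Combined with Cohen--Macaulayness (hence equidimensionality), this forces $\Spec\Rct^{1,2,1}_k$ to be irreducible. I would then write down the $6\times 8$ Jacobian of $(f_1,\dots,f_6)$ and verify by direct substitution that it has rank $3$ at one representative of every orbit except $0_k$ (which is the unique fixed point and where the Jacobian vanishes); translating by $\Hct_k(\kbar)$ propagates regularity to the entire complement of $0_k$, giving the smoothness assertion. Generic reducedness plus Cohen--Macaulayness yields reducedness, so $\Rct^{1,2,1}_k$ is a domain, and Serre's criterion ($R_1+S_2$) gives normality.

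Finally, for the non-Gorenstein statement, I would work in the Artinian ring $\Bc_k := \Rct^{1,2,1}_k/(\text{regular sequence})$ produced above and compute its socle $(0:\mf_{\Bc_k})_{\Bc_k}$. The expectation, consistent with the fact that $\Cl(\Rct^{1,2,1}_k)\cong\Z$ with $[\omega]=2$ (part \ref{R121:class group} of Theorem \ref{thm:R121 main results}, to be proved later), is that $\dim_k\mathrm{soc}(\Bc_k) = 2$, so that $\Rct^{1,2,1}_k$ has Cohen--Macaulay type $2$ and hence is not Gorenstein. Concretely, I expect two independent socle classes to arise from the ``left'' relations $f_1,f_2$ and the ``right'' relations $f_3,f_4$, reflecting the visible $\Z/2$ symmetry of the defining equations swapping $(A,B,R)\leftrightarrow(D,-C,-R)$; verifying that these give genuinely distinct socle elements and that no further socle classes appear is the most delicate part of the argument, and I would do it by an explicit basis computation in $\Bc_k$.
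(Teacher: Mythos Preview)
Your approach is essentially the paper's: a two-stage Gr\"obner computation yielding a length-$5$ regular sequence (the paper uses variable order $(X,Y,Z,A,C,B,D,R)$ to get $(R,D,B)$ regular, then changes variables to $\alpha=Y-A$, $\beta=C-Z$ in the quotient to get $(\beta,\alpha)$ regular), followed by an orbit enumeration under $\Hct_k$ and a Jacobian check at one representative per orbit. Two corrections are worth making. First, the Artinian quotient you will reach is $k[X,Y,Z]/(X^2,Y^2,Z^2,XY,XZ,YZ)$, whose socle is the entire maximal ideal $(X,Y,Z)$ and hence has dimension $3$, not $2$; the Cohen--Macaulay type of a ring has no direct relation to the image of $[\omega]$ in the class group, and your ``left/right relations'' heuristic does not predict the socle. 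Second, the paper's group $\Hct_k = \GL_{2,k}\times\G_{m,k}^3$ is $7$-dimensional with $12$ orbits on $\kbar$-points; your $\G_{m}\times\GL_2\times\G_m$ gives the same orbit decomposition only after you absorb the center of $\GL_2$ as an extra scaling, so be careful when computing stabilizer dimensions.
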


By the same argument as in Section \ref{sec:R22}, this will imply parts (\ref{R121:domain}), (\ref{R121:smooth}) and (\ref{R121:CM}) of Theorem \ref{thm:R121 main results}.

Let $\succ$ be the reverse lexicographical ordering on $k[A,B,C,D,R,X,Y,Z]$ with variable ordering $(X,Y,Z,A,C,B,D,R)$. Then we have:

\begin{prop}\label{prop:R121 Grob}
The polynomials
\begin{align*}
g_1 &= \ul{XA}+ZB-AR,&
g_2 &= \ul{YA}-XB-BR,&
g_3 &= \ul{XC}+YD+CR,\\
g_4 &= \ul{ZC}-XD+DR,&
g_5 &= \ul{X^2}+YZ-R^2,&
g_6 &= \ul{AC}+BD
\end{align*}
form a \Grobner basis for $\Ict_k$ with respect to $\succ$. In particular, $(R,D,B)$ is a regular sequence for $\Rct^{1,2,1}_k$.
\end{prop}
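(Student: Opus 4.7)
The plan is to apply Buchberger's Criterion (Proposition \ref{prop:Buchberger}) to the set $\Gs = \{g_1, \ldots, g_6\}$. This set clearly generates $\Ict_k$, since it is just a rearrangement of the defining matrix equations together with the trace-like relation $AC+BD$ and the determinantal relation $X^2 + YZ - R^2$. It therefore remains to verify condition (\ref{buchberger}) for each of the $\binom{6}{2} = 15$ pairs $(g_i, g_j)$, and this is what the bulk of the proof will consist of.

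The leading monomials are $\LT(g_1) = XA$, $\LT(g_2) = YA$, $\LT(g_3) = XC$, $\LT(g_4) = ZC$, $\LT(g_5) = X^2$, $\LT(g_6) = AC$. Inspecting these, six of the fifteen pairs have coprime leading monomials and thus automatically satisfy (\ref{buchberger}) by the final part of Proposition \ref{prop:Buchberger}: namely $(g_1, g_4)$, $(g_2, g_3)$, $(g_2, g_4)$, $(g_2, g_5)$, $(g_4, g_5)$, and $(g_5, g_6)$. Only the remaining nine pairs require an explicit reduction.

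For each of these nine pairs, one computes $\Ssf(g_i, g_j)$ and writes it as a short combination $\sum_s f^{(ij)}_s g_s$, then verifies that $\LM(f^{(ij)}_s)\LM(g_s) \preceq \LM(\Ssf(g_i, g_j))$ for every $s$. For instance,
\[
\Ssf(g_1, g_6) = C g_1 - X g_6 = B g_4 - R g_6, \qquad \Ssf(g_1, g_3) = C g_1 - A g_3 = -D g_2 + B g_4 - 2 R g_6,
\]
and the remaining seven reductions have the same flavor. The only mild subtlety is that the relation $g_6 = AC + BD$ couples the ``row-side'' data $(A,B)$ with the ``column-side'' data $(C,D)$, so reductions of pairs involving $g_6$ typically invoke $g_3$ or $g_4$ (rather than the naively expected $g_1$ or $g_2$), and some pairs among $\{g_1, g_2, g_3, g_4\}$ pick up a $g_6$ correction after cancellation of their leading terms. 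The degree bounds in (\ref{buchberger}) always hold because $\Ssf(g_i, g_j)$ is homogeneous of the degree of $\operatorname{lcm}(\LT(g_i), \LT(g_j))$, which matches the degrees of each term on the right.

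Once the \Grobner basis statement is proved, the regular sequence claim follows directly from Proposition \ref{prop:Grob reg seq}: under the variable ordering $(X,Y,Z,A,C,B,D,R)$, the variables $B$, $D$, $R$ occupy the last three positions, and none of the leading monomials $XA, YA, XC, ZC, X^2, AC$ involves $B$, $D$, or $R$. Hence $(R, D, B)$ is a regular sequence on $\Rct^{1,2,1}_k$. The main obstacle is simply the bookkeeping of the nine explicit S-polynomial reductions; there is no conceptual difficulty, as each reduction is dictated by the structure of the matrix relations defining $\Rct^{1,2,1}_k$.
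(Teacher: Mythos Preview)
Your approach is correct and essentially identical to the paper's: apply Buchberger's Criterion, dismiss the same six pairs with coprime leading terms, explicitly reduce the remaining nine S-polynomials (your two sample reductions match the paper exactly), and invoke Proposition~\ref{prop:Grob reg seq} for the regular sequence. One small caution: the condition in (\ref{buchberger}) is a monomial-order bound, not merely a degree bound, so homogeneity alone does not settle it---though in each of the nine cases the check is indeed routine once the expression $\sum_s f^{(ij)}_s g_s$ is written down.
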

\begin{proof}
Note that $\{g_1,\ldots,g_6\}$ is just the generating set for $\Ict$ given above. Hence it generates $\Ict_k$, and so it remains to verify Buchberger's Criterion (Proposition \ref{prop:Buchberger}).

By the last part of Proposition \ref{prop:Buchberger}, we do not need to check (\ref{buchberger}) for $(g_1,g_4), (g_2,g_3),(g_2,g_4),(g_2,g_5),(g_4,g_5)$ or $(g_5,g_6)$. For the remaining $9$ pairs of polynomials we have the expressions:
\begin{align*}
\Ssf(g_1,g_2) &= Yg_1-Xg_2 = \ul{X^2B}+YZB-YAR+XBR = -Rg_2+Bg_5\\
\Ssf(g_1,g_3) &= Cg_1-Ag_3 = \ul{ZCB}-YAD-2ACR = -Dg_2+Bg_4-2Rg_6\\
\Ssf(g_1,g_5) &= Xg_1-Ag_5 = -\ul{YZA}+XZB-XAR+AR^2 = -Rg_1-Zg_2\\
\Ssf(g_1,g_6) &= Cg_1-Xg_6 = \ul{ZCB}-XBD-ACR = Bg_4-Rg_6\\
\Ssf(g_2,g_6) &= Cg_2-Yg_6 = -\ul{XCB}-YBD-BCR = -Bg_3\\
\Ssf(g_3,g_4) &= Zg_3-Xg_4 = \ul{X^2D}+YZD+ZCR-XDR = Rg_4+Dg_5\\
\Ssf(g_3,g_5) &= Xg_3-Cg_5 = -\ul{YZC}+XYD+XCR+CR^2 = Rg_3-Yg_4\\
\Ssf(g_3,g_6) &= Ag_3-Xg_6 = \ul{YAD}-XBD+ACR = Dg_2+Rg_6\\
\Ssf(g_4,g_6) &= Ag_4-Zg_6 = -\ul{XAD}-ZBD+ADR = -Dg_1,
\end{align*}
all of which satisfy the criterion $\LM(f^{(ij)}_s)\LM(g_s)\preceq \LM(\Ssf(g_i,g_j))$ from (\ref{buchberger}). So indeed, $\{g_1,g_2,g_3,g_4,g_5,g_6\}$ is a \Grobner basis for $\Ict_k$. The last claim follows from Proposition \ref{prop:Grob reg seq}.
\end{proof}

Now let 
\[\Sc_k = \Rct^{1,2,1}_k/(R,B,D) = \frac{k[A,C,X,Y,Z]}{(AX,AY,CX,CZ,AC,X^2+YZ)}\]
Letting $\alpha = Y-A$ and $\beta = C-Z$ we may rewrite this as $\Sc_k = k[X,Y,Z,\alpha,\beta]/\Jc$ where
\[
\Jc = \bigg(X(Y+\alpha),Y(Y+\alpha),X(Z+\beta),Z(Z+\beta),(Y+\alpha)(Z+\beta),X^2+YZ\bigg)\subseteq  k[X,Y,Z,\alpha,\beta]
\]
Now let $\succ_{\Sc}$ be the reverse lexicographical ordering on $k[X,Y,Z,\alpha,\beta]$ with variable ordering $(X,Y,Z,\alpha,\beta)$.

\begin{prop}\label{prop:S121 Grob}
The polynomials
\begin{align*}
	h_1 &= \ul{X}(\ul{Y}+\alpha),&
	h_2 &= \ul{Y}(\ul{Y}+\alpha),&
	h_3 &= \ul{X}(\ul{Z}+\beta),\\
	h_4 &= \ul{Z}(\ul{Z}+\beta),&
	h_5 &= (\ul{Y}+\alpha)(\ul{Z}+\beta),&
	h_6 &= \ul{X^2}+YZ
\end{align*}
form a \Grobner basis for $\Jc$ with respect to $\succ_{\Sc}$. In particular, $(\beta,\alpha) = (C-Z,Y-A)$ is a regular sequence for $\Sc_k$.
\end{prop}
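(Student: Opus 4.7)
The proof plan is to mimic the structure of Proposition~\ref{prop:S22 Grob} and Proposition~\ref{prop:R121 Grob}: first identify the leading terms of the $h_i$'s under $\succ_{\Sc}$, then verify Buchberger's criterion (Proposition~\ref{prop:Buchberger}) for each S-pair, and finally deduce the regular sequence statement from Proposition~\ref{prop:Grob reg seq}. Since $\succ_{\Sc}$ is the reverse lexicographic order on $k[X,Y,Z,\alpha,\beta]$ with variable ordering $(X,Y,Z,\alpha,\beta)$, a short check shows that the underlined monomials are indeed the leading terms: explicitly, $\LT(h_1)=XY$, $\LT(h_2)=Y^2$, $\LT(h_3)=XZ$, $\LT(h_4)=Z^2$, $\LT(h_5)=YZ$ and $\LT(h_6)=X^2$ (the role of reverse lex is to ensure that among equal-degree monomials the ones with $\alpha$'s and $\beta$'s are smaller, so that the ``expanded'' terms $X\alpha$, $Y\alpha$, $X\beta$, $Z\beta$, $Y\beta$, $Z\alpha$, $\alpha\beta$, and $YZ$ in $h_6$ are all dominated).

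By the coprimality clause of Proposition~\ref{prop:Buchberger}, the pairs $(h_1,h_4)$, $(h_2,h_3)$, $(h_2,h_4)$, $(h_2,h_6)$, $(h_4,h_6)$, and $(h_5,h_6)$ are automatic, leaving exactly nine pairs to verify. For each of these nine pairs I will exhibit an explicit expression of $\Ssf(h_i,h_j)$ as a combination $\sum_s f^{(ij)}_s h_s$ with $\LM(f^{(ij)}_s)\LM(h_s)\preceq \LM(\Ssf(h_i,h_j))$. The computations are short and essentially forced: most reductions are either identically zero (for instance $\Ssf(h_1,h_2) = Yh_1 - Xh_2 = 0$ and $\Ssf(h_3,h_4)=0$, since the leading terms come from the same factor $Y+\alpha$ or $Z+\beta$), or collapse to a single term $\pm\alpha h_i$ or $\pm\beta h_j$ after cancellation (for instance $\Ssf(h_1,h_3) = -\beta h_1 + \alpha h_3$, $\Ssf(h_1,h_5) = -\beta h_1$, $\Ssf(h_2,h_5) = -\beta h_2$, $\Ssf(h_3,h_5) = -\alpha h_3$, $\Ssf(h_4,h_5) = -\alpha h_4$), while the pairs involving $h_6$ reduce in the same manner as in Proposition~\ref{prop:S22 Grob}: $\Ssf(h_1,h_6) = -Zh_2+\alpha h_6$ and $\Ssf(h_3,h_6) = -Yh_4+\beta h_6$. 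In each case one checks that the leading monomial of every summand is $\preceq$ the leading monomial of the S-pair, which is straightforward since the multipliers are always variables.

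Once the Gröbner basis claim is established, the regular sequence statement follows immediately from Proposition~\ref{prop:Grob reg seq}: none of the leading terms $XY, Y^2, XZ, Z^2, YZ, X^2$ involves $\alpha$ or $\beta$, so $(\beta,\alpha)$ is a regular sequence on $k[X,Y,Z,\alpha,\beta]/\init(\Jc)$, hence on $\Sc_k$. Translated back to the original variables, this gives that $(C-Z, Y-A)$ is a regular sequence on $\Sc_k$, which combined with Proposition~\ref{prop:R121 Grob} yields a regular sequence $(R,D,B,C-Z,Y-A)$ on $\Rct^{1,2,1}_k$ needed for the Cohen--Macaulay argument that follows.

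The main (very mild) obstacle is simply bookkeeping: one must pick the right multipliers in each S-pair reduction and double-check that the reverse lex order really puts the $X^2$, $Y^2$, $Z^2$, $XY$, $XZ$, $YZ$ monomials above their ``perturbed'' counterparts involving $\alpha,\beta$. No deeper difficulty is expected, and the resulting computation is the direct analogue of the one carried out for $\Rct^{2,2}$.
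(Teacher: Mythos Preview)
Your proposal is correct and follows essentially the same approach as the paper's own proof: the same six coprime pairs are dismissed, the same nine S-pair reductions are computed with the same expressions (including $\Ssf(h_1,h_2)=\Ssf(h_3,h_4)=0$, the $\pm\alpha h_i$ and $\pm\beta h_j$ reductions for the pairs involving $h_5$, and the $-Zh_2+\alpha h_6$ and $-Yh_4+\beta h_6$ reductions for the pairs with $h_6$), and the regular sequence claim is deduced from Proposition~\ref{prop:Grob reg seq} in the same way.
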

\begin{proof}
By definition, $\Jc = (h_1,h_2,h_3,h_4,h_5,h_6)$, so we must verify Proposition \ref{prop:Buchberger}. By the last part of Proposition \ref{prop:Buchberger}, we do not need to check (\ref{buchberger}) for $(h_1,h_4),(h_2,h_3),(h_2,h_4),(h_2,h_6),(h_4,h_6)$ or $\Ssf(h_5,h_6)$. For the remaining $9$ pairs of polynomials we have:
\begin{align*}
\Ssf(h_1,h_2) &= Yh_1-Xh_2 = 0\\
\Ssf(h_1,h_3) &= Zh_1-Yh_3 = \ul{XZ\alpha}-XY\beta = -\beta h_1+\alpha h_3\\
\Ssf(h_1,h_5) &= Zh_1-Xh_5 = XZ(Y+\alpha)-X(Z+\beta)(Y+\alpha)=-\ul{X\beta}(\ul{Y}+\alpha) = -\beta h_1\\
\Ssf(h_1,h_6) &= Xh_1-Yh_6 = -\ul{Y^2Z}+X^2\alpha = -Zh_2+\alpha h_6\\
\Ssf(h_2,h_5) &= Zh_2-Yh_5 = YZ(Y+\alpha)-Y(Y+\alpha)(Z+\beta) = -\ul{Y\beta}(\ul{Y}+\alpha) = -\beta h_2\\
\Ssf(h_3,h_4) &= Zh_3-Xh_4 = 0\\
\Ssf(h_3,h_5) &= Yh_3-Xh_5 = XY(Z+\beta)-X(Z+\beta)(Y+\alpha)=-\ul{X\alpha}(\ul{Z}+\beta) = -\alpha h_3\\
\Ssf(h_3,h_6) &= Xh_3-Zh_6 = -\ul{YZ^2}+X^2\beta = -Yh_4+\beta h_6\\
\Ssf(h_4,h_5) &= Yh_4-Zh_5 = YZ(Z+\beta)-Z(Y+\alpha)(Z+\beta) = -\ul{Z\alpha}(\ul{Z}+\beta) = -\alpha h_4
\end{align*}
so indeed $\{h_1,h_2,h_3,h_4,h_5,h_6\}$ is a \Grobner basis for $\Jc$. Again, the last claim follows by Proposition \ref{prop:Grob reg seq}.
\end{proof}

Combining these propositions, we get that $(R,D,B,C-Z,Y-A)$ is a regular sequence for $\Rct^{1,2,1}_k$. But now
\[\frac{\Rct^{1,2,1}_k}{(R,D,B,C-Z,Y-A)} = \frac{\Sc}{(\alpha,\beta)} = \frac{k[X,Y,Z]}{(XY,Y^2,XZ,Z^2,YZ,X^2+YZ)} = \frac{k[X,Y,Z]}{(X^2,Y^2,Z^2,XY,XZ,YZ)},\]
which clearly has dimension $0$. It follows that $\Rct^{1,2,1}_k$ is Cohen--Macaulay of dimension $5$. Moreover, it is easy to check that ${k[X,Y,Z]}/{(X^2,Y^2,Z^2,XY,XZ,YZ)}$ is non-Gorenstein (with dualizing module $(X,Y,Z)$) and so $\Rct^{1,2,1}_k$ is non-Gorenstein (although we will give a different proof of this below).

To complete the proof of Proposition \ref{prop:R121tilde} we  need to show that $\Rct^{1,2,1}_k$ is a domain, smooth away from $0_{k}$. 

Let $\Hct_k = \Hc^{1,2,1}_k\times \G_{m,k} = \GL_{2,k}\times (\G_{m,k})^3$ treated as an algebraic group over $k$, and note that $\Hct_k$ acts on $\Spec \Rct^{1,2,1}_k$ via
\[
(g,r,s,t)\cdot\left(R; \begin{pmatrix}A&B\end{pmatrix};\begin{pmatrix} X&Y\\Z&-X\end{pmatrix}; \begin{pmatrix}C\\D\end{pmatrix}\right)
=
\left(rR; s\begin{pmatrix}A&B\end{pmatrix}g^{-1};rg\begin{pmatrix} X&Y\\Z&-X\end{pmatrix}g^{-1}; tg\begin{pmatrix}C\\D\end{pmatrix}\right)
\]
(for $g\in \GL_{2,k}$ and $r,s,t\in \G_{m,k}$). It is easy to check that the action of $\Hct_k(\kbar)$ on $(\Spec \Rct^{1,2,1}_k)(\kbar)$ has $12$ orbits, generated by the points:
\begin{align*}
x_{\e_1\e_2} &= \left(1; \begin{pmatrix}\e_1&0\end{pmatrix};\begin{pmatrix} 1&0\\0&-1\end{pmatrix}; \begin{pmatrix}0\\\e_2\end{pmatrix}\right)\\
y_{\e_1\e_2} &= \left(0; \begin{pmatrix}0&\e_1\end{pmatrix};\begin{pmatrix} 0&1\\0&0\end{pmatrix}; \begin{pmatrix}\e_2\\0\end{pmatrix}\right)\\
z_{\e_1\e_2} &= \left(0; \begin{pmatrix}\e_1&0\end{pmatrix};\begin{pmatrix} 0&0\\0&0\end{pmatrix}; \begin{pmatrix}0\\\e_2\end{pmatrix}\right)
\end{align*}
for $\e_1,\e_2\in \{0,1\}$. Now as $\Hct_k(\kbar)$ is a connected group variety of dimension $7$, for any $x\in (\Spec \Rct^{1,2,1}_k)(\kbar)$ the orbit $\Hct_k(\kbar)(x)$ is irreducible of dimension $7-\dim\stab_{\Hct_k(\kbar)}(x)$. As in Lemma \ref{lem:H22 orbits dimensions}, it is now easy to compute the dimension of each of these $12$ orbits (we again omit the details):

\begin{lemma}\label{lem:H121 orbits dimensions}
The dimensions of the $12$ orbits of $\Hct_k(\kbar)$ on $(\Spec \Rct^{1,2,1}_k)(\kbar)$ are given in the following table:
\[
\begin{array}{c|c|c|c|c|}
	(\e_1,\e_2) & (0,0) & (0,1) & (1,0) & (1,1)\\
	\hline
	\dim \Hct_k(\kbar)(x_{\e_1\e_2}) &3&4&4&5\\
	\hline
	\dim \Hct_k(\kbar)(y_{\e_1\e_2}) &2&3&3&4\\
	\hline
	\dim \Hct_k(\kbar)(z_{\e_1\e_2}) &0&2&2&3\\
	\hline
\end{array}
\]
\end{lemma}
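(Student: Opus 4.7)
The plan is to apply the orbit-stabilizer theorem. Since $\Hct_k \cong \GL_{2,k} \times (\G_{m,k})^3$ is a connected algebraic group of dimension $7$, and the action is algebraic, for any $p \in (\Spec \Rct^{1,2,1}_k)(\kbar)$ we have
\[
\dim \Hct_k(\kbar)(p) \;=\; 7 - \dim \stab_{\Hct_k(\kbar)}(p).
\]
So the task reduces to computing the stabilizer dimension at each of the 12 listed representatives.

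For each representative $p$, a stabilizing element $(g, r, s, t) \in \GL_2(\kbar) \times (\kbar^\times)^3$ must satisfy four conditions read off directly from the action: $rR = R$, $s(A,B)g^{-1} = (A,B)$, $rgMg^{-1} = M$ (with $M = \begin{pmatrix}X & Y \\ Z & -X\end{pmatrix}$), and $tg\binom{C}{D} = \binom{C}{D}$. I would organize the 12 cases into three families according to the shape of $M$. For the $x_{\e_1\e_2}$ family, $M = \diag(1,-1)$, so $g$ must commute with $M$, forcing $g$ diagonal; then I read off the remaining constraints from $(A,B)$ and $(C,D)$, which turn off one degree of freedom in $s$ or $t$ exactly when the corresponding $\e_i = 1$. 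For $y_{\e_1\e_2}$, $M$ is the standard Jordan block, whose normalizer in $\GL_2$ is the upper triangular Borel (three-dimensional), and on this Borel the scaling factor $r$ is determined by $d/a$; again the vector conditions kill a parameter per nonzero $\e_i$. For $z_{\e_1\e_2}$, $M = 0$ imposes no constraint, so one starts with the full $7$-dimensional group and subtracts one for each nontrivial vector condition (which turn out to impose $b = 0$ on $g$ when $\e_1 = 1$ and the symmetric condition when $\e_2 = 1$, with $s$ or $t$ becoming determined in each case).

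Carrying this out case by case gives the stabilizer dimensions $2,3,3,4$ for $x_{00}, x_{01}, x_{10}, x_{11}$; $5,4,4,3$ for $y_{00}, y_{01}, y_{10}, y_{11}$; and $7,5,5,4$ for $z_{00}, z_{01}, z_{10}, z_{11}$. Subtracting from $7$ yields exactly the dimensions in the table. Note that $z_{00}$ is the unique fixed point, as expected.

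The only obstacle is bookkeeping; each individual stabilizer computation is elementary linear algebra, and the symmetries among the cases (swap of the two vector slots, invariance within a fixed-$M$ family) cut down the real work to essentially three computations. I would present the proof as a single lemma computing $\stab_{\Hct_k(\kbar)}(p)$ in each of the three families, with the explicit parametrization of the stabilizer group written out, and then tabulate the resulting dimensions.
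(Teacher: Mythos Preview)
Your approach is exactly the paper's: compute stabilizers and apply orbit--stabilizer (the paper in fact writes ``By computing stabilizers it is now easy to compute the dimension of each of these $12$ orbits (we omit the details)''). Your organization into the three $M$-families is the natural way to fill in those omitted details.

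There is one bookkeeping slip in your final summary. For the $x$-family, the condition $R=1$ forces $r=1$, and then $g$ diagonal gives a $2$-parameter $g$ with $s,t$ free when $\e_1=\e_2=0$; so $\dim\stab(x_{00})=4$, not $2$. Your own narrative (``turn off one degree of freedom in $s$ or $t$ exactly when the corresponding $\e_i=1$'') is correct and leads to stabilizer dimensions $4,3,3,2$ for $x_{00},x_{01},x_{10},x_{11}$, not $2,3,3,4$ as you wrote; you listed them in reverse. With the corrected list, subtracting from $7$ does give the table. One small remark on the $z$-family: both the $\e_1=1$ condition (via $(1,0)g^{-1}=(1/s,0)$) and the $\e_2=1$ condition (via $tg\binom{0}{1}=\binom{0}{1}$) force the \emph{same} entry $b=0$ in $g=\begin{pmatrix}a&b\\c&d\end{pmatrix}$, not symmetric conditions on $b$ and $c$; this is why $\dim\stab(z_{11})=4$ rather than $3$.
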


In particular, $\dim \Hct_k(\kbar)(x_{11}) = 5$ and all other orbits have dimension strictly smaller than $5$, so by the same logic as in Section \ref{sec:R22}, $\Spec \Rct^{1,2,1}_k$ must be irreducible of dimension $5$ and 
$(\Spec \Rct^{1,2,1}_k)(\kbar) = \overline{\Hc_k(\kbar)(x_{11})}$.
	
Again, the only fixed point of the action is $z_{00} = 0_{k}$.

To show that $\Rct^{1,2,1}_k$ is a domain, it remains to show that $\Spec \Rct^{1,2,1}_k$ is reduced. Again as it is Cohen--Macaulay, it suffices to show it is generically reduced. Now we can compute:

\begin{lemma}\label{lem:R121 regular}
$\Rct^{1,2,1}_k$ is regular at all of the points $x_{\e_1\e_2}$, $y_{\e_1\e_2}$ and $z_{\e_1\e_2}$, except for $z_{00}$.
\end{lemma}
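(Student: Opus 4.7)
My plan is to imitate the analogous lemma for $\Rct^{2,2}_k$ proved just above: I would form the $8\times 6$ Jacobian matrix
$$
J \;=\; \left(\frac{\partial g_i}{\partial v}\right)_{v\in\{A,B,C,D,R,X,Y,Z\},\ 1\le i\le 6}
$$
of the six generators from Proposition \ref{prop:R121 Grob}, and then verify at each of the eleven listed points that $\rank J = 3$, which is precisely the codimension $8-5=3$ of $\Spec\Rct^{1,2,1}_k$ inside $\A^8_k$. Since each $g_i$ is a homogeneous quadratic, every entry of $J$ is linear in $A,B,C,D,R,X,Y,Z$, so each evaluation is a short and mechanical linear-algebra computation.

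At $z_{00}=0_k$ every partial derivative vanishes, so $J=0$ there; this reflects (and explains) the singularity at the unique fixed point of the $\Hct_k$-action. At each of the other eleven representatives I would exhibit a visibly nonzero $3\times 3$ minor of $J$. For instance, at $x_{11}$ one has $A=D=R=X=1$ and $B=C=Y=Z=0$, so the six columns of $J$ contain several rows with a single nonzero entry (e.g.\ $\partial g_6/\partial A = C=0$, $\partial g_6/\partial D = B=0$, but $\partial g_1/\partial Y = 0$ while $\partial g_2/\partial A = Y=0$, etc.), and one can read off a rank-$3$ submatrix coming from the relations involving the nonzero coordinates. The remaining ten points $y_{\e_1\e_2}$, $z_{\e_1\e_2}$ (with $(\e_1,\e_2)\ne (0,0)$), and the other $x_{\e_1\e_2}$ are handled in exactly the same way.

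The only real obstacle is bookkeeping: eleven points, each requiring a short explicit substitution. Once regularity is verified at each orbit representative, the $\Hct_k$-equivariance of $\Spec \Rct^{1,2,1}_k$ propagates regularity across every orbit (just as in the $\Rct^{2,2}$ argument immediately preceding), so the eleven checks together establish that $\Spec\Rct^{1,2,1}_k$ is smooth on the complement of $\{0_k\} = \overline{\Hct_k(\kbar)\cdot z_{00}}$. Combined with the Cohen--Macaulay and irreducibility statements already proven, this will finish the proof of Proposition \ref{prop:R121tilde}.
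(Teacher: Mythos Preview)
Your proposal is correct and follows essentially the same approach as the paper: compute the $8\times 6$ Jacobian matrix of $g_1,\ldots,g_6$ with respect to the eight variables, and verify it has rank $3=8-5$ at each of the eleven nonzero orbit representatives (and vanishes at $z_{00}$). The paper simply writes out the Jacobian explicitly and asserts the rank check, exactly as you describe; the $\Hct_k$-equivariance remark you include appears in the paper immediately after the lemma rather than inside its proof, but this is only a matter of placement.
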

\begin{proof}
Let $g_1,\ldots,g_6$ be as in Proposition \ref{prop:R121 Grob}. Write $\vec{g} = \begin{pmatrix}g_1&g_2&g_3&g_4&g_5&g_6\end{pmatrix}$. Then we can compute that 
\[
\begin{pmatrix}
\del \vec{g}/\del A\\
\del \vec{g}/\del B\\
\del \vec{g}/\del C\\
\del \vec{g}/\del D\\
\del \vec{g}/\del R\\
\del \vec{g}/\del X\\
\del \vec{g}/\del Y\\
\del \vec{g}/\del Z
\end{pmatrix}
=
\begin{pmatrix}
 X-R&Y&0&0&0&C\\
Z&-X-R&0&0&0&D\\
0&0& X+R&Z&0&A\\
0&0&Y&-X+R&0&B\\
-A&-B&C&D&-2R&0\\
A&-B&C&-D&2X&0\\
0&A&D&0&Z&0\\
B&0&0&C&Y&0
\end{pmatrix}
\]
One can easily verify that this matrix has rank $8-5=3$ at each of the $11$ points specified (and is the zero matrix at at $z_{00}$) so as $\Spec \Rct^{1,2,1}$ has codimension $8-5=3$ in $\A^8_k$, it follows that $\Rct^{1,2,1}_k$ is regular at the requested points.
\end{proof}

As in Section \ref{sec:R22}, this implies that $(\Spec \Rct^{1,2,1}_k)\sm \{0_{k}\}$ is smooth, and hence it is generically reduced and so (as it is Cohen--Macaulay and irreducible) it is a domain. This completes the proof of Proposition \ref{prop:R121tilde}, and so of parts (\ref{R121:domain}), (\ref{R121:smooth}) and (\ref{R121:CM}) of Theorem \ref{thm:R121 main results}.

We now turn our attention to the computation of $\Cl(\Rct^{1,2,1})$ and $\omega$. Again let $k=\F$ or $E$. We shall first show that $\Cl(\Rct_k^{1,2,1}) \cong \Z$, which will again imply $\Cl(\Rct^{1,2,1})\cong \Z$ by Corollary \ref{cor:Cl(A)=Cl(A[1/x])}.

We will use Lemma \ref{lem:Cl(X-Z)}. We first consider the localization $\Rct_k^{1,2,1}[A^{-1}]$.

\begin{lemma}\label{lem:R[1/A]}
$\Rct_k^{1,2,1}[A^{-1}] = k[A,B,D,R,Z][A^{-1}]$. In particular, $\Cl(\Rct_k^{1,2,1}[A^{-1}]) = 0$.
\end{lemma}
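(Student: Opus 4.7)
The plan is to eliminate the variables $X$, $Y$, and $C$ from the presentation of $\Rct_k^{1,2,1}[A^{-1}]$, using three of the defining relations of $\Ict_k$. Namely, the first row of the matrix equation $(A\; B)\begin{pmatrix}X&Y\\Z&-X\end{pmatrix}=R(A\;B)$ gives $XA+ZB=RA$, so after inverting $A$ we can solve
\[
X = R - A^{-1}BZ.
\]
The second entry of that row, $YA - XB = RB$, then lets us solve $Y = A^{-1}(X+R)B = 2A^{-1}RB - A^{-2}B^2Z$. Finally, from $AC+BD=0$ we get $C = -A^{-1}BD$.

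After this elimination, the proposal is to verify by direct substitution that the remaining four relations in $\Ict_k$ (namely $XC+YD = -RC$, $ZC-XD = -RD$, and $X^2+YZ=R^2$) are automatically satisfied. This is a purely mechanical check in the polynomial ring $k[A^{\pm 1},B,D,R,Z]$; for example $X^2+YZ = (R-A^{-1}BZ)^2 + (2A^{-1}RB - A^{-2}B^2Z)Z$ collapses to $R^2$ after cancellation. Thus the natural surjection
\[
k[A,B,C,D,R,X,Y,Z][A^{-1}]/\Ict_k \twoheadrightarrow k[A,B,D,R,Z][A^{-1}]
\]
has an inverse given by the formulas above, so $\Rct^{1,2,1}_k[A^{-1}] \cong k[A,B,D,R,Z][A^{-1}]$.

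Since $k[A,B,D,R,Z]$ is a polynomial ring and therefore a UFD, Lemma~\ref{lem:Cl(UFD)} gives $\Cl(k[A,B,D,R,Z]) = 0$; localizing at $A$ preserves this (by the usual exact sequence, or directly because localizations of UFDs are UFDs), so $\Cl(\Rct^{1,2,1}_k[A^{-1}]) = 0$ as claimed. No step here is a substantial obstacle — the only work is the substitution check, and I would organize it so that the identity $X^2+YZ=R^2$ is verified first (since it is the most opaque) and the other two relations follow from straightforward manipulations using the formula for $C$.
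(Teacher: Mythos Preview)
Your proof is correct and follows essentially the same approach as the paper: eliminate $X$, $Y$, and $C$ via three of the defining relations after inverting $A$, then check that the remaining relations of $\Ict_k$ become redundant, concluding that the localization is the polynomial ring $k[A,B,D,R,Z][A^{-1}]$ and hence a UFD with trivial class group. (One small slip: you speak of ``four'' remaining relations but correctly list and handle the three that are left, namely $g_3$, $g_4$, and $g_5$.)
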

\begin{proof}
	The equations $\begin{pmatrix}A&B\end{pmatrix}\begin{pmatrix}X&Y\\Z&-X\end{pmatrix} = R\begin{pmatrix}A&B\end{pmatrix}$ and $AC+BD=0$ imply that in $\Rc^{1,2,1}[A^{-1}]$ we have
	\begin{align*}
	X&= R-A^{-1}BZ, &
	Y&= A^{-1}B(R+X), &
	C &= -A^{-1}BD.
	\end{align*}
	Thus $\Rct_k^{1,2,1}[A^{-1}]$ can be generated by $A^{\pm 1}, B, D, R$ and $Z$. Moreover one can verify that these equations imply $\begin{pmatrix}X&Y\\Z&-X\end{pmatrix}\begin{pmatrix}C\\D\end{pmatrix} =  -R\begin{pmatrix}C\\D\end{pmatrix}$ and $R^2 = X^2+YZ$.	It follows that indeed $\Rct_k^{1,2,1}[A^{-1}] = k[A,B,D,R,Z][A^{-1}]$. As this is the localization of a UFD, it is also a UFD and so $\Cl(\Rct_k^{1,2,1}[A^{-1}]) = 0$.
\end{proof}

Now define the closed subschemes
\begin{align*}
\Dc_k &= \Spec \Rct^{1,2,1}_k/(A,B),&
\Dc_k' &= \Spec \Rct^{1,2,1}_k/(C,D),&
\Zc_k &= \Spec \Rct^{1,2,1}_k/(R),&
\end{align*}
of $\Spec\Rct^{1,2,1}_k$. Then
\begin{lemma}\label{lem:Dk and Zk prime}
$\Dc_k$, $\Dc_k$ and $\Zc_k$ are all geometrically integral of dimension $4$.
\end{lemma}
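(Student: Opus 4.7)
The plan is to adapt the argument structure of Lemma \ref{lem:R/(AD-BC)} and Proposition \ref{prop:R121tilde} separately to each of $\Zc_k$, $\Dc_k$, $\Dc_k'$. For each scheme I will establish in turn: (i) it is Cohen--Macaulay of dimension $4$; (ii) it is geometrically irreducible; and (iii) it is generically smooth. Together these three properties force geometric integrality of dimension $4$.

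For (i), the case of $\Zc_k=\Spec\Rct^{1,2,1}_k/(R)$ is essentially immediate: Proposition \ref{prop:R121 Grob} exhibits $R$ as the first term of a regular sequence on the Cohen--Macaulay ring $\Rct^{1,2,1}_k$, so $\Rct^{1,2,1}_k/(R)$ inherits Cohen--Macaulayness and dimension $4$. For $\Dc_k$ and $\Dc_k'$ the analogous approach fails because $(A,B)$ (respectively $(C,D)$) is \emph{not} a regular sequence on $\Rct^{1,2,1}_k$, the equation $AC+BD=0$ being an obstruction. Instead I plan to present $\Dc_k=k[C,D,R,X,Y,Z]/J$ with $J$ generated by the three explicit polynomials $f_1=(X+R)C+YD$, $f_2=ZC+(R-X)D$, and $f_3=X^2+YZ-R^2$, and then verify directly that $J$ is a perfect codimension-$2$ ideal. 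Concretely, one exhibits two independent syzygies among the $f_i$ (which amount to $(R-X)f_1-Yf_2=-Cf_3$ and $Zf_1-(X+R)f_2=Df_3$) and recognizes $f_1,f_2,f_3$ as the $2\times 2$ minors (up to signs) of a $3\times 2$ syzygy matrix with entries in $\{R\pm X,\,Y,\,Z,\,C,\,D\}$. After checking that $J$ has codimension exactly $2$ (i.e.\ is not contained in any codimension-$1$ prime, which is transparent from $f_3\in J$ and $f_1\notin(f_3)$), the Hilbert--Burch theorem yields a length-$1$ free resolution of $k[C,D,R,X,Y,Z]/J$, hence Cohen--Macaulayness of dimension $4$. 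An analogous $3\times 2$ matrix with the roles of $(A,B)$ and $(C,D)$ interchanged settles $\Dc_k'$.

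For (ii), each subscheme is stable under the $\Hct_k$-action on $\Spec\Rct^{1,2,1}_k$, since the coordinates $R$, $(A,B)$, and $(C,D)^T$ are each sent to scalar multiples of themselves up to the $\GL_2$ factor. Running through the list of twelve orbit representatives recorded in the proof of Proposition \ref{prop:R121tilde}, each of the three subschemes contains a \emph{unique} orbit of dimension $4$: $y_{11}$ lies in $\Zc_k$, $x_{01}$ lies in $\Dc_k$, and $x_{10}$ lies in $\Dc_k'$, while every other orbit contained in the respective scheme has strictly smaller dimension by Lemma \ref{lem:H121 orbits dimensions}. Combined with the equidimensionality from (i), this forces each scheme to coincide with the Zariski closure of its unique top-dimensional orbit, and hence to be irreducible over $\kbar$.

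For (iii), I will verify by a direct Jacobian calculation, in the style of Lemma \ref{lem:R121 regular}, that the representative $\kbar$-points $y_{11}$, $x_{01}$, $x_{10}$ are smooth points of $\Zc_k$, $\Dc_k$, $\Dc_k'$ respectively. Since each of these points lies in the unique $4$-dimensional component, the corresponding scheme is generically smooth, hence generically reduced; Cohen--Macaulayness from (i) then upgrades this to reducedness, and together with geometric irreducibility yields geometric integrality. The main obstacle is step (i) for $\Dc_k$ and $\Dc_k'$: pinning down the Hilbert--Burch presentation and verifying the codimension-$2$ condition for these explicit ideals. Once these ingredients are in place, the remaining steps are essentially mechanical adaptations of arguments already carried out in this section.
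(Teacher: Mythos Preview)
Your proposal is correct and follows the same three-step architecture as the paper (Cohen--Macaulayness, unique top-dimensional orbit, generic reducedness), but with genuinely different arguments in steps (i) and (iii).

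For step (i) on $\Dc_k$ and $\Dc_k'$, the paper performs a variable change $x=X-S$, $w=W-S$ and then runs a small Gr\"obner basis computation to exhibit an explicit regular sequence of length $5$. Your Hilbert--Burch argument is a clean alternative: the syzygies you wrote down do assemble into a $3\times 2$ matrix whose signed maximal minors are exactly $f_1,f_2,f_3$, and since $f_3$ is an irreducible quadric (rank $4$, $\Char k\neq 2$) while $f_1\notin(f_3)$, the grade-$2$ hypothesis is immediate. This buys you a free resolution for free, which the paper's argument does not produce, though the paper's method stays closer to the Gr\"obner toolkit already in use.

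For step (iii), the paper avoids Jacobians entirely: it intersects $\Dc'_k$ and $\Zc_k$ with the open set $\Uc=\Spec\Rct^{1,2,1}_k[A^{-1}]$ (already computed in Lemma~\ref{lem:R[1/A]}) and reads off directly that these intersections are polynomial rings of dimension $4$, hence geometrically reduced; a symmetric localization at $C$ handles $\Dc_k$. Your Jacobian check at $y_{11}$, $x_{01}$, $x_{10}$ also works (I checked: the rank is $4$, resp.\ $2$, as required), but the paper's localization argument has the advantage of reusing a computation already needed elsewhere and giving geometric reducedness over $\kbar$ without a separate base-change argument.
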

\begin{proof}
Note that by definition, the action of $\Hct_k$ on $\Spec \Rct^{1,2,1}_k$ preserves $\Dc_k$, $\Dc_k'$ and $\Zc_k$. Hence $\Dc_k(\kbar)$, $\Dc_k'(\kbar)$ and $\Zc_k(\kbar)$ are all unions of orbits of $\Hct_k(\kbar)$. From the description of the orbits given above we see that in fact:
\begin{itemize}
	\item $\Dc_k(\kbar)$ is a union of the orbits $\Hct_k(\kbar)(x_{0\e_2}),\Hct_k(\kbar)(y_{0\e_2})$ and $\Hct_k(\kbar)(z_{0\e_2})$ for $\e_2\in\{0,1\}$;
	\item $\Dc_k'(\kbar)$ is a union of the orbits $\Hct_k(\kbar)(x_{\e_10}),\Hct_k(\kbar)(y_{\e_10})$ and $\Hct_k(\kbar)(z_{\e_10})$ for $\e_1\in\{0,1\}$;
	\item $\Zc_k(\kbar)$ is a union of the orbits $\Hct_k(\kbar)(x_{\e_1\e_2})$ and $\Hct_k(\kbar)(y_{\e_1\e_2})$ for $\e_1,\e_2\in\{0,1\}$.
\end{itemize}
By Lemma \ref{lem:H121 orbits dimensions}, each of these contains exactly one orbit of dimension $4$, and all other orbits have dimension strictly less than $4$. Thus by the same logic as in Lemma \ref{lem:R/(AD-BC)}, $\Dc_k$, $\Dc_k'$ and $\Zc_k$ all have dimension $4$ with a single $4$-dimensional geometrically irreducible component. Call these $4$-dimensional irreducible components $\Dc_k^\circ$, $\Dc_k^{'\circ}$ and $\Zc_k^\circ$.

Now let $\Uc = \Spec \Rct_k^{1,2,1}[A^{-1}]$, which is a Zariski open subset of $\Spec \Rct_k^{1,2,1}$. By the computations in Lemma \ref{lem:R[1/A]} we get
\begin{align*}
\Dc'_k\cap \Uc &= \Spec \Rct_k^{1,2,1}[A^{-1}]/(C,D) = \Spec k[A,B,D,R,Z][A^{-1}]/(-A^{-1}BD,D)\\
&= \Spec k[A,B,D,R,Z][A^{-1}]/(D) = \Spec k[A,B,R,Z][A^{-1}]\\
\Zc_k\cap \Uc &= \Spec \Rct_k^{1,2,1}[A^{-1}]/(R) = \Spec k[A,B,D,R,Z][A^{-1}]/(R) = \Spec k[A,B,D,Z][A^{-1}].
\end{align*}
In particular $\Dc'_k\cap \Uc$ and $\Zc_k\cap \Uc$ are both geometrically reduced of dimension $4$, which in particular implies that $\Uc$ intersects  $\Dc_k^{'\circ}$ and $\Zc_k^\circ$ nontrivially and $\Dc_k^{'\circ}\cap \Uc$ and $\Zc_k^\circ\cap \Uc$ are reduced. In other words $\Dc_k^{'\circ}$ and $\Zc_k^\circ$ are generically geometrically reduced.

An analogous computation (using $\Rct^{1,2,1}_k[C^{-1}]$) gives that $\Dc_k^\circ$ is also generically geometrically reduced.

It now suffices to show that $\Dc_k$, $\Dc'_k$ and $\Zc_k$ are all Cohen--Macaulay. Indeed, this will imply that they are equidimensional of dimension $4$, and so they equal $\Dc_k^\circ$, $\Dc_k^{'\circ}$ and $\Zc_k^\circ$, respectively, all of which are irreducible. As these are also all generically geometrically reduced and Cohen--Macaulay, they are geometrically reduced, which finishes the argument.
 
As $\Rct^{1,2,1}_k$ is a domain, $R$ is a regular element, and so $\Zc_k = \Spec \Rct^{1,2,1}_k/(R)$ is automatically Cohen--Macaulay. 

Now $\Dc_k$ is cut out in $\Spec k[C,D,R,S,X,Y,Z,W]$ by the equations $\begin{pmatrix}X&Y\\Z&W\end{pmatrix}\begin{pmatrix}C\\D\end{pmatrix} = S\begin{pmatrix}C\\D\end{pmatrix}$, $RS = XW-YZ$ and $R+S=X+W$.

Making the variable change $x = X-S$, $w=W-S$ and substituting in $R = X+W-S = x+w+S$, these equations become $\begin{pmatrix}x&Y\\Z&w\end{pmatrix}\begin{pmatrix}C\\D\end{pmatrix} = \begin{pmatrix}0\\0\end{pmatrix}$ and $xw-YZ=0$ in $\Spec k[x,w,C,D,S,Y,Z]$.

It is easy to check that $\ul{xw}-YZ,\ul{xC}+YD$ and $\ul{wD}+ZC$ forms a \Grobner basis for the ideal defining $\Dc_k$ with respect to the reverse lexicographical monomial order with variable ordering $(x,w,C,D,Y,Z,S)$ and so Proposition \ref{prop:Grob reg seq} implies that $(Z,Y,S)$ is regular for $\Rct^{1,2,1}_k/(A,B)$. Now $\Rct^{1,2,1}_k/(A,B,S,Y,Z)\cong k[x,w,C,D]/(xw,xC,wD)$, and it's easy to check that $(C-D,x+w+C)$ is a regular sequence for this ring. Thus $\Dc_k$ is indeed Cohen--Macaulay. By symmetry, $\Dc_k'$ is Cohen--Macaulay as well, completing the proof.
\end{proof}

So $\Dc_k$, $\Dc_k'$ and $\Zc_k$ are all prime divisors in $\Spec \Rct^{1,2,1}_k$. The definition of $\Zc_k$ gives $\div(R) = \Zc_k$, so $\Zc_k\sim 0$. Also we have

\begin{lemma}\label{lem:R/A}
$\Spec \Rct_k^{1,2,1}/(A)$ is reduced and equidimensional of dimension $4$. It has $2$ irreducible components: $\Dc_k$, and $\Ec_k := \Spec \Rct_k^{1,2,1}/(A,D,Z,R+X)$. Hence $\div(A) = \Dc_k+\Ec_k$.
\end{lemma}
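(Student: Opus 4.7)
The plan is to prove the assertion for $k = \F$ and $k = E$ separately (it suffices to work with $\Rct^{1,2,1}_k$ since $(A)$ being a radical ideal with two prime components can be read off after base change, though in fact the argument transfers to $\Rct^{1,2,1}_\Oc$ by Proposition \ref{prop:graded p cap q = pq} applied to $\pf = (A,B)$ and $\qf = (A,D,Z,R+X)$). The first step is to observe that $A$ is a regular element on $\Rct^{1,2,1}_k$ since by Proposition \ref{prop:R121tilde} this ring is a domain. Hence $S := \Rct^{1,2,1}_k/(A)$ is Cohen--Macaulay of dimension $4$ and in particular is equidimensional with no embedded associated primes. It therefore suffices to identify the minimal primes of $S$, confirm each one gives a height-$1$ prime of $\Rct^{1,2,1}_k$ of the claimed form, and verify generic reducedness at each; Serre's $R_0 + S_1$ criterion will then finish reducedness, after which $\ord_\pf(A) = 1$ for each minimal $\pf$ follows automatically since $S_\pf$ will be a reduced $0$-dimensional local ring, i.e.\ a field.

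The candidate components are $\Dc_k = V(A,B)$, shown geometrically integral of dimension $4$ in Lemma \ref{lem:Dk and Zk prime}, and $\Ec_k = V(A,D,Z,R+X)$. For $\Ec_k$, I would first plug $A=D=Z=0$ and $R=-X$ into the generators $g_1,\ldots,g_6$ of Proposition \ref{prop:R121 Grob}; a direct substitution shows that each $g_i$ vanishes identically, so $\Rct^{1,2,1}_k/(A,D,Z,R+X) \cong k[B,C,X,Y]$, a polynomial ring of dimension $4$, confirming $(A,D,Z,R+X)$ is a height-$1$ prime with $\Ec_k \cong \A^4_k$.

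The technical heart is to show that these are the \emph{only} minimal primes, which I would do by localization. Inverting $B$ in $S$, the relations $ZB$, $B(X+R)$, $BD$ force $Z = X+R = D = 0$, and the remaining relations $g_3,g_4,g_5$ then become identically zero; so $S[B^{-1}] \cong k[B^{\pm 1}, C, R, Y]$, a domain. Dually, inverting $D$ in $S$ the relation $BD = 0$ forces $B=0$, and using $g_3, g_4$ one can solve for $X = D^{-1}ZC + R$ and $Y = -D^{-1}C(X+R)$; a short check shows $g_5$ becomes trivial, so $S[D^{-1}] \cong k[C, D^{\pm 1}, R, Z]$, again a domain. From this, any minimal prime $\pf$ of $S$ either contains $B$ and hence coincides with $(B)$ (since $S/(B) = \Dc_k$ is a domain), or does not contain $B$, in which case the $B$-torsion characterization of $\ker(S \to S[B^{-1}])$ combined with $S/(D,Z,X+R) \cong k[B,C,R,Y]$ being a domain forces $\pf = (D,Z,X+R)$.

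Finally, generic reducedness is immediate: $S_{(B)}$ is the fraction field of $S[D^{-1}]/(B) \cong k[C,D^{\pm 1},R,Z]$, which is a field, and $S_{(D,Z,X+R)}$ is the fraction field of $S[B^{-1}]$, which is also a field. The main obstacle I anticipate is the bookkeeping involved in verifying that all six generators $g_1,\ldots,g_6$ simplify correctly after the localizations (particularly that the quadratic relation $g_5$ is redundant after the linear relations have been solved), but this is a routine substitution check. With reducedness and the identification of the two components in hand, the equality $\div(A) = [\Dc_k] + [\Ec_k]$ is forced, since both $\ord_{(A,B)}(A)$ and $\ord_{(A,D,Z,R+X)}(A)$ equal $1$ by the $R_0$ property of $S$ at its minimal primes.
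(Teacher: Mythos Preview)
Your proposal is correct and follows essentially the same approach as the paper: both arguments use that $A$ is regular on the Cohen--Macaulay domain $\Rct^{1,2,1}_k$ to get equidimensionality and reduce to generic reducedness, identify $\Ec_k \cong \A^4_k$ by direct substitution, and then localize at $B$ and at $D$ to isolate the two components and check reducedness at the generic points. The only cosmetic difference is that the paper packages the identification of the minimal primes as ``for any prime $\qf \ni A$, the relations $BD, BZ, B(R+X)$ force $B \in \qf$ or $D,Z,R+X \in \qf$,'' whereas you phrase the same dichotomy via $S[B^{-1}]$ being a domain; the underlying computation is identical. One minor point: your parenthetical about transferring to $\Rct^{1,2,1}_\Oc$ via Proposition~\ref{prop:graded p cap q = pq} is not quite right, since that proposition yields $\pf \cap \qf = \pf\qf$ rather than $(A) = \pf \cap \qf$; but the lemma is only stated over $k$ anyway, so this aside is unnecessary.
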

\begin{proof}
Lemma \ref{lem:Dk and Zk prime} implies that $\Dc_k$ is reduced and irreducible of dimension $4$. Also, one can show that ${\Rct_k^{1,2,1}}/{(A,D,Z,R+X)} = k[B,C,X,Y]$ and so $\Ec\cong \A^4_k$, which is obviously reduced and irreducible.

Write $\Fc_k = \Spec \Rct_k^{1,2,1}/(A)$. As $\Rct_k^{1,2,1}$ is Cohen--Macaulay and $A$ is a regular element on $\Rct_k^{1,2,1}$ (as $\Rct^{1,2,1}_k$ is a domain), $\Fc_k$ is Cohen--Macaulay of dimension $4$. This implies that $\Fc_k$ is equidimensional of dimension $4$, and reduced provided that it is generically reduced. 

By definition, $\Dc_k\cup\Ec_k\subseteq \Fc_k$, and so $\Dc_k$ and $\Ec_k$ are both irreducible components of $\Fc_k$. Now take any $\qf\in \Spec \Rct^{1,2,1}_k/(A)$, that is, a prime ideal $\qf\subseteq \Rct^{1,2,1}_k$ containing $A$. 

Now as $\Rct_k^{1,2,1}/(A)$ is the quotient of $k[B,C,D,R,X,Y,Z]$ defined by the equations
\begin{align*}
	\begin{pmatrix}0&B\end{pmatrix}\begin{pmatrix}X&Y\\Z&-X\end{pmatrix} &= R\begin{pmatrix}0&B\end{pmatrix},&
	\begin{pmatrix}X&Y\\Z&-X\end{pmatrix}\begin{pmatrix}C\\D\end{pmatrix} &= -R\begin{pmatrix}C\\D\end{pmatrix},\\
	BD&=0, &
	R^2&=X^2+YZ.
\end{align*}
we get that $BD,BZ,BR+BX\in\qf$. As $\qf$ is prime, this implies that either $B\in \qf$ or $D,Z,R+X\in \qf$. In the first case, $\qf\in\Dc_k$ and in the second case, $\qf\in\Ec_k$. This implies that $\Dc_k$ and $\Ec_k$ are the only irreducible components $\Fc_k$.

Now let $\Uc_B = \Spec \Rct^{1,2,1}_k[B^{-1}]$ and $\Uc_D = \Spec \Rct^{1,2,1}_k[D^{-1}]$. By the same logic as above, inverting $B$ in the above equations implies that $D=Z=R+X=0$, while inverting $D$ implies $B=0$. Hence $\Fc_k\cap \Uc_B$ is a closed subscheme of $\Ec_k\cap \Uc_B$ and $\Fc_k\cap \Uc_D$ is a closed subscheme of $\Dc_k\cap \Uc_D$. As we already have $\Dc_k,\Ec_k\subseteq \Fc_k$ as schemes this implies that $\Fc_k\cap \Uc_B=\Ec_k\cap \Uc_B$ and $\Fc_k\cap \Uc_D=\Dc_k\cap \Uc_D$ as schemes. Since $\Ec_k\cap \Uc_B$ and $\Dc_k\cap \Uc_D$ are reduced and clearly nonempty, this implies that $\Fc_k$ is generically reduced on each component, and so is reduced.

The statement that $\div(A) = \Dc_k+\Ec_k$ is now immediate.
\end{proof}

As $\Dc_k$ and $\Ec_k$ are geometrically integral, Theorem \ref{thm:R121 main results}(\ref{R121:Rational}) now follows with $\Zc_k^{1,2,1} := \Dc_k\cup\Ec_k$.

Now by Lemmas \ref{lem:Cl(X-Z)}, \ref{lem:R[1/A]} and \ref{lem:R/A}, there is an exact sequence $\Z^2\to \Cl(\Rct^{1,2,1}_k)\to \Cl(\Rct_k^{1,2,1}[1/A]) = 0$ and so $\Cl(\Rct_k^{1,2,1})$ is generated by $[\Dc_k]$ and $[\Ec_k]$. Moreover, as $\div(A) = \Dc_k+\Ec_k$, $[\Dc_k] = - [\Ec_k]$ in $\Cl(\Rct_k^{1,2,1})$, and so $\Cl(\Rct_k^{1,2,1})$ is generated by $[\Dc_k]$.

We claim that $[\Dc_k]$ has infinite order in $\Cl(\Rct_k^{1,2,1})$, and hence $\Cl(\Rct_k^{1,2,1})\cong \Z$. Assume that $m[\Dc_k] = 0$ for some $m\in\Z$. Then by definition, there is some $f\in K(\Rct_k^{1,2,1})$ with $\div(f) = m\Dc$ (which in particular implies $\div(f)\ge 0$, so $f\in \Rct_k^{1,2,1}$). But this implies that $\supp f \cap \Spec \Rct_k^{1,2,1}[1/A] = \es$, and so $f$ is a unit in $\Rct_k^{1,2,1}[1/A]$. But now by Lemma \ref{lem:R[1/A]}, 
\[\Rct_k^{1,2,1}[1/A]^\times = k[A,B,D,R,Z,W][A^{-1}]^\times = \{uA^x|u\in k^\times, x\in\Z\}.\]
But if $f=uA^\times$ then $m\Dc_k+0\Ec_k = \div(f) = x\div(A)= x\Dc_k+x\Ec_k$ and so $m=x=0$. So indeed $\Cl(\Rc^{1,2,1})\cong \Z$. Now define an isomorphism $i_k:\Cl(\Rct_k^{1,2,1})\isomto\Z$ by $\Dc_k\mapsto -1$.

Also define the isomorphism $i:\Cl(\Rct^{1,2,1})\isomto \Z$ to be the composition $\Cl(\Rct^{1,2,1})\isomto \Cl(\Rct^{1,2,1}_E)\xrightarrow{i_E}\Z$ where the first map is the isomorphism $[M]\mapsto [M\otimes_\Oc E]$ from Corollary \ref{cor:Cl(A)=Cl(A[1/x])}.

Let $\Dc = \Spec \Rct^{1,2,1}/(A,B)$. From Lemma \ref{lem:R/A} and Corollary \ref{cor:graded domain} it follows that $\Dc$ is a prime divisor of $\Spec \Rct^{1,2,1}$. Note that $i(\Dc) = i_E(\Dc\cap \Spec \Rct^{1,2,1}_E) = i_E(\Dc_E)=-1$, so $\Dc$ is a generator of $\Cl(\Rct^{1,2,1})$.

Let $\omegat$ be the dualizing module of $\Rct^{1,2,1}$ and for $k =\F,E$ let $\omegat_k = \omegat \otimes_\Oc k$, and note that this is the dualizing module of $\Rct^{1,2,1}_k$. To prove Theorem \ref{thm:R121 main results}(\ref{R121:class group}) it remains to show that $i_k([\omegat_k]) = 2$ (as then $i([\omegat]) = i_E([\omegat_E]) = 2$ by definition).

As $(A,B)$ and $(C,D)$ are prime ideals of $\Rct^{1,2,1}_k$ by Lemma \ref{lem:Dk and Zk prime}, Lemma \ref{lem:O(-D) = P} gives $\Oc(-\Dc_k)=(A,B)$ and $\Oc(-\Dc'_k)=(C,D)$.

Now as $\Rct_k^{1,2,1}$ is a domain,
\[
\Oc(-\Dc_k) = (A,B)\cong C(A,B) = (AC, BC) = (-BD,BC) = B(C,D)\cong (C,D) = \Oc(-\Dc_k')
\]
as $\Rct_k^{1,2,1}$-modules, and so $[\Dc_k] = [\Dc'_k]$ in $\Cl(\Rct_k^{1,2,1})$.

\begin{prop}\label{prop:omega}
$\omegat_k \cong\Oc(-\Dc_k-\Dc'_k)\cong \Oc(-2\Dc_k)$ and $\omegat \cong \Oc(-2\Dc)$. If $\Mc_k$ is a self-dual $\Rct_k^{1,2,1}$-module of generic rank $1$, then $\Mc_k\cong \Oc(-\Dc_k)$. If $\Mc$ is a self-dual $\Rct^{1,2,1}$-module of generic rank $1$, then $\Mc\cong \Oc(-\Dc)$
\end{prop}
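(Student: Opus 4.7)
The isomorphism $\Oc(-\Dc_k - \Dc'_k) \cong \Oc(-2\Dc_k)$ is immediate from $[\Dc_k] = [\Dc'_k]$ shown above. I would deduce the integral statement $\omegat \cong \Oc(-2\Dc)$ from the mod-$\varpi$ case via the isomorphism $\Cl(\Rct^{1,2,1}) \isomto \Cl(\Rct^{1,2,1}_E)$ of Corollary \ref{cor:Cl(A)=Cl(A[1/x])} and the compatibility of the dualizing module with this specialization. The claim about self-dual $\Mc_k$ (resp.\ $\Mc$) of generic rank $1$ then follows formally: self-duality gives $2[\Mc_k] = [\omegat_k] = -2[\Dc_k]$ in the torsion-free group $\Cl(\Rct_k^{1,2,1}) \cong \Z$, forcing $[\Mc_k] = -[\Dc_k]$, hence $\Mc_k \cong \Oc(-\Dc_k)$; the integral case is analogous.

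The crux is thus the identification $\omegat_k \cong \Oc(-\Dc_k - \Dc'_k)$, which I would establish by applying Lemma \ref{lem:Gorenstein component} (after completing at the irrelevant ideal, or via its evident graded analogue) to the Gorenstein ambient $\Rc^{1,2,1}_k$, which is a complete intersection by Corollary \ref{cor:Nc^nun is CI}. Take $P_0 \subset \Rc^{1,2,1}_k$ to be the minimal prime with $\Rc^{1,2,1}_k/P_0 = \Rc^{1,2,1}(\st_{2,2})_k \cong \Rct^{1,2,1}_k[t]$; by Proposition \ref{prop:Nc^nun irreducible components}, $P_0$ corresponds to $(1,1;0) \in \Is^{1,2,1}$ and the other four minimal primes $P_i$ correspond to the remaining tuples $(0,0;0), (0,1;0), (1,0;0), (1,1;1)$.

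The remaining work is to classify the codimension-$1$ irreducible components of each $\Spec \Rc^{1,2,1}_k/(P_0 + P_i)$ inside $\Spec \Rct^{1,2,1}_k[t]$. The cases $(0,1;0)$ and $(1,0;0)$ cut out $\{A=B=0\}$ and $\{C=D=0\}$ on $V_{(1,1;0)}$, producing the divisors $\Dc_k \times \A^1$ and $\Dc'_k \times \A^1$ respectively (by Lemma \ref{lem:R/A} and the factorization $\Rc^{1,2,1}(\st_{2,2})_k \cong \Rct^{1,2,1}_k[t]$). The $(0,0;0)$ case forces $A=B=C=D=0$, and direct substitution into the defining equations gives a subscheme of codimension $2$, contributing no divisor. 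For $(1,1;1)$, the expected codimension of the intersection in $V_{(1,1;0)}$ is also $\geq 2$; this will be the main obstacle. I would argue it by showing that the intersection is $\Hct_k$-stable, and then using Lemma \ref{lem:H121 orbits dimensions} to check that any $\Hct_k(\kbar)$-orbit of dimension $\geq 4$ in $V_{(1,1;0)} \cap V_{(1,1;1)}$ must lie in either $\Dc_k$ or $\Dc'_k$, so that the divisorial components of the intersection coincide with divisors already accounted for.

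Once these intersections are classified, Lemma \ref{lem:Gorenstein component} yields $\omega_{\Rc^{1,2,1}(\st_{2,2})_k} \cong \Oc(-\Dc_k \times \A^1 - \Dc'_k \times \A^1)$, and the factorization $\Rc^{1,2,1}(\st_{2,2})_k \cong \Rct^{1,2,1}_k[t]$ then gives $\omegat_k \cong \Oc(-\Dc_k - \Dc'_k)$, completing the proof. Should the direct analysis of the $(1,1;1)$ intersection prove too intricate, an alternative route is to work instead with the Gorenstein ambient $\Nct(4,q)_k$ of Proposition \ref{prop:Nc(n,q) is CI}, whose components by Proposition \ref{prop:N(n,q) components} are indexed by the five partitions of $4$ with a single component of type $(2,2)$; the intersection analysis there is more uniform, and one can transfer the computation of $\omegat_k$ to $\Rct^{1,2,1}_k$ via Theorem \ref{thm:fixed type local model}.
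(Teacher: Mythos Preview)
Your overall strategy is the same as the paper's: reduce to $k=\F,E$, apply Lemma~\ref{lem:Gorenstein component} to the reduced complete intersection $\Nc^{1,2,1}_k$ with $P_0$ the $(1,1;0)$-component, and then transport the result to $\Rct^{1,2,1}_k$ via the isomorphism $\Rc^{1,2,1}(\st_{2,2})_k\cong \Rct^{1,2,1}_k[t]$. Your treatment of the $(0,0;0)$, $(0,1;0)$, and $(1,0;0)$ intersections and of the formal consequences for $\Mc_k$ and $\Mc$ is correct and matches the paper.

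The gap is in your handling of the $(1,1;1)$ intersection. Your expectation that this intersection has codimension $\ge 2$ is wrong, and your proposed orbit argument does not establish it. Concretely, the $4$-dimensional orbit $\Hct_k(\kbar)(y_{11})$ (where $A=D=0$, $B=C=1$) lies in neither $\Dc_k$ nor $\Dc'_k$, yet it does lie in $\Yc_{(1,1;1)}\cap\Spec\Rct^{1,2,1}_k$; indeed one can show $\Yc_{(1,1;1)}=\Zc_k\times\A^1_k$, which has codimension~$1$. So Lemma~\ref{lem:Gorenstein component} may well produce an extra divisor here, and your argument does not exclude this.

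The paper's fix is not to exclude this divisor but to neutralize it. On $(\Pi^{1,2,1})^{-1}(\Wc^{1,2,1}_k(1,1;1))$ one has $N_1N_2\in k^\times$ and $R(N_1N_2)=(N_1N_2)S$, forcing $R=S$ on the $(1,1;1)$-component; under $\Rc^{1,2,1}(\st_{2,2})_k\cong\Rct^{1,2,1}_k[t]$ this becomes the condition $R=0$ in $\Rct^{1,2,1}_k$, so $\Yc_{(1,1;1)}\subseteq\Zc_k\times\A^1_k$. Hence either $\dim\Yc_{(1,1;1)}<5$ or $\Yc_{(1,1;1)}=\Zc_k\times\A^1_k$. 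In the first case Lemma~\ref{lem:Gorenstein component} gives $\omegat_k\cong\Oc(-\Dc_k-\Dc'_k)$ directly; in the second it gives $\omegat_k\cong\Oc(-\Dc_k-\Dc'_k-\Zc_k)$. But $\Zc_k=\div(R)$ is principal, so $[\Zc_k]=0$ in $\Cl(\Rct^{1,2,1}_k)$, and in either case $\omegat_k\cong\Oc(-\Dc_k-\Dc'_k)$. This is the missing idea: the extra component is trapped inside a \emph{principal} divisor, so one need not determine its exact dimension. Your fallback via $\Nct(4,q)_k$ would be substantially harder and is unnecessary.
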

\begin{proof}
Since the isomorphism $\Cl(\Rct^{1,2,1})\to \Cl(\Rct^{1,2,1}_E)$ maps $[\Dc]$ to $[\Dc_E]$, $[\Dc']$ to $[\Dc'_E]$ and $\omega$ to $\omega_E$, the claims for $\Rct^{1,2,1}$ follow immediately from those for $\Rct^{1,2,1}_E$, so it is enough to prove the claims for $\Rct^{1,2,1}_k$ with $k = \F,E$.

The last claim will follow from the computation of $\omega_k$, as $\Mc_k$ is self-dual if and only if $2 \Mc_k  =  \omegat_k $ in $\Cl(\Rct_k^{1,2,1})$, and there can only be one solution to this equation, as $\Cl(\Rct_k^{1,2,1})\cong\Z$ is $2$-torsion free.

To determine $\omegat_k$, recall the isomorphism $\Rc^{1,2,1}_k(\st_{2,2}) \cong \Rct^{1,2,1}_k[t]$ from above, and let $\omega_k$ be the dualizing module of $\Rc^{1,2,1}_k$. Then we have $\omega_k/t\omega_k \cong \omegat_k$, so it will suffice to determine $\omega_k$.

Now recall that the irreducible components of $\Nc^{1,2,1}_k$ are in bijection with the set
\[
\Is^{1,2,1} = \{(0,0;0),(1,0;0),(0,1;0),(1,1;0),(1,1;0)\}.
\]
For each $P\in \Is^{1,2,1}$, let $\Nc^{1,2,1}_k(P)$ be the irreducible component of $\Nc^{1,2,1}_k$ corresponding to $P$. Then $\Nc^{1,2,1}(1,1;0) = \Spec \Rc^{1,2,1}(\st_{2,2})$. Now by Corollary \ref{cor:Nc^nun is CI}, $\Nc^{1,2,1}_k$ is a reduced complete intersection, and in particular is Gorenstein, hence we can compute $\omega_k$ using Lemma \ref{lem:Gorenstein component}.

For $P\in \Is^{1,2,1}\sm\{(1,1;0)\}$, let $\Yc_P = \Nc^{1,2,1}_k(P)\cap \Nc^{1,2,1}_k(1,1;0) \subseteq \Spec \Rc^{1,2,1}_k(\st_{2,2})$. To use Lemma \ref{lem:Gorenstein component}, we must first determine $\dim \Yc_P$ for each $P$. Recall that $\dim \Nc^{1,2,1}_k = 1^2+2^2+1^2 = 6$

First, on $\Nc^{1,2,1}_k(0,0;0)$ we have $A=B=C=D=0$, and so $\Yc_{(0,0;0)}$ is a subscheme of
\[
\Spec \frac{\Rc^{1,2,1}(\st_{2,2})}{(A,B,C,D)} = \Spec\frac{k[R,S,W,X,Y,Z]}{(R+S-X-W,RS-XW+YZ)} = \Spec\frac{k[R,W,X,Y,Z]}{(R(X+W-R)-XW+YZ)}
\]
and so $\dim \Yc_{(0,0;0)}\le 4 < 5$.

Next, on $\Nc^{1,2,1}_k(0,1;0)$ we have $A=B=0$, and so $\Yc_{(0,1;0)}$ is a closed subscheme of 
\[\Spec \Rc^{1,2,1}_k(\st_{2,2})/(A,B)\cong \Spec \Rct^{1,2,1}_k[t]/(A,B)\cong \Dc_k\times_k \A^1_k.\]
Moreover, for any $x = (R,N_1,X_2,N_2,S)\in (\Spec \Rc^{1,2,1}(\st_{2,2})/(A,B))(\kbar)$ we have $N_1 = 0$ by definition, so 
\[\Pi^{1,2,1}(x) = (N_1,N_2) = (0,N_2) \in \Wc^{1,2,1}_k(0,1;0)\sqcup \Wc^{1,2,1}_k(0,0;0).\]
Hence 
\begin{align*}
\Spec \left(\frac{\Rc^{1,2,1}(\st_{2,2})}{(A,B)}\right)(\kbar)&\subseteq (\Spec \Rc^{1,2,1}(\st_{2,2}))(\kbar)\cap \left((\Pi^{1,2,1})^{-1}\left(\Wc^{1,2,1}_k(0,1;0)(\kbar)\sqcup \Wc^{1,2,1}_k(0,0;0)(\kbar)\right)\right)\\
&\subseteq \Yc_{(0,1;0)}(\kbar)\cup \Yc_{(0,0;0)}(\kbar)
\end{align*}
and so $\Spec \Rc^{1,2,1}_k(\st_{2,2})/(A,B)$ is a subscheme of $\Yc_{(0,1;0)}\cup \Yc_{(0,0;0)}$. As $\Spec \Rc^{1,2,1}_k(\st_{2,2})/(A,B) = \Dc_k\times_k\A^1_k$ is reduced and irreducible of dimension $5$, while   $\dim \Yc_{(0,0;0)} \le 4$, this implies that $\dim \Yc_{(0,1;0)} = 5$ and $\Yc_{(0,1;0)} = \Spec \Rc^{1,2,1}_k(\st_{2,2})/(A,B) = \Dc_k\times \A^1_k$ (where we have identified $\Rc^{1,2,1}_k(\st_{2,2})= \Rct^{1,2,1}_k[t]$).

A similar argument gives $\Yc_{(1,0;0)} = \Spec \Rc^{1,2,1}_k(\st_{2,2})/(C,D) =  \Dc_k'\times\A^1_k$ and $\dim \Yc_{(1,0;0)} = 5$.

Lastly consider $\Yc_{(1,1;1)}$. Take any $(R,N_1,X_2,N_2,S)\in (\Pi^{1,2,1})^{-1}(\Wc^{1,2,1}_k(1,1;1))$. Then by definition we have $R(N_1N_2) = N_1X_2N_2 = N_1N_2S = S(N_1N_2)$ and $\rank(N_1N_2) = 1$, giving $N_1N_2 \in k^\times$, so $R = S$. It follows that $R=S$ on $\Nc^{1,2,1}(1,1;1)$, and so $R=S$ on $\Yc_{(1,1;1)}$. Thus $\Yc_{(1,1;1)}$ is a closed subscheme of $\Spec \Rc^{1,2,1}_k(\st_{2,2})/(R-S)$. But now the isomorphism $\Rc^{1,2,1}_k(\st_{2,2})\isomto \Rct^{1,2,1}_k[t]$ identifies $\Rc^{1,2,1}_k(\st_{2,2})/(R-S)$ with $(\Rct^{1,2,1}_k/(R))[t]$ (recall that $\Char k \ne 2$) and so we get $\Yc_{(1,1;1)}\subseteq \Zc_k\times\A^1_k$. Lemma \ref{lem:Dk and Zk prime} now implies that either $\Yc_{(1,1;1)}= \Zc_k\times \A^1_k$ or $\dim \Yc_{(1,1;1)}< 5$.\footnote{In fact, one can show that $\Yc_{(1,1;1)}= \Zc_k\times\A^1_k$. We will omit this argument, as it is not needed for any of our results.}

If $\Yc_{(1,1;1)}= \Zc_k\times \A^1_k$, Lemma \ref{lem:Gorenstein component} gives 
\[\omega_k \cong \Oc(-\Yc_{(0,1;0)}-\Yc_{(1,0;0)}-\Yc_{(1,1;1)}) = \Oc\left(-(\Dc_k\times\A^1_k)-(\Dc_k'\times\A^1_k)-(\Zc_k\times\A^1_k)\right)\]
and so the isomorphism $\omegat_k\cong \omega_k/t\omega_k$ gives $\omegat_k\cong \Oc(-\Dc_k-\Dc'_k-\Zc_k)$.

Similarly if $\dim \Yc_{(1,1;1)}< 5$ then we get $\omega_k \cong\Oc\left(-(\Dc_k\times\A^1_k)-(\Dc_k'\times\A^1_k)\right)$ and so $\omegat_k\cong \Oc(-\Dc_k-\Dc'_k)$.

But now as $\Zc_k\sim 0$, $\Oc(-\Dc_k-\Dc'_k-\Zc_k)\cong \Oc(-\Dc_k-\Dc'_k)$ and so in either case we get $\omegat_k\cong \Oc(-\Dc_k-\Dc'_k)$. The rest of the claims follow immediately from this, together with the fact that $\Dc_k\sim\Dc'_k$ and $\Cl(\Rct_k^{1,2,1})\cong \Z$ is generated by $[\Dc_k]$.
\end{proof}
This completes the proof of \ref{thm:R121 main results}(\ref{R121:class group}). Also as $[\omegat_k]\ne 0$ in $\Cl(\Rct_k^{1,2,1})$, $\omegat_k$ is indeed not free as $\Rct_k^{1,2,1}$-module, and so $\Rct_k^{1,2,1}$ is not Gorenstein.

Thus, $\Mc := \Oc(-\Dc)=(A,B)$ is the unique self-dual $\Rct^{1,2,1}$-module of generic rank $1$, and for $k = \F,E$, $\Mc_k := \Oc(-\Dc_k) = (A,B) = \Mc\otimes_{\Oc}k$ is the unique self-dual $\Rct_k^{1,2,1}$-module of generic rank $1$. This proves Theorem \ref{thm:R121 main results}(\ref{R121:M}). 

As $\Rct^{1,2,1}$ is a graded $\Oc$-algebra and $\Mc = (A,B)$ is a homogeneous ideal it is now clear that $\Mc/\Ic^+\Mc \cong \Oc A \oplus \Oc B\cong \Oc^2$, proving Theorem \ref{thm:R121 main results}(\ref{R121:mult 2}).

We now turn our attention to Theorem \ref{thm:R121 main results}(\ref{R121:surj}). First, Lemma \ref{lem:O(-D) = P} gives 
\begin{align*}
\omegat &\cong \Oc(-\Dc-\Dc') = (A,B)\cap (C,D).
\end{align*}
To simplify this description we will show the following:

\begin{lemma}\label{lem:(A B)cap(C D)}
$(A,B)\cap (C,D) = (A,B)(C,D)$ in $\Rct^{1,2,1}$.
\end{lemma}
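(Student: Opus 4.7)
The plan is to apply Proposition \ref{prop:graded p cap q = pq} with $\pf = (A,B)$ and $\qf = (C,D)$, reducing the desired equality in the graded $\Oc$-algebra $\Rct^{1,2,1}$ to the analogous equalities in each of the reductions $\Rct^{1,2,1}_k$ for $k = \F, E$. The hypotheses of that proposition follow from earlier results: Proposition \ref{prop:R121tilde} gives $\dim \Rct^{1,2,1}_\F = \dim \Rct^{1,2,1}_E = 5$, and Lemma \ref{lem:Dk and Zk prime} shows that $(A,B)_k$ and $(C,D)_k$ are both prime of height $d=1$ (being the defining ideals of geometrically integral $4$-dimensional closed subschemes). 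Distinctness of these two primes is immediate since, e.g., $A$ has nonzero image in $\Rct^{1,2,1}_k/(C,D)_k$.

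Having reduced to each field $k = \F, E$ separately, I would apply Lemma \ref{lem:I cap J = IJ}: since $\Rct^{1,2,1}_k/(A,B)_k$ and $\Rct^{1,2,1}_k/(C,D)_k$ are both integral domains, the equality $(A,B)_k \cap (C,D)_k = (A,B)_k(C,D)_k$ is equivalent to reducedness of the quotient $\Rct^{1,2,1}_k/(A,B)_k(C,D)_k$. Using the defining relation $AC + BD = 0$ of $\Rct^{1,2,1}_k$, we can rewrite $(A,B)(C,D) = (AC, AD, BC, BD)$ with only three effective generators, for example $(AC, AD, BC)$.

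To prove this quotient is reduced, I would apply Serre's criterion $R_0 + S_1$. Generic reducedness ($R_0$) is essentially automatic from the structure: the set-theoretic support of the quotient is $\Dc_k \cup \Dc'_k$, so its minimal primes lie among $\{(A,B), (C,D)\}$; and localizing at $\pf = (A,B)$, distinctness of $(A,B)$ and $(C,D)$ means $(C,D)$ generates the unit ideal in $(\Rct^{1,2,1}_k)_\pf$, whence $((A,B)(C,D))_\pf = \pf_\pf$ and the localization collapses to the residue field $\kappa(\pf)$. A symmetric argument applies at $(C,D)$, so both minimal primes give reduced localizations. The $S_1$ condition will follow from the stronger statement that the quotient is Cohen–Macaulay of dimension $4$, which is the main substance of the proof.

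The main obstacle is therefore proving Cohen–Macaulayness of $\Rct^{1,2,1}_k/(A,B)(C,D)$. I plan to mimic the approach of Propositions \ref{prop:R121 Grob} and \ref{prop:S121 Grob}: exhibit, via a \Grobner basis of the ideal $\Ict_k + (AC, AD, BC)$ with respect to a well-chosen reverse lexicographic order, a regular sequence of length $4$ on the quotient whose further quotient is Artinian; then Proposition \ref{prop:Grob reg seq} delivers the CM property. An alternative I would consider, should the direct \Grobner basis computation prove unwieldy, is to compute $(A,B,\Ict_k) \cap (C,D,\Ict_k)$ explicitly in the polynomial ring $k[A,B,C,D,R,X,Y,Z]$ via Proposition \ref{prop:Grob intersection} (introducing an auxiliary variable $t$) and then to verify generator by generator that the resulting basis lies in $(A,B)(C,D) + \Ict_k$. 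Either route is a finite but nontrivial extension of the computations already carried out in this section.
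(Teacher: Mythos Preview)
Your reduction via Proposition \ref{prop:graded p cap q = pq} is exactly how the paper begins, and the hypotheses you cite (Proposition \ref{prop:R121tilde}, Lemma \ref{lem:Dk and Zk prime}) are the right ones. After that reduction, the paper takes precisely your ``alternative'' route: it lifts to the polynomial ring $\Pc_k = k[A,B,C,D,R,X,Y,Z]$, applies Proposition \ref{prop:Grob intersection} with the auxiliary variable $t$, verifies by a mechanical Buchberger check that
\[
\Gs = \{g_1,\ldots,g_5,\ tA,\ tB,\ tC-C,\ tD-D,\ AC,\ AD,\ CB,\ BD\}
\]
is a \Grobner basis for $\Kc_k$ with respect to $\succ_t$, and then reads off $I_k \cap J_k = (g_1,\ldots,g_5, AC, AD, BC, BD)$, which descends to $(A,B)\cap(C,D)=(A,B)(C,D)$ in $\Rct^{1,2,1}_k$.

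Your primary route --- proving reducedness of $\Rct^{1,2,1}_k/(A,B)(C,D)$ via $R_0+S_1$ --- is also sound. The $R_0$ argument you sketch is correct. For $S_1$ you would need a \Grobner computation for $\Ict_k + (AC,AD,BC)$ of comparable length to the paper's, so there is no real saving; the paper's choice simply bypasses the need to establish Cohen--Macaulayness of the product quotient as a separate step. (That CM property does hold a posteriori --- $\Dc_k$ and $\Dc'_k$ are CM of dimension $4$ meeting in the CM hypersurface $\{R^2=X^2+YZ\}$ of dimension $3$ --- but this observation presupposes reducedness and so cannot replace the computation.) Either route is fine; the paper's is marginally more direct since it yields the intersection explicitly rather than inferring it from ring-theoretic properties of the product.
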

\begin{proof}
By Proposition \ref{prop:graded p cap q = pq}  it suffices to show that $(A,B)\cap (C,D) = (A,B)(C,D)$ in $\Rct^{1,2,1}_\F$ and in $\Rct^{1,2,1}_E$ (since $(A,B)$ and $(C,D)$ are distinct height $1$ primes in $\Rct^{1,2,1}_k$ for each $k=\F,E$). As usual let $k$ denote either $\F$ or $E$.

Consider the polynomial ring $\Pc_k = k[A,B,C,D,R,X,Y,Z]$ and again let $\succ$ be the reverse lexicographical ordering with variable ordering $(X,Y,Z,A,C,B,D,R)$. Then $\Rct_k^{1,2,1} = \Pc_k/(g_1,g_2,g_3,g_4,g_5,g_6)$ where
\begin{align*}
g_1 &= \ul{XA}+ZB-AR,&
g_2 &= \ul{YA}-XB-BR,&
g_3 &= \ul{XC}+YD+CR,\\
g_4 &= \ul{ZC}-XD+DR,&
g_5 &= \ul{X^2}+YZ-R^2,&
g_6 &= \ul{AC}+BD
\end{align*}
as in Proposition \ref{prop:R121 Grob}. Let 
\begin{align*}
 I_k &= (g_1,g_2,g_3,g_4,g_5,g_6,A,B) = (g_3,g_4,g_5,A,B) \subseteq \Pc_k\\
 J_k &= (g_1,g_2,g_3,g_4,g_5,g_6,C,D) = (g_1,g_2,g_5,C,D) \subseteq \Pc_k
\end{align*}
so that $I_k/(g_1,\ldots,g_6) = (A,B)\subseteq \Rct^{1,2,1}_k$ and $J_k/(g_1,\ldots,g_6) = (C,D)\subseteq \Rct^{1,2,1}_k$. It will suffice to compute $I_k\cap J_k$ in $\Pc_k$, which we will do via \Grobner bases, using Proposition \ref{prop:Grob intersection}.

Let $\succ_t$ be the monomial ordering on $\Pc_k[t] = k[A,B,C,D,R,X,Y,Z,t]$ defined in Section \ref{ssec:Grobner} and let 
\[\Kc_k 
= \left(ta+(1-t)b\middle|a\in I_k,b\in J_k\right).\] Using the above generating sets, and the fact that $I_k\cap J_k\subseteq \Kc_k$ we get
\begin{align*}
\Kc_k &= (g_1,g_2,g_3,g_4,g_5,\ul{tA},\ul{tB},\ul{tC}-C,\ul{tD}-D).
\end{align*}
Now we claim the set
\[
\Gs = \{g_1,g_2,g_3,g_4,g_5,\ul{tA},\ul{tB},\ul{tC}-C,\ul{tD}-D,\ul{AC},\ul{AD},\ul{CB},\ul{BD}\}
\]
is a \Grobner basis for $\Kc_k$ with respect to $\succ_t$. Clearly $\ul{AC},\ul{AD},\ul{CB},\ul{BD}\subseteq I_kJ_k\subseteq I_k\cap J_k\subseteq \Kc_k$, so this is a generating set for $\Kc_k$. It remains to check (\ref{buchberger}) for each unordered pair of distinct elements $f,g\in \Gs$.

The proof of Proposition \ref{prop:R121 Grob} showed that each pair $(g_i,g_j)$ with $i,j\in\{1,2,3,4,5\}$ satisfies (\ref{buchberger}) (since none of these polynomials contain the variable $t$, these computations are unaffected by the change in monomial ordering).

Next we consider the $28$ pairs of distinct elements $f,g \in \{\ul{tA},\ul{tB},\ul{tC}-C,\ul{tD}-D,\ul{AC},\ul{AD},\ul{CB},\ul{BD}\}$. It is easy to check that $\Ssf(f,g) = 0$ for all $f,g\in \{\ul{tA},\ul{tB},\ul{AC},\ul{AD},\ul{CB},\ul{BD}\}$ and that $\Ssf(\ul{tC}-C,\ul{tD}-D) = 0$, which accounts for $\binom{6}{2}+1 = 16$ of these pairs. Of the remaining $12$ pairs, if 
\[(f,g) = (\ul{tC}-C,\ul{AD}), (\ul{tC}-C,\ul{BD}),  (\ul{tD}-D,\ul{AC}), (\ul{tD}-D,\ul{CB})\]
then $\gcd(\LT(f),\LT(g)) = 1$. The remaining $8$ pairs are in the form $(tx,ty-y)$ or $(xy,ty-y)$ for $x\in\{A,B\}$ and $y\in \{C,D\}$, which all satisfy (\ref{buchberger}) as
\begin{align*}
\Ssf(tx,ty-y) = \Ssf(xy,ty-y) &= txy-x(ty-y) = xy \in \{\ul{AC},\ul{AD},\ul{CB},\ul{BD}\} \subseteq \Gs.
\end{align*}

It remains to show that $(f,g_i)$ satisfies (\ref{buchberger}) for each $f\in \{\ul{tA},\ul{tB},\ul{tC}-C,\ul{tD}-D,\ul{AC},\ul{AD},\ul{CB},\ul{BD}\}$ and each $i=1,2,3,4,5$. If $i=5$ then $\LT(g_5) = X^2$, which is relatively prime to $\LT(f)$. If $i=1,2$ then $\LT(g_i)$ is relatively prime to $\LT(f)$ unless $f = sA$ for $s\in \{t,C,D\}$. For any such $s$ we have
\begin{align*}
\Ssf(g_1,sA) &= s(\ul{XA}+ZB-AR)-X(sA) = sZB-sAR = Z(sB)-R(sA)\\
\Ssf(g_2,sA) &= s(\ul{YA}-XB-BR)-Y(sA) = sXB-sBR = (\ul{X}-R)(sB)
\end{align*}
which all satisfy (\ref{buchberger}) as $sA,sB\in\Gs$. Lastly if $i=3,4$ then $\LT(g_i)$ is relatively prime to $\LT(f)$ unless $f = \ul{tC}-C, \ul{AC}$ or $\ul{CB}$. Then we have
\begin{align*}
\Ssf(g_3,\ul{tC}-C) &= t(\ul{XC}+YD+CR)-X(\ul{tC}-C) = \ul{tYD}+tCR+XC = \ul{R}(\ul{tC}-C)+\ul{Y}(\ul{tD}-D)+g_3\\
\Ssf(g_4,\ul{tC}-C) &= t(\ul{ZC}-XD+DR)-Z(\ul{tC}-C) = -\ul{tXD}+tDR+ZC = (-\ul{X}+R)(\ul{tD}-D)+g_4
\end{align*}
and for $s\in\{A,B\}$ we have
\begin{align*}
\Ssf(g_3,sC) &= s(\ul{XC}+YD+CR)-X(sC) = sYD+sCR = Y(sD)-R(sC)\\
\Ssf(g_4,sC) &= s(\ul{ZC}-XD+DR)-Z(sC) = -sXD+sDR = (\ul{X}-R)(sD)
\end{align*}
which all satisfy (\ref{buchberger}) (as $sC,sD\in\Gs$).

Thus by Proposition \ref{prop:Buchberger}, $\Gs$ is indeed a \Grobner basis for $\Kc_k$.

Hence by Proposition \ref{prop:Grob intersection}, $I_k\cap J_k = (g_1,g_2,g_3,g_4,g_5,AC,AD,BC,BD)$ in $\Pc_k$, and so in $\Rc_k/(g_1,\ldots,g_6) = \Rct^{1,2,1}_k$ we have
\[
(A,B)\cap(C,D) = (AC,AD,BC,BD) = (A,B)(C,D),
\]
as desired.
\end{proof}

So now 
\[\omegat\cong (A,B)\cap (C,D) = (A,B)(C,D) = (AC,AD,BC,BD) = (AC,AD,BC)\]
(where the last equality is from $BD=-AC$ in $\Rct^{1,2,1}$). 

By the above we have $\Mc = (A,B) = \Oc(-\Dc) \cong \Oc(-\Dc') = (C,D)$ and $(A,B)(C,D) = \Oc(-\Dc-\Dc') \cong \omegat$. It follows that there exists an $\Rct^{1,2,1}$-module surjection $\Mc\otimes_{\Rct^{1,2,1}}\Mc\onto \omegat$. By \cite[Lemma 3.4]{Manning}, this implies that $\tau_\Mc:\Mc\otimes_{\Rct^{1,2,1}}\Mc\to \omegat$ is surjective.

This completes the proof of Theorem \ref{thm:R121 main results}.
\bibliographystyle{amsalpha}
{\footnotesize
\bibliography{refs}}
\end{document}